\newtheorem{theorem}{Theorem}[section]
\newtheorem{lemma}[theorem]{Lemma}
\newtheorem{proposition}[theorem]{Proposition}
\theoremstyle{definition}
\newtheorem{remark}[theorem]{Remark}
\newtheorem{definition}[theorem]{Definition}
\newcounter{assum}
\renewcommand{\appendix}{\par
\setcounter{section}{0}%
\setcounter{subsection}{0}%
\setcounter{subsubsection}{0}%
\gdef\thesection{\@Alph\c@section}%
\gdef\thesubsection{\@Alph\c@section.\@arabic\c@subsection}%
\gdef\theHsection{\@Alph\c@section.}%
\gdef\theHsubsection{\@Alph\c@section.\@arabic\c@subsection}%
\csname appendixmore\endcsname
}
\numberwithin{equation}{section}
\begin{document}

\arraycolsep=1pt

\title{\bf\Large
A Sharp Localized Weighted  Inequality Related to Gagliardo and
Sobolev Seminorms and Its Applications
\footnotetext{\hspace{-0.35cm} 2020
\emph{Mathematics Subject Classification}. Primary 26D10; Secondary
46E35, 42B25, 42B35.
\endgraf \emph{Key words and phrases}.
Muckenhoupt weight, Gagliardo Seminorm,
Sobolev Seminorm, Gagliardo--Nirenberg interpolation,
Bourgain--Brezis--Mironescu formula, ball Banach function
space.
\endgraf
This project is partially supported by the National
Key Research and Development Program of China
(Grant No.\ 2020YFA0712900),
the National Natural Science Foundation of
China (Grant Nos.\ 12431006, 12371093, and 124B2004),
the Fundamental Research Funds
for the Central Universities (Grant Nos.\ 2253200028 and 2233300008).
}}
\author{Pingxu Hu, Yinqin Li, Dachun Yang\footnote{Corresponding
author, E-mail: \texttt{dcyang@mail.bnu.edu.cn} /{\color{red}
\today}/Final version.}
\ and Wen Yuan}
\date{}
\maketitle

\vspace{-0.8cm}

\begin{center}
\begin{minipage}{13cm}
{\small {\bf Abstract}\quad
In this article, we establish a nearly sharp
localized weighted inequality related to Gagliardo
and Sobolev seminorms, respectively,
with the sharp $A_1$-weight constant or with
the specific $A_p$-weight constant when $p\in (1,\infty)$.
As applications, we further obtain a new characterization of
Muckenhoupt weights and, in the framework of
ball Banach function spaces,
an inequality related to Gagliardo and Sobolev
seminorms on cubes, a Gagliardo--Nirenberg interpolation inequality,
and a Bourgain--Brezis--Mironescu formula.
All these obtained results have wide generality and
are proved to be (nearly) sharp.

\emph{The original version of this article was published in
[Adv. Math. 481 (2025), Paper No. 110537]. 
In this revised version, we correct an error appeared 
in Theorem \ref{thm-upi} in the case where $p=1$,
which was pointed out to us by Emiel Lorist.}}
\end{minipage}
\end{center}

%
\tableofcontents
%

\section{Introduction}

Let $\Omega\subset \mathbb{R}^n$ be an open set
and $L^1_{\rm loc}(\Omega)$ denote the set of all
locally integrable functions on $\Omega$.
Recall that, for any $p\in [1,\infty)$, the \emph{homogeneous Sobolev space}
$\dot{W}^{1,p}(\Omega)$ and the \emph{homogeneous fractional
Sobolev space} $\dot{W}^{s,p}(\Omega)$ for any $s\in (0,1)$ are defined,
respectively, by setting
\begin{align*}
\dot{W}^{1,p}(\Omega):&
= \left\{f\in  L^1_{\rm loc}(\Omega):
|\nabla f|\in L^p(\Omega)\right\}
\end{align*}
and
\begin{align*}
\dot{W}^{s,p}(\Omega):
&= \left\{f\in L^1_{\rm loc}(\Omega): \|f\|_{\dot{W}^{s,p}(\Omega)}
:=\left[\int_{\Omega}
\int_{\Omega}\frac{|f(x)-f(y)|^p}{|x-y|^{n+sp}}\,dy\,dx
\right]^{\frac{1}{p}}<\infty\right\},
\end{align*}
where $\nabla f$ denotes the weak gradient of $f$.
It is well known that, if
$\Omega$ is a smooth bounded domain,
then, for any $p\in[1,\infty)$ and $s\in(0,1)$,
$\dot{W}^{1,p}(\Omega)\subset\dot{W}^{s,p}(\Omega)$;
that is, for any $f\in\dot{W}^{1,p}(\Omega)$,
$\|f\|_{\dot{W}^{s,p}(\Omega)}\lesssim\|\,|\nabla f|\,\|_{L^p(\Omega)}$
with the implicit positive constant depending only on
$n$, $s$, $p$, and $\Omega$ (see, for instance,
\cite[Proposition 2.2]{DPV12} or \cite[Theorem 6.21]{Leo23}).
An improved quantitative variant of this embedding was established by
Bourgain, Brezis, and Mironescu \cite[Theorem 1]{BBM01}, whose
special case is that, for any given
cube $Q\subset\mathbb{R}^n$ and any given $p\in[1,\infty)$ and
for any $s\in(0,1)$
and $f\in \dot{W}^{1,p}(Q)$,
\begin{align}\label{eq-sss03}
(1-s)^{\frac{1}{p}}
\left\{
\int_{Q}\int_{Q}
\frac{|f(x)-f(y)|^p}{|x-y|^{n+sp}}\,dy
\,dx\right\}^{\frac{1}{p}}
\lesssim
[\ell (Q)]^{1-s}
\left[\int_{Q}\left|\nabla f(x)\right|^p\,dx\right]^{\frac{1}{p}}
\end{align}
with the implicit positive constant depending only on $n$ and $p$,
here and thereafter,
any \emph{cube} $Q$ has finite \emph{edge length}
and all its edges parallel to the coordinate
axes and $\ell (Q)$ denotes its edge length. The factor $(1-s)^{\frac1p}$
in \eqref{eq-sss03} is \emph{optimal}, as evidenced by the limiting behavior
(see \cite[Theorems 2 and 3]{BBM01}):
For any given $p\in[1,\infty)$ and for any $f\in \dot{W}^{1,p}(Q)$,
$$\lim_{s\to 1^-}(1-s)^{\frac{1}{p}}\|f\|_{\dot{W}^{s,p}(Q)}
=C_{(n,p)}\|\,|\nabla f|\,\|_{L^p(Q)}$$
with $C_{(n,p)}\in(0,\infty)$ depending only on $n$ and $p$, where
$s\to 1^-$ means $s\in(0,1)$ and $s\to 1$.
This seminal result,
now called the Bourgain--Brezis--Mironescu \emph{formula}
(for short, BBM \emph{formula}), combined with
the above quantitative inequality \eqref{eq-sss03},
has inspired extensive developments and applications;
we refer to \cite{BIK13,BBM02,Dav02,DLTYY24,DLTYY25,DM23,KMX05,Lud14,Pon04jems}
for the studies in Euclidean spaces,
to \cite{DD22,Moh24} for the studies in domains,
and to \cite{Han24} for the studies in metric spaces.

However, inequality \eqref{eq-sss03}
fails if we replace $Q$ by $\mathbb{R}^n$.
Indeed, by a standard scaling argument, we
can find that, for any $s\in (0,1)$ and $p\in[1,\infty)$,
$\dot{W}^{1,p}\nsubseteq\dot{W}^{s,p}$
(see, for instance, \cite[(1.7)]{Tre15}).
Here, and thereafter, \emph{unless otherwise specified, we take $\mathbb{R}^n$
as the default underlying space.}
Therefore, the local properties
of cubes play an important role in inequality \eqref{eq-sss03}.
Recently, to obtain the BBM formula in arbitrary
domains, Mohanta \cite{Moh24}
introduced a new \emph{localized} Gagliardo seminorm
including an additional exponent $q$
(see \cite[(5)]{Moh24}) (which we call the \emph{Triebel--Lizorkin
exponent}, for the sake of presentation) and established
a variant of \eqref{eq-sss03}
in arbitrary domains via this
seminorm.
In particular, when applied to $\mathbb{R}^n$,
his results can be stated as follows (see \cite[Lemmas 11 and 12]{Moh24}):
If $p=q=1$ or $p\in(1,\infty)$ and $q\in [1,\infty)$ satisfying
$n(\frac1p-\frac1q)<1$, then, for any
$s\in (0,1)$, $r\in (0,\infty)$,
and $f\in \dot{W}^{1,p}$,
\begin{align}\label{eq-s04}
(1-s)^{\frac{1}{q}}
\left\{
\int_{\mathbb{R}^n}
\left[\int_{B({\bf 0},r)}
\frac{|f(x+h)-f(x)|^q}{|h|^{n+sq}}\,dh\right]^{\frac pq}
\,dx\right\}^{\frac{1}{p}}
\lesssim
r^{1-s}
\left\|\,|\nabla f|\,\right\|_{L^p}.
\end{align}
with the implicit positive constant depending only on $n$, $p$,
and $q$. In this article, we give a \emph{nearly sharp} weighted
generalization of this inequality in higher-order Sobolev spaces
with the sharp $A_1$-weight constant or with
the specific $A_p$-weight constant when $p\in (1,\infty)$ (see Theorem \ref{thm-upi}
and Remark \ref{re12}). Using it, we further obtain a new characterization
of Muckenhoupt weights (see Theorem \ref{thm-ap}) and,
in a general function space framework
(that is, the ball Banach function space; see Definition \ref{def-X}
for its definition), an inequality related to Gagliardo and Sobolev seminorms
on cubes (see Theorems \ref{thm-Q} and \ref{thm-QX}),
a fractional Gagliardo--Nirenberg interpolation inequality (see Theorems \ref{thm-ex}
and \ref{thm-in}), and a BBM formula (see Theorems
\ref{thm-wls}, \ref{thm-main}, and \ref{thm-cwkx}).
All these obtained results have wide generality (see Section \ref{sec-app})
and, through the flexible combination of
bump functions and power weights,
we constructively prove that these results are also the (nearly) sharp
generalizations of their corresponding known counterparts
(see Section \ref{sec-q}).

Next, we state our main results.
For any $k\in \mathbb{N}$ and $h\in \mathbb{R}^n$,
the {\emph{$k$-{\rm th} order
difference}} $\Delta^k_h f$ of
any measurable function $f$ on $\mathbb{R}^n$
is defined by setting
\begin{align}\label{eq-sho-hd}
\Delta_{h}^k
f(\cdot):=
\sum_{j=0}^{k}(-1)^{k-j}\binom{k}{j}f(\cdot+jh),
\end{align}
where $\binom{k}{j}:=\frac{k!}{j!(k-j)!}$.
Let $p\in [1,\infty)$
and $\omega\in A_p$ (the Muckenhoupt $A_p$-weight;
see Definition \ref{def-Ap} for its definition). We let
\begin{align}\label{eq-rw}
p_{\omega}:= \inf\{r\in [1,\infty):\omega\in A_r\}
\end{align}
and use $\dot{W}^{k,p}_\omega$ to denote the
homogeneous weighted Sobolev space
[see Definition \ref{def-Ap}(iv) for its definition].
For any $p,q\in [1,\infty)$, let
\begin{align}\label{Gqp}
	\gamma_{p,q}:=
	\begin{cases}
		1&\text{if}\ p=1,\\
		\frac{1}{q}&\text{if}\ p\in(1,\infty).
	\end{cases}
\end{align}
Then we obtain the following nearly sharp weighted variant
of \eqref{eq-s04}.

\begin{theorem}\label{thm-upi}
Let $k\in \mathbb{N}$ and $p\in[1,\infty)$.
\begin{enumerate}[{\rm (i)}]
\item
 If $ \omega \in A_1$ and $q\in [1,\infty)$
satisfies $n(\frac{1}{p}-\frac{1}{q})<k$,
then
there exists a positive constant
$C$, depending only on
$n$, $k$, $p$, and $q$,
such that, for any
$s\in (0,1)$, $r\in (0,\infty)$,
and $f\in \dot{W}^{k,p}_{\omega}$,
\begin{align}\label{eq-sss00}
	(1-s)^{\gamma_{p,q}}
	\left\{
	\int_{\mathbb{R}^n}
	\left[\int_{B({\bf 0},r)}
	\frac{|\Delta^k_h f(x)|^q}{|h|^{n+skq}}\,dh
	\right]^\frac{p}{q}\omega(x)\,dx\right\}^{\frac{1}{p}}
	\le
	C
	[\omega]_{A_{1}}
	^{\frac{1}{p}}
	r^{(1-s)k}
	\left\|\,\left|\nabla^k f\right|\,\right\|_{L^p_{\omega}},
\end{align}
where
$\nabla^k f:=\{\partial^{\alpha}f:\alpha \in \mathbb{Z}_+^n ,\
|\alpha|=k\}$ (the higher-order weak derivatives of $f$)
and
\begin{align*}
\left|\nabla^k f\right|
:= \left(\sum_{\alpha\in \mathbb{Z}_+^n,\,
|\alpha|=k}\left|\partial^{\alpha}
f\right|^2\right)^{\frac{1}{2}};
\end{align*}
moreover, the exponent
$\frac{1}{p}$ on the $A_1$-weight constant
$[\omega]_{A_1}$ is sharp.

\item  If $p\in (1,\infty)$,
$\omega\in A_p$ with $p_{\omega}$ as in \eqref{eq-rw},
and $q\in [1,\infty)$
satisfies $n(\frac{p_\omega}{p}-\frac{1}{q})<k$,
then there exists a positive constant
$C$ such that, for any $s\in (0,1)$, $r\in (0,\infty)$,
and $f\in \dot{W}^{k,p}_{\omega}$,
\begin{align}\label{eq-sss}
(1-s)^{\frac{1}{q}}
\left\{
\int_{\mathbb{R}^n}
\left[\int_{B({\bf 0},r)}
\frac{|\Delta^k_h f(x)|^q}{|h|^{n+skq}}\,dh
\right]^\frac{p}{q}\omega(x)\,dx\right\}^{\frac{1}{p}}
\le
C r^{(1-s)k}
\left\|\,\left|\nabla^k f\right|\,\right\|_{L^p_{\omega}},
\end{align}
where $C$ is of the form
$C_{(n,k,p,q,\beta)}[\omega]_{A_\beta}^{\frac{\beta}{p(\beta-1)}}$
with some $\beta \in (p_\omega,p)$ and
some positive constant $C_{(n,k,p,q,\beta)}$ depending only on
$n$, $k$, $p$, $q$, and $\beta$.
\end{enumerate}
\end{theorem}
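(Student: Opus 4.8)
The plan is to reduce both inequalities to a weighted estimate for a localized fractional‑maximal/Riesz‑potential‑type operator applied to $|\nabla^k f|^q$, to extract the factor $(1-s)$ from a dyadic summation, and then to invoke the sharp weighted theory for the Hardy--Littlewood maximal operator. By a routine mollification and truncation argument it suffices to prove the estimates for smooth $f$ with $|\nabla^k f|\in L^p_\omega$ (recall that any Muckenhoupt weight is locally bounded away from $0$ and $\infty$, so such $f$ are locally integrable). Writing $g:=|\nabla^k f|$, the pointwise starting point is the integral representation of the difference operator from \eqref{eq-sho-hd}: iterating the fundamental theorem of calculus gives
\begin{align*}
\Delta_h^k f(x)=\int_{[0,1]^k}(h\cdot\nabla)^k f\big(x+(t_1+\cdots+t_k)h\big)\,dt_1\cdots dt_k,
\end{align*}
and hence, with $\rho_k$ the bounded, $[0,k]$-supported density of a sum of $k$ independent random variables uniform on $[0,1]$,
\begin{align*}
\big|\Delta_h^k f(x)\big|\le C_{(n,k)}\,|h|^k\int_0^k g(x+\sigma h)\,\rho_k(\sigma)\,d\sigma.
\end{align*}

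Next I would decompose $\int_{B({\bf 0},r)}$ in the $h$-variable over the dyadic shells $\{2^{-j-1}r<|h|\le 2^{-j}r\}$, $j\in\mathbb{Z}_+$. Using the pointwise bound together with the dilation $v=\sigma h$ and Minkowski's integral inequality in the $\sigma$-variable, the $j$-th shell contributes at most $C_{(n,k,q)}(2^{-j}r)^{(1-s)kq}$ times a local average of $g^q$ at scale $\sim 2^{-j}r$ about $x$. Summing the geometric series then produces the sharp power of $(1-s)$: for $p\in(1,\infty)$ one sums $\sum_j(2^{-j}r)^{(1-s)kq}\lesssim(1-s)^{-1}r^{(1-s)kq}$ before taking $q$-th roots, obtaining $(1-s)^{-1/q}=(1-s)^{-\gamma_{p,q}}$; for $p=1$ one first takes $q$-th roots of each shell term (legitimate as $q\ge1$) and then sums a geometric series of ratio $2^{-(1-s)k}$, obtaining $(1-s)^{-1}=(1-s)^{-\gamma_{1,q}}$. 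This reduces the left-hand side of \eqref{eq-sss00}--\eqref{eq-sss}, modulo $C\,r^{(1-s)k}$ and the extracted power $(1-s)^{-\gamma_{p,q}}$, to a weighted bound in which $g^q$ is acted on by a localized operator retaining the scale gain $(2^{-j}r)^{(1-s)kq}$ at scale $2^{-j}r$; retaining this gain, rather than crudely collapsing it into a single maximal function, is essential. The hypothesis $n(\tfrac{p_\omega}{p}-\tfrac1q)<k$ (respectively $n(\tfrac1p-\tfrac1q)<k$ in part~(i)) is used precisely here, and it has a transparent meaning: by continuity it lets one fix $\beta\in(p_\omega,p)$ with $n(\tfrac{\beta}{p}-\tfrac1q)<k$, i.e.\ with $q$ strictly below the $k$-th order Sobolev exponent associated with the effective integrability that $\omega\in A_\beta$ (with $[\omega]_{A_\beta}<\infty$) affords — exactly the room a weighted Sobolev-type bound needs in order to control the $q$-th-power seminorm on the left by $\|\nabla^kf\|_{L^p_\omega}$.

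For part~(ii) I would then bound the resulting weighted norm by combining the reverse Hölder inequality for $\omega\in A_\beta$ with Buckley's sharp estimate $\|M\|_{L^{t}_\omega\to L^{t}_\omega}\le C_{(n,t)}[\omega]_{A_t}^{1/(t-1)}$ applied at the exponent governed by $\beta$, the retained scale gain absorbing the gap between $q$ and $p$; a careful count of the powers then yields the constant $C_{(n,k,p,q,\beta)}[\omega]_{A_\beta}^{\beta/(p(\beta-1))}$. For part~(i), with $\omega\in A_1$, I would instead integrate the shell estimate directly against $\omega$ and move each ball average onto the weight via Fubini and the pointwise inequality $M\omega\le[\omega]_{A_1}\omega$; this makes $[\omega]_{A_1}$ enter linearly inside an $L^1$-type integral, hence as $[\omega]_{A_1}^{1/p}$ once the $1/p$-th power is taken. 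The sharpness of this exponent is then confirmed by evaluating both sides on suitable products of bump functions with power weights, carried out in Section~\ref{sec-q}.

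The point I expect to be the main obstacle is the bookkeeping among the three exponents $p$, $q$, and the auxiliary exponent $\beta$, and the tracking of the precise $A_\beta$-power. When $q\ge p$ the maximal operator is not bounded on $L^{p/q}_\omega$, so one cannot close the estimate by dominating $g^q$ by a maximal function there; instead one must genuinely use the positive gain $(1-s)kq>0$ coming from the dyadic sum, the room for which is exactly what $n(\tfrac{\beta}{p}-\tfrac1q)<k$ provides. The most delicate case — and the one corrected in the present revision — is the endpoint $p=1$: there $g$ belongs only to $L^1_\omega$, so $g^q$ need not be locally integrable and a maximal function of $g^q$ is meaningless; one is forced to keep the full difference structure of $\Delta_h^k f$ and estimate through $g$ itself rather than through $g^q$, which is also why the exponent on $(1-s)$ in \eqref{eq-sss00} is $\gamma_{1,q}=1$ rather than $\tfrac1q$.
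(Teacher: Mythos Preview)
Your dyadic shell decomposition and the extraction of the $(1-s)$-power are essentially what the paper does, and your description of the $p=1$ endgame (Fubini plus $\mathcal{M}\omega\le[\omega]_{A_1}\omega$) is also on target. The gap is earlier, in the step where you pass from $\Delta_h^k f$ to $g:=|\nabla^k f|$. Starting from the pointwise bound $|\Delta_h^k f(x)|\le C|h|^k\int_0^k g(x+\sigma h)\,\rho_k(\sigma)\,d\sigma$ and then applying Jensen or Minkowski in $\sigma$ inevitably produces, on each shell, a quantity comparable to $(2^{-j}r)^{(1-s)kq}\fint_{B(x,C2^{-j}r)}g(y)^q\,dy$. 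For $q>p$ this cannot be closed: the averaging operator $g\mapsto\bigl(\fint_{B(\cdot,R)}g^q\bigr)^{1/q}$ is \emph{not} bounded on $L^p_\omega$ uniformly in $R$ (take $g$ a normalized bump of width $\varepsilon\to0$), and the ``retained scale gain'' $(2^{-j}r)^{(1-s)kq}$ does not help, because its exponent is $(1-s)kq$, which carries no information about $n(\tfrac1p-\tfrac1q)$ and in particular cannot compensate a deficit that is independent of $s$. In other words, once you have replaced $|\Delta_h^k f(x)|^q$ by a $g^q$-average, the hypothesis $n(\tfrac{\beta}{p}-\tfrac1q)<k$ has nowhere left to enter.

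What the paper does instead is to bridge the $q$--$p$ gap \emph{before} reducing to averages of $g$, by applying the $(q,p)$-Sobolev--Poincar\'e inequality (Lemma~\ref{lem-qpPoin}) to the function $h\mapsto \Delta_h^k f(x)$ on the ball $B({\bf 0},t)$; since $\nabla_h^k\bigl(\Delta_h^k f(x)\bigr)$ is a finite combination of translates of $\nabla^k f$, this yields (Lemma~\ref{Lem2.5})
\[
\Bigl[\fint_{B({\bf 0},t)}|\Delta_h^k f(x)|^q\,dh\Bigr]^{1/q}
\lesssim t^k\Bigl[\int_0^{k+1}\fint_{B(x,\tau t)}|\nabla^k f(y)|^{p'}\,dy\,d\tau\Bigr]^{1/p'}
\]
with $p'=1$ for part~(i) and $p'=p/\beta$ for part~(ii). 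The hypothesis $n(\tfrac{1}{p'}-\tfrac1q)<k$ is used exactly here, in the Sobolev--Poincar\'e step, not downstream. Only after this do you sum the geometric series (getting $(1-s)^{-\gamma_{p,q}}$ as you describe) and then, for $p>1$, dominate by $[\mathcal{M}(g^{p/\beta})]^{\beta/p}$ and invoke the $L^\beta_\omega$-bound for $\mathcal{M}$; for $p=1$ you instead have honest $L^1$-averages of $g$ and can push $\omega$ through by Fubini and the $A_1$ condition. Your integral representation of $\Delta_h^k f$ does appear in the paper, but only to handle the residual $L^{p'}$-average of $\Delta_h^k f$ that the Poincar\'e inequality leaves behind---it is not the main engine.
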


\begin{remark}\label{re12}
\begin{enumerate}[{\rm (i)}]
\item By the open property of $A_p$-weights
with $p\in(1,\infty)$ (see, for instance,
\cite[Corollary 7.6(i)]{Duo01}), we conclude
the existence of $\beta$ in Theorem \ref{thm-upi}(ii).

\item When $p=q=1$ or $p\in(1,\infty)$,
Theorem \ref{thm-upi} with $\omega\equiv1$
and $k=1$ reduces to \eqref{eq-s04} and, in other cases,
is new.

\item In Theorem \ref{thm-upi},
the exponents $\gamma_{p,q}$ and $\frac1q$
on $1-s$ are both \emph{sharp}; see Proposition \ref{gammaqp}.

\item The ranges of the Triebel--Lizorkin exponent $q$ in
Theorem \ref{thm-upi} are \emph{nearly sharp};
see Proposition \ref{pro-q}.

\item It is still unknown whether the
dependence of the constant in \eqref{eq-sss} on the $A_p$-weight constant
is sharp.
\end{enumerate}
\end{remark}

Using Theorem \ref{thm-upi}, we further obtain the following four applications.

\textbf{Application (I): Characterization of Muckenhoupt Weights.}
As in \cite{LYYZZ-arXiv,ZLYYZ24}, we find that the condition
$\omega\in A_p$ in Theorem \ref{thm-upi}
is not only sufficient but also necessary, which
provides a new characterization of Muckenhoupt weights.
In what follows, for any $p\in [1,\infty)$ and any nonnegative locally
integrable function $\omega$ on
$\mathbb{R}^n$,
$\dot{Y}^{1,p}_{\omega}$
denotes the homogeneous
weighted Sobolev space [see \eqref{eq-Yd}
for its definition].

\begin{theorem}\label{thm-ap}
If $\omega$ is a nonnegative locally
integrable function on $\mathbb{R}^n$ and if
$p\in [1,\infty)$,
then the following statements are equivalent.
\begin{enumerate}[{\rm (i)}]
\item $\omega\in A_p$.

\item There exist $s\in(0,1)$, $q\in[1,\infty)$, and
a positive constant $C$ such that,
for any $r\in(0,\infty)$ and
$f\in \dot{Y}^{1,p}_{\omega}$,
\begin{align}\label{eq-chap1}
\left\{
\int_{\mathbb{R}^n}
\left[\int_{B({\bf 0},r)}
\frac{| f(x+h)-f(x)|^q}{|h|^{n+sq}}\,dh
\right]^\frac{p}{q}\omega(x)\,dx\right\}^{\frac{1}{p}}\le
C r^{(1-s)}
\left\|f\right\|_{\dot{ Y}^{1,p}_{\omega}}.
\end{align}
\end{enumerate}
\end{theorem}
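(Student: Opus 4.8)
The plan is to establish the two implications separately. The implication \textup{(i)}$\Rightarrow$\textup{(ii)} will be essentially a direct application of Theorem \ref{thm-upi}, while \textup{(ii)}$\Rightarrow$\textup{(i)} is the substantive half and will be proved by a test-function argument.

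For \textup{(i)}$\Rightarrow$\textup{(ii)} I would fix once and for all $s:=\frac12$ and $q:=1$ and invoke Theorem \ref{thm-upi} with $k=1$: when $p=1$ this is part~(i) (and the constraint reads $n(\frac1p-\frac1q)=0<1$), while when $p\in(1,\infty)$ this is part~(ii) (and the constraint reads $n(\frac{p_\omega}{p}-\frac1q)=n(\frac{p_\omega}{p}-1)\le0<1$ because $p_\omega\le p$). Since, by the definition of $\dot Y^{1,p}_\omega$ in \eqref{eq-Yd}, one has $\dot Y^{1,p}_\omega\subseteq\dot W^{1,p}_\omega$ with $\||\nabla f|\|_{L^p_\omega}\le C\|f\|_{\dot Y^{1,p}_\omega}$, inequality \eqref{eq-sss00} (resp. \eqref{eq-sss}) applied to $f\in\dot Y^{1,p}_\omega$, after absorbing the harmless factor $(1-s)^{\gamma_{p,q}}$, yields exactly \eqref{eq-chap1} (with a constant of the form $C[\omega]_{A_1}$ when $p=1$ and $C_{(n,p,\beta)}[\omega]_{A_\beta}^{\beta/[p(\beta-1)]}$ when $p\in(1,\infty)$). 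This is statement~\textup{(ii)}.

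For \textup{(ii)}$\Rightarrow$\textup{(i)} I would argue as follows. First reduce the target: by Hölder's inequality, $\omega\in A_p$ is equivalent to the existence of $C'\in(0,\infty)$ such that $(\frac1{|Q|}\int_Qu)^p\,\omega(Q)\le C'\int_Qu^p\omega$ for every cube $Q$ and every nonnegative $u\in L^\infty(Q)$ --- the converse implication following on testing with $u=(\omega+\varepsilon)^{1-p'}\mathbf 1_Q$ and letting $\varepsilon\downarrow0$ when $p>1$, and with $u=\mathbf 1_{\{\omega<\lambda\}\cap Q}$ and letting $\lambda$ decrease to the essential infimum of $\omega$ over $Q$ when $p=1$. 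So fix a cube $Q$ (say $Q=Q(\mathbf 0,\ell)$ after translation) and a nonnegative continuous $u$ on $Q$ (continuity is harmless by mollification). Next, build a compactly supported Lipschitz test function $f$ adapted to $Q$ and $u$: morally a cut-off primitive of $u$ in one coordinate direction, so that $f$ rises from $0$ to an amplitude of order $\ell^{1-n}\int_Qu$ across (a dilate of) $Q$, with $|\nabla f|$ controlled on $Q$ by $u$ plus cut-off errors of size $\lesssim\ell^{-n}\int_Qu$. Finally, with $r$ chosen a suitable multiple of $\ell$, estimate the two sides of \eqref{eq-chap1} applied to this $f$: bound the left-hand side from below by restricting the outer integration to a cube $\widetilde Q\supseteq Q$ of comparable size on which $|\Delta_hf(x)|$ is of order $\ell^{1-n}\int_Qu$ for a set of $h$'s of measure $\sim\ell^n$ with $|h|\sim\ell$ --- giving a lower bound $\gtrsim c_{(n,p,q,s)}\,\ell^{-s}(\ell^{1-n}\int_Qu)\,\omega(\widetilde Q)^{1/p}$; and bound $\|f\|_{\dot Y^{1,p}_\omega}$ from above by $\ell^{1-s}$ times a quantity of order $(\int_Qu^p\omega)^{1/p}$. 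Combining these in \eqref{eq-chap1} and rearranging yields the testing inequality above (for $Q$ itself, since $\widetilde Q\supseteq Q$), hence $\omega\in A_p$.

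The main obstacle, and the place where real work is needed, is the construction and estimation of $f$ so that $\int|\nabla f|^p\omega$ is genuinely of order $\int_Qu^p\omega$: the naive one-directional primitive forces one either to pay the doubling factor $\omega(3Q)/\omega(Q)$ coming from the cut-off or to replace $u$ by its average over cross-sections transverse to the chosen direction, and the latter is fatal precisely when $u$ and $\omega$ are anti-correlated --- which is the relevant case, since $u$ is essentially $\omega^{1-p'}$. Following \cite{LYYZZ-arXiv,ZLYYZ24}, I would overcome this by first extracting the doubling (indeed $A_\infty$) property of $\omega$ from cruder, indicator-type test functions, and then building $f$ from a combination of bump functions over a Whitney/stopping-time decomposition of $Q$ adapted to the level sets of $u$; the endpoint $p=1$ (where $A_1$ amounts to $M\omega\lesssim\omega$) is handled by a parallel argument.
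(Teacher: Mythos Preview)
Your plan for \textup{(i)}$\Rightarrow$\textup{(ii)} is fine and matches the paper (Theorem~\ref{thm-upi} with $k=1$, plus Lemma~\ref{lem-hp} to pass from $\dot H^{1,p}_\omega$ to $\dot W^{1,p}_\omega$ when $p>1$).

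There is, however, a genuine gap in your \textup{(ii)}$\Rightarrow$\textup{(i)} strategy in the case $n\ge 2$ and $p\in(1,\infty)$. By definition \eqref{eq-Yd}, in this range $\dot Y^{1,p}_\omega=\dot H^{1,p}_\omega$, whose seminorm is $\|I_{-1}f\|_{L^p_\omega}$, \emph{not} $\|\,|\nabla f|\,\|_{L^p_\omega}$. Since at this point you do not yet know $\omega\in A_p$, you cannot appeal to Lemma~\ref{lem-hp} to identify the two seminorms --- that would be circular. Consequently, building a compactly supported Lipschitz $f$ with $|\nabla f|$ controlled by $u$ does not, on its own, give you the upper bound $\|f\|_{\dot Y^{1,p}_\omega}\lesssim(\int_Q u^p\omega)^{1/p}$ that your argument needs. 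The paper deals with this by choosing the test function as a Riesz potential, $f:=I_1(g\eta)$; then $I_{-1}f=g\eta$ by the semigroup property, so $\|f\|_{\dot H^{1,p}_\omega}=\|g\eta\|_{L^p_\omega}$ is controlled directly, and the explicit integral representation of $I_1$ is used to estimate $|\Delta_h f|$ from below on suitable annuli.

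For $n=1$ or $p=1$ (where $\dot Y^{1,p}_\omega=\dot W^{1,p}_\omega$) your approach is in the right spirit; the paper's actual test functions are a one-dimensional primitive $f(x)=\int_{-\infty}^x g\eta$ when $n=1$, and a radial primitive $f(x)=\int_{B(\mathbf 0,|x|)}g\eta$ when $p=1$, together with translation and dilation invariance of \eqref{eq-chap1}. The elaborate Whitney/stopping-time construction you suggest is not needed here: once the basic testing inequality on a fixed configuration of annuli is obtained, the paper simply repeats the concluding steps of \cite[Proposition~5.1]{LYYZZ-arXiv}.
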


\textbf{Application (II):
An Inequality Related to Gagliardo and Sobolev Seminorms.}
Note that, recently, a nice weighted variant of \eqref{eq-sss03}
with specific Muckenhoupt $A_1$-weights was obtained in
\cite[Corollary 2.2]{HMPV23}.
By combining Theorem \ref{thm-upi},
the extension theorem of integer-order Sobolev spaces,
and the extrapolation,
we obtain a nearly sharp variant of this result
in the framework of higher-order
Sobolev spaces and ball Banach function spaces (see Theorem \ref{thm-QX}).
Recall that the concept of ball
Banach function spaces
(see Definition \ref{def-X} for its definition) was
introduced by Sawano et al.\ \cite{SHYY17}
to unify the study of several different important function spaces,
such as Lebesgue spaces, weighted Lebesgue spaces,
Morrey spaces,
and all the function spaces in Section \ref{sec-app}.
Due to its wide generality, ball
Banach function spaces have recently attracted a lot of
attention
and yielded many applications; we refer to
\cite{LYH22,YHYY22ams,
YHYY22mn,YYY20,ZHYY22} for the real-variable
theory of function spaces and to
\cite{HCY21,IS17,TYYZ21,WYY20,WYYZ21,ZYYW21}
for the boundedness of operators on various function spaces.

In what follows,
for any open set $\Omega\subset \mathbb{R}^n$,
any $x\in \Omega$, and any $k\in \mathbb{N}$, let
\begin{align*}
\Omega(x,k):= \left\{h\in \mathbb{R}^n:
\text{for any}\ i\in \{1,\dots,k\},\ x+ih\in \Omega\right\}
\end{align*}
and, for any $p\in[1,\infty)$
and any nonnegative locally integrable function $\omega$
on $\mathbb{R}^n$, denote by $\dot{W}^{k,p}_\omega(\Omega)$
[resp.\,$W^{k,p}_\omega(\Omega)$]
the homogeneous [resp.\,inhomogeneous]
weighted Sobolev spaces on $\Omega$ (see Definition \ref{def-Ap}
for their definitions).
As an accessible example of Theorem \ref{thm-QX}, we
now state its special case where this conclusion is applied to
weighted Lebesgue spaces.

\begin{theorem}\label{thm-Q}
Let $k\in \mathbb{N}$, $p\in [1,\infty)$,
$\omega\in A_p$ with $p_{\omega}$ as in \eqref{eq-rw}, and
$q\in [1,\infty)$ satisfies $n(\frac{p_\omega}{p}-\frac{1}{q})<k$. Then
there exists a positive constant
$C$
such that, for any $s\in (0,1)$,
any cube $Q$ in $\mathbb{R}^n$, and any
$f\in \dot{W}^{k,p}_{\omega}(Q)$,
\begin{align}\label{eq-sss3}
&(1-s)^{\gamma_{p,q}}
\left\{
\int_{Q}
\left[\int_{Q(x,k)}
\frac{|\Delta^k_h f(x)|^q}{|h|^{n+skq}}\,dh
\right]^\frac{p}{q}\omega(x)\,dx\right\}^{\frac{1}{p}}\notag\\
&\quad \le
C[\ell (Q)]^{(1-s)k}
\left[\int_{Q}\left|\nabla^k f(x)\right|^p\omega(x)\,dx\right]^{\frac{1}{p}},
\end{align}
where $\ell (Q)$ denotes the edge length of
$Q$ and $\gamma_{p,q}$ is the same as in \eqref{Gqp}.
\end{theorem}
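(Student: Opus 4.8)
The plan is to deduce the localized inequality \eqref{eq-sss3} from the global inequality in Theorem \ref{thm-upi} by three reductions: rescaling to a fixed cube, subtracting a polynomial, and extending the function to $\mathbb{R}^n$.

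\emph{Normalization and polynomial reduction.} By a translation together with the dilation $f\mapsto f(x_0+\ell(Q)\,\cdot\,)$ --- which leaves $[\omega]_{A_p}$ invariant and multiplies both sides of \eqref{eq-sss3} by the same power of $\ell(Q)$ --- it suffices to prove \eqref{eq-sss3} for the unit cube $Q=Q_0:=(-\frac12,\frac12)^n$. Since $Q_0$ is convex, $Q_0(x,k)=\{h:\ x+kh\in Q_0\}$ for every $x\in Q_0$, and in particular $Q_0(x,k)\subset B({\bf 0},\sqrt n)$; by density I may also assume $f\in C^\infty(\overline{Q_0})$. Because $\Delta^k_h$ annihilates every polynomial of degree at most $k-1$, replacing $f$ by $f-P$ with $P$ an averaged Taylor polynomial of $f$ on $Q_0$ changes neither side of \eqref{eq-sss3}; a weighted Bramble--Hilbert/Poincar\'e-type inequality on $Q_0$, valid for $A_p$ weights, then lets me assume $f\in W^{k,p}_\omega(Q_0)$ with $\|f\|_{W^{k,p}_\omega(Q_0)}\le C\|\,|\nabla^k f|\,\|_{L^p_\omega(Q_0)}$, where $C$ depends only on $n$, $k$, $p$, and $[\omega]_{A_p}$.

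\emph{Extension and application of Theorem \ref{thm-upi}.} Let $\widetilde f:=\mathcal{E}f$, where $\mathcal{E}$ is an extension operator for integer-order Sobolev spaces on $Q_0$. Since $\widetilde f=f$ on $Q_0$ and, for $x\in Q_0$ and $h\in Q_0(x,k)$, all of $x,x+h,\dots,x+kh$ lie in $Q_0$, one has $\Delta^k_h\widetilde f(x)=\Delta^k_h f(x)$ there; hence, the integrands being nonnegative and $Q_0(x,k)\subset B({\bf 0},\sqrt n)$,
\begin{align*}
\int_{Q_0}\left[\int_{Q_0(x,k)}\frac{|\Delta^k_h f(x)|^q}{|h|^{n+skq}}\,dh\right]^{\frac pq}\omega(x)\,dx
&\le\int_{\mathbb{R}^n}\left[\int_{B({\bf 0},\sqrt n)}\frac{|\Delta^k_h\widetilde f(x)|^q}{|h|^{n+skq}}\,dh\right]^{\frac pq}\omega(x)\,dx.
\end{align*}
Applying Theorem \ref{thm-upi}(i) when $p=1$ and Theorem \ref{thm-upi}(ii) when $p\in(1,\infty)$ --- the hypotheses on $q$ coincide with those assumed here, and in both cases the exponent of $1-s$ is $\gamma_{p,q}$ --- with $r:=\sqrt n$, and then using $(\sqrt n)^{(1-s)k}\le(\sqrt n)^k$ together with $\|\,|\nabla^k\widetilde f|\,\|_{L^p_\omega}\le C\|f\|_{W^{k,p}_\omega(Q_0)}\le C\|\,|\nabla^k f|\,\|_{L^p_\omega(Q_0)}$ and $\ell(Q_0)=1$, I obtain \eqref{eq-sss3} for $Q_0$; undoing the normalization yields it for an arbitrary cube $Q$, with $C$ depending only on $n$, $k$, $p$, $q$, and $[\omega]_{A_p}$.

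\emph{The main obstacle} is the ingredient used above as a black box: that $\mathcal{E}$ is bounded from $W^{k,p}_\omega(Q_0)$ to $W^{k,p}_\omega(\mathbb{R}^n)$ with norm depending only on $n$, $k$, $p$, and $[\omega]_{A_p}$ --- the uniformity in the cube being exactly why the normalization is performed first. I would secure this either via Rubio de Francia extrapolation, starting from the classical unweighted extension theorem and from the boundedness of the components $\partial^\alpha\mathcal{E}$ on $L^r_\nu$ for all $r\in(1,\infty)$ and $\nu\in A_r$, or directly: taking $\mathcal{E}$ of Whitney type, its top-order derivatives on each Whitney cube of $\mathbb{R}^n\setminus\overline{Q_0}$ are pointwise dominated by an average of $|\nabla^k f|$ over a comparable cube inside $Q_0$, and such local averaging operators are uniformly bounded on $L^p_\omega$ for $\omega\in A_p$ --- which is precisely what the $A_p$ condition (respectively, the $A_1$ condition, for the endpoint $p=1$) provides. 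The rest of the argument is routine bookkeeping of scale-invariant quantities.
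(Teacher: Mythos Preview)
Your approach is correct but takes a genuinely different route from the paper. The paper does \emph{not} apply Theorem~\ref{thm-upi}(ii) directly to $\omega\in A_p$; instead it first establishes the inequality on an arbitrary cube for $A_1$-weights only (via Chua's weighted extension theorem \cite{Chu92} together with Theorem~\ref{thm-upi}(i)), and then lifts from $A_1$ to general ball Banach function spaces by Rubio de Francia extrapolation (Lemma~\ref{lem-po}), obtaining Theorem~\ref{thm-QX}; Theorem~\ref{thm-Q} is then recovered by checking that $L^p_\omega$ with $\omega\in A_p$ satisfies the hypotheses of Theorem~\ref{thm-QX}. Your normalize--subtract--extend--apply-\ref{thm-upi} route is more direct for the specific target $L^p_\omega$ and avoids the extrapolation machinery entirely; the paper's detour through $A_1$ plus extrapolation buys the much more general Theorem~\ref{thm-QX}. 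Your polynomial-subtraction step is one legitimate way to reduce the homogeneous extension problem to the inhomogeneous one; the paper instead reads off the homogeneous bound $\|\nabla^k\Lambda f\|_{L^p_\omega}\lesssim\psi([\omega]_{A_1})\|\nabla^k f\|_{L^p_\omega(Q)}$ directly from the proof in \cite{Chu92}, so no Bramble--Hilbert step appears. One minor overclaim in your last line: through Theorem~\ref{thm-upi}(ii) the constant depends on $[\omega]_{A_\beta}$ for some $\beta\in(p_\omega,p)$, which is not controlled by $[\omega]_{A_p}$ alone; but since the statement of Theorem~\ref{thm-Q} only asserts existence of a constant (allowed to depend on $\omega$), this does not affect correctness.
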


\begin{remark}
Although we do not obtain the
explicit dependence on weight
constants in \eqref{eq-sss3},
Theorem \ref{thm-Q} still
generalizes \cite[Corollary 2.2]{HMPV23}
in the sense that it covers more general
weight classes and higher-order derivatives and it includes
a Triebel--Lizorkin exponent $q$.
Indeed, Theorem \ref{thm-Q} with $\omega\in A_1$, $k=1$,
and $q=p$  reduces to
\cite[Corollary 2.2]{HMPV23} but without the explicit dependence on weight
constants as in \cite[Corollary 2.2]{HMPV23}.
Furthermore, in Theorem \ref{thm-Q},
the exponent $\gamma_{p,q}$ on $1-s$ is sharp
and the range of $q$ is nearly sharp;
see Remark \ref{rm312} and Proposition \ref{pro-q2}, respectively.
\end{remark}

\textbf{Application (III):
A Gagliardo--Nirenberg Interpolation Inequality.}
Gagliardo--Nirenberg interpolation inequalities
(see \cite[(1)]{BM18})
play an important role in the study of
Sobolev spaces and partial differential equations;
see, for example,
\cite{BM19,DD23,FFRS19,HSS20,KLV21,LRS25,MNSS12,MNSS13,SW13}.
In particular, the following
interpolation inequality
with optimal asymptotic factors holds:
For any given $p\in[1,\infty)$ and for any $s\in(0,1)$
and $f\in W^{1,p}$,
\begin{align}\label{GNe1}
s^{\frac1p}(1-s)^{\frac1p}
\|f\|_{\dot{W}^{s,p}}\lesssim
\|f\|_{L^p}^{1-s}\|\,|\nabla f|\,\|_{L^p}^s,
\end{align}
where the implicit positive constant
depends only on $n$ and $p$
(see, for instance, \cite[(17)]{YZ20}
or \cite{JM91,Mil05});
the optimality of $s^{\frac1p}$ is due to
the well-known Maz'ya--Shaposhnikova formula (see \cite{MS02})
and, similar to \eqref{eq-sss03},
the optimality of $(1-s)^{\frac1p}$ follows from
the BBM formula on $\mathbb{R}^n$
(see \cite{Bre02}).

As an application of Theorem \ref{thm-upi},
we obtain a nearly sharp
variant of \eqref{GNe1}
in the framework of ball Banach function spaces and higher-order
Sobolev spaces (see Theorem \ref{thm-in}).
As an accessible example, we
present its special case where this conclusion is applied to
weighted Lebesgue spaces.

\begin{theorem}\label{thm-ex}
Let $k\in \mathbb{N}$, $p\in [1,\infty)$,
$\omega\in A_p$ with $p_{\omega}$ as in \eqref{eq-rw}, and
$q\in [1,\infty)$ satisfy $n(\frac{p_\omega}{p}-\frac{1}{q})<k$.

\begin{enumerate}[{\rm(i)}]
\item If $q\le \frac{p}{p_\omega}$,
then there exists a positive constant
$C$ such that, for any $s\in(0,1)$ and $f\in W^{k,p}_\omega$,
\begin{align}\label{eq-exss}
s^{\frac1q}(1-s)^{\frac1q}\left\{
\int_{\mathbb{R}^n}
\left[\int_{\mathbb{R}^n}
\frac{|\Delta^k_h f(x)|^q}{|h|^{n+skq}}\,dh
\right]^\frac{p}{q}\omega(x)\,dx\right\}^{\frac{1}{p}}
\le C\left\|f\right\|^{1-s}_{L^p_{\omega}}
\left\|\,\left|\nabla^k f\right|\,\right\|^s_{L^p_{\omega}}.
\end{align}
\item If $q>\frac{p}{p_\omega}$ and
$\theta \in
(0, 1-\frac{n}{k}[\frac{p_\omega}{p}-\frac{1}{q}])$,
then there exists a positive constant
$C$ such that,
for any $s\in(\max\{1-\frac{\theta}{2},1-\frac{n}{kq}\},1)$
and $f\in W^{k,p}_\omega$.
\begin{align}\label{eq-exss00}
(1-s)^{\gamma_{p,q}}\left\{
\int_{\mathbb{R}^n}
\left[\int_{\mathbb{R}^n}
\frac{|\Delta^k_h f(x)|^q}{|h|^{n+skq}}\,dh
\right]^\frac{p}{q}\omega(x)\,dx\right\}^{\frac{1}{p}}
\le C\left\|f\right\|^{1-s}_{L^p_{\omega}}
\left\|\,\left|\nabla^k f\right|\,\right\|^s_{L^p_{\omega}},
\end{align}
where $\gamma_{p,q}$ is the same as in \eqref{Gqp}.
\end{enumerate}
\end{theorem}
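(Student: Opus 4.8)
In both parts the plan is the ``split-and-optimize'' scheme: cut the $h$-integral at a free scale $r\in(0,\infty)$, estimate the near piece by Theorem~\ref{thm-upi}, estimate the tail by a dyadic decomposition, and then choose $r$ to balance the two bounds. By the density of $C_c^\infty$ in $W^{k,p}_{\omega}$ (for $\omega\in A_p$) together with Fatou's lemma, it suffices to treat $f\in C_c^\infty$. Put $F:=\|f\|_{L^p_{\omega}}$ and $G:=\|\,|\nabla^k f|\,\|_{L^p_{\omega}}$; the cases $FG=0$ or $G=\infty$ being trivial, assume $0<F,G<\infty$. Let $N$, $N_{<r}$, $N_{\ge r}$ denote the double integral on the left-hand side of \eqref{eq-exss} (resp.\ \eqref{eq-exss00}) with $h$ ranging over $\mathbb{R}^n$, over $B(\mathbf 0,r)$, and over $\mathbb{R}^n\setminus B(\mathbf 0,r)$, respectively, so that $N^{1/p}\le N_{<r}^{1/p}+N_{\ge r}^{1/p}$. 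Applying Theorem~\ref{thm-upi} with the ball $B(\mathbf 0,r)$ (its part~(ii) if $p\in(1,\infty)$ and part~(i) if $p=1$; the hypothesis on $q$ there is exactly the present one, and in Theorem~\ref{thm-ex}(i) it is automatic since $q\le p/p_\omega$ gives $\frac{p_\omega}{p}-\frac1q\le0$) yields $(1-s)^{\gamma_{p,q}}N_{<r}^{1/p}\le C\,r^{(1-s)k}G$. With the choice $r^k:=F/G$ the right-hand side equals $C\,F^{1-s}G^s$, and since likewise $r^{-sk}F=F^{1-s}G^s$, the whole problem reduces to the tail estimate $(1-s)^{\gamma_{p,q}}N_{\ge r}^{1/p}\le C\,r^{-sk}F$, which in part~(i) may be relaxed to $s^{\gamma_{p,q}}(1-s)^{\gamma_{p,q}}N_{\ge r}^{1/p}\le C\,r^{-sk}F$.

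To prove the tail estimate I would use the elementary bound $|\Delta_h^k f(x)|^q\le C_{k,q}\sum_{j=0}^k|f(x+jh)|^q$ from \eqref{eq-sho-hd}, the changes of variables $z=jh$, and a dyadic decomposition of $\{|z|\ge jr\}$ into annuli; bounding $|h|^{-n-skq}$ by its supremum on each annulus and recognizing ball averages of $|f|^q$ gives the pointwise bound $\int_{|h|\ge r}|\Delta_h^k f(x)|^q|h|^{-n-skq}\,dh\le C_{k,q}(1-2^{-skq})^{-1}r^{-skq}\mathcal M(|f|^q)(x)$, where $\mathcal M$ is the Hardy--Littlewood maximal operator. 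In case~(i) one has $p/q\ge p_\omega$, and when $p/q>p_\omega$ the operator $\mathcal M$ is bounded on $L^{p/q}(\omega)$, so that $N_{\ge r}^{1/p}\le C\,r^{-sk}(1-2^{-skq})^{-1/q}F$; as $(1-2^{-skq})^{-1}\le 2/s$ for every $s\in(0,1)$ (because $kq\ge1$), this supplies exactly the factor $s^{1/q}=s^{\gamma_{p,q}}$ demanded by \eqref{eq-exss} (note that $q\le p/p_\omega\le p$ forces $\gamma_{p,q}=1/q$ throughout case~(i)). Combining with the near estimate and the choice of $r$ proves \eqref{eq-exss} whenever $p/q>p_\omega$.

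The main obstacle is the tail estimate in case~(ii), where $q>p/p_\omega$ and hence $p/q<p_\omega$: now $\omega\notin A_{p/q}$, the maximal operator $\mathcal M$ is \emph{unbounded} on $L^{p/q}(\omega)$, and the pointwise bound above no longer helps. This is precisely where the restrictions $n(\frac{p_\omega}{p}-\frac1q)<k$ and $s>\max\{1-\frac\theta2,\,1-\frac n{kq}\}$ with $\theta<1-\frac nk(\frac{p_\omega}{p}-\frac1q)$ are used: first, $s$ being bounded away from $0$ makes all the geometric series arising from the dyadic decomposition converge with constants depending only on $\theta,n,k,q$; secondly, the inequality $n(\frac{p_\omega}{p}-\frac1q)<k$ places $q$ strictly below the critical weighted Sobolev exponent of $\dot{W}^{k,p}_{\omega}$, which is exactly what makes the requisite weighted estimates available. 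My plan here is to handle the dyadic annuli one at a time rather than passing to a single maximal function: bound $\int_{2^mr\le|h|<2^{m+1}r}|\Delta_h^k f(x)|^q|h|^{-n-skq}\,dh$ by $C(2^mr)^{-skq}$ times the average of $|f|^q$ over the concentric ball $B(x,c2^mr)$, take the $L^{p/q}(\omega)$-norm of each annular term (or, in the subcase $q>p$ where $p/q<1$, sum the $(p/q)$-th powers using subadditivity of $t\mapsto t^{p/q}$), descend from $|f|^q$-averages to $|f|^p$-averages via H\"older and the subcriticality of $q$, control the weight at the dilated scales through the $A_\beta$ dimension bound $\omega(\lambda B)\le C_{[\omega]_{A_\beta}}\lambda^{n\beta}\omega(B)$ for $\lambda\ge1$ and some $\beta\in(p_\omega,p)$ provided by the open property of $A_p$ (Remark~\ref{re12}(i)), and finally sum in $m$. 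The restrictions $s>1-\frac n{kq}$ and $s>1-\frac\theta2$ are what force this final series in $m$ to converge and its total to carry the exact power $r^{-sk}$; the only blow-up left as $s\to1$ is of order $(1-s)^{-\gamma_{p,q}}$, which is absorbed by the prefactor $(1-s)^{\gamma_{p,q}}$ on the left-hand side of \eqref{eq-exss00}.

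Carrying this last step out rigorously, with a single constant and the precise exponents, and reconciling it with the borderline subcase $p/q=p_\omega$ of part~(i) (where $\mathcal M$ again fails on $L^{p/q}(\omega)$ and the same refined argument is needed), is where I expect the real work to lie; the remaining ingredients---Theorem~\ref{thm-upi}, the dyadic decomposition, and the optimization in $r$---are routine.
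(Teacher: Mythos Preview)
Your split-and-optimize scheme and the use of Theorem~\ref{thm-upi} for the near piece are exactly what the paper does, and your treatment of part~(i) is correct in spirit: the paper bounds the tail for $A_1$ weights via Lemma~\ref{eq-s1} (essentially the ball-average bound you describe) and then extrapolates through Lemma~\ref{lem-po}, whereas you invoke the boundedness of $\mathcal M$ on $L^{p/q}_\omega$ directly; both routes work once $p/q>p_\omega$.

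The genuine gap is in part~(ii). Your dyadic plan, as written, would establish $N_{\ge r}^{1/p}\lesssim r^{-sk}F$ for \emph{every} $r>0$ (nothing in your annular steps uses the particular value $r^k=F/G$), and this is simply false when $q>p/p_\omega$: already for $\omega\equiv1$, $p=1$, $n>k$, and any $q\in(1,\tfrac{n}{n-k})$, bumps $f_m$ supported in $B(\mathbf 0,1/m)$ give $N_{\ge1}(f_m)/\|f_m\|_{L^1}\gtrsim m^{n(1-1/q)}\to\infty$. The offending step is ``descend from $|f|^q$-averages to $|f|^p$-averages via H\"older'': for $q>p$ H\"older runs the other way, and any genuine descent (a $(q,p)$-Poincar\'e inequality) forces the gradient into the tail bound. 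That is exactly what the paper does. Its substitute for your tail estimate is Proposition~\ref{lem-upw}, which produces the \emph{interpolated} inequality $N_{\ge C_{(n)}r}^{1/p}\lesssim(\theta+s-1)^{-1/p}\,r^{k(1-s-\theta)}F^{\theta}G^{1-\theta}$, valid for all $r$ and reducing to $F^{1-s}G^s$ after your choice of $r$. The proof of that proposition is substantially more delicate than dyadic annuli plus $A_\beta$ doubling: one subtracts minimizing polynomials $P^{(k-1)}_{B}(f)$ to exploit cancellation in $\Delta_h^k$, applies the pointwise higher-order Poincar\'e inequality of Lemma~\ref{Poin} together with a pigeonhole argument to select a good scale $j_{x,h}$, passes to shifted dyadic cubes via Lemma~\ref{lem-dy-cub}, and only then sums using the $A_1$ structure (extrapolation through Lemma~\ref{lem-po} then reaches general $A_p$). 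This gradient-carrying interpolated tail bound is the missing idea in your proposal.
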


\begin{remark}
Let all the symbols be as in Theorem \ref{thm-ex}.
Then the range $n(\frac{p_\omega}{p}-\frac{1}{q})<k$
of the Triebel--Lizorkin exponent $q$ in Theorem \ref{thm-ex}
is sharp;
see Proposition \ref{pro-sp3}. Moreover,
noting that, as mentioned earlier,
$s^{\frac{1}{q}}$ is due to the Maz'ya--Shaposhnikova formula,
since $q\le\frac{p}{p_\omega}$
is sharp in the weighted Maz'ya--Shaposhnikova formula
(see \cite[Remark 2.13(iii) and Theorem 4.1]{PYYZ24})
and Theorem \ref{thm-ex}(ii) requires $q>\frac{p}{p_\omega}$,
we cannot obtain the asymptotic factor $s^{\frac1q}$
in \eqref{eq-exss00} as $s\in(0,1)$ and $s\to 0$. Therefore,
the range $q>\frac{p}{p_\omega}$ in Theorem \ref{thm-ex}(ii)
is also sharp.
\end{remark}

Different from the localized inequality \eqref{eq-sss00},
the inner integration region of
the seminorm on the left-hand side of \eqref{eq-exss}
or \eqref{eq-exss00} is $\mathbb{R}^n$.
Thus, the proof of Theorem \ref{thm-ex} needs
careful handling of the non-local component of Gagliardo seminorms.
For the subcritical
case $q\le\frac{p}{p_\omega}$ in Theorem
\ref{thm-ex}, we bound the non-local component by
using Lebesgue norms
via employing the boundedness of ball average operators on weighted Lebesgue
spaces (see Lemma \ref{eq-s1}).
However, this approach is not applicable
to the supercritical case $q>\frac{p}{p_\omega}$,
where we exploit the higher-order Poincar\'e
inequality (see Lemma \ref{Poin}) and some
exquisite geometrical properties of shifted dyadic grids
(see Lemma \ref{lem-dy-cub})
to obtain a novel domination
of the corresponding
non-local component by interpolation
terms involving derivatives
(see Proposition \ref{lem-upw}).

\textbf{Application (IV): A Bourgain--Brezis--Mironescu Formula.}
The final application of Theorem \ref{thm-upi}
is to establish a sharp higher-order
BBM formula and a related characterization
of ball Banach Sobolev spaces on extension domains
(see Theorems \ref{thm-main} and \ref{thm-cwkx}).
These results improve the known corresponding ones
(see Remarks \ref{rem-main} and \ref{thm-cwkx} for the details).
As an accessible example, we
state the special case where these conclusions are applied to
weighted Lebesgue spaces. In what follows,
for any multi-index
$\alpha=(\alpha_1, \dots, \alpha_n)\in \mathbb{Z}_+^n$ and
any $x=(x_1,\dots,x_n)\in {\mathbb{R}^n}$,
we denote by $x^{\alpha}:={x_1}^{\alpha_1}\cdots{x_n}^{\alpha_n}$
the monomial of degree $|\alpha|:=\sum_{i=1}^{n}\alpha_i$.

\begin{theorem}\label{thm-wls}
Let $k\in {\mathbb{N}}$, $\omega\in
A_{p}$ with $p_{\omega}$
as in \eqref{eq-rw}, $q\in [1,\infty)$ satisfy that $q=p=1$ or
$n(\frac{p_\omega}{p}-\frac{1}{q})<k$ when $p\in (1,\infty)$,
and $\Omega$ be a $W^{k,p}_{\omega}$-extension domain (see Definition
\ref{df-EXD}). Then the following statements hold.
\begin{enumerate}[{\rm (i)}]
\item For any $f\in {W}^{k,p}_{\omega}(\Omega)$,
\begin{align*}
&\lim_{s\to 1^{-}}
(1-s)^{\frac{1}{q}}
\left\{
\int_{\Omega}
\left[\int_{\Omega(x,k)}
\frac{|\Delta^k_h f(x)|^q}{|h|^{n+skq}}\,dh
\right]^\frac{p}{q}\omega(x)\,dx\right\}^{\frac{1}{p}}\notag\\
&\quad=
\frac{1}{(kq)^\frac{1}{q}}
\left\{
\int_{\Omega}
\left[ \int_{\mathbb{S}^{n-1}}
\left|\sum_{\alpha\in\mathbb{Z}_+^n,\,|\alpha|=k}
\partial^{\alpha}f(x)\xi^{\alpha}\right|^q\,d\mathcal{H}^{n-1}(\xi)
\right]^{\frac{1}{q}}
\omega(x)\,dx\right\}^{\frac{1}{p}},
\end{align*}
here and thereafter,
$\mathcal{H}^{n-1}$ denotes the $(n-1)$-dimensional Hausdorff
measure.

\item If $p\in (1,\infty)$, then $f\in W^{k,p}_{\omega}(\Omega)$
if and only if
$f\in W^{k-1,p}_{\omega} (\Omega)$ [where $W^{0,p}_\omega
(\Omega):=L^p_\omega(\Omega)$] and
\begin{align*}
I(f):=\liminf_{s\to 1^{-}}
(1-s)^{\frac{1}{q}}
\left\{
\int_{\Omega}
\left[\int_{\Omega(x,k)}
\frac{|\Delta^k_h f(x)|^q}{|h|^{n+skq}}\,dh
\right]^\frac{p}{q}\omega(x)\,dx\right\}^{\frac{1}{p}}<\infty;
\end{align*}
moreover, for any $f\in W^{k,p}_{\omega}(\Omega)$,
$\|f\|_{W^{k,p}_{\omega}(\Omega)}
\sim \|f\|_{W^{k-1,p}_{\omega}(\Omega)}+I(f)$
with the positive equivalence constants independent of $f$.
\end{enumerate}
\end{theorem}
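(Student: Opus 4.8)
The plan is to deduce Theorem \ref{thm-wls} from the general ball-Banach-space BBM formula (Theorems \ref{thm-main} and \ref{thm-cwkx}) by specializing the underlying ball Banach function space $X$ to the weighted Lebesgue space $L^p_\omega$. First I would verify that $L^p_\omega$ with $\omega\in A_p$ is indeed a ball Banach function space and that it satisfies the structural hypotheses required by Theorems \ref{thm-main} and \ref{thm-cwkx}: essentially, that the Hardy--Littlewood maximal operator (or a suitable ball average operator) is bounded on $L^p_\omega$ and on an appropriate associate or convexification of it. For $p\in(1,\infty)$ this is the classical Muckenhoupt theorem, and the condition $n(\frac{p_\omega}{p}-\frac1q)<k$ is exactly what matches the Triebel--Lizorkin-exponent restriction imposed in Theorem \ref{thm-upi} (which feeds the upper bound needed for the $\limsup$ half of the formula); for $p=q=1$ one uses instead the $A_1$ estimate \eqref{eq-sss00} with the sharp $[\omega]_{A_1}$ constant. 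Thus part (i) and the $\liminf$ direction of part (ii) follow by citing the abstract theorems with $X=L^p_\omega$, once the dictionary between the abstract hypotheses and the $A_p$ condition is checked.

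Next I would unwind the abstract limit formula into the concrete right-hand side displayed in Theorem \ref{thm-wls}(i). The abstract formula produces a limit of the form (constant) times the $X$-norm of the function $x\mapsto\bigl(\int_{\mathbb{S}^{n-1}}|\sum_{|\alpha|=k}\partial^\alpha f(x)\,\xi^\alpha|^q\,d\mathcal H^{n-1}(\xi)\bigr)^{1/q}$; with $X=L^p_\omega$ this $X$-norm is literally $\{\int_\Omega[\cdots]^{p/q}\,\omega(x)\,dx\}^{1/p}$, so the only thing to pin down is the universal constant. Here the plan is to compute, for a pure polynomial $P(h)=\sum_{|\alpha|=k}c_\alpha h^\alpha$ of degree $k$, the limit $\lim_{s\to1^-}(1-s)\int_{B(\mathbf 0,1)}\frac{|P(h)|^q}{|h|^{n+skq}}\,dh$ by passing to polar coordinates: the radial integral $\int_0^1 \rho^{kq-1-skq}\,d\rho=\frac{1}{kq(1-s)}$ contributes the $(1-s)^{-1}$ blow-up and the factor $\frac1{kq}$, while the angular integral gives $\int_{\mathbb{S}^{n-1}}|P(\xi)|^q\,d\mathcal H^{n-1}(\xi)$, whence the constant $\frac{1}{(kq)^{1/q}}$ after taking the $q$-th root. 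Then one localizes: using a Lebesgue-point / Taylor-expansion argument one replaces $\Delta^k_h f(x)$ by $P_x(h):=\sum_{|\alpha|=k}\partial^\alpha f(x)\,h^\alpha$ (up to lower-order error controlled, as $s\to1^-$, by the derivative terms and dominated using Theorem \ref{thm-upi} applied to the remainder), and one replaces $\Omega(x,k)$ near the diagonal by a full small ball $B(\mathbf 0,\varepsilon)$, the contribution away from the diagonal being negligible in the $(1-s)$-weighted limit; a dominated-convergence argument in $L^p_\omega(\Omega)$, with the domination furnished again by Theorem \ref{thm-upi} (the upper bound is uniform in $s$), then yields the pointwise-in-$x$ limit integrated against $\omega$.

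For part (ii) the "if" direction (i.e.\ $f\in W^{k-1,p}_\omega(\Omega)$ together with $I(f)<\infty$ implies $f\in W^{k,p}_\omega(\Omega)$) is where the $W^{k,p}_\omega$-extension hypothesis on $\Omega$ and the restriction $p\in(1,\infty)$ enter essentially: one extends $f$ to $\tilde f\in W^{k-1,p}_\omega$ on all of $\mathbb R^n$, uses $I(f)<\infty$ to get a uniform-in-$s$ bound on the fractional seminorms of $\tilde f$ (this is where the extension operator must be controlled), and then applies the characterization of weighted Sobolev spaces via bounded difference quotients / Gagliardo seminorms — the classical fact that $\liminf_{s\to1^-}(1-s)^{1/q}\|f\|$-type quantities being finite forces the top-order weak derivatives to exist in $L^p_\omega$ when $p>1$ (reflexivity of $L^p_\omega$ is used here to extract a weakly convergent subsequence of difference quotients whose limit is identified as $\nabla^k f$). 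The norm equivalence $\|f\|_{W^{k,p}_\omega(\Omega)}\sim\|f\|_{W^{k-1,p}_\omega(\Omega)}+I(f)$ then follows by combining this lower bound with the upper bound from part (i) (the limit equals a quantity comparable to $\|\,|\nabla^k f|\,\|_{L^p_\omega(\Omega)}$, since on $\mathbb S^{n-1}$ the $L^q$ norm of the form $\xi\mapsto\sum_{|\alpha|=k}\partial^\alpha f(x)\xi^\alpha$ is comparable, with constants depending only on $n,k,q$, to the Euclidean norm of the coefficient vector $\nabla^k f(x)$).

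The main obstacle I expect is the careful diagonal localization in part (i): controlling the $k$-th order Taylor remainder $\Delta^k_h f(x)-\sum_{|\alpha|=k}\partial^\alpha f(x)h^\alpha$ uniformly enough, both pointwise at Lebesgue points of $\nabla^k f$ and in an $L^p_\omega$-dominated sense, so that dominated convergence legitimately passes the limit $s\to1^-$ inside the $\int_\Omega(\cdots)\omega\,dx$ integral — the weight $\omega$ makes the usual density/mollification arguments more delicate, and one must route everything through the quantitative inequality of Theorem \ref{thm-upi} rather than through unweighted estimates. A secondary but purely bookkeeping difficulty is verifying that $L^p_\omega$ literally satisfies the list of abstract axioms demanded by Theorems \ref{thm-main} and \ref{thm-cwkx}, which amounts to invoking standard weighted maximal-function bounds and the self-improvement/openness of the $A_p$ class.
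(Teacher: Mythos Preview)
Your approach is correct and matches the paper's: specialize Theorems \ref{thm-main} and \ref{thm-cwkx} to $X=L^p_\omega$ and verify the hypotheses. Note, however, that your second and third paragraphs re-derive content already built into those abstract theorems --- the constant $(kq)^{-1/q}$ is part of the conclusion of Theorem \ref{thm-main}, and the weak-compactness/characterization argument is exactly Theorem \ref{thm-cwkx} --- so the paper's proof consists \emph{only} of hypothesis verification: that $L^p_\omega$ has absolutely continuous norm (via \cite[Theorem 1.34]{Rud87}), that it satisfies the assumptions of Lemma \ref{lem-po} with exponent $p/(p_\omega+\epsilon)$ (as in the proof of Theorem \ref{thm-Q}), and, for part (ii), that the associate $[L^p_\omega]'=L^{p'}_{\omega^{1-p'}}$ also has absolutely continuous norm.
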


\begin{remark}
\begin{enumerate}[{\rm (i)}]
\item In the special case where $k=1$, $\omega\equiv 1$,
and $\Omega$ is a smooth bounded domain in $\mathbb{R}^n$,
Theorem \ref{thm-wls} with
$q=p$ coincides with the classical BBM formula in \cite{BBM01}.
In addition, when $k=1$, $\omega\equiv 1$,
and $\Omega=\mathbb{R}^n$,
Theorem \ref{thm-wls}(i)
is a special case of \cite[Theorem 1]{Moh24}.

\item The range of the Triebel--Lizorkin exponent $q$ in
Theorem \ref{thm-wls} is \emph{sharp};
see Proposition \ref{pro-sp3}.
\end{enumerate}
\end{remark}

The organization of the remainder of this article is as follows.

In Section \ref{sec-cap}, we show
Theorems \ref{thm-upi} and \ref{thm-ap}.
Section \ref{sec-bbf} is devoted to proving an inequality
related to Gagliardo and Sobolev seminorms
in the framework of  ball Banach function spaces.
The main target of Section \ref{sec-pf1} is
to obtain a fractional
Gagliardo--Nirenberg interpolation
inequality in the framework of ball Banach function spaces.
In Section \ref{sec-pf2}, we establish
the BBM formula on ball Banach function spaces and
the related characterization of ball Banach Sobolev spaces.
In Section \ref{sec-q}, we show the sharpness of the Triebel--Lizorkin exponent $q$
in the main theorems of this article.
Finally, in Section \ref{sec-app}, we apply these main results
(Theorems \ref{thm-QX},
\ref{thm-in}, \ref{thm-main},
and \ref{thm-cwkx})
to various specific examples of ball Banach function spaces.

We end this introduction by making some conventions on symbols.
We always let
$\mathbb{Z}_+ :=\mathbb{N}\cup\{0\}$.
If $E$ is a subset of ${\mathbb{R}^n}$, we denote by
${\bf 1}_E$ its \emph{characteristic function} and
by $E^{\complement}$ the set ${\mathbb{R}^n} \setminus E$.
For any sets $E,F\subset \mathbb{R}^n$, let
$E-F:=\{x-y:x\in E\ \text{ and}\ y\in F\}.$
Moreover, we use ${\bf 0}$ to denote the origin of ${\mathbb{R}^n}$
and $\mathbb{S}^{n-1}$ the unit sphere of ${\mathbb{R}^n}$.
For any $x\in {\mathbb{R}^n}$ and $r\in (0,\infty)$, let
$B(x,r):=\{y\in {\mathbb{R}^n}:|x-y|<r\}$.
For any $\lambda\in (0,\infty)$
and any cube $Q$ in $\mathbb{R}^n$, $\lambda Q$ means a cube
with the same center as $Q$ and $\lambda$ times the edge length of
$Q$.
Let $\Omega\subset \mathbb{R}^n$ be an open set.
For any measurable function $f$ on $\Omega$, its support
$\mathrm{supp\,} (f)$
is defined by setting
$\mathrm{supp\,} (f):=
\{x\in\Omega:f(x)\neq 0\}$.
We use $C_{\rm c}^{\infty}(\Omega)$ to denote
the space of all infinitely
differentiable functions on $\Omega$ with compact support.
For any $k\in\mathbb{N}$ and any Banach space $X$ of measurable functions
on $\mathbb{R}^n$, if $f\in L^1_{\rm loc}$ satisfies that
$\nabla^k f$ exists,
then define $\|\nabla^k f\|_X:=\|\,|\nabla^k f|\,\|_X$.
For any $f\in L^{1}_{\rm loc} $ and any bounded measurable $E\subset
{\mathbb{R}^n}$,
let
$
f(E):=\int_{E}f(x)\, dx
$
and $\fint_Ef(x)\,dx:=\frac{1}{|E|}\int_{E}f(x)\,dx$.
In addition,
we denote by $C$ a \emph{positive constant} which is independent of
the main parameters involved, but may vary from line to line and
use $C_{(\alpha,\dots)}$
to denote a positive constant depending on the indicated parameters
$\alpha,\dots$.
The symbol $f\lesssim g$ means $f\leq C g$ and, if
$f\lesssim g\lesssim f$, then we write $f\sim g$.
If $f\leq C g$ and $g=h$ or $g\leq h$, we
then write $f\lesssim g=h$ or $f\lesssim g \leq h$.
The symbol $s\to 0^{+}$ (resp. $s \to 1^-$)
means that there exists a constant $c_0\in (0,1)$
such that $s\in (0,c_0)$ and $s\to 0$
[resp. $s\in (1-c_0 ,1)$ and $s\to 1$].
Finally, in all proofs, we consistently retain the symbols
introduced in the original theorem (or related statement).

\begin{remark}\label{e}
We point out that, in Theorem 1.1(i) of
[Adv. Math. 481 (2025), Paper No. 110537], the exponent on $1-s$
was stated to be $\frac1q$ rather than $\gamma_{p,q}$,
due to an error in its proof.
In this revised version, we clarify that $\frac1q$
should be $\gamma_{p,q}$, which is indeed sharp;
see Theorem \ref{thm-upi} and Remark \ref{re12}(iii) of the present version.
As a result, we also present corrected versions
of Theorems 1.4, 1.6, and 1.8 of [Adv. Math. 481 (2025), Paper No. 110537], respectively, 
in Theorems \ref{thm-Q}, \ref{thm-ex}, and \ref{thm-wls}
of the present version.
\end{remark}

\section{Proofs of Theorems \ref{thm-upi} and \ref{thm-ap}}\label{sec-cap}
The target of this section is to prove
Theorems \ref{thm-upi} and \ref{thm-ap}.

First, we recall the concepts of
$A_p$-weights and
weighted function spaces as follows
(see, for instance, \cite[Section 7.1]{Gra14}).

\begin{definition}\label{def-Ap}
Let $k\in \mathbb{N}$, $p\in[1,\infty)$,
$\omega$ be a nonnegative locally integrable function on $\mathbb{R}^n$,
and $\Omega\subset\mathbb{R}^n$ an open set.
\begin{enumerate}[{\rm (i)}]
\item
The function $\omega$ is called an \emph{$A_p$-weight} if,
when $p\in(1,\infty)$,
\begin{align*}
[\omega]_{A_p}:=
\sup_{Q\subset {\mathbb{R}^n}}
\left[\frac{1}{|Q|}\int_{Q}\omega(x)\,dx\right]
\left\{\frac{1}{|Q|}\int_{Q}[\omega(x)]^{-\frac{1}{p-1}}\,dx\right\}^{p-1}
<
\infty
\end{align*}
and, when $p=1$,
\begin{align*}
[\omega]_{A_1}:=\sup_{Q\subset {\mathbb{R}^n}}
\left[\frac{1}{|Q|}
\int_{Q}\omega(x)\,dx\right]
\left\|\omega^{-1}\right\|_{L^\infty (Q)}
<
\infty,
\end{align*}
where
the suprema are
taken over all cubes $Q\subset {\mathbb{R}^n}$.

\item
The \emph{weighted Lebesgue space}
$L^{p}_{\omega}(\Omega)$
is defined to be the set of all
measurable functions $f$ on $\Omega$
such that
$\|f\|_{L^p_{\omega}(\Omega)}:=
[\int_{\Omega}|f(x)|^p \omega(x)\,
dx]^{\frac{1}{p}}
<
\infty.$

\item
The \emph{inhomogeneous weighted Sobolev space}
$W_{\omega}^{k,p}(\Omega)$ is defined to be the set
of all $f\in L^p_\omega (\Omega)$ such that,
for any multi-index
$\alpha\in \mathbb{Z}_{+}^n$ with
$|\alpha|\le k$,
the $\alpha$-th weak partial derivative
$\partial^{\alpha} f$ of $f$ exists and
$\partial^{\alpha} f \in L^p_\omega(\Omega) $.
Moreover, the \emph{norm} $\|\cdot\|_{{W}^{k,p}_{\omega}(\Omega)}$
of $W^{k,p}_\omega (\Omega)$
is defined by setting,
for any $f\in W^{k,p}_{\omega}(\Omega)$,
$\|f\|_{{W}^{k,p}_{\omega}(\Omega)}
:=
\sum_{\alpha\in \mathbb{Z}_{+}^n,\, |\alpha|\le k}
\left\|\partial^{\alpha} f\right\|_{L^p_\omega (\Omega)}.$

\item
The \emph{homogeneous weighted Sobolev space}
$\dot{W}_{\omega}^{k,p}(\Omega)$ is defined to be the set
of all $f\in L^1_{\rm loc}(\Omega) $ such that,
for any multi-index
$\alpha\in \mathbb{Z}_{+}^n$ with
$|\alpha|= k$,
the $\alpha$-th weak partial derivative
$\partial^{\alpha} f$ of $f$ exists and
$\partial^{\alpha} f \in L^p_\omega(\Omega) $.
Moreover, the \emph{seminorm} $\|\cdot\|_{\dot{W}^{k,p}_{\omega}(\Omega)}$
of $\dot{W}^{k,p}_\omega(\Omega) $
is defined by setting,
for any $f\in \dot{W}^{k,p}_{\omega}(\Omega)$,
$\|f\|_{\dot{W}^{k,p}_{\omega}(\Omega)}
:=
\sum_{\alpha\in \mathbb{Z}_{+}^n,\, |\alpha|= k}
\left\|\partial^{\alpha} f\right\|_{L^p_\omega (\Omega)}.$
\end{enumerate}
\end{definition}
In what follows, for any $f\in L^{1}_{\rm loc} $, the
\emph{Hardy--Littlewood maximal function}
$\mathcal{M}(f)$ of $f$ is defined by setting,
for any $x\in {\mathbb{R}^n}$,
$
\mathcal{M}(f)(x):=\sup_{B\ni x}\fint_{B}|f(y)|\,dy,
$
where the supremum is taken over all balls
$B\subset \mathbb{R}^n$ containing $x$. We denote by
$\|{\mathcal M}\|_{X \to Y}$
the operator norm of $\mathcal{M}$ from a
Banach space $X$ to another Banach space
$Y$.
For Muckenhoupt $A_p$-weights, we have the
following basic properties,
which are frequently used in this article; see, for instance,
\cite[(7.3)
and (7.5)]{Duo01},
\cite[Proposition 7.1.5 and Theorem 7.1.9]{Gra14},
\cite[Lemma 3.1]{Cla25},
and \cite[Theorem
2.7.4]{DHHR11}.

\begin{lemma}\label{lem-apwight}
If $p\in [1,\infty)$ and $\omega\in A_p$, then the
following
statements hold.
\begin{enumerate}[{\rm (i)}]
\item For any cubes $Q,S\subset {\mathbb{R}^n}$ with $Q\subset S$,
$
\omega(S)\leq [\omega]_{A_p}
({|S|}/{|Q|})^p
\omega(Q).
$

\item $\omega\in A_{q}$ for any $q\in [p,\infty)$
and, moreover,
$[\omega]_{A_{q}}\leq [\omega]_{
A_p}.$

\item If $p\in (1,\infty)$, then $\mathcal{M}$
is bounded on $L_{\omega}^p  $ and, moreover, there
exists a positive constant $C$,
depending only on $n$ and $p$, such that
\begin{align}\label{lem-apwight-e1}
\|{\mathcal M}\|_{L_{\omega}^p   \to
L_{\omega}^p  }
\leq C[\omega]^{\frac{1}{p-1}}_{A_p};
\end{align}
furthermore, if $\omega\in A_1$, then
$[\omega]_{A_p}^{\frac{1}{p-1}}$ in \eqref{lem-apwight-e1} can be replaced by
$[\omega]_{A_1}^{\frac1p}$.

\item
If $\upsilon$ is a nonnegative locally integrable
function on $\mathbb{R}^n$, then $\upsilon\in A_p$
if and only if
\begin{align*}
[\upsilon]^*_{A_p}:
=\sup_{Q\subset {\mathbb{R}^n} }
\sup_{\|f{\bf 1}_{Q}\|_{L^p_{\upsilon}}
\in (0,\infty)}
\frac{[\frac{1}{|Q|}\int_{Q}|f(x)|\,dx]^p}{\frac{1}{\upsilon(Q)}
\int_{Q}|f(x)|^p\upsilon(x)\,dx}<\infty,
\end{align*}
where the first supremum is taken over all cubes $Q\subset
{\mathbb{R}^n}$
and the second supremum is taken
over all $f\in L^{1}_{\rm loc}$ such that
$\|f{\bf 1}_{Q}\|_{L^p_{\upsilon}}
\in (0,\infty).$ Moreover, for any $\upsilon\in A_p$,
$[\upsilon]^*_{A_p}=[\upsilon]_{A_p}$.
\end{enumerate}
\end{lemma}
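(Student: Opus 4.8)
The four assertions are classical facts about Muckenhoupt weights; the plan is to treat them in turn, none except (iii) requiring more than Hölder's or Jensen's inequality. For (i), I would apply the defining $A_p$ inequality to the larger cube $S$ and, in the dual factor, restrict the integral (when $p>1$) or the essential supremum (when $p=1$) from $S$ down to the subcube $Q$; since Hölder's inequality gives $|Q|\le\omega(Q)^{1/p}(\int_Q\omega^{-1/(p-1)}\,dx)^{(p-1)/p}$ when $p\in(1,\infty)$ and $|Q|\le\omega(Q)\,\|\omega^{-1}\|_{L^\infty(Q)}$ when $p=1$, rearranging the resulting chain of inequalities produces exactly $\omega(S)\le[\omega]_{A_p}(|S|/|Q|)^p\omega(Q)$. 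For (ii) with $p\le q$, I would use Jensen's inequality $\frac{1}{|Q|}\int_Q g^\theta\,dx\le(\frac{1}{|Q|}\int_Q g\,dx)^\theta$ with the concave exponent $\theta:=\frac{p-1}{q-1}\in(0,1]$ applied to $g:=\omega^{-1/(p-1)}$ (when $p>1$), or the pointwise bound $\omega^{-1/(q-1)}\le\|\omega^{-1}\|_{L^\infty(Q)}^{1/(q-1)}$ (when $p=1$), to see that $(\frac{1}{|Q|}\int_Q\omega^{-1/(q-1)}\,dx)^{q-1}$ is dominated by the corresponding dual factor appearing in $[\omega]_{A_p}$; multiplying through by $\frac{1}{|Q|}\int_Q\omega\,dx$ and taking the supremum over all cubes $Q$ then yields $[\omega]_{A_q}\le[\omega]_{A_p}$.

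For (iii), the bound $\|\mathcal{M}\|_{L^p_\omega\to L^p_\omega}\le C[\omega]_{A_p}^{1/(p-1)}$ is Buckley's sharp weighted maximal inequality, and I would recall its standard proof: a quantitative reverse Hölder inequality for $A_p$-weights, in which both the exponent gain and the constant are controlled by $[\omega]_{A_p}$, combined with a stopping-time (dyadic maximal function) decomposition, as in the cited references. For the $A_1$ refinement, I would argue by interpolation. First, a Vitali covering argument gives the weak-type estimate $\omega(\{\mathcal{M}f>\lambda\})\le 3^n[\omega]_{A_1}\lambda^{-1}\|f\|_{L^1_\omega}$: each selected cube $Q_i$ satisfies $|Q_i|<\lambda^{-1}\int_{Q_i}|f|\,dx$, the $A_1$ condition gives $\omega(3Q_i)\le 3^n|Q_i|[\omega]_{A_1}\,\operatorname*{ess\,inf}_{Q_i}\omega\le 3^n[\omega]_{A_1}\lambda^{-1}\int_{Q_i}|f|\omega\,dx$, and summing over the disjoint $Q_i$ finishes this step. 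Since $\mathcal{M}$ is bounded on $L^\infty$ with constant $1$ and $L^\infty_\omega=L^\infty$, the Marcinkiewicz interpolation theorem between these two endpoints produces $\|\mathcal{M}\|_{L^p_\omega\to L^p_\omega}\le C[\omega]_{A_1}^{1/p}$ with $C$ depending only on $n$ and $p$.

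For (iv), I would prove the two inequalities separately. The bound $[\upsilon]^*_{A_p}\le[\upsilon]_{A_p}$ is Hölder's inequality: for $p\in(1,\infty)$, $\frac{1}{|Q|}\int_Q|f|\,dx\le(\frac{1}{|Q|}\int_Q|f|^p\upsilon\,dx)^{1/p}(\frac{1}{|Q|}\int_Q\upsilon^{-1/(p-1)}\,dx)^{(p-1)/p}$, and raising to the $p$-th power and regrouping extracts exactly the factor $\frac{\upsilon(Q)}{|Q|}(\frac{1}{|Q|}\int_Q\upsilon^{-1/(p-1)}\,dx)^{p-1}\le[\upsilon]_{A_p}$; the case $p=1$ uses $\int_Q|f|\,dx\le\|\upsilon^{-1}\|_{L^\infty(Q)}\int_Q|f|\upsilon\,dx$. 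For the converse, I would test with $f:=\min\{\upsilon^{-1/(p-1)},N\}\mathbf{1}_Q$ when $p>1$ (note $\int_Q f^p\upsilon\,dx\le\int_Q f\,dx<\infty$, so $f\mathbf{1}_Q\in L^p_\upsilon$) and with $f:=\mathbf{1}_{\{x\in Q:\,\upsilon(x)<\lambda\}}$ when $p=1$; plugging these admissible test functions into the definition of $[\upsilon]^*_{A_p}$ recovers the $A_p$-ratio of $Q$ up to the truncation parameter, and letting $N\to\infty$ (resp. $\lambda\downarrow\operatorname*{ess\,inf}_Q\upsilon$) gives $[\upsilon]_{A_p}\le[\upsilon]^*_{A_p}$; in particular $[\upsilon]^*_{A_p}<\infty$ already forces $\upsilon^{-1/(p-1)}\in L^1_{\rm loc}$, whence the equivalence $\upsilon\in A_p\iff[\upsilon]^*_{A_p}<\infty$ together with the identity $[\upsilon]^*_{A_p}=[\upsilon]_{A_p}$.

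The only genuinely non-trivial point is part (iii): the sharp dependence on $[\omega]_{A_p}$ in the strong-type bound for $\mathcal{M}$ does not reduce to a one-line computation and relies on the reverse Hölder machinery, whereas parts (i), (ii), and (iv), and the $A_1$ refinement in (iii), are routine.
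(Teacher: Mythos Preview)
Your proof sketch is correct and follows the standard arguments for each of the four parts. Note, however, that the paper does not actually prove this lemma: it is stated with the preamble ``we have the following basic properties\ldots; see, for instance, \cite[(7.3) and (7.5)]{Duo01}, \cite[Proposition 7.1.5 and Theorem 7.1.9]{Gra14}, \cite[Lemma 3.1]{Cla25}, and \cite[Theorem 2.7.4]{DHHR11}'' and no proof is given in the text. Your arguments for (i), (ii), and (iv) are precisely the textbook proofs one finds in Duoandikoetxea or Grafakos, and your treatment of (iii)---Buckley's sharp bound via quantitative reverse H\"older plus a Calder\'on--Zygmund/stopping-time decomposition, together with the Marcinkiewicz interpolation between the weak $(1,1)$ bound (with constant $C_n[\omega]_{A_1}$) and the trivial $L^\infty$ bound to obtain the $[\omega]_{A_1}^{1/p}$ refinement---matches what is in the cited references (the $A_1$ refinement in particular is the content of \cite[Lemma 3.1]{Cla25}). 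So there is nothing to compare: you have supplied exactly the proofs the paper elected to outsource.
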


We first show the sharpness
of the exponent $\frac{1}{p}$ on the $A_1$-weight constant
in Theorem \ref{thm-upi}.

\begin{proposition}\label{pro-sharp}
Let $k\in \mathbb{N}$, $s\in (0,1)$, and $p,q\in [1,\infty)$.
If there exists a positive constant
$C$ such that, for any
$r\in (0,\infty)$, $ \omega \in A_1$,
and $f\in \dot{W}^{k,p}_{\omega}$,
the inequality
\begin{align}\label{eq-s0}
\left\{
\int_{\mathbb{R}^n}
\left[\int_{B({\bf 0},r)}
\frac{|\Delta^k_h f(x)|^q}{|h|^{n+skq}}\,dh
\right]^\frac{p}{q}\omega(x)\,dx\right\}^{\frac{1}{p}}
\le
C[\omega]_{A_{1}}
^{\beta}
r^{(1-s)k}
\left\|\nabla^k f\right\|_{L^p_{\omega}}
\end{align}
holds with some $\beta\in \mathbb{R}$,
then $\beta\ge \frac{1}{p}$.
\end{proposition}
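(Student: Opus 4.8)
The plan is to exhibit, for each sufficiently large value of the weight characteristic, a pair $(\omega,f)$ for which the left-hand side of \eqref{eq-s0} is large relative to the right-hand side, forcing the exponent $\beta$ to be at least $\frac1p$. The natural candidates are power weights $\omega(x)=|x|^{-a}$ with $a$ close to the critical value $n$: these belong to $A_1$ (since $\mathcal M(\omega)\lesssim\omega$ precisely when $a\in[0,n)$), and a direct computation gives $[\omega]_{A_1}\sim(n-a)^{-1}$, so letting $a\to n^-$ drives $[\omega]_{A_1}\to\infty$ at a controlled rate. The test function should be a fixed bump (or a polynomial cut off away from the origin) — something like $f\in C_{\rm c}^\infty$ with $\nabla^k f$ not identically zero — so that all quantities in \eqref{eq-s0} are finite and the only parameter moving is $a$.

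First I would fix $r=1$ (scaling is irrelevant here since both sides carry the same homogeneity in $r$) and a fixed $f\in C_{\rm c}^\infty(\mathbb R^n)$, say supported in $B(\mathbf 0,1)$ with $f$ chosen so that near the origin $f$ agrees with a homogeneous polynomial of degree $k$; then $\Delta_h^k f(x)$ does not vanish for $x$ near $\mathbf 0$ and small $|h|$, and in fact $|\Delta_h^k f(x)|\gtrsim|h|^k$ uniformly on a neighborhood of the origin. Computing the inner integral $\int_{B(\mathbf 0,1)}|\Delta_h^k f(x)|^q|h|^{-n-skq}\,dh$ on that neighborhood yields a quantity bounded below by a positive constant (depending on $s$ through $\int_0^1 t^{k q-skq-1}\,dt=\frac1{(1-s)kq}$, but for fixed $s$ this is just a constant). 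Hence the left-hand side of \eqref{eq-s0} is $\gtrsim[\int_{B(\mathbf 0,\rho)}|x|^{-a}\,dx]^{1/p}=[c\,(n-a)^{-1}\rho^{\,n-a}]^{1/p}\sim(n-a)^{-1/p}$ as $a\to n^-$.

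Next I would estimate the right-hand side from above: $\|\nabla^k f\|_{L^p_\omega}^p=\int|\nabla^k f(x)|^p|x|^{-a}\,dx$. Since $|\nabla^k f|$ is bounded and supported in $B(\mathbf 0,1)$ and vanishes to no particular order at the origin (for the polynomial model $|\nabla^k f|$ is bounded below and above near $\mathbf 0$), this integral is again $\sim(n-a)^{-1}$ as $a\to n^-$, so $\|\nabla^k f\|_{L^p_\omega}\sim(n-a)^{-1/p}$. Plugging both estimates into \eqref{eq-s0} and using $[\omega]_{A_1}\sim(n-a)^{-1}$ gives
\begin{align*}
(n-a)^{-\frac1p}\lesssim C\,(n-a)^{-\beta}\,(n-a)^{-\frac1p},
\end{align*}
i.e. $1\lesssim C\,(n-a)^{-\beta+\frac1p-\frac1p}$ — wait, the powers of $(n-a)$ from the left-hand side and from $\|\nabla^k f\|_{L^p_\omega}$ cancel, leaving $1\lesssim C\,(n-a)^{-\beta}$. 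This would give nothing, so the bump must be chosen so that $\nabla^k f$ \emph{vanishes near the origin}: take $f$ supported in an annulus away from $\mathbf 0$, so that $\|\nabla^k f\|_{L^p_\omega}$ stays bounded as $a\to n^-$, while the left-hand side still sees the singularity of $\omega$ provided $\Delta_h^k f(x)\ne0$ for some $x$ near $\mathbf 0$ — which happens because the $k$-th difference $\Delta_h^k f(x)$ with $x$ near the origin and $h$ of order $1$ samples $f$ on the annulus where it is nonzero. With this corrected choice the left-hand side is $\gtrsim(n-a)^{-1/p}$ while the right-hand side is $\lesssim C\,(n-a)^{-\beta}$, yielding $(n-a)^{-1/p}\lesssim C\,(n-a)^{-\beta}$ and hence $\beta\ge\frac1p$ upon letting $a\to n^-$.

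The main obstacle, and the step requiring genuine care, is the construction of the test function $f$: one needs $\nabla^k f$ to live in a region where $\omega=|x|^{-a}$ is comparable to $1$ (so the right-hand side is uniformly bounded in $a$), yet one needs $\int_{B(\mathbf 0,1)}\frac{|\Delta_h^k f(x)|^q}{|h|^{n+skq}}\,dh$ to be bounded below by a positive constant \emph{uniformly for $x$ in a neighborhood of the origin of fixed size}, so that integrating $\omega(x)\,dx$ over that neighborhood produces the divergent factor $(n-a)^{-1/p}$. This forces $h$ to range up to scale $\sim1$ in the inner integral, which is exactly why the localization radius $r$ must be taken of order $1$ and why the difference operator $\Delta_h^k$ (rather than a pointwise gradient) is essential — for $x$ near $\mathbf 0$, no small $h$ reaches $\operatorname{supp}(f)$, but $h$ of order one does. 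Once $f$ is pinned down, the remaining computations are the standard polar-coordinate integrals $\int_0^{\rho}t^{n-1-a}\,dt\sim(n-a)^{-1}\rho^{n-a}$ and the verification $[\,|x|^{-a}\,]_{A_1}\sim(n-a)^{-1}$, both routine.
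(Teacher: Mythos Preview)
Your corrected approach is essentially the paper's: power weights $\omega(x)=|x|^{-a}$ with $a\to n^-$ (written there as $|x|^{\delta-n}$, $\delta\to 0^+$), a bump $f$ supported in an annulus away from the origin so that $\|\nabla^k f\|_{L^p_\omega}$ stays bounded, and the observation that for $x$ near $\mathbf 0$ and suitable $h$ of order $1$ the difference $\Delta_h^k f(x)$ is bounded below. The paper carries out the step you flag as needing care by choosing $f$ with $\mathbf 1_{B(\mathbf 0,4k+4)\setminus B(\mathbf 0,4k)}\le f\le\mathbf 1_{B(\mathbf 0,4k-1)^\complement}$ and computing, for $x\in B(\mathbf 0,1)$ and $y$ in a unit ball inside that annulus, that $\Delta^k_{(y-x)/k}f(x)=f(y)=1$ exactly (all intermediate sample points land in the region where $f\equiv 0$); with $r=4k+4$ this gives the lower bound $\gtrsim\omega(B(\mathbf 0,1))^{1/p}\sim\delta^{-1/p}$ you anticipated. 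So your plan is correct --- just note that $r$ must be chosen comparable to the outer radius of the annulus (not $r=1$), and your initial ``polynomial near the origin'' attempt indeed fails for exactly the cancellation reason you identified.
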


\begin{proof}
For any given $\delta\in(0,n]$,
consider the weight $\omega(x):=|x|^{\delta-n}$ for any $x\in\mathbb{R}^n$.
Then, by \cite[Example 7.1.7]{Gra14}, we find that
$\omega\in A_1$ and that $[\omega]_{A_1}\sim\delta^{-1} $ and $\omega(B({\bf 0},1))\sim \delta^{-1}$
with the positive equivalence constants independent of $\delta$
(see also \cite[p.\,6118]{pr19}).
Choose a function $f\in C_{\rm c}^\infty$ satisfying
${\bf 1}_{B({\bf 0},4k+4)\setminus B({\bf 0},4k)}\le f\le {\bf 1}_{B({\bf 0},4k-1)^{\complement}}$
and a ball $B_0 \subset B({\bf 0},4k+3)\setminus B({\bf 0},4k)$ with radius $1$.
Notice that, for any $i\in \{0,\ldots,k-1\}$, $x\in B({\bf 0},1)$,
and $y\in B_0$,
$\frac{|(k-i)x+iy|}{k}\le \frac{|x|+(k-1)|y|}{k}<4k-1$, which further implies that
\begin{align}\label{eq-s2}
\Delta^k_{\frac{y-x}{k}}f(x)
=\sum_{i=0}^{k}(-1)^{k-i}\binom{k}{i}f\left(\frac{(k-i)x+iy}{k}\right)
=f(y)=1.
\end{align}
From this, the observation that,
for any $x\in B({\bf 0},1)$ and $y\in B_0$, $|x-y|<4k+4$,
and a change of variables, it follows that
\begin{align*}
&\left\{
\int_{\mathbb{R}^n}
\left[\int_{B({\bf 0},4k+4)}
\frac{|\Delta^k_h f(x)|^q}{|h|^{n+skq}}\,dh
\right]^\frac{p}{q}\omega(x)\,dx\right\}^{\frac{1}{p}}\\
&\quad \gtrsim
\left\{
\int_{B({\bf 0},1)}
\left[\int_{B_0}
\frac{|\Delta^k_{\frac{y-x}{k}} f(x)|^q}{|y-x|^{n+skq}}\,dy
\right]^\frac{p}{q}\omega(x)\,dx\right\}^{\frac{1}{p}}\gtrsim
\omega(B({\bf 0},1))^{\frac{1}{p}}\sim \delta^{-\frac{1}{p}}.
\end{align*}
By this, \eqref{eq-s0} with $r:=4k+4$,
and $[\omega]_{A_1}\sim\delta^{-1}$,
we obtain
$\delta^{-\frac{1}{p}}\lesssim \delta^{-\beta}$ with the implicit
positive constant independent of $\delta$.
This, together with the arbitrariness of $\delta\in(0,n]$, further implies that
$\beta \ge \frac{1}{p}$, which completes the proof of Proposition
\ref{pro-sharp}.
\end{proof}

To prove Theorem \ref{thm-upi},
we also need
the following well-known $(q, p)$-Poincar\'e inequality
(see, for instance,
\cite[Corollary 9.14]{Bre11} or \cite[Lemma 4]{Moh24}).

\begin{lemma}\label{lem-qpPoin}
If $k\in \mathbb{N}$ and $p,q\in [1,\infty)$
satisfies $n(\frac{1}{p}-\frac{1}{q})< k$,
then there exists a positive constant $C_{(n,k,p,q)}$, depending
only on $n,k,p$, and $q$,
such that, for any ball $B\subset \mathbb{R}^n$
with radius $r\in (0,\infty)$
and for any $f\in C^{\infty}$,
\begin{align*}
\left[\fint_{B}|f(y)|^q \,dy\right]^\frac{1}{q}
\le
C_{(n,k,p,q)}
\left\{r^k \left[\fint_{B}
\left|\nabla^k f(y)\right|^p \,dy\right]^\frac{1}{p}
+\left[\fint_{B}|f(y)|^p \,dy\right]^\frac{1}{p}\right\}.
\end{align*}
\end{lemma}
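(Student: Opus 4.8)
The plan is to reduce the claim, by an affine change of variables, to the case of the unit ball, and then to combine a classical Sobolev embedding with an interpolation estimate that absorbs the intermediate‑order derivatives. First I would normalize: given a ball $B=B(x_0,r)$ with $x_0\in\mathbb{R}^n$ and $r\in(0,\infty)$, set $g(z):=f(x_0+rz)$ for $z\in B({\bf 0},1)$. A direct change of variables gives $\fint_{B}|f(y)|^\sigma\,dy=\fint_{B({\bf 0},1)}|g(z)|^\sigma\,dz$ for every $\sigma\in[1,\infty)$, while the chain rule yields $|\nabla^k g(z)|=r^k|(\nabla^k f)(x_0+rz)|$ and hence $[\fint_{B({\bf 0},1)}|\nabla^k g(z)|^p\,dz]^{1/p}=r^k[\fint_{B}|\nabla^k f(y)|^p\,dy]^{1/p}$. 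Thus the desired inequality for $f$ on $B$ is \emph{exactly} the asserted inequality for $g$ on $B({\bf 0},1)$, and it suffices to prove the latter, namely
\begin{align*}
\left[\fint_{B({\bf 0},1)}|g(z)|^q\,dz\right]^{\frac1q}
\le C_{(n,k,p,q)}\left\{\left[\fint_{B({\bf 0},1)}|\nabla^k g(z)|^p\,dz\right]^{\frac1p}
+\left[\fint_{B({\bf 0},1)}|g(z)|^p\,dz\right]^{\frac1p}\right\}.
\end{align*}

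Since $B({\bf 0},1)$ has finite measure, on it the averaged integrals and the unweighted $L^p$– and $L^q$–norms agree up to a dimensional constant, so I would now work with $\|\cdot\|_{L^p(B({\bf 0},1))}$ and $\|\cdot\|_{L^q(B({\bf 0},1))}$ directly. The ball $B({\bf 0},1)$ is a bounded smooth (in particular Lipschitz, hence $W^{k,p}$–extension) domain, and the hypothesis $n(\frac1p-\frac1q)<k$ is precisely the strict subcriticality condition $\frac1q>\frac1p-\frac kn$; therefore the classical Sobolev embedding theorem on bounded smooth domains gives $W^{k,p}(B({\bf 0},1))\hookrightarrow L^q(B({\bf 0},1))$, that is,
\begin{align*}
\|g\|_{L^q(B({\bf 0},1))}\le C\sum_{\alpha\in\mathbb{Z}_+^n,\,|\alpha|\le k}\|\partial^\alpha g\|_{L^p(B({\bf 0},1))}
\end{align*}
(being smooth, $g$ and its derivatives up to order $k$ are bounded on $\overline{B({\bf 0},1)}$, so $g\in W^{k,p}(B({\bf 0},1))$). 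It remains to discard the intermediate derivatives $\partial^\alpha g$ with $0<|\alpha|<k$. For this I would invoke the standard interpolation (Ehrling / Gagliardo--Nirenberg) inequality on the bounded smooth domain $B({\bf 0},1)$: for each $j\in\{1,\dots,k-1\}$ and each $\varepsilon\in(0,\infty)$ there is a constant $C_\varepsilon$ with $\|\nabla^j g\|_{L^p(B({\bf 0},1))}\le\varepsilon\|\nabla^k g\|_{L^p(B({\bf 0},1))}+C_\varepsilon\|g\|_{L^p(B({\bf 0},1))}$; taking $\varepsilon=1$ and summing over $j$ gives $\sum_{0<|\alpha|<k}\|\partial^\alpha g\|_{L^p(B({\bf 0},1))}\le C(\|\nabla^k g\|_{L^p(B({\bf 0},1))}+\|g\|_{L^p(B({\bf 0},1))})$. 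Combining this with the Sobolev embedding, converting back to averaged integrals, and undoing the scaling completes the argument.

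A fully self‑contained alternative, which I would mention in case one prefers not to cite the intermediate‑derivative interpolation, is polynomial subtraction: let $P$ be the $L^2(B({\bf 0},1))$–orthogonal projection of $g$ onto the finite‑dimensional space of polynomials of degree at most $k-1$. The Bramble--Hilbert / Sobolev--Poincar\'e estimate gives $\|g-P\|_{L^q(B({\bf 0},1))}\le C\|\nabla^k g\|_{L^p(B({\bf 0},1))}$, and, since all norms are equivalent on the finite‑dimensional polynomial space, $\|P\|_{L^q(B({\bf 0},1))}\le C\|P\|_{L^p(B({\bf 0},1))}\le C(\|g\|_{L^p(B({\bf 0},1))}+\|g-P\|_{L^p(B({\bf 0},1))})\le C(\|\nabla^k g\|_{L^p(B({\bf 0},1))}+\|g\|_{L^p(B({\bf 0},1))})$; adding the two estimates yields the unit‑ball inequality. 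I do not expect a genuine obstacle here, as every ingredient is classical; the only points requiring care are (i) tracking the dilation exponents so that exactly the factor $r^k$ appears in front of the gradient term, and (ii) applying the exponent condition $n(\frac1p-\frac1q)<k$ at the right place — it is exactly what guarantees the embedding $W^{k,p}(B({\bf 0},1))\hookrightarrow L^q(B({\bf 0},1))$, covering both the range $kp<n$ (where $q$ may be any exponent below $\frac{np}{n-kp}$) and the range $kp\ge n$ (where every $q\in[1,\infty)$ is admissible).
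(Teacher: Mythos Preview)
Your argument is correct and follows a completely standard route: rescale to the unit ball, invoke the Sobolev embedding $W^{k,p}(B({\bf 0},1))\hookrightarrow L^q(B({\bf 0},1))$ under the hypothesis $n(\frac1p-\frac1q)<k$, and then eliminate the intermediate derivatives either by Ehrling-type interpolation or by the Bramble--Hilbert polynomial subtraction. Both variants work, and your tracking of the scaling factor $r^k$ is right.

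There is no ``paper's own proof'' to compare against here: the paper does not prove this lemma but simply records it as the well-known $(q,p)$-Poincar\'e inequality, citing \cite[Corollary 9.14]{Bre11} and \cite[Lemma 4]{Moh24}. Your write-up is exactly the kind of argument those references contain, so you may regard it as filling in a proof the paper chose to omit.
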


Using this inequality, we first establish the following
estimate.

\begin{lemma}\label{Lem2.5}
If $k\in \mathbb{N}$ and $p,q\in [1,\infty)$
satisfy $n(\frac{1}{p}-\frac{1}{q})< k$,
then there exists a positive constant $C_{(n,k,p,q)}$, depending
only on $n,k,p$, and $q$, such that,
for any $f\in C^{\infty}$, $r\in (0,\infty)$, and $x\in \mathbb{R}^n$,
\begin{align}\label{eq-pp}
\left[\fint_{ B({\bf 0},r)}
\left|\Delta^k_h f(x)\right|^q\,dh\right]^{\frac{1}{q}}
\le C_{(n,k,p,q)}  r^k \left[\int_{0}^{k+1}
\fint_{B(x,tr)}
\left|\nabla^k f(y)\right|^p\,dy\,dt\right]^{\frac{1}{p}}.
\end{align}
\end{lemma}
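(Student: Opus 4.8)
The plan is to apply the $(q,p)$-Poincar\'e inequality of Lemma \ref{lem-qpPoin} \emph{in the increment variable $h$ rather than in $x$}. Fix $x\in\mathbb{R}^n$ and view $\Phi_x(h):=\Delta^k_h f(x)$ as a function of $h\in\mathbb{R}^n$; since $f\in C^{\infty}$, we have $\Phi_x\in C^{\infty}(\mathbb{R}^n)$, and differentiating \eqref{eq-sho-hd} gives, for any multi-index $\alpha$ with $|\alpha|=k$,
\[
\partial_h^{\alpha}\Phi_x(h)=\sum_{j=0}^{k}(-1)^{k-j}\binom{k}{j}j^{k}(\partial^{\alpha}f)(x+jh),
\]
so that (the $j=0$ term drops out) $|\nabla_h^{k}\Phi_x(h)|\le C_{(n,k)}\sum_{j=1}^{k}|\nabla^{k}f(x+jh)|$. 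Applying Lemma \ref{lem-qpPoin} to $\Phi_x$ on the ball $B({\bf 0},r)$---which is legitimate precisely because of the hypothesis $n(\frac1p-\frac1q)<k$---then yields
\[
\left[\fint_{B({\bf 0},r)}|\Delta^k_h f(x)|^q\,dh\right]^{\frac1q}
\le C_{(n,k,p,q)}\left(r^{k}\left[\fint_{B({\bf 0},r)}|\nabla_h^{k}\Phi_x(h)|^{p}\,dh\right]^{\frac1p}
+\left[\fint_{B({\bf 0},r)}|\Delta^k_h f(x)|^{p}\,dh\right]^{\frac1p}\right).
\]
It remains to dominate each term on the right by $C_{(n,k,p,q)}\,r^{k}[\int_{0}^{k+1}\fint_{B(x,tr)}|\nabla^{k}f(y)|^{p}\,dy\,dt]^{1/p}$.

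For the \emph{derivative term}, I would use the pointwise bound above together with the change of variables $h\mapsto jh$, which (since $|B(x,jr)|=j^{n}|B({\bf 0},r)|$) converts $\fint_{B({\bf 0},r)}|\nabla^{k}f(x+jh)|^{p}\,dh$ into $\fint_{B(x,jr)}|\nabla^{k}f(y)|^{p}\,dy$. For each $j\in\{1,\dots,k\}$ and each $t\in[j,j+\tfrac12]\subset[0,k+1]$, the inclusion $B(x,jr)\subset B(x,tr)$ of comparable balls gives $\fint_{B(x,jr)}|\nabla^{k}f|^{p}\lesssim_{n}\fint_{B(x,tr)}|\nabla^{k}f|^{p}$, and integrating this in $t$ over $[j,j+\tfrac12]$ produces $\fint_{B(x,jr)}|\nabla^{k}f|^{p}\lesssim_{n}\int_{0}^{k+1}\fint_{B(x,tr)}|\nabla^{k}f|^{p}\,dt$. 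Summing over $j\in\{1,\dots,k\}$ then bounds $\fint_{B({\bf 0},r)}|\nabla_h^{k}\Phi_x(h)|^{p}\,dh$ by $C_{(n,k)}\int_{0}^{k+1}\fint_{B(x,tr)}|\nabla^{k}f(y)|^{p}\,dy\,dt$, as required.

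For the \emph{lower-order term} no gain of integrability is needed, so an elementary estimate suffices: writing $\psi(\tau):=f(x+\tau h)$, the classical representation of finite differences gives $\Delta^k_h f(x)=\Delta^{k}\psi(0)=\int_{[0,1]^{k}}\psi^{(k)}(\tau_1+\cdots+\tau_k)\,d\tau_1\cdots d\tau_k$; since $|\psi^{(k)}(\tau)|\le C_{(n,k)}|h|^{k}|\nabla^{k}f(x+\tau h)|$ by the chain rule and the density of $\tau_1+\cdots+\tau_k$ on $[0,k]$ is bounded, this yields the pointwise bound $|\Delta^k_h f(x)|\le C_{(n,k)}|h|^{k}\int_{0}^{k}|\nabla^{k}f(x+th)|\,dt$. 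Raising to the $p$-th power, using $|h|\le r$ on $B({\bf 0},r)$ and H\"older's inequality in $t$, and then the change of variables $h\mapsto th$ exactly as above, gives $\fint_{B({\bf 0},r)}|\Delta^k_h f(x)|^{p}\,dh\lesssim_{n,k,p} r^{kp}\int_{0}^{k}\fint_{B(x,tr)}|\nabla^{k}f|^{p}\,dt\le r^{kp}\int_{0}^{k+1}\fint_{B(x,tr)}|\nabla^{k}f(y)|^{p}\,dy\,dt$. Combining the two bounds with the displayed inequality above finishes the proof.

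\textbf{Main obstacle.} The only conceptual step is the choice to invoke Lemma \ref{lem-qpPoin} in the $h$-variable: this is what lets the Sobolev-type gain needed when $q>p$ be produced \emph{entirely} by the Poincar\'e inequality, so that the lower-order remainder it leaves behind---being merely an $L^{p}$-average of $\Delta^k_h f(x)$---can be absorbed by the trivial fundamental-theorem-of-calculus bound, with no case distinction on $q$ versus $p$. The remaining ingredients are routine: the scaling identities for the ball averages under $h\mapsto jh$, and the comparison $\fint_{B(x,jr)}|\nabla^{k}f|^{p}\lesssim\int_{0}^{k+1}\fint_{B(x,tr)}|\nabla^{k}f|^{p}\,dt$; neither presents any real difficulty.
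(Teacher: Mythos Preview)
Your proposal is correct and follows essentially the same route as the paper: both apply Lemma~\ref{lem-qpPoin} to $h\mapsto\Delta^k_h f(x)$, bound the resulting gradient term via the change of variables $y=x+jh$, and control the lower-order $L^p$-average by the integral representation $\Delta^k_h f(x)=\int_{[0,1]^k}\psi^{(k)}(\tau_1+\cdots+\tau_k)\,d\tau$ followed by H\"older and the substitution $y=x+th$. The only cosmetic differences are that the paper passes through $\fint_{B(x,kr)}$ before reaching $\int_0^{k+1}\fint_{B(x,tr)}\,dt$, whereas you go directly via $t\in[j,j+\tfrac12]$, and the paper handles the $[0,1]^k$ integral by freezing $s_1,\dots,s_{k-1}$ rather than invoking the bounded density of $\tau_1+\cdots+\tau_k$.
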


\begin{proof}
Let $f\in C^\infty$, $r\in (0,\infty)$, and $x\in \mathbb{R}^n$. For any
$h\in \mathbb{R}^n$, define $F_x(h):=\Delta_h^k f(x)$.
By \eqref{eq-sho-hd}, we find that,
for any $h\in \mathbb{R}^n$,
\begin{align*}
\left|\nabla^k F_x(h)\right|
&=\left[\sum_{\alpha\in \mathbb{Z}_{+}^n,\,|\alpha|=k}
\left|\sum_{j=1}^{k}(-1)^{k-j}
j^{k}\binom{k}{j}\partial^{\alpha}
f(x+jh)\right|^2\right]^\frac{1}{2}\lesssim
\sum_{j=1}^{k}
\left|\nabla^k
f(x+jh)\right|.
\end{align*}
From this, the assumption $n(\frac{1}{p}-\frac{1}{q})< k$,
and Lemma \ref{lem-qpPoin} with $f:=F_x$, it follows that
\begin{align}\label{eqPkey2}
\left[\fint_{ B({\bf 0},r)}
\left|\Delta^k_h f(x)\right|^q\,dh\right]^{\frac{1}{q}}
&\lesssim
r^{k}
\left[\fint_{ B({\bf 0},r)}
\left|\nabla^k F_x(h)\right|^{p}\,dh\right]^{\frac{1}{p}}
+
\left[\fint_{ B({\bf 0},r)}
\left|\Delta^k_h f(x)\right|^{p}\,dh\right]^{\frac{1}{p}}\notag\\
&\lesssim r^k
\sum_{j=1}^{k}
\left[
\fint_{ B({\bf 0},r)}
\left|\nabla^k
f(x+jh)\right|^{p}\,dh\right]^{\frac{1}{p}}
+
\left[\fint_{ B({\bf 0},r)}
\left|\Delta^k_h f(x)\right|^{p}\,dh\right]^{\frac{1}{p}}.
\end{align}
Using a change of variables, we obtain
\begin{align}\label{1kp}
\sum_{j=1}^{k}
\left[
\fint_{ B({\bf 0},r)}
\left|\nabla^k
f(x+jh)\right|^{p}\,dh\right]^{\frac{1}{p}}
&=
\sum_{j=1}^{k}
\left[
\fint_{ B(x,jr)}
\left|\nabla^k
f(y)\right|^{p}\,dy\right]^{\frac{1}{p}}
\lesssim
\left[
\fint_{ B(x,kr)}
\left|\nabla^k
f(y)\right|^{p}\,dy\right]^{\frac{1}{p}}\notag\\
&\lesssim \left[\int_{0}^{k+1}
\fint_{B(x,tr)}
\left|\nabla^k f(y)\right|^p\,dy\,dt\right]^{\frac{1}{p}}.
\end{align}
Furthermore,
applying the assumption
$f\in C^{\infty}$ and \cite[Proposition 1.4.5]{Gra14},
we conclude that,
for any $h\in \mathbb{R}^n$,
\begin{align*}
\Delta^{k}_{h}f(x)
=
\int_{[0,1]^k}
\sum_{\alpha\in\mathbb{Z}_{+}^n,\,|\alpha|=k}
\partial^{\alpha}f(x+[s_1+\cdots+s_k]h)h^{\alpha}
\,ds_1\cdots \, ds_k,
\end{align*}
which, together with H\"{o}lder's inequality, Tonelli's theorem, and a change of variables, further
implies that
\begin{align*}
\left[\fint_{ B({\bf 0},r)}
\left|\Delta^k_h f(x)\right|^{p}\,dh
\right]^{\frac{1}{p}}
&\lesssim
r^k
\left[\int_{[0,1]^k}\fint_{ B({\bf 0},r)}
\left|\nabla^k f(x+[s_1+\cdots+s_k]h)
\right|^{p}
\,dh\,ds_1\cdots \, ds_k\right]^{\frac{1}{p}}\\
&\le
r^k
\left[\int_{[0,1]^{k-1}}\int_{0}^{k}\fint_{ B({\bf 0},r)}
\left|\nabla^k f(x+th)
\right|^{p}
\,dh\, dt \,ds_1\cdots \, ds_{k-1}\right]^{\frac{1}{p}}\\
&=
r^k
\left[\int_{[0,1]^{k-1}}\int_{0}^{k}\fint_{ B(x,tr)}
\left|\nabla^k f(y)
\right|^{p}
\,dy\, dt \,ds_1\cdots \, ds_{k-1}\right]^{\frac{1}{p}}\\
&\le
r^k \left[\int_{0}^{k+1}
\fint_{B(x,tr)}
\left|\nabla^k f(y)\right|^p\,dy\,dt\right]^{\frac{1}{p}}.
\end{align*}
Combining this, \eqref{eqPkey2}, and \eqref{1kp}, we obtain
\eqref{eq-pp} and hence complete the proof of Lemma \ref{Lem2.5}.
\end{proof}

We now turn to show Theorem \ref{thm-upi}.

\begin{proof}[Proof of Theorem \ref{thm-upi}]
We first prove (i). Let $\omega\in A_1$, $s\in(0,1)$,
and $r\in(0,\infty)$. We now show that \eqref{eq-sss00}
holds for any $f\in C^{\infty}\cap \dot{W}_\omega^{k,p}$.
For this purpose, fix $f\in C^{\infty}\cap \dot{W}_\omega^{k,p}$.
Notice that, for any $h\in \mathbb{R}^n$
with $0<|h|<r$,
\begin{align*}
\int_{|h|}^{r}\frac{dt}{t^{n+skq+1}}
=\frac{1}{n+skq}
\left(\frac{1}{|h|^{n+skq}}-\frac{1}{r^{n+skq}}\right),
\end{align*}
which further implies that
\begin{align*}
\frac{1}{|h|^{n+skq}}
=(n+skq) \int_{|h|}^{1}\frac{dt}{t^{n+skq+1}}
+\frac{1}{r^{n+skq}}.
\end{align*}
From this, the assumption $s\in (0,1)$, and
Tonelli's theorem,
 we infer that, for any $x\in \mathbb{R}^n$,
\begin{align}\label{eq-div12-1}
&\left[\int_{B({\bf 0},r)}
\frac{|\Delta^k_h f(x)|^q}{|h|^{n+skq}}\,dh
\right]^\frac{1}{q}\notag\\
&\quad\lesssim
\left[\int_{B({\bf 0},r)}
\int_{|h|}^{r}
\frac{|\Delta^k_h f(x)|^q}{t^{n+skq+1}}\,dt\,dh
\right]^\frac{1}{q}
+r^{-sk}
\left[\fint_{B({\bf 0},r)}
{\left|\Delta^k_h f(x)\right|^q}\,dh
\right]^\frac{1}{q}\notag\\
&\quad=\left[\int_{0}^{r}
t^{-skq-1}
\fint_{B({\bf 0},t)}
\left|\Delta^k_h f(x)\right|^q\,dh\,dt
\right]^\frac{1}{q}+
r^{-sk}\left[\fint_{B({\bf 0},r)}
{\left|\Delta^k_h f(x)\right|^q}\,dh
\right]^\frac{1}{q};
\end{align}
notice that
\begin{align*}
\left[\int_{0}^{r}
t^{-skq-1}
\fint_{B({\bf 0},t)}
\left|\Delta^k_h f(x)\right|^q\,dh\,dt
\right]^\frac{1}{q}
\lesssim r^{-sk}
\left[\sum_{\nu =0}^{\infty}
2^{\nu skq}
\fint_{B({\bf 0}, 2^{-\nu }r)}
\left|\Delta^k_h f(x)\right|^q\,dh
\right]^\frac{1}{q}
\end{align*}
Hence, using \eqref{eq-div12-1},
we obtain, for any $x\in\mathbb{R}^n$,
\begin{align}\label{eq-div12}
\left[\int_{B({\bf 0},r)}
\frac{|\Delta^k_h f(x)|^q}{|h|^{n+skq}}\,dh
\right]^\frac{1}{q}
\lesssim r^{-sk}
\left[\sum_{\nu =0}^{\infty}
2^{\nu skq}
\fint_{B({\bf 0}, 2^{-\nu }r)}
\left|\Delta^k_h f(x)\right|^q\,dh
\right]^\frac{1}{q}.
\end{align}
We next consider the following two cases for $p$.

\emph{Case 1: $p=1$}. In this case, $\gamma_{p,q}=1$.
By \eqref{eq-div12},
the well-known inequality that,
for any sequence
$\{a_{\ell}\}_{\ell\in \mathbb{N}}\subset \mathbb{C}$
and any $\mu\in (0,1]$,
\begin{align}\label{eq-rrr}
	\left(\sum_{\ell\in \mathbb{N}}|a_{\ell}|\right)^\mu
	\le
	\sum_{\ell\in \mathbb{N}}|a_{\ell}|^\mu,
\end{align}
Lemma \ref{Lem2.5} with $p:=1$, Tonelli's theorem, and Definition
\ref{def-Ap}(i),
we find that
\begin{align*}
	&\int_{\mathbb{R}^n}\left[\int_{B({\bf 0},r)}
	\frac{|\Delta^k_h f(x)|^q}{|h|^{n+skq}}\,dh
	\right]^\frac{1}{q}\omega(x)\,dx\\
	&\quad\lesssim
	r^{-sk}
	\int_{\mathbb{R}^n}
\sum_{\nu =0}^{\infty}
	2^{\nu sk}
		\left[\fint_{B({\bf 0}, 2^{-\nu }r)}
	\left|\Delta^k_h f(x)\right|^q\,dh
	\right]^\frac{1}{q}\omega(x)\,dx\\
		&\quad\lesssim
		r^{(1-s)k}
	\int_{\mathbb{R}^n}
		\sum_{\nu =0}^{\infty}
		2^{\nu (s-1)k}\int_{0}^{k+1}\fint_{B(x,2^{-\nu}tr)}
		\left|\nabla^k f(y)\right|\,dy\,dt
	\omega(x)\,dx\\
	&\quad=
	r^{(1-s)k}
	\sum_{\nu =0}^{\infty}
	2^{\nu (s-1)k}\int_{0}^{k+1}	\int_{\mathbb{R}^n}\fint_{B(y,2^{-\nu}tr)}\omega(x)\,dx
	\left|\nabla^k f(y)\right|\,dy	\,dt\\
	&\quad\le
	r^{(1-s)k}[\omega]_{A_1}\left\|\nabla^k f\right\|_{L^1_\omega}
\frac{k+1}{1-2^{(s-1)k}}.
\end{align*}
This, combined with the fact that $1-2^{(s-1)k}\sim 1-s$
for any $s\in (0,1)$, implies that
\eqref{eq-sss00}
holds in this case.

\emph{Case 2: $p\in(1,\infty)$.} In this case, $\gamma_{p,q}=\frac{1}{q}$.
Choose $\beta  \in (1,p)$
satisfying $n(\frac{\beta }{p}-\frac{1}{q})<k$.
From \eqref{eq-div12}, Lemma \ref{Lem2.5} with $p$ replaced by $\frac{p}{\beta}$,
the definition of the Hardy--Littlewood
maximal function, and $1-2^{(s-1)k}\sim 1-s$
for any $s\in (0,1)$ again,
we infer that, for any $x\in\mathbb{R}^n$,
\begin{align*}
\left[\int_{B({\bf 0},r)}
\frac{|\Delta^k_h f(x)|^q}{|h|^{n+skq}}\,dh
\right]^\frac{1}{q}
&\lesssim r^{(1-s)k}
\left\{\sum_{\nu=0}^{\infty}2^{\nu skq}
\left[\int_{0}^{k+1}
\fint_{B(x,tr)}\left|\nabla^k f(y)\right|^{\frac{p}{\beta}}
\,dy\,dt\right]^{\frac{\beta q}{p}}\right\}^{\frac1q}\\
&\le r^{(1-s)k}(k+1)^{\frac\beta p}
\left[\mathcal{M}\left(\left|\nabla^k f\right|^{\frac p\beta}\right)
(x)\right]^{\frac{\beta}{p}}
\left[\sum_{\nu=0}^{\infty}2^{\nu(s-1)kq}\right]^{\frac1q}\\
&\sim\frac{r^{(1-s)k}}{(1-s)^{\frac1q}}
\left[\mathcal{M}\left(\left|\nabla^k f\right|^{\frac p\beta}\right)
(x)\right]^{\frac{\beta}{p}}.
\end{align*}
Taking the $L^p_\omega$
norm on the leftmost and rightmost sides
of the above inequality and applying Lemma \ref{lem-apwight}(iii)
and $\beta\in(1,p)$, we further obtain \eqref{eq-sss00}.
This then finishes the proof that
\eqref{eq-sss00} holds for any $f\in C^\infty\cap \dot{W}^{k,p}_{\omega}$.

Next, we show that
\eqref{eq-sss00} also holds for any
$f\in \dot{W}^{k,p}_{\omega}$. To
do this, let $f\in \dot{W}^{k,p}_{\omega}$ and
choose
$\eta\in C_{\rm c}^{\infty}$
satisfying that ${\rm supp}\,(\eta)\subset B({\bf 0},1)$
and $\int_{\mathbb{R}^n}\eta(x)\,dx =1$.
For any $j\in \mathbb{N}$, let
$\eta_j(\cdot):= j^n\eta(j\cdot)$
and $f_j:=  \eta_j \ast f$.
Then, from \cite[Theorem 2.1.4]{Tur00} and
\cite[Theorem 4.1(iv)]{EG15}, we deduce
that
$
\lim_{j\to \infty}\|f_j -f\|_{\dot{W}_{\omega}^{k,p}}=0
$
and, for almost every $x\in \mathbb{R}^n$,
$\lim_{j\to \infty}	f_j (x)= f(x)$.
Applying these, Fatou's lemma,
and \eqref{eq-sss00} for
$\{f_j\}_{j\in \mathbb{N}}$,
we conclude that, for any $s\in (0,1)$ and $r\in (0,\infty)$,
\begin{align*}
	&	(1-s)^{\gamma_{p,q}}
	\left\{
	\int_{\mathbb{R}^n}
	\left[\int_{B({\bf 0},r)}
	\frac{|\Delta^k_h f(x)|^q}{|h|^{n+skq}}\,dh
	\right]^\frac{p}{q}\omega(x)\,dx\right\}^{\frac{1}{p}}\notag\\
	&\quad	\le
	\liminf_{j\to \infty}(1-s)^{\gamma_{p,q}}
	\left\{\int_{\mathbb{R}^n}
	\left[\int_{B({\bf 0},r)}
	\frac{|\Delta^k_h f_j(x)|^q}{|h|^{n+skq}}\,dh
	\right]^\frac{p}{q}\omega(x)\,dx\right\}^{\frac{1}{p}}\notag\\
	&\quad \lesssim
	\liminf_{j\to \infty} 	[\omega]_{A_{1}}^{\frac{1}{p}}r^{(1-s)k}
	\left\|\nabla^k f_j\right\|_{L^p_{\omega}}
	=[\omega]_{A_{1}}^{\frac{1}{p}}r^{(1-s)k}
	\left\|\nabla^k f\right\|_{L^p_{\omega}},
\end{align*}
which implies that \eqref{eq-sss00} also holds for any
$f\in \dot{W}^{k,p}_{\omega}$.
Thus, we obtain \eqref{eq-sss00} and hence complete
the proof (i).

Finally, let $p\in(1,\infty)$,
$\omega\in A_p $, and $\beta  \in (p_\omega,p)$
satisfy $n(\frac{\beta }{p}-\frac{1}{q})<k$.
Then $\omega\in A_\beta$.
Therefore, by the proof of Case 2 and Lemma \ref{lem-apwight}(iii),
we conclude \eqref{eq-sss}.
This then finishes the proof of (ii)
and hence Theorem \ref{thm-upi}.
\end{proof}

Moreover, the following proposition implies that, in Theorem
\ref{thm-upi}, the factors on
$(1-s)$ are sharp.

\begin{proposition}\label{gammaqp}
Let $p,q \in [1, \infty)$ satisfy $n(\frac{1}{p}-\frac{1}{q})<1$ and $\gamma\in (0,\infty)$.
If there exists a positive constant $C$ such that,
for any $s \in (0,1)$ and $f \in \dot{W}^{1,p}$,
\begin{align}\label{1sGa2}
	(1-s)^\gamma\left\{
	\int_{\mathbb{R}^n}\left[\int_{|h|\le 1}
\frac{|f(x+h)-f(x)|^q}{|h|^{n+sq}}\,dh\right]^{\frac{p}{q}}\,dx
\right\}^{\frac{1}{p}} \le C {\|\nabla f\|_{L^p}},
\end{align}
then $\gamma \ge \gamma_{p,q}$.
\end{proposition}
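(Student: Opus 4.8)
The plan is to establish the lower bound $\gamma \ge \gamma_{p,q}$ by constructing, for each small $s\in(0,1)$, an explicit test function $f$ (or a family) that makes the left-hand side of \eqref{1sGa2} blow up at the rate $(1-s)^{-\gamma_{p,q}}$ relative to $\|\nabla f\|_{L^p}$. Since $\gamma_{p,q}=\frac1q$ when $p\in(1,\infty)$ and $\gamma_{p,q}=1$ when $p=1$, the two regimes must be treated separately. In both cases the mechanism is the same: near $s=1$ the kernel $|h|^{-n-sq}$ concentrates the Gagliardo integral on small $h$, and the factor $(1-s)$ is exactly what is needed to tame the divergence $\int_{|h|\le1}|h|^{-1-(s-1)q}\,d|h|\sim (1-s)^{-1}$ coming from the radial part once one inserts a first-order Taylor estimate $|f(x+h)-f(x)|\lesssim|h|\,|\nabla f|$ near $x$. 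To show the exponent cannot be smaller, one needs a test function for which this bound is essentially sharp, i.e. for which $|f(x+h)-f(x)|\sim|h|\,|\nabla f(x)|$ on a set of $h$ of full measure in $B(\mathbf 0,1)$.

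For the case $p\in(1,\infty)$, first I would reduce to $n=1$ (or just work with a one-dimensional profile cut off to live on a bounded set, since the outer integration region is all of $\mathbb{R}^n$ and the constraint $n(\frac1p-\frac1q)<1$ only enters through the validity of the inequality, not the construction). Take $f$ to be a fixed smooth function, say equal to a linear function $x\mapsto x_1$ on the cube $[0,2]^n$ and smoothly truncated outside, so that $\nabla f$ is bounded and $\|\nabla f\|_{L^p}$ is a finite constant independent of $s$. On the unit cube $Q_0:=[\tfrac12,\tfrac32]^n$ and for $h$ with $|h|\le\tfrac14$ one has $f(x+h)-f(x)=h_1$ exactly, so
\begin{align*}
\int_{|h|\le 1}\frac{|f(x+h)-f(x)|^q}{|h|^{n+sq}}\,dh
\ge \int_{|h|\le 1/4}\frac{|h_1|^q}{|h|^{n+sq}}\,dh
= c_n\int_0^{1/4} t^{q-1-(s-1)q}\,\frac{dt}{t}\cdot(\text{angular part})
\sim \frac{1}{1-s}
\end{align*}
with a positive constant independent of $s$ once $s$ is close to $1$. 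Integrating this lower bound in $x$ over $Q_0$ (where it holds uniformly) gives that the left-hand side of \eqref{1sGa2} is $\gtrsim (1-s)^{\gamma}\cdot(1-s)^{-1/q}$; since the right-hand side is a constant, letting $s\to1^-$ forces $\gamma-\tfrac1q\ge0$, i.e. $\gamma\ge\gamma_{p,q}$ in this regime.

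For the case $p=1$, the same test function only yields $\gamma\ge\frac1q$, which is weaker than $\gamma_{p,q}=1$ when $q>1$; so here one must exploit an additional loss that occurs specifically for the $L^1$ outer norm. The idea is to use a sum of $N$ bumps with disjoint unit-scale supports, each a translate of the linear profile above, located at points $z_1,\dots,z_N$; then $\|\nabla f\|_{L^1}\sim N$ (the $L^1$ norm is additive over disjoint supports), while on each translated unit cube the inner Gagliardo integral is $\gtrsim (1-s)^{-1}$ as before, and the $p=1$ outer integral again just sums over the $N$ pieces, giving left-hand side $\gtrsim (1-s)^{\gamma}\cdot N\cdot(1-s)^{-1}$ — so this still only gives $\gamma\ge1$... wait: with $p=1$ the inner power is $\frac pq=\frac1q$, so the inner factor enters as $[(1-s)^{-1}]^{1/q}=(1-s)^{-1/q}$, and the outer $L^1$ sum over $N$ pieces gives $N\cdot(1-s)^{-1/q}$; balancing against $\|\nabla f\|_{L^1}\sim N$ again only yields $\gamma\ge\frac1q$. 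The correct mechanism is instead to choose $f$ so that $\Delta_h f$ is genuinely of size $1$ (not $|h|$) on a set of $h$ of measure comparable to the whole ball: take $f$ to be (a smoothing of) a characteristic function of a half-space on a unit cube, so that $|f(x+h)-f(x)|\sim 1$ for $x$ within distance $|h|$ of the jump and a positive proportion of directions $h$. Then for such an $x$, the inner integral is $\gtrsim \int_{|h|\le1}|h|^{-n-sq}\mathbf 1_{\{|h|\gtrsim\mathrm{dist}(x)\}}\,dh\sim \mathrm{dist}(x)^{-sq}$, but the set of such $x$ has measure only $\sim\delta$ at distance $\delta$, and integrating $\delta^{-sq\cdot(1/q)}=\delta^{-s}$ against $d\delta$ over $\delta\in(0,1)$ gives $\int_0^1\delta^{-s}\,d\delta\sim\frac{1}{1-s}$ — this time with no fractional power, because $p=1$ kills the inner exponent but the spatial integration reintroduces a full factor $(1-s)^{-1}$. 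This yields left-hand side $\gtrsim(1-s)^{\gamma-1}$, and since $\|\nabla f\|_{L^1}$ is finite (for the smoothed jump, the gradient concentrates on a thin slab but has bounded $L^1$ norm, uniformly in the smoothing), letting $s\to1^-$ gives $\gamma\ge1=\gamma_{p,q}$. The main obstacle is getting the two-sided control on $|f(x+h)-f(x)|$ in the $p=1$ case sharp enough — one must verify that a genuine jump discontinuity (suitably regularized, or handled directly in $\dot W^{1,1}$ by an approximation) produces the claimed $\delta^{-s}$ lower bound on the inner integral on a set of $x$ of measure $\sim\delta$, and that the regularization does not destroy this; the $p\in(1,\infty)$ case, by contrast, is essentially a one-line Taylor computation.
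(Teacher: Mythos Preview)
Your treatment of the case $p\in(1,\infty)$ is correct and in fact more elementary than the paper's: you compute the inner integral directly for a function that is exactly linear on a cube, whereas the paper simply invokes the known BBM limit formula \cite[Theorem 14]{Moh24} for a fixed nonzero $f\in C_c^\infty$ to obtain the lower bound $\gtrsim(1-s)^{-1/q}$. Both routes yield $\gamma\ge\tfrac1q$.

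For $p=1$ your heuristic is right, but there is a genuine gap. A \emph{fixed} smoothed jump cannot work: if the jump of height $1$ is smoothed at scale $\epsilon>0$, then your lower bound on the inner integral $[\,\cdot\,]^{1/q}\gtrsim \delta^{-s}$ only holds for $x$ at distance $\delta>\epsilon$ from the jump, so the outer $L^1$ integral is controlled not by $\int_0^1\delta^{-s}\,d\delta$ but by
\[
\int_\epsilon^1\delta^{-s}\,d\delta=\frac{1-\epsilon^{1-s}}{1-s}\xrightarrow[s\to1^-]{}\log\tfrac1\epsilon<\infty,
\]
which gives no constraint on $\gamma$. A true (unsmoothed) jump is not in $\dot W^{1,1}$, so you cannot use it directly either. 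The fix is to let the test function depend on $s$: take the smoothing scale $\epsilon=\epsilon(s)$ so small that $\epsilon^{1-s}$ stays bounded away from $1$ (e.g.\ $\epsilon=2^{-c/(1-s)}$), which preserves the $(1-s)^{-1}$ blow-up while keeping $\|\nabla f\|_{L^1}\sim 1$ uniformly. This is exactly what the paper does: for each $s$ it chooses $m\in\mathbb{N}$ with $(m-2)(1-s)\ge1$ and takes the trapezoid $\eta_m$ (height $2^{-m}$, ramps of width $2^{-m}$), for which $\|\eta_m'\|_{L^1}\sim2^{-m}$ and a dyadic summation over scales $2^{-j}$ with $3\le j\le m$ yields a lower bound $\gtrsim 2^{-m}/(1-s)$ on the left-hand side. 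The $2^{-m}$ cancels, and $\gamma\ge1$ follows. Your caveat ``that the regularization does not destroy this'' is precisely the point---it \emph{does} destroy it unless the regularization scale goes to zero with $s$.
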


\begin{proof}
	We consider the following three cases for $p$ and $n$.
	
	\emph{Case 1: $p=n=1$}. In this case,
let $m \in \mathbb{N}$ be a large integer to be specified later.
For any $t\in \mathbb{R}$, define
\begin{align*}
\eta_m(t) := \begin{cases}
0 & \text{if}\ t \le 0, \\
t & \text{if}\ 0 < t < 2^{-m}, \\
2^{-m} & \text{if}\ 2^{-m} \le t \le 2- 2^{-m}, \\
-t+2 & \text{if}\ 2- 2^{-m} < t < 2, \\
0 & \text{if}\ t \ge 2.
\end{cases}
\end{align*}
Then $\|\eta_m'\|_{L^1(\mathbb{R})}\sim 2^{-m}$
and $\|\eta_m\|_{L^1(\mathbb{R})}\lesssim 2^{-m}$.
 Let $f_m:=\eta_m$.
Notice that, for any fixed $j\in \{0,\ldots,m\}$ and
for any $x\in (2^{-j},2^{-j+1})$ and $h\in (2^{-j+2},2^{-j+3})$, $x-h<0$ and $2^{-m}<x\le 1$,
which further imply that $f_m(x-h)=0$ and $f_m (x)=2^{-m}$. From these,
and a change of variables, we deduce that,
for any $s\in (0,1)$,
\begin{align}\label{1s1-n1}
&\int_{\mathbb{R}}\left[\int_{-1}^1
\frac{|f_m(x+h)-f_m(x)|^q}{|h|^{1+sq}}\,dh\right]^{\frac{1}{q}}\,dx\notag\\
&\quad\ge
\sum_{j=3}^{m}
\int_{2^{-j}}^{2^{-j+1}}
\left[\int_{2^{-j+2}}^{2^{-j+3}}
\frac{|f_m(x)-f_m (x-h)|^q}{h^{1+sq}}\,dh\right]^{\frac{1}{q}}\,dx\notag\\
&\quad = 2^{-m}\sum_{j=3}^{m}
\int_{2^{-j}}^{2^{-j+1}}
\left(\int_{2^{-j+2}}^{2^{-j+3}}
\frac{1}{h^{1+sq}}\,dh\right)^{\frac{1}{q}}\,dx\notag\\
&\quad \sim 2^{-m}\sum_{j=3}^{m}2^{j(s-1)} = 2^{-m}
\frac{2^{3(s-1)}[1-2^{-(m-2)(1-s)}]}{1-2^{s-1}}.
\end{align}
For any $s\in (0,1)$, choose $m$ satisfying $(m-2)(1-s)\ge 1$. Then we have
\begin{align*}
\frac{2^{3(s-1)}[1-2^{-(m-2)(1-s)}]}{1-2^{s-1}}
\gtrsim \frac{1}{1-s}.
\end{align*}
Combining this, \eqref{1s1-n1}, \eqref{1sGa2}, and $\|f'_m\|_{L^1}\sim 2^{-m}$,
we find that, for any $s\in (0,1)$, $(1-s)^{\gamma-1}\lesssim 1$,
which further implies that $\gamma \ge 1=\gamma_{p,q}$.

\emph{Case 2: $p=1$ and $n\ge 2$}. In this case,
let $\phi \in C_c^\infty(\mathbb{R}^{n-1})$ be a nonnegative smooth function such that
$$
\text{supp}(\phi) \subset \{y \in \mathbb{R}^{n-1} : |y| \le 1\} \quad
\text{and}\quad
\inf_{y\in\mathbb{R}^{n-1}:|y|\le\frac12}\phi(y)>0.
$$
For any given $s\in(0,1)$, we choose the same $m\in\mathbb{N}$ as in Case 1,
that is, $(m-2)(1-s)\ge 1$.
For any $x := (x_1, x') \in \mathbb{R} \times \mathbb{R}^{n-1}$, define
$
f_m(x) := \eta_m(x_1)\phi(x').
$
Thus,
\begin{align}\label{eq-Nfm}
\|\nabla f_m\|_{L^1(\mathbb{R}^n)}
&\le \|\partial_{x_1} f_m\|_{L^1(\mathbb{R}^n)} + \|\nabla_{x'} f_m\|_{L^1(\mathbb{R}^n)} \notag\\
&= \|\eta_m'\|_{L^1(\mathbb{R})} \|\phi\|_{L^1(\mathbb{R}^{n-1})} + \|\eta_m\|_{L^1(\mathbb{R})} \|\nabla_{x'} \phi\|_{L^1(\mathbb{R}^{n-1})}\lesssim 2^{-m}.
\end{align}
Similar to Case 1, for any fixed $j \in \{3, \ldots, m\}$
and for any $x\in E_j:=\{ x=(x_1,x') \in \mathbb{R}\times\mathbb{R}^{n-1} : x_1 \in (2^{-j}, 2^{-j+1}), \ |x'|<\frac{1}{2} \}$ and $h:=(h_1,h')
\in \mathbb{R} \times \mathbb{R}^{n-1}$
with $h_1 \in (2^{-j+2},2^{-j+3})$ and $|h'|\le h_1$,
it holds that $\eta_m(x_1)=2^{-m}$ and $\eta_m(x_1-h_1)=0$, which imply
$$
|f(x)-f(x-h)| = |\eta_m(x_1)\phi(x')-\eta_m(x_1-h_1)\phi(x'-h')|=
\eta_m(x_1)\phi(x')\gtrsim 2^{-m}.
$$
By this and the choice of $m$, we further obtain
\begin{align*}
&\int_{\mathbb{R}^n}\left[\int_{|h|\le 1} \frac{|f(x)-f(x-h)|^q}{|h|^{n+sq}}\,dh\right]^{\frac{1}{q}}\,dx\\
&\quad \ge \sum_{j=3}^m \int_{E_j} \left[ \int_{2^{-j+2}}^{2^{-j+3}} \int_{|h'|\le h_1} \frac{|f(x)-f(x-h)|^q}{|h|^{n+sq}} \, dh' \, dh_1 \right]^{\frac{1}{q}} dx\\
&\quad \gtrsim 2^{-m}\sum_{j=3}^m \int_{E_j} \left( \int_{2^{-j+2}}^{2^{-j+3}} \int_{|h'|\le h_1} \frac{1}{|h|^{n+sq}} \, dh' \, dh_1 \right)^{\frac{1}{q}} dx
\sim2^{-m} \sum_{j=3}^m \int_{2^{-j}}^{2^{-j+1}}
\left( \int_{2^{-j+2}}^{2^{-j+3}} \frac{1}{h_1^{1+sq}} dh_1 \right)^{\frac{1}{q}} dx_1\\
&\quad \sim2^{-m}\frac{2^{3(s-1)}[1-2^{-(m-2)(1-s)}]}{1-2^{s-1}}
\gtrsim \frac{2^{-m}}{1-s}.
\end{align*}
Hence, from this, \eqref{eq-Nfm}, and \eqref{1sGa2},
we deduce that, for any $s\in (0,1)$, $(1-s)^{\gamma-1}\lesssim 1$,
which further implies that $\gamma \ge 1=\gamma_{p,q}$.

\emph{Case 3: $p\in (1,\infty)$}. In this case, $\gamma_{p,q}=\frac{1}{q}$ and
$C_{\rm c}^{\infty}\subset\dot{W}^{1,p}$ (see, for instance, \cite[Theorem 4]{HK95}).
Choose $f\in C_{\rm c}^\infty$ such that $\nabla f\ne 0$.
By \cite[Theorem 14]{Moh24}, we find that
there exists a positive constant $C$ such that
\begin{align*}
	\lim_{s\to 1^-}	{(1-s)^\frac{1}{q}}\left[
	\int_{\mathbb{R}^n}\left(\int_{|h|\le 1} \frac{|f(x+h)-f(x)|^q}{|h|^{n+sq}}\,dh\right)^{\frac{p}{q}}\,dx\right]^{\frac{1}{p}}= C \|\nabla f\|_{L^p}.
\end{align*}
Thus, there exists $s_f\in(0,1)$ such that,
for any $s\in(s_f,1)$,
\begin{align*}
	{(1-s)^\frac{1}{q}}\left[
	\int_{\mathbb{R}^n}\left(\int_{|h|\le 1} \frac{|f(x+h)-f(x)|^q}{|h|^{n+sq}}\,dh\right)^{\frac{p}{q}}\,dx\right]^{\frac{1}{p}}\ge   \frac{C}{2} \|\nabla f\|_{L^p}.
\end{align*}
Combining this and \eqref{1sGa2},
we conclude that, for any $s\in(s_f,1)$,
$(1-s)^{\gamma-\frac{1}{q}}\lesssim 1$, which further implies
$\gamma\ge \frac{1}{q}=\gamma_{p,q}$.
This then finishes the proof of
Proposition \ref{gammaqp}.
\end{proof}

Now, we are devoted to
showing Theorem \ref{thm-ap}.
Let us first recall some related concepts.
Let $\mathcal{S}$ denote the set of
all Schwartz functions on $\mathbb{R}^n$ and
$\mathcal{S}'
/\mathcal{P}$ denote the set of
all tempered distributions modulo polynomials.
Recall that, for any $f\in\mathcal{S}$,
its \emph{Fourier transform} $\mathcal{F} f$ is defined by
setting, for any $x\in\mathbb{R}^n$,
$
\mathcal{F}f(x):=\int_{\mathbb{R}^n}
f(\xi)e^{-2\pi ix\cdot \xi}\,d\xi
$
and its \emph{inverse Fourier transform} $\mathcal{F}^{-1}f$
is defined by setting
$\mathcal{F}^{-1}f(x):=\mathcal{F}f(-x)$ for any
$x\in\mathbb{R}^n$.
These definitions are naturally extended to
$\mathcal{S}'/\mathcal{P}$.
Then, for any $\beta\in\mathbb{R}$ and
$f\in\mathcal{S}'/\mathcal{P}$,
the \emph{Riesz potential} $I_\beta f$ of $f$
is defined by setting
\begin{align}\label{df-ibeta}
I_{\beta} f:=\mathcal{F}^{-1}
\left[(2\pi|\cdot|)^{-\beta}\mathcal{F}f\right];
\end{align}
see
\cite[Chapter 1]{Gra14m} for more details.
Let $p\in [1,\infty)$ and $\omega$ be a
nonnegative locally integrable function on $\mathbb{R}^n$.
The \emph{homogeneous weighted
Sobolev space} $\dot{H}^{1,p}_\omega$
is defined to be the set of all $f\in\mathcal{S}'
/\mathcal{P}$ such that
$\|f\|_{\dot{H}^{1,p}_\omega}
:=\left\|I_{-1}f\right\|_{L^p_\omega}$ is finite.
Moreover, let
\begin{align}\label{eq-Yd}
\dot{Y}^{1,p}_{\omega} :=
\begin{cases} \dot{W}^{1,p}_\omega
& \text{if }n=1\text{ or }p=1,\\
\dot{H}^{1,p}_\omega
& \text{if } n\in \mathbb{N} \cap [2,\infty)
\text{ and }p\in (1,\infty).
\end{cases}
\end{align}

To prove Theorem \ref{thm-ap},
we also need the following conclusion, which
is a direct consequence of \cite[Theorem 2.8 and Remark 2.9]{b82};
we omit the details.

\begin{lemma}\label{lem-hp}
If $p\in (1,\infty)$ and
$\omega\in A_p$, then
$\dot{W}^{1,p}_{\omega} =\dot{H}^{1,p}_{\omega} $ with equivalent seminorms.
\end{lemma}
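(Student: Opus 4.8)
The plan is to deduce Lemma \ref{lem-hp} from the $L^p_\omega$-boundedness of the Riesz transforms, a classical consequence of Calder\'on--Zygmund theory with $A_p$ weights (see, for instance, \cite[Chapter 7]{Gra14}). For $j\in\{1,\dots,n\}$, let $R_j$ denote the Fourier multiplier operator with symbol $\xi\mapsto i\xi_j/|\xi|$; each $R_j$ is a Calder\'on--Zygmund operator and hence, since $p\in(1,\infty)$ and $\omega\in A_p$, bounded on $L^p_\omega$. Comparing Fourier multipliers and recalling \eqref{df-ibeta}, one has, for any $f\in\mathcal{S}$ and any $j$, the operator identities
\[
\partial_j f = R_j\,(I_{-1}f)\qquad\text{and}\qquad I_{-1}f = -\sum_{j=1}^n R_j\,(\partial_j f).
\]

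First I would establish the seminorm equivalence on the dense subclass $C_{\rm c}^\infty$. Any $f\in C_{\rm c}^\infty$ lies in $\dot{W}^{1,p}_\omega$ (as $\omega\in A_p$ is locally integrable) and, being Schwartz, also determines an element of $\mathcal{S}'/\mathcal{P}$ on which the identities above hold as genuine function identities. Using the $L^p_\omega$-boundedness of the $R_j$'s,
\[
\|f\|_{\dot{H}^{1,p}_\omega}=\|I_{-1}f\|_{L^p_\omega}\le\sum_{j=1}^n\|R_j(\partial_j f)\|_{L^p_\omega}\lesssim\sum_{j=1}^n\|\partial_j f\|_{L^p_\omega}\sim\|f\|_{\dot{W}^{1,p}_\omega}
\]
and, conversely,
\[
\|f\|_{\dot{W}^{1,p}_\omega}=\sum_{j=1}^n\|\partial_j f\|_{L^p_\omega}=\sum_{j=1}^n\|R_j(I_{-1}f)\|_{L^p_\omega}\lesssim\|I_{-1}f\|_{L^p_\omega}=\|f\|_{\dot{H}^{1,p}_\omega},
\]
so that $\|f\|_{\dot{W}^{1,p}_\omega}\sim\|f\|_{\dot{H}^{1,p}_\omega}$ for all $f\in C_{\rm c}^\infty$, with equivalence constants depending only on $n$, $p$, and $[\omega]_{A_p}$.

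Next I would pass to the full spaces by density and completeness. Given $f\in\dot{W}^{1,p}_\omega$, the mollification scheme used in the proof of Theorem \ref{thm-upi} (invoking \cite[Theorem 2.1.4]{Tur00} and $\omega\in A_p$), together with a standard truncation, produces $\{f_j\}_{j\in\mathbb{N}}\subset C_{\rm c}^\infty$ with $\|f_j-f\|_{\dot{W}^{1,p}_\omega}\to0$; by the equivalence on $C_{\rm c}^\infty$, $\{f_j\}_j$ is Cauchy in $\dot{H}^{1,p}_\omega$, which, being complete, forces $f_j\to g$ in $\dot{H}^{1,p}_\omega$ for some $g$ with $\|g\|_{\dot{H}^{1,p}_\omega}\lesssim\|f\|_{\dot{W}^{1,p}_\omega}$; comparing the weak gradients of $g$ and $f$ shows $\nabla(g-f)=0$, i.e.\ $g$ equals $f$ modulo constants, hence modulo polynomials, which gives $\dot{W}^{1,p}_\omega\subset\dot{H}^{1,p}_\omega$ with $\|\cdot\|_{\dot{H}^{1,p}_\omega}\lesssim\|\cdot\|_{\dot{W}^{1,p}_\omega}$. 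The reverse inclusion is symmetric: approximating $f\in\dot{H}^{1,p}_\omega$ by Schwartz functions in the homogeneous Riesz-potential seminorm makes $\{\nabla f_j\}_j$ Cauchy in $L^p_\omega$, and identifying the resulting limit via its weak gradient produces the desired representative in $\dot{W}^{1,p}_\omega$. Alternatively, the whole conclusion can be read off directly from \cite[Theorem 2.8 and Remark 2.9]{b82}, which is the route taken here.

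The step I expect to be the main obstacle is the bookkeeping at the ``modulo polynomials'' level, i.e.\ making the two descriptions genuinely identify as sets rather than merely comparing seminorms: one must check that an a priori only locally integrable element of $\dot{W}^{1,p}_\omega$ can be regarded as a tempered distribution modulo polynomials so that $I_{-1}$ acts on it, and, conversely, that the $\mathcal{S}'/\mathcal{P}$-limit supplied by the completeness of $\dot{H}^{1,p}_\omega$ admits an $L^1_{\rm loc}$ representative with the expected weak gradient. Both points are resolved by the density of smooth functions and by the fact that the gradient determines a function up to an additive constant, but they must be handled carefully so that the equivalence of seminorms is not vacuous.
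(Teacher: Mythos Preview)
Your proposal is correct and in fact goes beyond what the paper does: the paper gives no proof whatsoever, stating only that the lemma ``is a direct consequence of \cite[Theorem 2.8 and Remark 2.9]{b82}; we omit the details.'' You cite the very same reference as an alternative route, so your approach subsumes the paper's, while additionally supplying the standard Riesz-transform argument that underlies it.
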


\begin{proof}[Proof of Theorem \ref{thm-ap}]
From Theorem \ref{thm-upi} with $k=1$ and from
Lemma \ref{lem-hp}, it follows that (i) $\Longrightarrow$ (ii)
holds.
Next,
we show (ii) $\Longrightarrow$ (i)
by considering the following three
cases for both $n$ and $p$.

\emph{Case 1: $n=1$ and $p\in (1,\infty)$.}
In this case,
$\dot{Y}^{1,p}_{\omega}(\mathbb{R})=\dot{W}^{1,p}_{\omega}(\mathbb{R})$.
Let $\eta \in C_{\rm c}^{\infty}(\mathbb{R})$
satisfy ${\bf 1}_{[0,1]}\le \eta \le {\bf 1}_{[-2,2]}$,
$g\in C^{\infty}(\mathbb{R})$
be nonnegative, and, for any $x\in \mathbb{R}$,
$f(x):=\int_{-\infty}^x g(t)\eta (t)\,dt$.
Then $f'=g\eta \in C_{\rm c}^{\infty}(\mathbb{R})
\subset \dot{W}^{1,p}_{\omega}(\mathbb{R})$ and
\begin{align}\label{eq-f'}
\int_{\mathbb{R}}|f'(x)|^p \omega(x)\,dx
\le \int_{-2}^2 [g(x)]^p \omega(x)\,dx.
\end{align}
Notice that, for any $x\in [-1,0]$ and $h\in [2,3]$,
\begin{align*}
|\Delta_h f(x)|
=\int_{x}^{x+h}g(t)\eta (t)\,dt
\ge \int_{0}^1 g(t)\,dt
\end{align*}
and, for any $x\in [1,2]$ and $h\in [-3,-2]$,
\begin{align*}
|\Delta_h f(x)|
=\int_{x+h}^{x}g(t)\eta (t)\,dt
\ge \int_{0}^1 g(t)\,dt.
\end{align*}
From these, we infer that
\begin{align*}
&\int_{\mathbb{R}}
\left[\int_{|h|<3}
\frac{|\Delta_h f(x)|^q}{|h|^{1+sq}}\,dh
\right]^\frac{p}{q}\omega(x)\,dx\\
&\quad \ge
\int_{-1}^0
\left[\int_{2}^3
\left\{\int_{0}^1 g(t)\,dt\right\}^q
{|h|^{-1-sq}}\,dh
\right]^\frac{p}{q}\omega(x)\,dx\\
&\qquad +\int_{1}^2
\left[\int_{-3}^{-2}
\left\{\int_{0}^1 g(t)\,dt\right\}^q
{|h|^{-1-sq}}\,dh
\right]^\frac{p}{q}\omega(x)\,dx\\
&\quad \sim
\omega\left([-1,0]\cup [1,2]\right)	\left[\int_{0}^1 g(t)\,dt\right]^p.
\end{align*}
Using this, \eqref{eq-chap1} with $r:=3$,
and \eqref{eq-f'}, we find that
\begin{align*}
\omega\left([-1,0]\cup [1,2]\right)	\left[\int_{0}^1 g(t)\,dt\right]^p
\lesssim
\int_{-2}^2 [g(x)]^p \omega(x)\,dx.
\end{align*}
Repeating the proof of \cite[(3.43)]{DLYYZ23},
we conclude that, for any nonnegative
$g\in L^{1}_{\rm loc}(\mathbb{R})$,
\begin{align}\label{eq-g2}
\omega\left([-1,0]\cup [1,2]\right)	\left[\int_{0}^1 g(t)\,dt\right]^p
\lesssim
\int_{0}^1 [g(x)]^p \omega(x)\,dx.
\end{align}
From this with $g:=1 $, we deduce that
$
\omega ([-1,0])\le
\omega\left([-1,0]\cup [1,2]\right)
\lesssim
\omega ([0,1]).
$
Since \eqref{eq-chap1} is translation invariant
for the weight $\omega$ [that is, for any
$x_0\in \mathbb{R}$,
the weight $\omega (\cdot-x_0)$ satisfies \eqref{eq-chap1}
with the same positive constant], it follows that
$\omega ([0,1])\lesssim \omega ([1,2])
\le
\omega([-1,0]\cup [1,2]).$
This, together with \eqref{eq-g2},
further implies that, for any nonnegative function
$g\in L^{1}_{\rm loc}(\mathbb{R})$,
\begin{align*}
\left[\int_{0}^1 g(x)\,dx\right]^p
\lesssim
\frac{1}{\omega([0,1])}
\int_{0}^1 [g(x)]^p\omega(x)\,dx.
\end{align*}
By this and the facts
that \eqref{eq-chap1} is translation invariant
and dilation invariant [that is, for any $\delta\in (0,\infty)$,
the weight
$\omega (\delta\cdot)$ satisfies \eqref{eq-chap1}
with the same positive constant],
we conclude that, for any interval $I$ and
any nonnegative $g\in L^{1}_{\rm loc}(\mathbb{R})$,
$$
\left[\fint_{I}g(x)\,dx\right]^p \lesssim \frac{1}{\omega(I)}\int_{I}
[g(x)]^p\omega(x)\,dx,
$$
which, together with Lemma \ref{lem-apwight}(iv), further
implies
$\omega\in A_p $ in this case.

\emph{Case 2: $n\in \mathbb{N}$ and $p=1$.}
In this case, $\dot{Y}^{1,p}_{\omega}
=\dot{W}^{1,p}_{\omega} $.
Let $D_0:= B({\bf 0},6)\setminus B({\bf 0},3)$,
$D_1:= B({\bf 0},2)\setminus B({\bf 0},1)$,
$D_2:= B({\bf 0},18)\setminus B({\bf 0},9)$,
and
$D_3:= B({\bf 0},48)\setminus B({\bf 0},24)$.
Let
$g\in C^{\infty} $ be nonnegative and radial
and choose $\eta \in C^{\infty} $
such that $\eta $ is radial and
${\bf 1}_{D_0}\le \eta \le {\bf 1}_{B({\bf 0},8)\setminus B({\bf 0},2)}$.
For any $x\in \mathbb{R}^n$, let
$
f(x):= \int_{B({\bf 0},|x|)}g(y)\eta (y)\,dy.
$
Then, for any $x\in \mathbb{R}^n$, $\nabla f(x)=\mathcal{H}^{n-1}(\mathbb{S}^{n-1})g(x)\eta (x)|x|^{n-2}x$.
Thus,
\begin{align}\label{eq-c2}
\int_{\mathbb{R}^n}|\nabla f(x)|
\omega(x)\,dx \lesssim
\int_{B({\bf 0},8)\setminus B({\bf 0},2)}
|g(x)|\omega(x)\,dx .
\end{align}
Observe that, for any $x\in D_1$ and $h\in D_2 -\{x\}$,
it holds that
$7<|h|<20$ and
\begin{align*}
|\Delta_h f(x)|=
\int_{|x|\le |y|<|x+h|}g(y)\eta (y)\,dy
\ge \int_{D_0}g(y)\,dy;
\end{align*}
on the other hand, for any $x\in D_2$ and
$h\in D_1-\{x\}$, it also holds that
$7<|h|<20$ and
\begin{align*}
|\Delta_h f(x)|=
\int_{|x+h|\le |y|<|x|}g(y)\eta (y)\,dy
\ge \int_{D_0}g(y)\,dy.
\end{align*}
From these, we further infer that
\begin{align*}
&\int_{\mathbb{R}^n}
\left[\int_{|h|<20}
\frac{|\Delta_h f(x)|^q}{|h|^{1+sq}}\,dh
\right]^\frac{1}{q}\omega(x)\,dx\\
&\quad \ge
\int_{D_1}
\left[\int_{D_2-\{x\}}
\left\{\int_{D_0}g(y)\,dy\right\}^q {|h|^{-1-sq}}\,dh
\right]^\frac{1}{q}\omega(x)\,dx\\
&\qquad +
\int_{D_2}
\left[\int_{D_1-\{x\}}
\left\{\int_{D_0}g(y)\,dy\right\}^q {|h|^{-1-sq}}\,dh
\right]^\frac{1}{q}\omega(x)\,dx\\
&\quad \sim
\omega(D_1 \cup D_2)
\int_{D_0}g(y)\,dy.
\end{align*}
By this,  \eqref{eq-chap1} with $r:=20$,
and \eqref{eq-c2}, we find that
\begin{align}\label{eq-aaa}
\omega(D_1 \cup D_2)
\int_{D_0}g(y)\,dy
\lesssim
\int_{B({\bf 0},8)\setminus B({\bf 0},2)}
g(x)\omega(x)\,dx.
\end{align}
Now, repeating the proof of Case 2 of \cite[Proposition 5.1]{LYYZZ-arXiv} with
(5.12) therein replaced by \eqref{eq-aaa} here,
we obtain $\omega\in A_1$.

\emph{Case 3: $n\in \mathbb{N} \cap [2,\infty)$
and $p\in (1,\infty)$.}  In this case,
$\dot{Y}^{1,p}_{\omega} =\dot{H}^{1,p}_{\omega} $.
Let $g\in C^{\infty} $ be
nonnegative, $\eta \in C^{\infty} $
satisfy ${\bf 1}_{A_0}\le \eta
\le {\bf 1}_{B({\bf 0},8)\setminus B({\bf 0},2)}$,
and $f:= I_1 (g\eta )$, where $I_1$ is the same as in \eqref{df-ibeta}
with $\beta$ replaced by $1$.
Then, by \cite[pp.\,10--11]{Gra14m}
and the assumption $n\in \mathbb{N} \cap [2,\infty)$,
we find that $f\in L^{1}_{\rm loc} $
and, for any $x\in \mathbb{R}^n$,
\begin{align}\label{eq-Iff}
f(x)=\frac{\Gamma(\frac{n-1}{2})}{2 \pi^\frac{n}{2} \Gamma (\frac{1}{2})}
\int_{\mathbb{R}^n}
\frac{g(y)\eta (y)}{|x-y|^{n-1}}\,dy,
\end{align}
where $\Gamma (\cdot)$ denotes the Gamma function.
In addition, applying the semigroup property of Riesz
potentials (see, for instance, \cite[p.\,9]{Gra14m}) and (2.31),
we conclude that
\begin{align}\label{eq-cap2}
\left\| f\right\|_{\dot{H}^{1,p}_{\omega} }^p
&= \left\|I_{-1}f\right\|^p_{L^p_\omega }
=
\left\|I_{-1}I_1 (g\eta )\right\|^p_{L^p_\omega }\notag\\
&= \int_{\mathbb{R}^n}|g(y)\eta (y)|^p \omega (y)\,dy
\le
\int_{B({\bf 0},8)\setminus B({\bf 0},2)}|g(y)|^p \omega (y)\,dy.
\end{align}
Notice that, for any $x\in A_1$, $h \in A_3 -\{x\}$, and
$y\in B({\bf 0},8)\setminus B({\bf 0},2)$,
$
|x-y|\le |x|+|y|<10
$
and
$
|x+h-y|\ge |x+h|-|y|>16,
$
which, combined with \eqref{eq-Iff}, further imply that
\begin{align*}
|\Delta_h f(x)|
&\sim
\int_{B({\bf 0},8)\setminus B({\bf 0},2)}g(y)\eta (y)
\left(\frac{1}{|x-y|^{n-1}}-
\frac{1}{|x+h-y|^{n-1}}\right)\,dy\\
&\gtrsim
\int_{B({\bf 0},8)\setminus B({\bf 0},2)}g(y)\eta (y)
\,dy
\ge
\int_{A_0}g(y)
\,dy;
\end{align*}
similarly, for any $x \in A_3$ and $h\in A_1-\{x\}$,
$
|\Delta_h f(x)|
\gtrsim
\int_{A_0}g(y)
\,dy.
$
On the other hand,
observe that, for any $x\in A_1$ and $h \in A_3 -\{x\}$
or for any $x \in A_3$ and $h\in A_1-\{x\}$,
it holds that $22<|h|<50$.
From this, it follows that
\begin{align*}
&\int_{\mathbb{R}^n}
\left[\int_{|h|<50}
\frac{|\Delta_h f(x)|^q}{|h|^{1+sq}}\,dh
\right]^\frac{p}{q}\omega(x)\,dx\\
&\quad \gtrsim
\int_{A_1}
\left[\int_{A_3-\{x\}}
\left\{\int_{A_0}g(y)\,dy\right\}^q {|h|^{-1-sq}}\,dh
\right]^\frac{p}{q}\omega(x)\,dx\\
&\qquad +
\int_{A_3}
\left[\int_{A_1-\{x\}}
\left\{\int_{A_0}g(y)\,dy\right\}^q {|h|^{-1-sq}}\,dh
\right]^\frac{p}{q}\omega(x)\,dx\\
&\quad \sim
\omega(A_1 \cup A_3)
\left[\int_{A_0}g(y)\,dy\right]^p.
\end{align*}
Using this, \eqref{eq-chap1} with $r:=50$,
and \eqref{eq-cap2}, we find that
\begin{align}\label{eq-a13}
\omega(A_1 \cup A_3)
\left[\int_{A_0}g(y)\,dy\right]^p
\lesssim
\int_{B({\bf 0},8)\setminus B({\bf 0},2)}
[g(x)]^p\omega(x)\,dx.
\end{align}
Repeating the proof of Case 3 of \cite[Proposition 5.1]{LYYZZ-arXiv} with
(5.16) therein replaced by \eqref{eq-a13} here,
we obtain $\omega\in A_p$ in this case.
This finishes the proof that (ii) $\Longrightarrow$ (i)
and hence Theorem \ref{thm-ap}.
\end{proof}

\section{An Inequality Related to Gagliardo and Sobolev Seminorms \\
in Framework of Ball Banach Function Spaces $X(Q)$
over Cubes}\label{sec-bbf}

The main target of this section
is to establish an inequality related to
Gagliardo and Sobolev seminorms in the framework of
ball Banach function spaces
(see Theorem \ref{thm-QX}).
To do this, we first give some preliminary
concepts about ball Banach function spaces in Subsection \ref{s3.1},
which are frequently used in subsequent sections.
Moreover, in Subsection \ref{s3.1}, we also establish an extrapolation lemma
on ball Banach function spaces (see Lemma \ref{lem-po}), which
is one of the most important tools in subsequent sections.
Next, we prove an inequality related to
Gagliardo and Sobolev seminorms in Subsection \ref{s3.2}.

\subsection{Ball Banach Function Spaces and Extrapolation}\label{s3.1}

In this subsection, we give some preliminary
concepts about ball Banach function spaces and
establish a key extrapolation lemma (see Lemma \ref{lem-po}).
For any $\Omega\subset\mathbb{R}^n$,
let $\mathscr{M}(\Omega)$ denote the set of all
measurable functions on $\Omega$.
The definition
of ball Banach function spaces is as follows,
which was introduced in \cite[Definition 2.1]{SHYY17}.

\begin{definition}\label{def-X}
A Banach space $X \subset \mathscr{M} $,
equipped with a norm
$\|\cdot\|_{X }$ which makes sense for all functions in
$\mathscr{M} $,
is called a \emph{ball Banach function space}
(for short, \emph{{\rm BBF} space})
if $X $ satisfies that
\begin{enumerate}[{\rm (i)}]
\item for any $f\in \mathscr{M} $,
if $\|f\|_{X } =0$,
then $f=0$ almost
everywhere;
\item if $f,g\in\mathscr{M} $ satisfy that $|g|\leq
|f|$
almost everywhere, then
$\|g\|_{X } \leq \|f\|_{X }$;
\item if a sequence
$\{f_m\}_{m\in{\mathbb{N}}}$ in $\mathscr{M} $
satisfies that
$0\leq f_m \uparrow f$ almost everywhere as $m\to\infty$,
then $\|f_m\|_{X } \uparrow \|f\|_{X }$
as $m\to\infty$;
\item for any ball $B$ in $\mathbb{R}^n$,
${\bf 1}_{B}\in {X }$;

\item[${\rm (v)}$]
for any ball $B$ in $\mathbb{R}^n$,
there exists a positive constant $C_{(B)}$,
depending on $B$,
such that, for any $f\in {X }$,
$
\int_{B}|f(x)|\,dx \leq C_{(B)}\|f\|_{{X }}.
$
\end{enumerate}
\end{definition}

\begin{remark}
\begin{enumerate}[{\rm (i)}]
\item As mentioned in \cite[Remark 2.5(ii)]{YHYY22ams},
we obtain an equivalent formulation of
Definition \ref{def-X} via replacing any ball
$B$ in $\mathbb{R}^n$ by any bounded measurable set
$S$ in $\mathbb{R}^n$.

\item In Definition \ref{def-X}, if we replace any ball $B$
by any measurable $E$ with $|E|<\infty$, then
we obtain the definition of \emph{Banach function spaces},
which was originally
introduced by Bennett and Sharpley in
\cite[Chapter 1, Definitions 1.1 and 1.3]{BS88}.
Using their definitions,
we easily find that
a Banach function space is always a ball Banach function space.
However,
the converse is not necessarily true
(see, for instance, \cite[p.\,9]{SHYY17}).

\item In Definition \ref{def-X}, if we replace (iv)
by the \emph{saturation property}
that,
for any measurable set
$E$ in $\mathbb{R}^n$
with $|E|\in (0,\infty)$, there exists a measurable
set $F\subset E$ with $|F|\in (0,\infty)$ satisfying that
${\bf 1}_{F}\in X $,
then we obtain the definition of Banach function spaces in the terminology
of Lorist and Nieraeth \cite[p.\,251]{LN24im}. Moreover,
by \cite[Proposition 2.5]{ZYY23ccm}
(see also \cite[Proposition 4.21]{Nie23}),
we conclude that, if the normed vector space $X $ under
consideration satisfies the additional assumption that
the Hardy--Littlewood maximal
operator $\mathcal{M}$ is weakly bounded on one of its convexification,
then the definition of Banach function spaces in \cite{LN24im}
coincides with the definition of ball
Banach function spaces. Thus, under this additional assumption, working with
ball Banach function spaces in the sense of Definition \ref{def-X} or Banach
function spaces in the sense of \cite{LN24im} would
yield exactly the same results.

\item  From \cite[Proposition 1.2.36]{LYH22}, we deduce that
both (ii) and (iii) of Definition \ref{def-X}
imply that any ball Banach function space is complete.
\end{enumerate}
\end{remark}

For any $g\in \mathscr{M} $,
we denote by $g|_{\Omega}$ the restriction of $g$ to $\Omega$.
The following concept of restricted spaces
was introduced in \cite[Definition 2.6]{ZYY24jga}.
\begin{definition}
Let $\Omega\subset \mathbb{R}^n$ be
an open set and $X $ a {\rm BBF} space.
The \emph{restricted space} $X(\Omega)$ of
$X $
is defined by setting
$X(\Omega):=\{f\in \mathscr{M}(\Omega):f=g|_{\Omega} \
\text{for some}\ g\in X \}$.	
Moreover, for any $f\in X(\Omega)$, define
$
\|f\|_{X(\Omega)}:= \inf\left\{\|g\|_{X }:
f=g|_{\Omega},\  g\in X \right\}.
$
\end{definition}

\begin{remark}\label{rem-0}
Let $\Omega\subset \mathbb{R}^n$ be
an open set, $X$ a {\rm BBF} space,
and $X(\Omega)$ its
restricted space. From \cite[Proposition 2.7]{ZYY24jga},
it follows that,
for any $f\in X(\Omega)$,
$\|f\|_{X(\Omega)}=\|\widetilde{f}\|_{X }$,
where
\begin{align*}
\widetilde{f}(x):=
\begin{cases}
f(x) &\text{if}\  x\in \Omega,\\
0 &\text{if}\  x\in \Omega^{\complement}.
\end{cases}
\end{align*}
\end{remark}

Moreover, we need the following definition of the $p$-convexification of
a {\rm BBF} space given in \cite[Definition 2.6]{SHYY17}.

\begin{definition}\label{def-convex}
Let $X $ be a {\rm BBF} space and $p\in (0,\infty)$. The
\emph{$p$-convexification}
$X^p $ of $X $ is defined by setting
$X^p :=\{f\in \mathscr{M} :
|f|^p \in X \}$
and is equipped with the quasi-norm
$\left\|f\right\|_{X^p }
:=\left\|\,|f|^p\right\|_{X }^{\frac{1}{p}}$
for any $f\in X^p $.
\end{definition}

The following concept of the associate space of a
{\rm BBF} space can be found in
\cite[p.\,9]{SHYY17}; see \cite[Chapter 1, Section 2]{BS88}
for more details.

\begin{definition}\label{def-X'}
Let $\Omega\subset \mathbb{R}^n$ be
an open set, $X $ a {\rm BBF} space,
and $X(\Omega)$ its restricted space.
The \emph{associate space} (also called the
\emph{K\"{o}the dual}) $[X(\Omega)]'$ is defined by setting
\begin{align*}
[X(\Omega)]':=\left\{f\in \mathscr{M}(\Omega):\|f\|_{[X(\Omega)]'}:=
\sup_{\genfrac{}{}{0pt}{}{g\in X(\Omega)}{\|g\|_{X(\Omega)} =1}}\|fg\|_{L^1
(\Omega)}<\infty\right\},
\end{align*}
where $\|\cdot\|_{[X(\Omega)]'}$ is called the \emph{associate norm}
of $\|\cdot\|_{X(\Omega)}$.
\end{definition}

\begin{remark}\label{rem-x-a}
Let $\Omega\subset \mathbb{R}^n$ be an open set,
$X $ a {\rm BBF} space,
and $X(\Omega)$ its
restricted space.
\begin{enumerate}[{\rm (i)}]
\item From \cite[Proposition 2.3]{SHYY17}, we infer that the
associate space $X'$
is also a {\rm BBF} space.

\item By \cite[Proposition 2.12]{ZYY24jga},
we find that $X'(\Omega)=[X(\Omega)]'$
with the same norms, where $X'(\Omega)$
denotes the restricted space of $X'$ on $\Omega$.

\item From \cite[Lemma 2.6]{ZYYW21}, we deduce that
$X$ coincides with $X''$ (namely its second associate space)
with the same norms.
\end{enumerate}
\end{remark}

Now, we present the following definition
of ball Banach Sobolev
spaces on any open set of $\mathbb{R}^n$, which was introduced in
\cite[Definition 2.14]{ZYY24jga} (see also \cite[Definition 2.6]{DGPYYZ24}
for the corresponding space defined on $\mathbb{R}^n$).

\begin{definition}
Let $\Omega\subset \mathbb{R}^n$ be an open set,
$X $ a {\rm BBF} space, $X(\Omega)$ its
restricted space, and $k\in {\mathbb{N}}$.
\begin{enumerate}[{\rm (i)}]
\item The \emph{inhomogeneous ball Banach Sobolev space}
$W^{k,X}(\Omega)$ is defined to be the set
of all $f\in X(\Omega)$ such that,
for any multi-index
$\alpha\in \mathbb{Z}_{+}^n$ with
$|\alpha|\le k$,
the $\alpha$-th weak partial derivative
$\partial^{\alpha} f$ of $f$ exists and
$\partial^{\alpha} f \in X(\Omega)$.
Moreover, the \emph{norm} $\|\cdot\|_{{W}^{k,X}(\Omega)}$ of $W^{k,X}(\Omega)$
is defined by setting, for any $f\in W^{k,X}(\Omega)$,
$\|f\|_{{W}^{k,X}(\Omega)}
:=\sum_{\alpha\in \mathbb{Z}_{+}^n,\, |\alpha|\le k}
\|\partial^{\alpha} f\|_{X(\Omega)}.$

\item 	The \emph{homogeneous ball Banach Sobolev space}
$\dot{W}^{k,X}(\Omega)$ is defined to be the set
of all $f\in L^{1}_{\rm loc}(\Omega) $ such that,
for any multi-index
$\alpha\in \mathbb{Z}_{+}^n$ with
$|\alpha|= k$,
the $\alpha$-th weak partial derivative
$\partial^{\alpha} f$ of $f$ exists and
$\partial^{\alpha} f \in X(\Omega)$.
Moreover, the \emph{seminorm} $\|\cdot\|_{\dot{W}^{k,X}(\Omega)}$
of $\dot{W}^{k,X}(\Omega)$
is defined by setting, for any $f\in \dot{W}^{k,X}(\Omega)$,
$\|f\|_{\dot{W}^{k,X}(\Omega)}
:=\sum_{\alpha\in \mathbb{Z}_{+}^n,\, |\alpha|= k}
\|\partial^{\alpha} f\|_{X(\Omega)}.$
\end{enumerate}
\end{definition}

Finally, we establish the following
extrapolation lemma on BBF spaces,
which is one of the most important tools in subsequent sections;
for other variants of extrapolation lemmas on BBF spaces,
we refer to \cite{LN24jfaa,Nie23,TYYZ21}.

\begin{lemma}\label{lem-po}
Let $X $ be a {\rm BBF} space, $\Omega$ an open set,
and $X(\Omega)$ the
restricted space of $X$ on $\Omega$.
Assume that
there exists some $p\in [1,\infty)$
such that $X^{\frac{1}{p}} $ is
also a {\rm BBF} space
and the Hardy--Littlewood maximal operator
$\mathcal{M}$ is bounded on $Y:= (X^{\frac{1}{p}})'$.
If a nonnegative measurable function pair $(F,G)$ on $\Omega$
satisfies that $G\in X(\Omega)$ and that there exists an increasing function
$\phi:[0,\infty)\to [0,\infty)$ such that,
for any $\omega \in A_{1}$,
$\|F\|_{L^p_\omega (\Omega)}\le \phi([\omega]_{A_{1}})\|G\|_{L^p_\omega (\Omega)}$,
then
$$\|F\|_{X(\Omega)} \le 2^{\frac{1}{p}}\phi(2\|\mathcal{M}\|_{Y\to Y}) \|G\|_{X(\Omega)}.$$
\end{lemma}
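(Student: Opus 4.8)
The plan is to run Rubio de Francia's extrapolation algorithm, adapted to the restricted space $X(\Omega)$ via zero-extensions and the convexification calculus. Write $Y:=(X^{1/p})'$, and let $\widetilde{F},\widetilde{G}$ denote the zero-extensions to $\mathbb{R}^n$ of $F,G$ (as in Remark \ref{rem-0}); since $G\in X(\Omega)$, we have $\widetilde{G}\in X$ with $\|\widetilde{G}\|_{X}=\|G\|_{X(\Omega)}$. First I would note that it suffices to prove $\|\widetilde{F}\|_{X}\le 2^{1/p}\phi(2\|\mathcal{M}\|_{Y\to Y})\|G\|_{X(\Omega)}$, because this already forces $\widetilde{F}\in X$, hence $F=\widetilde{F}|_{\Omega}\in X(\Omega)$, and then $\|F\|_{X(\Omega)}=\|\widetilde{F}\|_{X}$ by Remark \ref{rem-0}. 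Using Definition \ref{def-convex} (with convexification exponent $\frac1p$), the Lorentz--Luxemburg-type identity $X^{1/p}=(X^{1/p})''$ from Remark \ref{rem-x-a}(iii), and the definition of the associate norm, I would rewrite
\begin{align*}
\|\widetilde{F}\|_{X}^{p}=\|\widetilde{F}^{p}\|_{X^{1/p}}=\|\widetilde{F}^{p}\|_{Y'}
=\sup\left\{\int_{\mathbb{R}^n}\widetilde{F}(x)^{p}h(x)\,dx:\ h\in Y,\ h\ge 0,\ \|h\|_{Y}\le 1\right\},
\end{align*}
where the right-hand side is read as $+\infty$ if $\widetilde{F}^{p}\notin Y'$; the same identities give $\|\widetilde{G}^{p}\|_{Y'}=\|G\|_{X(\Omega)}^{p}<\infty$.

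Next I would fix an admissible $h$ (assuming $h\not\equiv 0$, the other case being trivial) and form the Rubio de Francia operator
\begin{align*}
\mathcal{R}h:=\sum_{j=0}^{\infty}\frac{\mathcal{M}^{j}h}{\left(2\|\mathcal{M}\|_{Y\to Y}\right)^{j}},\qquad\text{where}\quad \mathcal{M}^{0}h:=h,\quad \mathcal{M}^{j}:=\mathcal{M}\circ\mathcal{M}^{j-1}.
\end{align*}
From $\|\mathcal{M}^{j}h\|_{Y}\le\|\mathcal{M}\|_{Y\to Y}^{j}\|h\|_{Y}$ the series converges in $Y$, so $\mathcal{R}h$ is a nonnegative locally integrable function on $\mathbb{R}^n$ (by Definition \ref{def-X}(v)) with the three key properties: (a) $h\le\mathcal{R}h$ pointwise; (b) $\|\mathcal{R}h\|_{Y}\le 2\|h\|_{Y}\le 2$; and (c) the sublinearity of $\mathcal{M}$ gives $\mathcal{M}(\mathcal{R}h)\le 2\|\mathcal{M}\|_{Y\to Y}\,\mathcal{R}h$ almost everywhere, so $\mathcal{R}h\in A_{1}$ with $[\mathcal{R}h]_{A_{1}}\le 2\|\mathcal{M}\|_{Y\to Y}$ (here one invokes the characterization of $A_{1}$-weights via the maximal operator; cf.\ Lemma \ref{lem-apwight}(i)).

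Then I would chain these together. Using (a), the hypothesis applied with the $A_{1}$-weight $\omega:=\mathcal{R}h$, the monotonicity of $\phi$, H\"older's inequality in the $(Y',Y)$-duality, and (b),
\begin{align*}
\int_{\mathbb{R}^n}\widetilde{F}^{p}h=\int_{\Omega}F^{p}h\le\int_{\Omega}F^{p}\,\mathcal{R}h
&\le\phi\left([\mathcal{R}h]_{A_{1}}\right)^{p}\int_{\Omega}G^{p}\,\mathcal{R}h\\
&\le\phi\left(2\|\mathcal{M}\|_{Y\to Y}\right)^{p}\,\|\widetilde{G}^{p}\|_{Y'}\,\|\mathcal{R}h\|_{Y}
\le 2\,\phi\left(2\|\mathcal{M}\|_{Y\to Y}\right)^{p}\|G\|_{X(\Omega)}^{p}.
\end{align*}
Taking the supremum over all admissible $h$ yields $\|\widetilde{F}^{p}\|_{Y'}\le 2\,\phi(2\|\mathcal{M}\|_{Y\to Y})^{p}\|G\|_{X(\Omega)}^{p}<\infty$, so $\widetilde{F}\in X$, and extracting a $p$-th root gives the claimed inequality.

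The method is the classical extrapolation scheme, so no genuinely new idea is needed; the real care lies in the bookkeeping: correctly threading the identities $X=(X^{1/p})^{p}$ (with matching norms), $X^{1/p}=(X^{1/p})''$, and the duality $\|g\|_{Z'}=\sup_{\|h\|_{Z}\le 1}\|gh\|_{L^{1}}$ through the zero-extension formalism of Remark \ref{rem-0}, together with the routine (but not entirely trivial) verification of properties (a)--(c) of $\mathcal{R}$ in the abstract space $Y$. The one subtle point is that $F$ is not assumed a priori to lie in $X(\Omega)$; this is exactly why I phrase everything through the associate-norm supremum and invoke $X=X''$, so that finiteness of that supremum delivers both the membership $F\in X(\Omega)$ and the estimate simultaneously.
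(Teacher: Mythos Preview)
Your proof is correct and follows essentially the same route as the paper: both argue via zero-extensions, the convexification identity $\|\widetilde{F}\|_X^p=\|\widetilde{F}^p\|_{X^{1/p}}=\|\widetilde{F}^p\|_{Y'}$ (using $X^{1/p}=(X^{1/p})''$), and Rubio de Francia's iteration algorithm in $Y$. The only cosmetic difference is that the paper packages properties (a)--(c) of $\mathcal{R}$ and the dual-norm representation into a separate preparatory lemma (Lemma~\ref{lem-rxp}), whereas you write the construction out inline; your extra care in noting that finiteness of the supremum simultaneously yields $\widetilde{F}\in X$ is a nice touch that the paper leaves implicit.
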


To show this extrapolation lemma, we need the following
Rubio de Francia's iteration algorithm
and related equivalent norms of
{\rm BBF} spaces, which is precisely
\cite[Lemma 4.6 and Remark 4.7]{DLYYZ23}.

\begin{lemma}\label{lem-rxp}
Let $X $ be a {\rm BBF} space
satisfying the same assumptions as in Lemma \ref{lem-po}
with some $p\in[1,\infty)$.
For any $g\in Y:=(X^{\frac1p})'$, let
$
R_{Y}g:=\sum_{m=0}^{\infty}\frac{\mathcal{M}^m g}{2^m
\|\mathcal{M}\|_{Y\to
Y}^m},
$
where, for any $m\in {\mathbb{N}}$, $\mathcal{M}^m$ is the
$m$-fold iteration
of $\mathcal{M}$ and
$\mathcal{M}^0 g:=|g|$.
Then it holds that, for any $g\in Y$,
\begin{enumerate}[{\rm (i)}]
\item for any $x\in {\mathbb{R}^n}$, $|g(x)|\leq R_{Y}g(x),$
\item $R_{Y}g\in A_1  $ and $[R_{Y}g]_{A_1}\leq
2\|\mathcal{M}\|_{Y\to Y},$
\item $\|R_{X}g\|_Y \leq 2\|g\|_Y$.
\end{enumerate}
Moreover, if $f\in X $, then
\begin{align*}
\|f\|_X
\leq
\sup_{\|g\|_{Y}\leq
1}\left[\int_{{\mathbb{R}^n}}|f(x)|^p R_{Y}
g(x)\, dx\right]^{\frac{1}{p}}
\leq
2^{\frac{1}{p}}\|f\|_X.
\end{align*}
\end{lemma}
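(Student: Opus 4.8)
The plan is to carry out Rubio de Francia's extrapolation scheme on the BBF space $X(\Omega)$, using the iteration algorithm $R_Y$ of Lemma \ref{lem-rxp} to produce, for each test function $g$ in $Y=(X^{\frac1p})'$, a concrete $A_1$ weight whose $A_1$-constant is at most $2\|\mathcal{M}\|_{Y\to Y}$, and then reassembling the weighted estimates via the norm characterization at the end of Lemma \ref{lem-rxp}.

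First I would transfer the problem from $\Omega$ to $\mathbb{R}^n$. Let $\widetilde F$ and $\widetilde G$ denote the extensions of $F$ and $G$ by zero on $\Omega^{\complement}$. By Remark \ref{rem-0}, $\|\widetilde G\|_{X}=\|G\|_{X(\Omega)}<\infty$, so $\widetilde G\in X$; moreover, once the inequality $\|\widetilde F\|_{X}\le 2^{\frac1p}\phi(2\|\mathcal{M}\|_{Y\to Y})\|\widetilde G\|_{X}$ is proved (as an inequality in $[0,\infty]$), the finiteness of its right-hand side together with Definition \ref{def-X}(iii) forces $\widetilde F\in X$, hence $F\in X(\Omega)$ and $\|F\|_{X(\Omega)}=\|\widetilde F\|_{X}$ by Remark \ref{rem-0} again. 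Since Lemma \ref{lem-rxp} is stated for functions already lying in $X$, I would first replace $\widetilde F$ by the truncations $H_N:=\min\{\widetilde F,N\}{\bf 1}_{B({\bf 0},N)}$, which belong to $X$ by Definition \ref{def-X}(ii) and (iv) and satisfy $0\le H_N\uparrow\widetilde F$; it then suffices to bound $\|H_N\|_{X}$ uniformly in $N$ and invoke Definition \ref{def-X}(iii).

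Next, fixing $N$ and then $g\in Y$ with $\|g\|_{Y}\le 1$, I would set $\omega:=R_Y g$. By Lemma \ref{lem-rxp}(ii), $\omega\in A_1$ and $[\omega]_{A_1}\le 2\|\mathcal{M}\|_{Y\to Y}$; since $H_N\le\widetilde F$, all functions involved vanish off $\Omega$, and $\phi$ is increasing, the weighted hypothesis on the pair $(F,G)$ gives
\begin{align*}
\int_{\mathbb{R}^n}|H_N(x)|^{p}\omega(x)\,dx
\le\|F\|_{L^{p}_{\omega}(\Omega)}^{p}
\le\phi\left(2\|\mathcal{M}\|_{Y\to Y}\right)^{p}\|G\|_{L^{p}_{\omega}(\Omega)}^{p}
=\phi\left(2\|\mathcal{M}\|_{Y\to Y}\right)^{p}\int_{\mathbb{R}^n}|\widetilde G(x)|^{p}R_Y g(x)\,dx.
\end{align*}
Then, by the definition of the associate norm in Definition \ref{def-X'} applied to the pair $|\widetilde G|^{p}\in X^{\frac1p}$ and $R_Y g\in Y=(X^{\frac1p})'$, together with Definition \ref{def-convex} and Lemma \ref{lem-rxp}(iii),
\begin{align*}
\int_{\mathbb{R}^n}|\widetilde G(x)|^{p}R_Y g(x)\,dx
\le\left\|\,|\widetilde G|^{p}\,\right\|_{X^{\frac1p}}\left\|R_Y g\right\|_{Y}
=\|\widetilde G\|_{X}^{p}\left\|R_Y g\right\|_{Y}
\le 2\|g\|_{Y}\|\widetilde G\|_{X}^{p}
\le 2\|\widetilde G\|_{X}^{p}.
\end{align*}
Combining these two displays with the inequality $\|H_N\|_{X}\le\sup_{g\in Y,\,\|g\|_{Y}\le1}[\int_{\mathbb{R}^n}|H_N(x)|^{p}R_Y g(x)\,dx]^{\frac1p}$ from Lemma \ref{lem-rxp}, taking $p$-th roots and the supremum over all such $g$, I would obtain $\|H_N\|_{X}\le 2^{\frac1p}\phi(2\|\mathcal{M}\|_{Y\to Y})\|\widetilde G\|_{X}$; letting $N\to\infty$ then completes the argument.

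I do not anticipate a substantial obstacle: this is the classical Rubio de Francia extrapolation transcribed into the BBF-space setting, with all the analytic input (boundedness of $\mathcal{M}$ on $Y$, the iteration algorithm and its $A_1$-bound, the associate-space duality) already packaged into Lemma \ref{lem-rxp} and the hypotheses on $X$. The only delicate points are bookkeeping in nature: verifying that the truncation-and-Fatou reduction in the first step is legitimate, and keeping track that the relevant $A_1$-constant is $2\|\mathcal{M}\|_{Y\to Y}$ rather than $\|\mathcal{M}\|_{Y\to Y}$, so that the quantities appearing inside $\phi$ and the final constant $2^{\frac1p}$ emerge exactly as stated.
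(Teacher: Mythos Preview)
Your proposal does not prove Lemma \ref{lem-rxp}; it \emph{uses} Lemma \ref{lem-rxp} (parts (i)--(iii) and the final norm characterization) as a black box to prove Lemma \ref{lem-po}, the extrapolation lemma. The paper does not prove Lemma \ref{lem-rxp} either---it is quoted from \cite[Lemma 4.6 and Remark 4.7]{DLYYZ23}---so there is no proof in the paper to compare against for the statement you were given.

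If the intended target was Lemma \ref{lem-po}, then your argument is correct and follows essentially the same Rubio de Francia route as the paper's proof. The only structural difference is that the paper works directly with $\widetilde F$, relying on the fact that $\|\cdot\|_X$ is defined (possibly as $+\infty$) for all measurable functions and on the biduality $X^{\frac1p}=(X^{\frac1p})''$ as an identity of norms (Remark \ref{rem-x-a}(iii)); you instead truncate to $H_N\in X$ so that the hypothesis ``$f\in X$'' in the last display of Lemma \ref{lem-rxp} applies literally, and then pass to the limit via the Fatou property. This is harmless extra caution. On the $G$ side, the paper invokes the upper inequality of Lemma \ref{lem-rxp} directly, while you unpack it as H\"older for the $X^{\frac1p}$--$Y$ pairing combined with part (iii); these are the same estimate. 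In both arguments the constants $2^{\frac1p}$ and $\phi(2\|\mathcal{M}\|_{Y\to Y})$ emerge identically.
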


\begin{proof}[Proof of Lemma \ref{lem-po}]
Let $F\in \mathscr{M}(\Omega)$ and $G\in X(\Omega)$.
By Remark \ref{rem-0}, Definition \ref{def-convex}, Remark \ref{rem-x-a}(iii),
Definition \ref{def-X'}, and Lemma \ref{lem-rxp}, we find that
\begin{align*}
\|F\|_{X(\Omega)}
&=\left\|\widetilde{F}\right\|_{X}=
\left\|\left(\widetilde{F}\right)^p\right\|^{\frac{1}{p}}_{X^{\frac{1}{p}}}
=\left\|\left(\widetilde{F}\right)^p\right\|^{\frac{1}{p}}_{(X^{\frac{1}{p}})''}
=\sup_{\|g\|_{Y}\leq
1}\left[\int_{{\mathbb{R}^n}}\left|\widetilde{F}(x)\right|^p
g(x)\, dx\right]^{\frac{1}{p}}\\
&\le
\sup_{\|g\|_{Y}\leq
1}\left[\int_{\Omega}\left|F(x)\right|^p
R_Y g(x)\, dx\right]^{\frac{1}{p}}\le
\sup_{\|g\|_{Y}\leq 1}	\phi([R_Y g]_{A_{1}})\left[\int_{\Omega}\left|G(x)\right|^p
R_Y g(x)\, dx\right]^{\frac{1}{p}}\\
&\le
\phi(2\|\mathcal{M}\|_{Y \to Y })
\sup_{\|g\|_{Y}\leq 1}\left[\int_{\Omega}\left|G(x)\right|^p
R_Y g(x)\, dx\right]^{\frac{1}{p}}\\
&=
\phi(2\|\mathcal{M}\|_{Y \to Y })\sup_{\|g\|_{Y}\leq 1}\left[\int_{\mathbb{R}^n}\left|\widetilde{G}(x)\right|^p
R_Y g(x)\, dx\right]^{\frac{1}{p}}\le
2^{\frac{1}{p}}\phi(2\|\mathcal{M}\|_{Y \to Y }) \|G\|_{X(\Omega)}.
\end{align*}
This finishes the proof of Lemma \ref{lem-po}.
\end{proof}

\subsection{An Inequality Related to Gagliardo and Sobolev Seminorms\\
in Framework of $X(Q)$}\label{s3.2}

In this subsection, we mainly prove the following theorem.

\begin{theorem}\label{thm-QX}
Let $k\in \mathbb{N}$ and $X $ be a {\rm BBF} space.
Assume that $X$ satisfies the same assumptions as in Lemma \ref{lem-po}
with some $p\in [1,\infty)$.
If
$q\in [1,\infty)$
satisfies $n(\frac{1}{p}-\frac{1}{q})<k$, then
there exists a positive constant
$C$
such that, for any $s\in (0,1)$,
any cube $Q$ in $\mathbb{R}^n$, and any
$f\in \dot{W}^{k,X}(Q)$,
\begin{align}\label{eq-sssX}
(1-s)^{\gamma_{p,q}}
\left\|\left[\int_{Q(\cdot,k)}
\frac{|\Delta^k_h f(\cdot)|^q}{|h|^{n+skq}}\,dh
\right]^\frac{1}{q}\right\|_{X(Q)}
\le
C[\ell (Q)]^{(1-s)k}
\left\|\nabla^k f\right\|_{X(Q)},
\end{align}
where
$\ell (Q)$ denotes the edge length of
$Q$ and $\gamma_{p,q}$ is the same as in \eqref{Gqp}.
\end{theorem}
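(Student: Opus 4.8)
The plan is to deduce Theorem~\ref{thm-QX} from Theorem~\ref{thm-upi}(i) through the extrapolation Lemma~\ref{lem-po}. Fix $s\in(0,1)$ and a cube $Q$, and set
\begin{align*}
F(x):=\left[\int_{Q(x,k)}\frac{|\Delta^k_hf(x)|^q}{|h|^{n+skq}}\,dh\right]^{\frac1q},
\qquad
G:=\left|\nabla^kf\right|,
\end{align*}
so that $G\in X(Q)$ by hypothesis. By Lemma~\ref{lem-po} with $\Omega=Q$, it suffices to produce an increasing function $\psi\colon[0,\infty)\to[0,\infty)$, depending only on $n,k,p,q$, such that for every $\omega\in A_1$ one has the \emph{seed estimate}
\begin{align*}
(1-s)^{\gamma_{p,q}}\,\|F\|_{L^p_\omega(Q)}
\le
[\ell(Q)]^{(1-s)k}\,\psi([\omega]_{A_1})\,\|G\|_{L^p_\omega(Q)};
\end{align*}
indeed, then $\phi(t):=(1-s)^{-\gamma_{p,q}}[\ell(Q)]^{(1-s)k}\psi(t)$ is an increasing function satisfying the hypothesis of Lemma~\ref{lem-po}, and its conclusion is precisely \eqref{eq-sssX} with $C:=2^{\frac1p}\psi(2\|\mathcal{M}\|_{Y\to Y})$, a constant independent of both $s$ and $Q$.

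To prove the seed estimate, fix $\omega\in A_1$; we may assume $\|G\|_{L^p_\omega(Q)}<\infty$. First, the dilation $f\mapsto f(\ell(Q)\,\cdot)$, $\omega\mapsto\omega(\ell(Q)\,\cdot)$, which leaves $[\omega]_{A_1}$ unchanged, reduces matters to the unit cube $Q_0$ and manufactures the factor $[\ell(Q)]^{(1-s)k}$, so we may assume $\ell(Q)=1$. Next, since $\Delta^k_h$ annihilates polynomials of degree at most $k-1$, a weighted higher-order Poincar\'e inequality on the cube $Q_0$ provides a polynomial $P$ with $\deg P\le k-1$ for which $\nabla^k(f-P)=\nabla^kf$, $\Delta^k_h(f-P)=\Delta^k_hf$, and $\|f-P\|_{W^{k,p}_\omega(Q_0)}\le\psi_1([\omega]_{A_1})\|\nabla^kf\|_{L^p_\omega(Q_0)}$ with $\psi_1$ increasing; in particular $f-P\in W^{k,p}_\omega(Q_0)$. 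Then a weighted extension theorem for integer-order Sobolev spaces on cubes yields $g\in W^{k,p}_\omega\subset\dot{W}^{k,p}_\omega$ with $g=f-P$ on $Q_0$ and $\|\nabla^kg\|_{L^p_\omega}\le\psi_2([\omega]_{A_1})\|f-P\|_{W^{k,p}_\omega(Q_0)}$ for some increasing $\psi_2$.

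Finally, feed $g$ into Theorem~\ref{thm-upi}(i). If $x\in Q_0$ and $h\in Q_0(x,k)$, then all points $x+jh$ with $0\le j\le k$ lie in the convex set $Q_0$, so $\Delta^k_hf(x)=\Delta^k_h(f-P)(x)=\Delta^k_hg(x)$; moreover $x,x+kh\in Q_0$ forces $|h|\le\sqrt n/k$, whence $Q_0(x,k)\subset B({\bf 0},\sqrt n)$. Enlarging the inner domain of integration to $B({\bf 0},\sqrt n)$ and the outer one to $\mathbb{R}^n$ (the integrand being nonnegative) and then applying Theorem~\ref{thm-upi}(i) to $g$ with $r:=\sqrt n$, we obtain
\begin{align*}
(1-s)^{\gamma_{p,q}}\,\|F\|_{L^p_\omega(Q_0)}
\le
(1-s)^{\gamma_{p,q}}\left\{\int_{\mathbb{R}^n}\left[\int_{B({\bf 0},\sqrt n)}\frac{|\Delta^k_hg(x)|^q}{|h|^{n+skq}}\,dh\right]^{\frac pq}\omega(x)\,dx\right\}^{\frac1p}
\le C_{(n,k,p,q)}\,[\omega]_{A_1}^{\frac1p}\,\|\nabla^kg\|_{L^p_\omega};
\end{align*}
combining this with the Poincar\'e and extension bounds yields the seed estimate on $Q_0$ with $\psi:=C_{(n,k,p,q)}\,\psi_1\psi_2\,(\cdot)^{1/p}$, and undoing the dilation finishes the proof. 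The main obstacle is the middle step: one needs a weighted higher-order Poincar\'e inequality and a weighted $W^{k,p}_\omega$-extension operator on cubes whose norms are bounded by \emph{increasing} functions of $[\omega]_{A_1}$ (equivalently of $[\omega]_{A_p}$, since $[\omega]_{A_p}\le[\omega]_{A_1}$ by Lemma~\ref{lem-apwight}(ii)), which is what allows the hypothesis of Lemma~\ref{lem-po} to be verified; a secondary subtlety --- and the very reason for passing to $f-P$ rather than $f$ --- is that an element of $\dot{W}^{k,X}(Q)$ carries no a priori integrability against $\omega$.
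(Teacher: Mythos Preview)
Your proposal is correct and follows essentially the same route as the paper: establish the weighted $A_1$ seed estimate on $Q$ by extending $f$ to $\mathbb{R}^n$ via Chua's weighted extension theorem \cite{Chu92} with operator norm controlled by an increasing function of $[\omega]_{A_1}$, apply Theorem~\ref{thm-upi}(i) on the extended function with $r\sim\ell(Q)$, and then extrapolate through Lemma~\ref{lem-po}. The paper streamlines your middle step by invoking Chua's extension directly on the \emph{homogeneous} space $\dot{W}^{k,p}_\omega(Q)\to\dot{W}^{k,p}_\omega$ (so no preliminary Poincar\'e subtraction or dilation is needed, and the scale-invariance of the constant in $Q$ comes for free), whereas you pass through the inhomogeneous space via $f\mapsto f-P$; both are legitimate and rest on the same external input.
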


\begin{proof}
Let $Q$ be a cube in $\mathbb{R}^n$.
We first claim that
there exists an increasing function $\phi:[0,\infty)\to [0,\infty)$
such that, for any $s\in (0,1)$, $\omega\in A_1$, and $f\in \dot{W}^{k,p}_{\omega}(Q)$,
\begin{align}\label{eq-claim1}
(1-s)^{\gamma_{p,q}}
\left\{
\int_{Q}
\left[\int_{Q(x,k)}
\frac{|\Delta^k_h f(x)|^q}{|h|^{n+skq}}\,dh
\right]^\frac{p}{q}\omega(x)\,dx\right\}^{\frac{1}{p}}
\lesssim\phi([\omega]_{A_1})
{\ell (Q)}^{(1-s)k}
\left\|\nabla^k  f\right\|_{L^p_{\omega}(Q)},
\end{align}
where the implicit positive constant depends only on $n$, $k$,
$p$, and $q$.
To this end, fix $\omega\in A_1$.
By carefully checking the proof of
\cite[Theorem 1.2]{Chu92}, we find
that there exist an extension operator
$\Lambda:\dot{W}_{\omega}^{k,p}(Q)\to\dot{W}^{k,p}_\omega$ and an increasing
function $\psi:[0,\infty)\to[0,\infty)$ such that, for any
$f\in\dot{W}^{k,p}_\omega(Q)$, $\Lambda f=f$ almost everywhere in $Q$
and
\begin{align}\label{eq-FQ}
\left\|\nabla^k \Lambda f\right\|_{L^p_{\omega}}
\lesssim\psi([\omega]_{A_1})
\left\|\nabla^k f\right\|_{L^p_{\omega} (Q)}
\end{align}
with the implicit positive constant depending only
on $n$, $k$, and $p$.

On the other hand, assume that $c_Q$ denotes the center of $Q$.
Then, for any $x\in Q$ and $h\in Q(x,k)$,
we have $|h|\le |h+x-c_Q|+|x-c_Q|<2 \sqrt{n}\ell(Q)$.
Combining this, Theorem \ref{thm-upi}(i),
and \eqref{eq-FQ}, we find that,
for any $s\in (0,1)$ and $f\in\dot{W}^{k,p}_\omega(Q)$,
\begin{align*}
&(1-s)^{\gamma_{p,q}}
\left\{
\int_{Q}
\left[\int_{Q(x,k)}
\frac{|\Delta^k_h f(x)|^q}{|h|^{n+skq}}\,dh
\right]^\frac{p}{q}\omega(x)\,dx\right\}^{\frac{1}{p}}\\
&\quad\le
(1-s)^{\gamma_{p,q}}
\left\{
\int_{\mathbb{R}^n}
\left[\int_{B({\bf 0},2\sqrt{n}\ell(Q))}
\frac{|\Delta^k_h \Lambda f(x)|^q}{|h|^{n+skq}}\,dh
\right]^\frac{p}{q}\omega(x)\,dx\right\}^{\frac{1}{p}}\\
&	\quad\lesssim
[\omega]_{A_1}^{\frac{1}{p}}{\ell (Q)}^{(1-s)k}
\left\|\nabla^k \Lambda f\right\|_{L^p_{\omega}}
\lesssim
[\omega]_{A_1}^{\frac{1}{p}}
\psi([\omega]_{A_1})
{\ell (Q)}^{(1-s)k}
\left\|\nabla^k  f\right\|_{L^p_{\omega}(Q)}.
\end{align*}
Thus, the above claim \eqref{eq-claim1} holds with
$\phi(t):=t^{\frac1p}\psi(t)$ for any $t\in[0,\infty)$.
From \eqref{eq-claim1} and Lemma \ref{lem-po}, it follows
\eqref{eq-sssX}, which then completes the proof of Theorem \ref{thm-QX}.
\end{proof}

\begin{remark}\label{rm312}
Similar to the proof of Proposition \ref{gammaqp},
we can find that, in Theorem \ref{thm-QX},
the exponent $\gamma_{p,q}$
on $1-s$ is sharp.
\end{remark}

Next, with the aid of Theorem \ref{thm-QX},
we can show Theorem \ref{thm-Q}.

\begin{proof}[Proof of Theorem \ref{thm-Q}]
Applying Theorem \ref{thm-QX}, we find that,
to show the present theorem, we only need to
verify that $X:=L^p_\omega$ with $\omega\in A_p$
satisfies all the assumptions of Theorem \ref{thm-QX}.
To this end,
we consider the following two cases for $p$ and $\omega$.

\emph{Case 1: $p\in (1,\infty)$ and $\omega\in A_p \setminus A_1$}.
In this case, $p_\omega>1$.
By the assumption that $n(\frac{p_\omega}{p}-\frac{1}{q})<k$,
we find that there exists
$\epsilon \in (0, p- p_\omega)$ such that
$n(\frac{p_\omega +\epsilon}{p}-\frac{1}{q})<k$.
Using the definition of $p_{\omega}$, we obtain
$\omega\in A_{p_\omega +\epsilon}$.
From Definition \ref{def-convex}, we infer that
$[L^p_{\omega} ]^{\frac{p_\omega +\epsilon}{p}}=L^{p_\omega +\epsilon}_{\omega} $.
By this, $\omega\in A_{p_\omega +\epsilon}$,
and  \cite[p.\,86]{SHYY17}, we find that
both $L^p_{\omega} $ and $L^{p_\omega +\epsilon}_{\omega} $
are {\rm BBF} spaces.
Moreover,
from the proof of \cite[Theorem 5.10]{DGPYYZ24},
we deduce that the Hardy--Littlewood maximal operator $\mathcal{M}$ is
bounded on $[L^{p_\omega +\epsilon}_{\omega} ]'$.
Therefore, applying Theorem \ref{thm-QX}
with $X $ and $p$ therein replaced, respectively, by
$L^p_{\omega} $ and $\frac{p}{p_\omega +\epsilon}$ here,
we obtain \eqref{eq-sss3}.

\emph{Case 2: $p\in [1,\infty)$ and $\omega\in A_1$}.
In this case, $p_\omega=1$.
Similar to the proof of Case 1, we conclude that
$L^p_\omega$ with $\omega\in A_1$ satisfies all the assumptions of Theorem \ref{thm-QX}.
Thus, \eqref{eq-sss3} also holds in this case.
This finishes the proof of Theorem \ref{thm-Q}.
\end{proof}

\section{A Fractional Gagliardo--Nirenberg Inequality \\
in Framework of Ball Banach Function Spaces}\label{sec-pf1}

In this section, we establish the following
general fractional
Gagliardo--Nirenberg interpolation
inequalities on {\rm BBF} spaces.

\begin{theorem}\label{thm-in}
Let $k\in \mathbb{N}$ and
$X $ be a {\rm BBF} space.
Assume that $X$ satisfies the same assumptions as in Lemma \ref{lem-po}
with some $p\in [1,\infty)$.
\begin{enumerate}[{\rm (i)}]
\item If $q\in [1,p]$, then there exists
a positive constant $C$
such that,
for any $s\in (0,1)$ and $f\in W^{k,X}$,
\begin{align*}
s^{\frac{1}{q}}(1-s)^{\frac{1}{q}}\left\|
\left[\int_{\mathbb{R}^n}
\frac{|\Delta^k_h f(\cdot)|^q}{|h|^{n+skq}}\,dh
\right]^\frac{1}{q}\right\|_{X}
\le C
\left\|f\right\|^{1-s}_{X}
\left\|\nabla^k f\right\|^s_{X}.
\end{align*}

\item If $q\in (p,\infty)$ satisfies
$n(\frac{1}{p}-\frac{1}{q})<k$
and
$\theta \in
(0, 1-\frac{n}{k}[\frac{1}{p}-\frac{1}{q}])$, then there exists
a positive constant $C$
such that,
for any $s\in (\max\{1-\frac{\theta}{2},1-\frac{n}{kq}\},1)$
and $f\in W^{k,X}$,
\begin{align*}
(1-s)^{\gamma_{p,q}}\left\|
\left[\int_{\mathbb{R}^n}
\frac{|\Delta^k_h f(\cdot)|^q}{|h|^{n+skq}}\,dh
\right]^\frac{1}{q}\right\|_{X}
\le C
\left\|f\right\|^{1-s}_{X}
\left\|\nabla^k f\right\|^s_{X},
\end{align*}
where $\gamma_{p,q}$ is the same as in \eqref{Gqp}.
\end{enumerate}
\end{theorem}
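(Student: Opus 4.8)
The plan is to split the inner integral over $\mathbb{R}^n$ into a \emph{local part} $\{h:|h|\le r\}$ and a \emph{tail} $\{h:|h|>r\}$, with $r\in(0,\infty)$ a free parameter, to dominate the local part by a multiple of $r^{(1-s)k}\|\nabla^k f\|_X$ and the tail by a multiple of $r^{-sk}\|f\|_X$ (up to powers of $s$ and $1-s$), and then to optimize in $r$. Since $q\ge 1$, the subadditivity of $t\mapsto t^{1/q}$ gives $[\int_{\mathbb{R}^n}\frac{|\Delta_h^k f(\cdot)|^q}{|h|^{n+skq}}\,dh]^{1/q}\le[\int_{|h|\le r}\cdots]^{1/q}+[\int_{|h|>r}\cdots]^{1/q}$, so it suffices to treat the two pieces separately in $\|\cdot\|_X$. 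For the local part I would invoke Theorem \ref{thm-upi}(i): for every $\omega\in A_1$ it yields $(1-s)^{\gamma_{p,q}}\|[\int_{B({\bf 0},r)}\cdots]^{1/q}\|_{L^p_\omega}\le C_{(n,k,p,q)}[\omega]_{A_1}^{1/p}r^{(1-s)k}\|\nabla^k f\|_{L^p_\omega}$, which (being vacuously true when $\|\nabla^k f\|_{L^p_\omega}=\infty$) is an admissible weighted inequality for the pair $(F,G)=([\int_{B({\bf 0},r)}\cdots]^{1/q},|\nabla^k f|)$ with the increasing function $\phi(t)=C_{(n,k,p,q)}t^{1/p}$. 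Hence the extrapolation Lemma \ref{lem-po} (applicable since $X^{1/p}$ is a {\rm BBF} space and $\mathcal{M}$ is bounded on $(X^{1/p})'$) gives $(1-s)^{\gamma_{p,q}}\|[\int_{B({\bf 0},r)}\cdots]^{1/q}\|_X\lesssim r^{(1-s)k}\|\nabla^k f\|_X$.

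For part (i), where $q\le p$ and hence $\gamma_{p,q}=\frac1q$, I would bound the tail pointwise by $[\int_{|h|>r}\frac{|\Delta_h^k f(x)|^q}{|h|^{n+skq}}\,dh]^{1/q}\lesssim s^{-1/q}r^{-sk}[\mathcal{M}(|f|^q)(x)]^{1/q}$, which follows from $|\Delta_h^k f(x)|\le\sum_{j=0}^k\binom{k}{j}|f(x+jh)|$, a dyadic decomposition of $\{|h|>r\}$, the change of variables $z=jh$, the bound $\fint_{B(x,\rho)}|f|^q\le\mathcal{M}(|f|^q)(x)$, and $\sum_{\nu\ge 0}2^{-\nu skq}\sim(skq)^{-1}$. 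Since I only have $\mathcal{M}$ bounded on $(X^{1/p})'$ and not on $X^{1/q}$, I would route the tail through extrapolation as well: for $\omega\in A_1$, using $p/q\ge 1$ and $\omega\in A_1\subset A_{p/q}$, the ball-average operators $A_\rho g:=\fint_{B(\cdot,\rho)}|g|$ are uniformly in $\rho$ bounded on $L^{p/q}_\omega$ with norm controlled by an increasing function of $[\omega]_{A_1}$ — via $A_\rho\le\mathcal{M}$ and Lemma \ref{lem-apwight}(iii) when $q<p$, and via the estimate $\int A_\rho(g)\,\omega\le[\omega]_{A_1}\int g\,\omega$, i.e.\ Lemma \ref{eq-s1}, when $q=p$ — so that $s^{1/q}\|[\int_{|h|>r}\cdots]^{1/q}\|_{L^p_\omega}\le\Phi([\omega]_{A_1})\,r^{-sk}\|f\|_{L^p_\omega}$ with $\Phi$ increasing; Lemma \ref{lem-po} applied to $(F,G)=([\int_{|h|>r}\cdots]^{1/q},|f|)$ then gives $s^{1/q}\|[\int_{|h|>r}\cdots]^{1/q}\|_X\lesssim r^{-sk}\|f\|_X$. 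Combining the two pieces and using $s^{1/q},(1-s)^{1/q}\le1$ yields $s^{1/q}(1-s)^{1/q}\|[\int_{\mathbb{R}^n}\cdots]^{1/q}\|_X\lesssim r^{(1-s)k}\|\nabla^k f\|_X+r^{-sk}\|f\|_X$, and choosing $r^k\sim\|f\|_X/\|\nabla^k f\|_X$ (the cases $\|f\|_X=0$, $\|\nabla^k f\|_X=0$, and $\|f\|_X+\|\nabla^k f\|_X=\infty$ being trivial) finishes (i).

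For part (ii), where $q>p$, the maximal-function/ball-average argument breaks down because the relevant exponent $p/q$ is now less than $1$. Instead I would estimate the tail via Proposition \ref{lem-upw}: on each dyadic annulus $\{|h|\sim 2^\nu r\}$, the covering properties of finitely many shifted dyadic grids (Lemma \ref{lem-dy-cub}) let one enclose the configuration $\{x,x+h,\dots,x+kh\}$ in a grid cube $Q$ of side length $\sim 2^\nu r$ chosen independently of $(x,h)$ within the annulus; replacing $f$ by its degree-$(k-1)$ polynomial projection $P_Q$ on $Q$ — which is annihilated by $\Delta_h^k$ — and estimating $\sum_i|f(x+ih)-P_Q(x+ih)|$ by the higher-order Poincaré inequality (Lemma \ref{Poin}), one obtains, after summing the dyadic series, a domination of the tail by interpolation quantities of the type $r^{(1-s)k}\|\nabla^k f\|_X$ and $r^{-sk}\|f\|_X$; the hypotheses $\theta<1-\tfrac nk(\tfrac1p-\tfrac1q)$, $s>1-\tfrac\theta2$, and $s>1-\tfrac n{kq}$ are precisely what guarantee convergence of that series and a favourable net power of the scale (and, if the argument is first run on $L^p_\omega$, a final application of Lemma \ref{lem-po} transfers it to $X$). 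Combining with the local estimate and optimizing in $r$ as in (i) then gives $(1-s)^{\gamma_{p,q}}\|[\int_{\mathbb{R}^n}\cdots]^{1/q}\|_X\lesssim\|f\|_X^{1-s}\|\nabla^k f\|_X^s$. I expect part (ii) to be the main obstacle: the delicate points are making the polynomial-replacement uniform over all difference vectors $h$ of a given dyadic length via the shifted-grid covering, and tracking the scale-dependent constants carefully enough that they stay within reach of the $A_1$-extrapolation.
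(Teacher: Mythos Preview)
Your approach is correct and essentially matches the paper's. Two small clarifications for (ii): Proposition~\ref{lem-upw} applied with $\rho(t)=t^{(1-s)kq-n}\mathbf{1}_{(r,\infty)}(t)$ yields a \emph{single} tail term of order $(\theta+s-1)^{-1/p}r^{k(1-s-\theta)}\|f\|_{L^p_\omega}^\theta\|\nabla^k f\|_{L^p_\omega}^{1-\theta}$, not two separate pieces as you describe; one then combines this with the local bound and optimizes in $r$ at the $L^p_\omega$ level, obtaining a right-hand side proportional to $\|f\|_{L^p_\omega}^{1-s}\|\nabla^k f\|_{L^p_\omega}^{s}$ (the restriction $s>1-\theta/2$ enters precisely to bound the resulting prefactor $(\theta+s-1)^{(s-1)/(p\theta)}$ uniformly in $s$). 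Since this right-hand side is a product of two norms rather than a single $\|G\|_{L^p_\omega}$, transferring to $X$ requires a mild adaptation of Lemma~\ref{lem-po} --- in the Rubio de Francia step one uses $\sup_g A(g)^{1-s}B(g)^s\le(\sup_g A(g))^{1-s}(\sup_g B(g))^{s}$ --- whereas your piecewise extrapolation in (i) neatly sidesteps this issue there.
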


\begin{remark}
Theorem \ref{thm-in} gives the fractional
Gagliardo--Nirenberg interpolation
inequality between $X$ and $\dot{W}^{k,X}$.
It would be interesting to
establish a corresponding interpolation
inequality between $X_1$ and $\dot{W}^{k,X_2}$
for different function spaces $X_1$ and $X_2$;
see also \cite[Remark 4.14(iii)]{DLYYZ23} for a related
question.
\end{remark}

To prove Theorem \ref{thm-in},
we first establish two weighted estimates.
Indeed, applying \cite[Lemma 3.15]{DGPYYZ24}
and repeating an argument similar to that used in the proof of \cite[Lemma 2.25]{PYYZ24}
with the first-order difference therein replaced by
the higher-order difference, we obtain
the following lemma, which plays a key role in the proof of
Theorem \ref{thm-in}(i);
we omit the details.

\begin{lemma}\label{eq-s1}
Let $\omega\in A_1$,
$k\in \mathbb{N}$,
and $1\le q\le p <\infty$.
Then there exists a positive constant $C$,
depending only
on $n,k,p$, and $q$, such that,
for any $r\in (0,\infty)$ and $f\in L^{p}_{\omega}$,
\begin{align*}
s^{\frac{1}{q}}
\left\{\int_{\mathbb{R}^n}\left[\int_{B({\bf 0},r)^{\complement}}
\frac{|\Delta^k_h f(x)|^q}{|h|^{n+skq}}\,dh
\right]^\frac{p}{q}\omega(x)\,dx\right\}^{\frac{1}{p}}
\le C
[\omega]_{A_{1}}^{\frac{2}{p}}
r^{-sk}
\left\| f\right\|_{L^{p}_{\omega}}.
\end{align*}
\end{lemma}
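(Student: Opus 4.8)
The plan is to prove Lemma \ref{eq-s1} by reducing it, via extrapolation-free direct estimates, to the boundedness of the ball average operators on weighted Lebesgue spaces, and to handle the factor $s^{1/q}$ by exploiting the dyadic decomposition of the outer region $B(\mathbf{0},r)^{\complement}$. First I would write $B(\mathbf{0},r)^{\complement}=\bigcup_{\nu=0}^{\infty}A_\nu$, where $A_\nu:=B(\mathbf{0},2^{\nu+1}r)\setminus B(\mathbf{0},2^\nu r)$, so that
\begin{align*}
\int_{B(\mathbf{0},r)^{\complement}}\frac{|\Delta^k_h f(x)|^q}{|h|^{n+skq}}\,dh
\le\sum_{\nu=0}^{\infty}(2^\nu r)^{-skq}\int_{A_\nu}\frac{|\Delta^k_h f(x)|^q}{|h|^{n}}\,dh
\lesssim\sum_{\nu=0}^{\infty}(2^\nu r)^{-skq}\fint_{B(\mathbf{0},2^{\nu+1}r)}|\Delta^k_h f(x)|^q\,dh.
\end{align*}
Since $|\Delta^k_h f(x)|^q\lesssim\sum_{j=0}^{k}|f(x+jh)|^q$ by \eqref{eq-sho-hd} and the elementary inequality \eqref{eq-rrr} (or H\"older), a change of variables bounds the inner average by $\sum_{j=0}^{k}\fint_{B(x,2^{\nu+1}jr)}|f(y)|^q\,dy\lesssim[\mathcal{M}(|f|^q)(x)]$, uniformly in $\nu$ and $j$. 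Summing the geometric series $\sum_{\nu\ge0}2^{-\nu skq}\sim(1-2^{-skq})^{-1}\sim(skq)^{-1}\sim s^{-1}$ then gives, pointwise,
\begin{align*}
\int_{B(\mathbf{0},r)^{\complement}}\frac{|\Delta^k_h f(x)|^q}{|h|^{n+skq}}\,dh
\lesssim\frac{r^{-skq}}{s}\,\mathcal{M}\!\left(|f|^q\right)(x).
\end{align*}

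Next I would raise both sides to the power $1/q$, multiply by $s^{1/q}$, and take the $L^p_\omega$ norm. The right-hand side becomes $r^{-sk}\,\|[\mathcal{M}(|f|^q)]^{1/q}\|_{L^p_\omega}=r^{-sk}\,\|\mathcal{M}(|f|^q)\|_{L^{p/q}_\omega}^{1/q}$. Because $q\le p$, we have $p/q\ge1$; if $p/q>1$ we apply the weighted boundedness of $\mathcal{M}$ on $L^{p/q}_\omega$ for $\omega\in A_{p/q}$, and here $\omega\in A_1\subset A_{p/q}$ by Lemma \ref{lem-apwight}(ii), with operator norm controlled by $[\omega]_{A_{p/q}}^{1/(p/q-1)}\le[\omega]_{A_1}^{1/(p/q-1)}$; the cleaner route, yielding the stated exponent $2/p$, is to use the last clause of Lemma \ref{lem-apwight}(iii): since $\omega\in A_1$, $\|\mathcal{M}\|_{L^{p/q}_\omega\to L^{p/q}_\omega}\lesssim[\omega]_{A_1}^{q/p}$, so $\|\mathcal{M}(|f|^q)\|_{L^{p/q}_\omega}^{1/q}\lesssim[\omega]_{A_1}^{1/p}\,\||f|^q\|_{L^{p/q}_\omega}^{1/q}=[\omega]_{A_1}^{1/p}\|f\|_{L^p_\omega}$. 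When $p/q=1$ the maximal operator is not bounded, and instead one uses that $\int_{\mathbb{R}^n}\mathcal{M}(|f|^q)(x)\,\omega(x)\,dx\lesssim[\omega]_{A_1}\int_{\mathbb{R}^n}|f(x)|^q\omega(x)\,dx$, which is the standard $A_1$ weak-type-turned-strong-type bound for $\mathcal{M}$ paired against an $A_1$ weight; this gives the factor $[\omega]_{A_1}^{1/p}=[\omega]_{A_1}$ since $p=q$, still dominated by $[\omega]_{A_1}^{2/p}$. In all cases we arrive at the asserted bound with constant $[\omega]_{A_1}^{2/p}$ (the exponent $2/p$ is a convenient, non-sharp upper bound absorbing the two sub-cases), completing the estimate for $f\in L^p_\omega$; a routine density/Fatou argument as in the proof of Theorem \ref{thm-upi} extends it from smooth functions if needed, though the pointwise maximal bound already makes sense for any $f\in L^p_\omega$.

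The main obstacle is the borderline case $q=p$, where $\mathcal{M}$ fails to be bounded on $L^{1}_\omega$, so the naive "bound by the maximal function and apply its boundedness" strategy collapses; the fix is to integrate the maximal-function bound directly against the $A_1$ weight using Fefferman--Stein / $A_1$ duality ($\int \mathcal{M}g\cdot\omega\lesssim[\omega]_{A_1}\int g\cdot\omega$ for $g\ge0$), which is exactly what the reference \cite[Lemma 2.25]{PYYZ24} exploits. A secondary technical point is ensuring the change of variables $h\mapsto y=x+jh$ in the inner average is uniform in the dilation scale $2^\nu r$ and in $j\in\{1,\dots,k\}$, so that all $k+1$ terms coalesce into a single maximal function with a constant depending only on $n,k,p,q$; this is routine but must be tracked to keep the weight constant clean. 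With these two points handled, the summation of the geometric series is what produces the decisive gain of $s^{-1}$, matching the prefactor $s^{1/q}$ and giving the claimed decay $r^{-sk}$.
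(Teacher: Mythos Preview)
Your overall strategy---dyadic decomposition of $B(\mathbf{0},r)^\complement$, pointwise domination of $|\Delta_h^k f|^q$ by translates of $|f|^q$, change of variables to ball averages, and summing the geometric series to extract the factor $s^{-1}$---is correct and is essentially the route indicated by the paper via \cite[Lemma~2.25]{PYYZ24}. For $q<p$ your maximal-function argument is fine and even yields the better constant $[\omega]_{A_1}^{1/p}\le[\omega]_{A_1}^{2/p}$.

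The genuine gap is in the endpoint case $q=p$. The inequality you invoke,
\[
\int_{\mathbb{R}^n}\mathcal{M}g(x)\,\omega(x)\,dx\lesssim[\omega]_{A_1}\int_{\mathbb{R}^n}g(x)\,\omega(x)\,dx,
\]
is \emph{false}: take $\omega\equiv1\in A_1$ and $g=\mathbf{1}_{B(\mathbf{0},1)}$; then $\mathcal{M}g(x)\sim|x|^{-n}$ at infinity, so the left side diverges while the right side is finite. Fefferman--Stein gives only the weak-type estimate $\omega(\{\mathcal{M}g>\lambda\})\lesssim\lambda^{-1}\int g\,\mathcal{M}\omega$, which does not upgrade to a strong $L^1$ bound. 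The correct fix is to stop one step earlier, \emph{before} passing to the maximal function: retain the individual ball averages $\fint_{B(x,2^{\nu+1}jr)}|f(y)|^q\,dy$, integrate against $\omega$, and apply Tonelli to swap the $x$- and $y$-integrals. This converts the ball average of $|f|^q$ into a ball average of $\omega$, and the $A_1$ condition gives $\fint_{B(y,R)}\omega\le[\omega]_{A_1}\omega(y)$ directly, so that
\[
\int_{\mathbb{R}^n}\fint_{B(x,R)}|f(y)|^q\,dy\,\omega(x)\,dx\le[\omega]_{A_1}\|f\|_{L^q_\omega}^q
\]
uniformly in $R$. Summing the geometric series then closes the case $q=p$. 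This boundedness of the \emph{ball average operator} (not the maximal operator) on $L^1_\omega$ is exactly what the paper has in mind via \cite[Lemma~3.15]{DGPYYZ24}; the maximal function is a red herring at the endpoint.
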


On the other hand, the following weighted
estimate involving derivatives is essential in the proof of
Theorem \ref{thm-in}(ii) and
is also of independent interest.

\begin{proposition}\label{lem-upw}
Let $\omega\in A_1$,
$k\in \mathbb{N}$,
$1\le p <q <\infty$
satisfy $n(\frac{1}{p}-\frac{1}{q})<k$,
and $\theta \in
[0, 1-\frac{n}{k}[\frac{1}{p}-\frac{1}{q}])$.
Assume $\rho$ is a nonnegative
and decreasing function
on $(c_0,\infty)$
with some $c_0\in(0,\infty)$.
If
\begin{align}\label{eq-conRho}
C_{(\rho)}:=\int_{c_0}^{\infty}
\left[\rho(r)r^{n-kq\theta}\right]
^{\frac{p}{q}}\,\frac{dr}{r}
<\infty,
\end{align}
then there exist a positive constant
$C_{(n)}$, depending only on $n$,
and a positive constant
$C_{(n,k,p,q)}$, depending only
on $n,k,p$, and $q$, such that,
for any $f\in {W}^{k,p}_{\omega} $,
\begin{align*}
&\int_{\mathbb{R}^n}\left[\int_{B({\bf0},C_{(n)}c_0)^\complement}
\frac{|\Delta^k_h f(x)|^q}{|h|^{kq}}\rho(|h|)\,dh
\right]^\frac{p}{q}\omega(x)\,dx\\
&\quad\le C_{(n,k,p,q)} C_{(\rho)}
[\omega]_{A_{1}}\left([\omega]_{A_1}+1\right)
\left\| f\right\|^{p\theta}_{L^{p}_{\omega}}
\left\|\nabla^k f\right\|^{p(1-\theta)}_{L^{p}_{\omega}}.
\end{align*}
\end{proposition}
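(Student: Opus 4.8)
As in the proof of Theorem \ref{thm-upi}, I would first reduce, by mollification (using \cite[Theorem 2.1.4]{Tur00}, \cite[Theorem 4.1(iv)]{EG15}, and Fatou's lemma), to the case $f\in C^{\infty}\cap W^{k,p}_{\omega}$, and then decompose the $h$-integral dyadically. Writing the outer region as $\bigcup_{\nu\ge\nu_0}\{2^{\nu}\le|h|<2^{\nu+1}\}$ and using that $\rho$ is decreasing together with the subadditivity of $t\mapsto t^{p/q}$ (valid since $q>p$), the task reduces to proving, for each scale $R=2^{\nu}$, the estimate
\begin{align}\label{plan-IR}
\mathcal{I}_R:=\int_{\mathbb{R}^n}\left[\int_{R\le|h|<2R}|\Delta^k_h f(x)|^q\,dh\right]^{\frac pq}\omega(x)\,dx
\lesssim [\omega]_{A_1}\left([\omega]_{A_1}+1\right)R^{\frac{np}{q}+kp(1-\theta)}\left\|f\right\|_{L^p_{\omega}}^{p\theta}\left\|\nabla^k f\right\|_{L^p_{\omega}}^{p(1-\theta)}.
\end{align}
Indeed, given \eqref{plan-IR}, summing $\sum_{\nu\ge\nu_0}\rho(2^{\nu})^{p/q}2^{-\nu kp}\mathcal{I}_{2^{\nu}}$ one finds that the $\nu$-exponent collapses to $(n-kq\theta)\frac pq$, and the integral test (again by monotonicity of $\rho$) bounds the resulting series by $\int_{c_0}^{\infty}[\rho(r)r^{n-kq\theta}]^{p/q}\,\frac{dr}{r}=C_{(\rho)}$; here $C_{(n)}$ is chosen large enough that all the annuli above lie in $(c_0,\infty)$.

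To prove \eqref{plan-IR}, fix $R$. By Lemma \ref{lem-dy-cub} there are finitely many shifted dyadic grids such that every ball $B(x,2kR)$ is contained in some cube $Q=Q(x)$ from one of them with $\ell(Q)\sim R$; in particular $x,x+h,\dots,x+kh\in Q$ whenever $|h|<2R$. Since $\Delta^k_h$ annihilates polynomials of degree less than $k$, I would write $\Delta^k_h f(x)=\Delta^k_h(f-P_Q)(x)$ for the averaged Taylor polynomial $P_Q$ of $f$ on $Q$, so that $|\Delta^k_h f(x)|\lesssim\sum_{j=0}^k|(f-P_Q)(x+jh)|$. Changing variables $y=x+jh$ in the summands with $j\ge1$ bounds their contribution to $\int_{R\le|h|<2R}(\cdots)\,dh$ by $\|f-P_Q\|^q_{L^q(Q)}$, while the $j=0$ summand contributes $R^n|(f-P_Q)(x)|^q$. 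Raising to the power $p/q$, integrating in $x$ against $\omega$, and partitioning $\mathbb{R}^n$ according to which grid and cube is assigned to $x$ (grid by grid, these cubes form a bounded-overlap tiling of $\mathbb{R}^n$), one obtains
\begin{align*}
\mathcal{I}_R\lesssim\sum_{Q}\left[R^{\frac{np}{q}}\int_{Q}|(f-P_Q)(x)|^p\omega(x)\,dx+\|f-P_Q\|^p_{L^q(Q)}\,\omega(Q)\right].
\end{align*}

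The heart of the argument is the per-cube estimate. For the first term I would take the geometric mean, with exponents $\theta$ and $1-\theta$, of the trivial bound $\int_Q|f-P_Q|^p\omega\lesssim[\omega]_{A_1}\int_Q|f|^p\omega$ (triangle inequality, $\|P_Q\|_{L^{\infty}(Q)}\lesssim\fint_Q|f|$, and the $A_1$-estimate $\|\omega^{-1}\|_{L^{\infty}(Q)}\le[\omega]_{A_1}|Q|/\omega(Q)$) and the weighted higher-order Poincar\'e bound $\int_Q|f-P_Q|^p\omega\lesssim[\omega]_{A_1}R^{kp}\int_Q|\nabla^k f|^p\omega$, which follows from Lemma \ref{Poin} (in the form $|f(x)-P_Q(x)|\lesssim\int_Q|x-y|^{k-n}|\nabla^k f(y)|\,dy\lesssim R^k\mathcal{M}(|\nabla^k f|{\bf 1}_Q)(x)$) together with the weighted maximal bound in Lemma \ref{lem-apwight}(iii) (for $p=1$ one uses instead Tonelli's theorem and $\mathcal{M}\omega\le[\omega]_{A_1}\omega$). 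For the second term I would invoke the higher-order Gagliardo--Nirenberg inequality on $Q$ for functions with vanishing polynomial part, a consequence of the Sobolev and Poincar\'e inequalities in Lemma \ref{Poin}: since $\theta<1-\frac nk(\frac1p-\frac1q)$,
\begin{align*}
\|f-P_Q\|_{L^q(Q)}\lesssim R^{\frac nq-\frac np+k(1-\theta)}\left\|f\right\|^{\theta}_{L^p(Q)}\left\|\nabla^k f\right\|^{1-\theta}_{L^p(Q)},
\end{align*}
whose constant is uniform in $\theta$ (it is the product of the endpoint Sobolev constant and a bounded power of the Poincar\'e constant), and then convert the unweighted norms to weighted ones via the same $A_1$-estimate. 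In both cases the per-cube quantity is $\lesssim[\omega]_{A_1}([\omega]_{A_1}+1)R^{\frac{np}{q}+kp(1-\theta)}(\int_Q|f|^p\omega)^{\theta}(\int_Q|\nabla^k f|^p\omega)^{1-\theta}$; summing over the cubes of each grid by H\"older's inequality for series, $\sum_Q a_Q^{\theta}b_Q^{1-\theta}\le(\sum_Q a_Q)^{\theta}(\sum_Q b_Q)^{1-\theta}$, and over the finitely many grids, then yields \eqref{plan-IR}.

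The main obstacle is the supercritical regime $p<q$: the $L^q$-norm cannot be dominated by the $L^p$-norm, so there is no ``pure function'' estimate to interpolate against, and the remedy is precisely the Gagliardo--Nirenberg inequality on cubes, which trades part of the derivative decay for an $L^p$-factor, forces the restriction $\theta<1-\frac nk(\frac1p-\frac1q)$, and requires one to track the exact power of $R$ so that, after the dyadic reassembly, the growth exponent is exactly $(n-kq\theta)\frac pq$ and the series is governed by $C_{(\rho)}$. Secondary care goes into making the cube decomposition an honest tiling through the shifted dyadic grids of Lemma \ref{lem-dy-cub}, into the $\theta$-uniformity of the Gagliardo--Nirenberg constant, and into tracking the $A_1$-constant so that the total contribution is at most $[\omega]_{A_1}([\omega]_{A_1}+1)$ (as the unweighted-to-weighted conversions and the weighted Poincar\'e/maximal step each cost at most one factor of $[\omega]_{A_1}$).
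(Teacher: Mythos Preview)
Your approach is correct and differs substantially from the paper's. The paper does not fix a single scale per dyadic annulus; instead, for each pair $(x,h)$ it applies the telescoping estimate of Lemma~\ref{Poin} together with a pigeonhole argument to select a possibly much smaller scale $2^{-j_{x,h}}|h|$, matches it to a shifted dyadic cube $Q_{x,h}$, and bounds $|f-P^{(k-1)}_{B_{x,h,k}}(f)|$ simultaneously by $\fint_{Q_{x,h}}|f|$ and by $|Q_{x,h}|^{k/n}\fint_{Q_{x,h}}|\nabla^k f|$, producing the $\theta$-interpolated form pointwise. This introduces an extra sum over $j\in\mathbb{Z}_+$ with geometric decay $2^{-j\epsilon}$, which is exactly what forces the choice of an $\epsilon>0$ and hence the strict inequality $\theta<1-\frac{n}{k}(\frac1p-\frac1q)$. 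Your route is more direct: one scale per annulus, with the interpolation handled by the higher-order Gagliardo--Nirenberg inequality on cubes (derived from Sobolev--Poincar\'e and H\"older interpolation of $L^r$-norms); in effect you invoke as a black box the very inequality that the paper's pigeonhole argument reproves from scratch. Your argument even seems to yield a slightly better weight dependence (a single factor $[\omega]_{A_1}$ rather than the $[\omega]_{A_1}^2$ coming from the paper's Case~2). Two small remarks: first, in the borderline case $kp=n$ your Gagliardo--Nirenberg constant does depend on $\theta$ (it blows up as $\theta\uparrow\theta^*$ through the Trudinger-type growth of the Sobolev constant), so your parenthetical uniformity claim needs a caveat---but this matches the paper, whose constant, despite its name $C_{(n,k,p,q)}$, also depends on $\theta$ through the choice of $\epsilon$ in \eqref{eq-ep}. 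Second, the initial reduction to smooth $f$ by mollification is unnecessary (the paper works directly in $W^{k,p}_\omega$), though it does no harm.
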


To show this proposition, we need
some concepts and lemmas.
Let $\mathcal{P}_{s} $ denote the set of
all polynomials of degree not greater than $s\in \mathbb{Z}_+$ on
$\mathbb{R}^n$.
For any
$s\in \mathbb{Z}_+$, any ball $B\subset \mathbb{R}^n$,
and any $f\in L^{1}_{\rm loc} $,
let
$P^{(s)}_{B}(f)$ denote the \emph{minimizing polynomial} in
$\mathcal{P}_{s} $ such that, for any
$\alpha\in\mathbb{Z}_+^n$ with $|\alpha|\leq s$,
\begin{align*}
\int_{B}\left[f(x)-P^{(s)}_{B}(f)(x)\right]x^{\alpha}\,dx=0;
\end{align*}
for any cube $Q\subset \mathbb{R}^n$, $P^{(s)}_{Q}(f)$ is defined
in a similar way.
The following technical lemma about minimizing polynomials can
be easily deduced from the definition;
see also \cite[p.\,83]{TG80} and \cite[Lemma 4.1]{Lu95}.

\begin{lemma}\label{lem-unip}
Let $s\in\mathbb{Z}_+$,
$p\in [1,\infty)$, and $\Omega$
be a ball or a cube in $\mathbb{R}^n$.
Then, for any $f\in L^{1}_{\rm loc} $,
\begin{align*}
\left\|f-P^{(s)}_{\Omega}(f)\right\|_{L^p(\Omega)}
\sim
\inf_{P\in \mathcal{P}_s  }
\left\|f-P\right\|_{L^p(\Omega)},
\end{align*}
where the positive equivalence constants depend only on $n$ and $s$.
\end{lemma}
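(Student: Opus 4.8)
The plan is to prove the two halves of the claimed equivalence separately. One direction, $\inf_{P\in\mathcal{P}_s}\|f-P\|_{L^p(\Omega)}\le\|f-P^{(s)}_{\Omega}(f)\|_{L^p(\Omega)}$, is immediate since $P^{(s)}_{\Omega}(f)$ itself lies in $\mathcal{P}_s$ and is thus an admissible competitor. The content of Lemma \ref{lem-unip} is therefore the reverse estimate
$$\left\|f-P^{(s)}_{\Omega}(f)\right\|_{L^p(\Omega)}\le C_{(n,s)}\inf_{P\in\mathcal{P}_s}\left\|f-P\right\|_{L^p(\Omega)},$$
which I would establish through a uniform boundedness property of the map $f\mapsto P^{(s)}_\Omega(f)$.

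The starting point I would use is that $f\mapsto P^{(s)}_{\Omega}(f)$ is a \emph{linear projection} onto $\mathcal{P}_s$: the defining conditions $\int_\Omega[f-P^{(s)}_\Omega(f)]x^\alpha\,dx=0$ for $|\alpha|\le s$ amount to a linear system for the coefficients of $P^{(s)}_\Omega(f)$ whose matrix is the Gram matrix of the monomials $\{x^\alpha:|\alpha|\le s\}$ over $\Omega$, hence positive definite (a nonzero polynomial vanishes only on a set of measure zero, so these monomials are linearly independent on $\Omega$) and invertible. Consequently $P^{(s)}_\Omega$ is well defined and linear, and $P^{(s)}_\Omega(P)=P$ for every $P\in\mathcal{P}_s$. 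Thus, for any $P\in\mathcal{P}_s$,
$$f-P^{(s)}_{\Omega}(f)=(f-P)-P^{(s)}_{\Omega}(f-P),$$
so by the triangle inequality the reverse estimate reduces to the scale-independent operator bound $\|P^{(s)}_{\Omega}(g)\|_{L^p(\Omega)}\le C_{(n,s)}\|g\|_{L^p(\Omega)}$, after which one applies this with $g:=f-P$ and takes the infimum over $P\in\mathcal{P}_s$.

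To prove the operator bound with a constant independent of $\Omega$, I would reduce to a fixed reference domain by an affine change of variables. Writing $\Omega=x_0+r\Omega_0$, where $\Omega_0$ is the unit ball (resp.\ the unit cube centered at the origin) and $r\in(0,\infty)$, the substitution $x=x_0+ry$ preserves $\mathcal{P}_s$, and since $\{(x_0+ry)^\alpha:|\alpha|\le s\}$ and $\{y^\beta:|\beta|\le s\}$ span the same subspace of polynomials in $y$, it carries the orthogonality conditions on $\Omega$ to those on $\Omega_0$; hence $P^{(s)}_{\Omega}(g)(x_0+r\cdot)=P^{(s)}_{\Omega_0}\big(g(x_0+r\cdot)\big)$, while $\|\cdot\|_{L^p(\Omega)}=r^{n/p}\|\cdot(x_0+r\cdot)\|_{L^p(\Omega_0)}$, and the factors $r^{n/p}$ cancel. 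It then remains to bound $\|P^{(s)}_{\Omega_0}(g)\|_{L^p(\Omega_0)}$ by $C_{(n,s)}\|g\|_{L^p(\Omega_0)}$, which is a finite-dimensional statement: expanding $P^{(s)}_{\Omega_0}(g)=\sum_{|\alpha|\le s}c_\alpha y^\alpha$, the vector $(c_\alpha)$ equals $M^{-1}$ applied to $\big(\int_{\Omega_0}g(y)y^\beta\,dy\big)_\beta$ with $M$ the invertible, $(n,s)$-dependent Gram matrix, so H\"older's inequality on the bounded set $\Omega_0$ yields $|c_\alpha|\le C_{(n,s)}\|g\|_{L^p(\Omega_0)}$, and summing against $\|y^\alpha\|_{L^p(\Omega_0)}$ (again bounded by a constant depending only on $n$ and $s$) gives the claim.

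I do not expect a genuine obstacle here; this is the routine ``best polynomial approximation is comparable to the minimizing polynomial'' argument, and the only points requiring minor care are the invertibility of the Gram matrix and the bookkeeping of the dilation, both of which are straightforward. Combining the two directions completes the proof of Lemma \ref{lem-unip}.
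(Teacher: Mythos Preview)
Your argument is correct and is precisely the standard route: one direction is trivial, and for the other you use that $P^{(s)}_\Omega$ is a linear projection onto $\mathcal{P}_s$, reduce by affine invariance to a fixed reference domain, and invoke the invertibility of the Gram matrix together with H\"older's inequality to get an $L^p$ operator bound with constant depending only on $n$ and $s$. The paper itself does not write out a proof but simply defers to \cite[p.\,83]{TG80} and \cite[Lemma 4.1]{Lu95}, where exactly this argument appears; your proposal is a faithful unpacking of that reference.
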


Moreover, we need the shifted
dyadic grids and their exquisite geometrical properties.
Recall that,
for any $\alpha\in\{0,\frac{1}{3},\frac{2}{3}\}^n$, the \emph{shifted
dyadic grid} $\mathcal{D}^\alpha$ is defined by setting
\begin{align*}
\mathcal{D}^\alpha:=\left\{2^j\left[m+[0,1)^n+(-1)^j\alpha\right]:
j\in\mathbb{Z},\ m\in\mathbb{Z}^n\right\}.
\end{align*}
The following properties (see, for instance, \cite[p.\,479]{MTT02})
of shifted dyadic grids on the Euclidean space play
key roles in the proof of Proposition \ref{lem-upw}.

\begin{lemma}\label{lem-dy-cub}
Let $\alpha\in\{0,\frac{1}{3},\frac{2}{3}\}^n$. On
$\mathcal{D}^\alpha$, the following statements hold.
\begin{enumerate}[{\rm(i)}]
\item
For any $Q,P\in\mathcal{D}^\alpha$ with
$\alpha\in\{0,\frac{1}{3},\frac{2}{3}\}^n$,
$Q\cap P\in\{\emptyset,Q,P\}$.
\item
There exists a positive constant $C_{(n)}$,
depending only on $n$, such that,
for any ball $B\subset\mathbb{R}^n$, there exist
$\alpha\in\{0,\frac{1}{3},\frac{2}{3}\}^n$ and
$Q\in\mathcal{D}^\alpha$
such that $B\subset Q\subset C_{(n)}B$.
\end{enumerate}
\end{lemma}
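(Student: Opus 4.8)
The plan is to establish the two assertions by the classical reasoning behind the ``one-third trick.'' \emph{Proof of (i).} For each $j\in\mathbb{Z}$, write $\mathcal{D}^\alpha_j:=\{2^j[m+[0,1)^n+(-1)^j\alpha]:m\in\mathbb{Z}^n\}$; since $\mathbb{Z}^n+[0,1)^n=\mathbb{R}^n$ is a partition, so is $\mathcal{D}^\alpha_j$, into pairwise disjoint half-open cubes of edge length $2^j$. I would then show that each $Q\in\mathcal{D}^\alpha_j$ lies in a unique $P\in\mathcal{D}^\alpha_{j+1}$: writing $Q=2^j[m+[0,1)^n+(-1)^j\alpha]$ and $P=2^{j+1}[m'+[0,1)^n+(-1)^{j+1}\alpha]$, one checks that $Q\subset P$ is equivalent to $m+3(-1)^j\alpha=2m'+\varepsilon$ for some $\varepsilon\in\{0,1\}^n$; the point is that $3\alpha\in\{0,1,2\}^n\subset\mathbb{Z}^n$, so $m+3(-1)^j\alpha\in\mathbb{Z}^n$ admits exactly one such representation. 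Iterating (and using that, by the partition property, each cube of $\mathcal{D}^\alpha_{j+1}$ is the disjoint union of the $2^n$ cubes of $\mathcal{D}^\alpha_j$ contained in it), each $Q\in\mathcal{D}^\alpha_j$ sits inside a unique member of $\mathcal{D}^\alpha_{j'}$ for every $j'\ge j$. Given $Q,P\in\mathcal{D}^\alpha$ with the edge length of $Q$ not greater than that of $P$, I would then compare $Q$ with its unique ancestor $\widetilde Q$ at the scale of $P$: by disjointness within one scale, either $P=\widetilde Q$, whence $Q\cap P=Q$, or $P\cap\widetilde Q=\emptyset$, whence $Q\cap P=\emptyset$. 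This proves (i).

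\emph{Proof of (ii).} I would reduce to one dimension via tensor products, since the dyadic scale to be used depends only on the edge length, which is common to all coordinates. In $\mathbb{R}$, the key observation is that, at a fixed scale $j$, the left endpoints of the intervals in $\mathcal{D}^0_j\cup\mathcal{D}^{1/3}_j\cup\mathcal{D}^{2/3}_j$ together form the set $2^j\cdot\frac13\mathbb{Z}$ (independently of the parity of $j$, because $\mathbb{Z}-\frac13=\mathbb{Z}+\frac23$ and $\mathbb{Z}-\frac23=\mathbb{Z}+\frac13$), so two consecutive such endpoints are exactly $2^j/3$ apart. Given an interval $I$ of length $\ell$, I would pick $j$ minimal with $2^j>\frac32\ell$, so also $2^j\le 3\ell$ and $2^j-\ell>2^j/3$; the set of admissible left endpoints $p$ for which $I\subset[p,p+2^j)$ is an interval of length $2^j-\ell>2^j/3$, hence contains a point of $2^j\cdot\frac13\mathbb{Z}$, that is, the left endpoint of an interval $J\in\mathcal{D}^{\alpha_1}$ for a suitable $\alpha_1\in\{0,\frac13,\frac23\}$, with $I\subset J$ and $|J|=2^j\le 3\ell$. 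Finally, for $B=B(x_0,r)$, I would enclose it in the concentric cube $Q_0$ of edge length $2r$, apply the one-dimensional step to each of the $n$ coordinate projections of $Q_0$ (all of length $2r$, hence all yielding the same scale $j$), and set $Q:=J_1\times\cdots\times J_n\in\mathcal{D}^\alpha_j$ with $\alpha:=(\alpha_1,\dots,\alpha_n)$; then $B\subset Q_0\subset Q$ and, since $x_0\in Q$ and $Q$ has edge length at most $6r$, $Q\subset B(x_0,6\sqrt n\,r)=6\sqrt n\,B$, so (ii) holds with $C_{(n)}:=6\sqrt n$.

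The only genuinely delicate point --- and it is mild --- is the interaction of the alternating sign $(-1)^j$ with the one-third shift: this is exactly what forces the cubes of $\mathcal{D}^\alpha$ to nest consistently across all dyadic scales, which is both the content of (i) and what legitimizes treating, in the proof of (ii), the interval $J$ from $\mathcal{D}^{\alpha_1}$ at a single scale as a genuine member of that grid. The remaining verifications --- the partition property, rewriting $Q\subset P$ as the congruence $m+3(-1)^j\alpha=2m'+\varepsilon$, and the endpoint count for the three one-dimensional grids --- are elementary bookkeeping.
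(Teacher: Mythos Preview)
Your proof is correct. The paper does not actually prove this lemma; it merely cites it as a known fact from \cite[p.\,479]{MTT02}, so your self-contained argument via the one-third trick supplies what the paper omits.
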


The following variant of the higher-order
Poincar\'e inequality is exactly \cite[Lemma 3.10]{HLYYZ-bsvy},
which is also needed in the proof of Proposition \ref{lem-upw}
(see also \cite[Lemma 4.13]{LYYZZ-arXiv}).

\begin{lemma}\label{Poin}
Let $k\in \mathbb{N}$ and $f\in L^1_{\rm loc} $.
Then there exists a positive constant $C_{(n,k)}$, depending only on
$n$ and $k$, such that,
for almost every $x\in\mathbb{R}^n$ and for any $r\in(0,\infty)$
and any ball $B_1\subset B:=B(x,r)\subset 3B_1$,
\begin{align*}
\left|f(x)-P^{(k-1)}_{B_1}(f)(x)\right|
\le C_{(n,k)}\sum_{j\in\mathbb{Z}_+}\fint_{B(x,2^{-j}r)}
\left|f(y)-P^{(k-1)}_{B(x,2^{-j}r)}(f)(y)\right|\,dy.
\end{align*}
\end{lemma}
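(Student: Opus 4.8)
The plan is to reduce to the case where $B_1$ is replaced by $B:=B(x,r)$ itself, and then to run a telescoping argument over the dyadic balls $B_j:=B(x,2^{-j}r)$, $j\in\mathbb{Z}_+$, all centered at $x$, so that $B_0=B$. Throughout I will use the elementary fact that, by scaling, translation, and the equivalence of norms on the finite-dimensional space $\mathcal{P}_{k-1}$, there is a positive constant $C_{(n,k)}$ such that $\|g\|_{L^\infty(\widetilde{B})}\le C_{(n,k)}\fint_{\widetilde{B}}|g(y)|\,dy$ for every $g\in\mathcal{P}_{k-1}$ and every ball $\widetilde{B}$; consequently, $\fint_{\widehat{B}}|g|\le C_{(n,k,\lambda)}\fint_{\widetilde{B}}|g|$ whenever $\widetilde{B}\subset\widehat{B}\subset\lambda\widetilde{B}$. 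In every application below, $g$ will be a difference of two minimizing polynomials of degree at most $k-1$.

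First I would note that we may assume $\Sigma(x):=\sum_{j\in\mathbb{Z}_+}\fint_{B_j}|f-P^{(k-1)}_{B_j}(f)|<\infty$, since otherwise the asserted inequality is vacuous. Next I would establish the pointwise convergence $P^{(k-1)}_{B_j}(f)(x)\to f(x)$ as $j\to\infty$ for almost every $x$. Fix a Lebesgue point $x$ of $f$ (almost every point is one, since $f\in L^1_{\rm loc}$), so that $\fint_{B_j}|f(y)-f(x)|\,dy\to0$. Since the constant $f(x)$ lies in $\mathcal{P}_{k-1}$, Lemma \ref{lem-unip} gives $\fint_{B_j}|f-P^{(k-1)}_{B_j}(f)|\lesssim\fint_{B_j}|f(y)-f(x)|\,dy\to0$; applying the norm-equivalence fact to the polynomial $P^{(k-1)}_{B_j}(f)-f(x)$ on $B_j$ (whose center is $x$) and using $\fint_{B_j}|f-f(x)|\to0$ once more yields $|P^{(k-1)}_{B_j}(f)(x)-f(x)|\le C_{(n,k)}\fint_{B_j}|P^{(k-1)}_{B_j}(f)-f(x)|\to0$.

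For the telescoping step, set $g_j:=P^{(k-1)}_{B_{j+1}}(f)-P^{(k-1)}_{B_j}(f)\in\mathcal{P}_{k-1}$. The norm-equivalence fact on $B_{j+1}$ (which has center $x$), the triangle inequality, and $|B_j|=2^n|B_{j+1}|$ give
\begin{align*}
|g_j(x)|\le C_{(n,k)}\fint_{B_{j+1}}|g_j|
\le C_{(n,k)}\left[\fint_{B_{j+1}}\left|f-P^{(k-1)}_{B_{j+1}}(f)\right|+2^n\fint_{B_j}\left|f-P^{(k-1)}_{B_j}(f)\right|\right],
\end{align*}
so that $\sum_{j\in\mathbb{Z}_+}|g_j(x)|\le C_{(n,k)}\Sigma(x)<\infty$ (each average is counted at most twice). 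Since $\sum_{j=0}^{N-1}g_j(x)=P^{(k-1)}_{B_N}(f)(x)-P^{(k-1)}_{B_0}(f)(x)$ telescopes and $P^{(k-1)}_{B_N}(f)(x)\to f(x)$, letting $N\to\infty$ yields $|f(x)-P^{(k-1)}_{B}(f)(x)|\le C_{(n,k)}\Sigma(x)$, i.e.\ the desired inequality with $B_1$ replaced by $B$.

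Finally I would pass from $B$ to the comparable ball $B_1$. Since $x$ is the center of $B$, the norm-equivalence fact and the triangle inequality give $|P^{(k-1)}_{B}(f)(x)-P^{(k-1)}_{B_1}(f)(x)|\le C_{(n,k)}[\fint_{B}|f-P^{(k-1)}_{B}(f)|+\fint_{B}|f-P^{(k-1)}_{B_1}(f)|]$; then the inclusions $B_1\subset B\subset 3B_1$ (so that $|B|\le 3^n|B_1|$), Lemma \ref{lem-unip}, and the consequence of the norm-equivalence fact (with $\widetilde{B}=B_1$, $\widehat{B}=B$, $\lambda=3$) bound the second average by $C_{(n,k)}\fint_{B}|f-P^{(k-1)}_{B}(f)|$, which is the $j=0$ term of $\Sigma(x)$. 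Combining this with the preceding paragraph via $|f(x)-P^{(k-1)}_{B_1}(f)(x)|\le|f(x)-P^{(k-1)}_{B}(f)(x)|+|P^{(k-1)}_{B}(f)(x)-P^{(k-1)}_{B_1}(f)(x)|$ completes the argument. The main technical points, rather than any single hard estimate, are the pointwise convergence $P^{(k-1)}_{B_j}(f)(x)\to f(x)$ at Lebesgue points and the dichotomy that renders the telescoping series absolutely convergent; the reduction from $B_1$ to $B$ is routine, provided one is careful not to use the minimality of the polynomials $P^{(k-1)}$ circularly.
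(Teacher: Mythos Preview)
Your proof is correct and follows the standard telescoping strategy for such pointwise Poincar\'e-type estimates. The paper does not supply its own proof of this lemma; it simply quotes the result from \cite[Lemma~3.10]{HLYYZ-bsvy} (see also \cite[Lemma~4.13]{LYYZZ-arXiv}), so there is no in-paper argument to compare against. Your approach---reducing from $B_1$ to $B=B(x,r)$ via the equivalence of norms on $\mathcal{P}_{k-1}$ and Lemma~\ref{lem-unip}, then telescoping over the dyadic chain $B_j=B(x,2^{-j}r)$ and using that $P^{(k-1)}_{B_j}(f)(x)\to f(x)$ at Lebesgue points---is exactly the natural route, and is essentially how such inequalities are proved in the cited references and in the classical literature (e.g., DeVore--Sharpley-type arguments). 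One small point worth making explicit: the null set you discard (the non-Lebesgue points of $f$) is independent of $r$ and $B_1$, which is what the quantifier order in the statement requires; your proof handles this correctly since the Lebesgue-point condition depends only on $f$.
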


\begin{proof}[Proof of Proposition \ref{lem-upw}]
Let $f\in {W}^{k,p}_{\omega} $.
Notice that, if $P\in \mathcal{P}_{k-1} $,
then $\Delta^k_h P(x)=0$
for any $x,h\in{\mathbb{R}^n}$.
Using this and \eqref{eq-sho-hd}, we find that,
for any $x,h\in {\mathbb{R}^n}$,
\begin{align*}
\left|\Delta_{h}^k f(x)\right|=
\left|\Delta_{h}^k \left(f-P^{(k-1)}_{B_{x,h,k}}(f)\right)(x)\right|
\leq\sum_{i=0}^{k}2^k
\left|\left(f-P^{(k-1)}_{B_{x,h,k}}(f)\right)(x+ih)\right|,
\end{align*}
where $B_{x,h,k}:=B(x+\frac{kh}{2},k|h|)$. From
this, it follows that, to show the present proposition,
it suffices to prove that, for any $i\in \{0,\dots,k\}$,
\begin{align}\label{eq-wup2}
{\rm I}_i:&=\int_{\mathbb{R}^n}\left[\int_{\mathbb{R}^n}
\frac{|f(x+ih)-P^{(k-1)}_{B_{x,h,k}}(f)(x+ih)|^q}{|h|^{kq}}\rho(|h|)\,dh
\right]^\frac{p}{q}\omega(x)\,dx\notag\\
&\lesssim
C_{(\rho)} [\omega]_{A_1}
\left([\omega]_{A_{1}}+1\right)
\left\| f\right\|^{p\theta}_{L^{p}_{\omega}
}
\left\|\nabla^k f\right\|^{p(1-\theta)}_{L^{p}_{\omega}
}
\end{align}
with the implicit positive constant depending only on
$n,k,p$, and $q$.

Fix $i\in \{0,\dots,k\}$ and $x\in\mathbb{R}^n$
and let $C_{(n)}$ be the same as in Lemma \ref{lem-dy-cub}
and $h\in B({\bf0},C_{(n)}c_0)^\complement$.
Using Lemma \ref{Poin} with $B_1:=B_{x,h,k}$
and $B:=B(x+ih,2k|h|)$, we find that
\begin{align}\label{eq-pigeonhole1}
&\left|f(x+ih)-P^{(k-1)}_{B_{x,h,k}}(f)(x+ih)\right|\notag\\
&\quad\le C_{(n,k)}\sum_{j\in\mathbb{Z}_+}
\fint_{B(x+ih,2^{-j+1}k|h|)}
\left|f(z)-P^{(k-1)}_{B(x+ih,2^{-j+1}k|h|)}(f)(z)\right|\,dz,
\end{align}
where $C_{(n,k)}$ is the same positive constant as in Lemma \ref{Poin}.
By the assumptions $n(\frac{1}{p}-\frac{1}{q})<k$
and $\theta \in
[0, 1-\frac{n}{k}[\frac{1}{p}-\frac{1}{q}])$, we can choose
\begin{align}\label{eq-ep}
\epsilon\in\left(0,
k[1-\theta]-n\left[\frac{1}{p}-\frac{1}{q}\right]\right),
\end{align}
and let $c_0:=\frac{1-2^{-\epsilon}}{ C_{(n,k)}}$.
Then, from \eqref{eq-pigeonhole1}, we infer that
\begin{align*}
&c_0\sum_{j\in\mathbb{Z}_+}2^{-j\epsilon}
\left|f(x+ih)-P^{(k-1)}_{B_{x,h,k}}(f)(x+ih)\right|\\
&\quad =
\frac{1}{C_{(n,k)}}
\left|f(x+ih)-P^{(k-1)}_{B_{x,h,k}}(f)(x+ih)\right|\\
&\quad\le\sum_{j\in\mathbb{Z}_+}\fint_{B(x+ih,2^{-j+1}k|h|)}
\left|f(z)-P^{(k-1)}_{B(x+ih,2^{-j+1}k|h|)}(f)(z)\right|\,dz,
\end{align*}
which further implies that
there exists $j_{x,h}\in\mathbb{Z}_+$ such that
\begin{align}\label{eq-claim-02}
&c_02^{-j_{x,h}\epsilon}
\left|f(x+ih)-P^{(k-1)}_{B_{x,h,k}}(f)(x+ih)\right|\notag\\
&\quad\le
\fint_{B(x+ih,2^{-j_{x,h}+1}k|h|)}
\left|f(z)-P^{(k-1)}_{B(x+ih,2^{-j_{x,h}+1}k|h|)}(f)(z)\right|\,dz.
\end{align}
In addition, applying Lemma \ref{lem-dy-cub},
we find that there exist
$\alpha_{x,h}\in \{0,\frac13,\frac23\}^n$ and
$Q_{x,h}\in \mathcal{D}^{\alpha_{x,h}}$
satisfying that
\begin{align}\label{eq-BQB}
B\left(x+ih,2^{-j_{x,h}+1}k|h|\right)\subset Q_{x,h}\subset
B\left(x+ih,2^{-j_{x,h}+1}C_{(n)}k|h|\right).
\end{align}
By this, Lemma \ref{lem-unip}, and the higher-order
Poincar\'{e} inequality (see, for instance, \cite[Theorem 13.27]{Leo17}),
we conclude that
\begin{align}\label{eq-pig2}
&\fint_{B(x+ih,2^{-j_{x,h}+1}k|h|)}
\left|f(z)-P^{(k-1)}_{B(x+ih,2^{-j_{x,h}+1}k|h|)}(f)(z)\right|\,dz\notag\\
&\quad\sim
\inf_{P\in \mathcal{P}_{k-1} }
\fint_{B(x+ih,2^{-j_{x,h}+1}k|h|)}
\left|f(z)-P(z)\right|\,dz\notag\\
&\quad \lesssim
\inf_{P\in \mathcal{P}_{k-1} }
\fint_{Q_{x,h}}
\left|f(z)-P(z)\right|\,dz
\lesssim
|Q_{x,h}|^{\frac{k}{n}}\fint_{Q_{x,h}}
\left|\nabla^k f(z)\right|\,dz.
\end{align}
On the other hand,
applying \eqref{eq-BQB} again
and \cite[Lemma 4.1]{Lu95},
we find that
\begin{align}\label{eq-pig3}
\fint_{B(x+ih,2^{-j_{x,h}+1}k|h|)}
\left|f(z)-P^{(k-1)}_{B(x+ih,2^{-j_{x,h}+1}k|h|)}(f)(z)\right|\,dz
\lesssim
\fint_{Q_{x,h}}
\left|f(z)\right|\,dz.
\end{align}
Furthermore, from \eqref{eq-BQB}, we deduce that
\begin{align*}
c_1\left|2^{j_{x,h}}Q_{x,h}\right|^{\frac1n}
:=\left[2C_{(n)}k\omega_n^{\frac1n}\right]^{-1}\left|2^{j_{x,h}}Q_{x,h}\right|^{\frac1n}
\le|h| \le\left(2k\omega_n^{\frac1n}\right)^{-1}
\left|2^{j_{x,h}}Q_{x,h}\right|^{\frac1n},
\end{align*}
where $\omega_n$ denotes the volume of the unit ball in $\mathbb{R}^n$.
This implies that
$c_1|2^{j_{x,h}}Q_{x,h}|^{\frac1n}
\ge C_{(n)}^{-1}|h|\ge c_0$.
Thus, combining this, \eqref{eq-claim-02},
\eqref{eq-pig2}, \eqref{eq-pig3}, and
the assumption that $\rho$ is decreasing on $(c_0,\infty)$,
further implies that
\begin{align}\label{eq-dom-p}
&\frac{|f(x+ih)-P^{(k-1)}_{B_{x,h,k}}(f)(x+ih)|^q}{|h|^{kq}}\rho(|h|)\notag\\
&\quad=\frac{|f(x+ih)-P^{(k-1)}_{B_{x,h,k}}(f)(x+ih)|
^{q\theta+q(1-\theta)}}{|h|^{kq}}\rho(|h|)\notag\\
&\quad\lesssim
2^{j_{x,h}q(\epsilon-k)}
\left[\left|Q_{x,h}\right|^{-\frac{k}{n}}\fint_{Q_{x,h}}
\left| f(z)\right|\,dz\right]^{q\theta}
\left[\fint_{Q_{x,h}}
\left|\nabla^k f(z)\right|\,dz\right]^{q(1-\theta)}
\rho\left(c_1 \left|2^{j_{x,h}} Q_{x,h}\right|^{\frac{1}{n}}\right).
\end{align}
Using \eqref{eq-BQB} again, we find that
\begin{align*}
(x,h) \in
\begin{cases}
\displaystyle{Q_{{x,h}}\times \left[\left(2^{j_{x,h}}Q_{{x,h}}\right)-\{x\}\right]}&
\text{if } i=0,\\
\displaystyle{\left(2^{j_{x,h}}Q_{{x,h}}\right)\times \frac{[Q_{{x,h}}-\{x\}]}{i}}
&\text{if }i\in \{1,\dots,k\}.
\end{cases}
\end{align*}
Applying this, \eqref{eq-dom-p}, and $Q_{x,h}\in \mathcal{D}^{\alpha_{x,h}}$,
we obtain
\begin{align*}
&\frac{|f(x)-P^{(k-1)}_{B_{x,h,k}}(f)(x)|^q}{|h|^{kq}}\rho(|h|)\\
&\quad \lesssim
\sum_{j\in\mathbb{Z_{+}}}2^{jq(\epsilon-k)}
\sum_{\alpha\in\{0,\frac{1}{3},\frac{2}{3}\}^n}\sum_{Q\in
\mathcal{D}^{\alpha}}
\left[|Q|^{-\frac{k}{n}}\fint_{Q}
\left|f(z)\right|\,dz\right]^{q\theta}
\left[\fint_{Q}
\left|\nabla^k f(z)\right|\,dz\right]^{q(1-\theta)}\notag\\
&\qquad \times
\rho\left(c_1 \left|2^j Q\right|^{\frac{1}{n}}\right)
{\bf 1}_{E_0}(x,x+h)\notag
\end{align*}
and, for any $i\in\{1,\ldots,k\}$,
\begin{align}\label{eq-point}
&\frac{|f(x+ih)-P^{(k-1)}_{B_{x,h,k}}(f)(x+ih)|^q}{|h|^{kq}}\rho(|h|)\notag\\
&\quad \lesssim
\sum_{j\in\mathbb{Z_{+}}}2^{jq(\epsilon-k)}
\sum_{\alpha\in\{0,\frac{1}{3},\frac{2}{3}\}^n}\sum_{Q\in
\mathcal{D}^{\alpha}}
\left[|Q|^{-\frac{k}{n}}\fint_{Q}
\left|f(z)\right|\,dz\right]^{q\theta}
\left[\fint_{Q}
\left|\nabla^k f(z)\right|\,dz\right]^{q(1-\theta)}\notag\\
&\qquad \times
\rho\left(c_1 \left|2^j Q\right|^{\frac{1}{n}}\right)
{\bf 1}_{E_i}(x,x+ih),
\end{align}
where
\begin{align}\label{eq-Eii}
E_{i}:=\begin{cases} Q\times 2^{j}Q & \text{if } i=0,\\
\left(2^{j}Q\right)\times Q & \text{if } i\in \{1,\dots,k\}.
\end{cases}
\end{align}
We then consider the following two cases for $i$.

\emph{Case 1: $i=0$.}
In this case,
from Lemma \ref{lem-dy-cub}(i), the assumption that $\rho$
is decreasing on $(c_0,\infty)$, and \eqref{eq-conRho}, it follows that,
for any $\alpha\in\{0,\frac{1}{3},\frac{2}{3}\}^n$,
$j\in \mathbb{Z}_+$, and $x\in \mathbb{R}^n$,
\begin{align*}
&\sum_{Q\in
\mathcal{D}^{\alpha}}
\left[\rho\left(c_1\left|2^j Q\right|
^{\frac{1}{n}}\right)\right]^{\frac{p}{q}}
\left| 2^j Q\right|^{\frac{p}{q}(1-\frac{kq\theta}{n})}{\bf 1}_{Q}(x)\\
&\quad
=\sum_{\nu\in \mathbb{Z},\,c_1 2^{j+\nu}\ge c_0}
\left[\rho\left(c_1 2^{j+\nu}\right)\right]
^{\frac{p}{q}}2^{\frac{p(n-kq\theta)(j+\nu)}{q}}\\
&\quad\lesssim
\sum_{\nu\in \mathbb{Z},\,c_1 2^{j+\nu}\ge c_0}
\int_{B({\bf 0},c_1 2^{j+\nu})\setminus B({\bf 0},c_1 2^{j+\nu-1})}
\left[\rho(|y|)|y|^{n-kq\theta}\right]^\frac{p}{q}\,\frac{dy}{|y|^n}\\
&\quad\sim
\int_{c_0}^{\infty}
\left[\rho(r)r^{n-kq\theta}\right]
^{\frac{p}{q}}\,\frac{dr}{r}=C_{(\rho)}.
\end{align*}
By this, the assumption $\omega\in A_1 \subset A_p$, Lemma \ref{lem-apwight}(iv),
and
Tonelli's theorem,
we find that, for any
$\alpha\in\{0,\frac{1}{3},\frac{2}{3}\}^n$
and
$j\in \mathbb{Z}_{+}$,
\begin{align}\label{eq-key}
{\rm A}_{\alpha,j}:&=
\sum_{Q\in
\mathcal{D}^{\alpha}}
\left[\fint_{Q}
\left| f(z)\right|\,dz\right]^{p}
\left[\rho\left(c_1 \left|2^j Q\right|^{\frac{1}{n}}\right)
\right]^{\frac{p}{q}}\left|2^j Q\right|^{\frac{p}{q}(1-\frac{kq\theta}{n})}
\omega(Q)\notag\\
&\le
[\omega]_{A_p}\sum_{Q\in
\mathcal{D}^{\alpha}}
\left[\int_{Q}
\left| f(z)\right|^p\omega(z)\,dz\right]
\left[\rho\left(c_1\left|2^j Q\right|^{\frac{1}{n}}\right)
\right]^{\frac{p}{q}}\left|2^j Q\right|^{\frac{p}{q}(1-\frac{kq\theta}{n})}
\notag\\
&\le[\omega]_{A_1}\int_{\mathbb{R}^n}\left\{\sum_{Q\in
\mathcal{D}^{\alpha}}
\left[\rho\left(c_1\left|2^j Q\right|^{\frac{1}{n}}\right)
\right]^{\frac{p}{q}}
\left| 2^j Q\right|^{\frac{p}{q}(1-\frac{kq\theta}{n})}
{\bf 1}_{Q}(z)\right\}
\left|f(z)\right|^p\omega(z)\,dz
\notag\\
&\lesssim
C_{(\rho)} [\omega]_{A_1}
\left\| f\right\|^{p}_{L^{p}_{\omega} }
\end{align}
and, similarly,
\begin{align}\label{eq-key2}
{\rm B}_{\alpha,j}
:&=	\sum_{Q\in
\mathcal{D}^{\alpha}}
\left[\fint_{Q}
\left| \nabla^k f(z)\right|\,dz\right]^{p}
\left[\rho\left(c_1 \left|2^j Q\right|^{\frac{1}{n}}\right)
\right]^{\frac{p}{q}}
\left|2^j Q\right|
^{\frac{p}{q}(1-\frac{kq\theta}{n})}	\omega(Q)\notag\\
&\lesssim
C_{(\rho)} [\omega]_{A_1}
\left\| \nabla^k f\right\|
^{p}_{L^{p}_{\omega} }.
\end{align}
From these, \eqref{eq-point}, \eqref{eq-Eii},
\eqref{eq-rrr},
H\"older's inequality,
and \eqref{eq-ep},
we infer that
\begin{align*}
{\rm I}_0
& \lesssim
\sum_{j\in\mathbb{Z_{+}}}2^{jp(\epsilon-k+k\theta)}
\sum_{\alpha\in\{0,\frac{1}{3},\frac{2}{3}\}^n}\sum_{Q\in
\mathcal{D}^{\alpha}}
\left[\fint_{Q}
\left| f(z)\right|\,dz\right]^{p\theta}
\left[\fint_{Q}
\left|\nabla^k f(z)\right|\,dz\right]^{p(1-\theta)}\\
&\quad \times
\left[\rho\left(c_1 \left|2^j Q\right|^{\frac{1}{n}}\right)
\right]^{\frac{p}{q}}
\left|2^j Q\right|^{\frac{p}{q}(1-\frac{kq\theta}{n})}\omega(Q)\\
&\le
\sum_{j\in\mathbb{Z_{+}}}2^{jp(\epsilon-k+k\theta)}
\sum_{\alpha\in\{0,\frac{1}{3},\frac{2}{3}\}^n}
\left({\rm A}_{\alpha,j}\right)^{\theta}
\left({\rm B}_{\alpha,j}\right)^{1-\theta}\lesssim
C_{(\rho)}[\omega]_{A_1}
\left\|f\right\|^{p\theta}_{L^{p}_{\omega}}
\left\|\nabla^k f\right\|^{p(1-\theta)}_{L^{p}_{\omega}}.\notag
\end{align*}
Thus, we obtain \eqref{eq-wup2} in this case.

\emph{Case 2: $ i\in \{1,\dots,k\}$.}
In this case, using
\eqref{eq-point}, \eqref{eq-Eii}, \eqref{eq-rrr},
$\omega\in A_{1}$,
H\"older's inequality, \eqref{eq-ep},
\eqref{eq-key},
and \eqref{eq-key2},
we conclude that, for any $i\in \{1,\dots,k\}$,
\begin{align*}
{\rm I}_{i}
&\lesssim
[\omega]_{A_{1}}
\sum_{j\in\mathbb{Z_{+}}}
2^{jp[\epsilon+\frac{n}{p}-\frac{n}{q}-k+k\theta]}
\sum_{\alpha\in\{0,\frac{1}{3},\frac{2}{3}\}^n}\sum_{Q\in
\mathcal{D}^{\alpha}}
\left[\fint_{Q}
\left| f(z)\right|\,dz\right]^{p\theta}
\\
&\quad\times
\left[\fint_{Q}
\left|\nabla^k f(z)\right|\,dz\right]^{p(1-\theta)}\left[\rho\left(\left|2^j Q\right|^{\frac{1}{n}}\right)\right]^{\frac{p}{q}}
\left|2^j Q\right|^{\frac{p}{q}(1-\frac{kq\theta}{n})}\omega\left( Q\right)\\
&\le
[\omega]_{A_{1}}
\sum_{j\in\mathbb{Z_{+}}}
2^{jp[\epsilon+\frac{n}{p}-\frac{n}{q}-k+k\theta]}
\sum_{\alpha\in\{0,\frac{1}{3},\frac{2}{3}\}^n}
\left({\rm A}_{\alpha,j}\right)^{\theta}
\left({\rm B}_{\alpha,j}\right)^{1-\theta}\\
&\lesssim
C_{(\rho)}[\omega]_{A_{1}}^2
\left\|
f\right\|^{p\theta}_{L^{p}_{\omega} }
\left\|\nabla^k f\right\|^{p(1-\theta)}_{L^{p}_{\omega} }.
\end{align*}
This further implies that \eqref{eq-wup2} also holds in this case,
which then completes the proof of
Proposition \ref{lem-upw}.
\end{proof}

Now, we turn to show Theorem \ref{thm-in}.

\begin{proof}[Proof of Theorem \ref{thm-in}]
We first prove (i).
To do this, let $q\in [1,p]$.
We claim that, for any $s\in (0,1)$, $\omega\in A_1$, and $f\in
W^{k,p}_{\omega}$,
\begin{align}\label{eq-ss001}
s^{\frac1q}(1-s)^{\frac1q}\left\{
\int_{\mathbb{R}^n}
\left[\int_{\mathbb{R}^n}
\frac{|\Delta^k_h f(x)|^q}{|h|^{n+skq}}\,dh
\right]^\frac{p}{q}\omega(x)\,dx\right\}^{\frac{1}{p}}
\lesssim [\omega]_{A_1}^{\frac2p}\left\|f\right\|^{1-s}_{L^p_{\omega}}
\left\|\nabla^k f\right\|^s_{L^p_{\omega}}.
\end{align}
If this claim holds, then,
by an argument similar to that used in the proof of Lemma \ref{lem-po},
we immediately conclude (i).
Thus, it remains to show the above claim.
Let $\omega\in A_1$ and $f\in W^{k,p}_{\omega}$
with $\|\nabla^k f\|_{L^p_\omega}\in (0,\infty)$.
Using \eqref{eq-sss00} and Lemma \ref{eq-s1},
we find that, for any $s\in (0,1)$ and $r\in (0,\infty)$,
\begin{align}\label{eq-ss003}
&\left\{
\int_{\mathbb{R}^n}
\left[\int_{\mathbb{R}^n}
\frac{|\Delta^k_h f(x)|^q}{|h|^{n+skq}}\,dh
\right]^\frac{p}{q}\omega(x)\,dx\right\}^{\frac{1}{p}}\notag\\
&\quad\le
\left\{
\int_{\mathbb{R}^n}
\left[\int_{B({\bf 0},r)}
\frac{|\Delta^k_h f(x)|^q}{|h|^{n+skq}}\,dh
\right]^\frac{p}{q}\omega(x)\,dx\right\}^{\frac{1}{p}}\notag\\
&\qquad+
\left\{
\int_{\mathbb{R}^n}
\left[\int_{B({\bf 0},r)^{\complement}}
\frac{|\Delta^k_h f(x)|^q}{|h|^{n+skq}}\,dh
\right]^\frac{p}{q}\omega(x)\,dx\right\}^{\frac{1}{p}}\notag\\
&\quad\lesssim
(1-s)^{-\frac{1}{q}}[\omega]_{A_{1}}
^{\frac{1}{p}}
r^{(1-s)k}
\left\|\nabla^k f\right\|_{L^p_{\omega}}
+
s^{-\frac{1}{q}}[\omega]_{A_{1}}^{\frac{2}{p}}
r^{-sk}
\left\| f\right\|_{L^{p}_{\omega}}.
\end{align}
Choose $r\in (0,\infty)$ satisfying
$(1-s)^{-\frac{1}{q}}[\omega]_{A_{1}}
^{\frac{1}{p}}
r^{(1-s)k}\|\nabla^k f\|_{L^p_{\omega}}=s^{-\frac{1}{q}}[\omega]_{A_{1}}^{\frac{2}{p}}
r^{-sk}
\| f\|_{L^{p}_{\omega}},$
which implies that
$$
r=\left[\frac{(1-s)}{s}\right]^{\frac{1}{kq}}
[\omega]_{A_1}^{\frac{1}{kp}}\left\| f\right\|_{L^{p}_{\omega}}^{\frac{1}{k}}
\left\|\nabla^k f\right\|_{L^p_{\omega}}^{-\frac{1}{k}}.
$$
Therefore, combining this, \eqref{eq-ss003},
and the fact that $[\omega]_{A_1}\ge 1$ (see, for instance,
\cite[Proposition 7.1.5(5)]{Gra14}),
we obtain,
for any $s\in (0,1)$,
\begin{align*}
&s^{\frac1q}(1-s)^{\frac1q}\left\{
\int_{\mathbb{R}^n}
\left[\int_{\mathbb{R}^n}
\frac{|\Delta^k_h f(x)|^q}{|h|^{n+skq}}\,dh
\right]^\frac{p}{q}\omega(x)\,dx\right\}^{\frac{1}{p}}\\
&\quad\lesssim
s^{\frac{s}{q}}(1-s)^{\frac{1-s}{q}}
[\omega]_{A_1}^{\frac{2-s}{p}}\left\|f\right\|^{1-s}_{L^p_{\omega}}
\left\|\nabla^k f\right\|^s_{L^p_{\omega}}
\le s^{\frac{s}{q}}(1-s)^{\frac{1-s}{q}}
[\omega]_{A_1}^{\frac{2}{p}}\left\|f\right\|^{1-s}_{L^p_{\omega}}
\left\|\nabla^k f\right\|^s_{L^p_{\omega}}.
\end{align*}
From this and the observation
$\sup_{s\in (0,1)}s^{\frac{s}{q}}(1-s)^{\frac{1-s}{q}}\lesssim 1$,
we deduce that
the above claim \eqref{eq-ss001} holds.
This then finishes the proof of (i).

Next, we prove (ii). To this end, let $q\in (p,\infty)$ satisfy
$n(\frac{1}{p}-\frac{1}{q})<k$
and let
$\theta \in
(0, 1-\frac{n}{k}[\frac{1}{p}-\frac{1}{q}])$.
Fix $\omega\in A_1$ and $r\in (0,\infty)$.
For any given
$s\in (\max\{1-\theta,1-\frac{n}{kq}\},1)$
and any
$t\in (0,\infty)$, define
\begin{align*}
\rho_{s}(t):=\begin{cases}
0\quad &\text{if }t\in (0,r],\\
t^{(1-s)kq-n}\quad &\text{if }t\in (r,\infty).
\end{cases}
\end{align*}
Then, for any $s\in (\max\{1-\theta,1-\frac{n}{kq}\},1)$,
$\rho_s$ is a nonnegative and decreasing function on $(r,\infty)$
and
\begin{align*}
C_{(\rho_s)}:&=	
\int_{r}^{\infty}
\left[\rho_s (t)t^{n-kq\theta}
\right]^{\frac{p}{q}}\,\frac{dt}{t}
=\int_{r}^{\infty}
t^{kp(1-s-\theta)}
\,\frac{dt}{t}=
\frac{r^{kp(1-s-\theta)}}{kp(\theta +s -1)}.\notag
\end{align*}
From this, Proposition \ref{lem-upw} with
$\rho:= \rho_s$, and the fact that $[\omega]_{A_1}\ge 1$ again,
it follows that there exists a positive constant
$C_{(n)}$,
depending only on $n$, such that,
for any $s\in (\max\{1-\theta,1-\frac{n}{kq}\},1)$
and $f\in W^{k,p}_{\omega}$,
\begin{align}\label{eq-rr}
&\left\{\int_{\mathbb{R}^n}\left[\int_{B({\bf 0},C_{(n)}r)^{\complement}}
\frac{|\Delta^k_h f(x)|^q}{|h|^{n+skq}}\,dh
\right]^\frac{p}{q}\omega(x)\,dx\right\}^{\frac{1}{p}}\notag\\
&\quad\lesssim
(\theta+s-1)^{-\frac{1}{p}}
[\omega]^{\frac{2}{p}}_{A_1}
r^{k(1-s-\theta)}
\left\| f\right\|^{\theta}_{L^{p}_{\omega}
}
\left\|\nabla^k f\right\|^{1-\theta}_{L^{p}_{\omega}
}.
\end{align}
Repeating the proof of (i) with Lemma \ref{eq-s1} replaced by
\eqref{eq-rr}, we find that,
for any $s\in (\max\{1-\theta,1-\frac{n}{kq}\},1)$,
\begin{align*}
&(1-s)^{\gamma_{p,q}}\left\{
\int_{\mathbb{R}^n}
\left[\int_{\mathbb{R}^n}
\frac{|\Delta^k_h f(x)|^q}{|h|^{n+skq}}\,dh
\right]^\frac{p}{q}\omega(x)\,dx\right\}^{\frac{1}{p}}\\
&\quad\lesssim
{(1-s)^{\frac{(1-s)\gamma_{p,q}}{\theta}}(\theta+s-1)^{\frac{s-1}{p\theta}}}
[\omega]_{A_1}^{\frac{1-s}{p\theta}}
\left\|f\right\|^{1-s}_{L^p_{\omega}}
\left\|\nabla^k f\right\|^s_{L^p_{\omega}}.\notag
\end{align*}
From this and the facts that $\sup_{s\in (\max\{1-\frac{\theta}{2},1-\frac{n}{kq}\},1)}{(1-s)^{\frac{(1-s)\gamma_{p,q}}
{\theta}}(\theta+s-1)^{\frac{s-1}{p\theta}}}\lesssim 1$
and $[\omega]_{A_1}\ge 1$ again,
we infer that, for any $s\in (\max\{1-\frac{\theta}{2},1-\frac{n}{kq}\},1)$,
\begin{align*}
(1-s)^{\gamma_{p,q}}\left\{
\int_{\mathbb{R}^n}
\left[\int_{\mathbb{R}^n}
\frac{|\Delta^k_h f(x)|^q}{|h|^{n+skq}}\,dh
\right]^\frac{p}{q}\omega(x)\,dx\right\}^{\frac{1}{p}}\lesssim
[\omega]_{A_1}^{\frac{1}{p}}
\left\|f\right\|^{1-s}_{L^p_{\omega}}
\left\|\nabla^k f\right\|^s_{L^p_{\omega}}.
\end{align*}
By this and Lemma \ref{lem-po},
we obtain (ii), which then completes the proof of
Theorem \ref{thm-in}.
\end{proof}

Now, we can show Theorem \ref{thm-ex}.

\begin{proof}[Proof of Theorem \ref{thm-ex}]
By the proof of Theorem \ref{thm-Q}, we find that
$X:=L^p_\omega$ with $p\in [1,\infty)$ and $\omega\in A_p$
satisfies all the assumptions of Lemma \ref{lem-po}.
This, combined with Theorem \ref{thm-in}, further
implies Theorem \ref{thm-ex}.
\end{proof}

\section{A BBM Formula and A New Characterization
of \\ Ball Banach Sobolev Spaces}\label{sec-pf2}

This section is divided into two subsections.
In Subsection \ref{s51},
we establish the sharp higher-order BBM formula
for BBF spaces on extension domains
(see Theorem \ref{thm-main}).
Later, we prove
the related characterization of ball Banach Sobolev spaces
in Subsection \ref{s52} (see Theorem \ref{thm-cwkx}).

\subsection{A BBM Formula on Ball Banach Function Spaces}\label{s51}

In this subsection, we show a
BBM formula on BBF spaces.
To state this result, we
first recall some definitions.
The following concept is precisely
\cite[Definition 3.2]{WYY20} (see also \cite[Definition 3.1]{BS88}).

\begin{definition}
Let $\Omega\subset \mathbb{R}^n$ be
an open set, $X $ a {\rm BBF} space,
and $X(\Omega)$ its
restricted space on $\Omega$.
Then $X(\Omega)$ is said to have an
\emph{absolutely continuous norm}
if, for any $f\in X(\Omega)$ and any sequence $\{E_{j}\}_{j\in{\mathbb{N}}}$
of measurable sets in $\Omega$ satisfying that ${\bf 1}_{E_j}\to 0$ almost
everywhere as $j\to \infty$,
$\|f{\bf 1}_{E_j}\|_{X(\Omega)} \to 0$ as $j\to \infty$.
\end{definition}

\begin{remark}\label{r-dual}
Let $\Omega\subset \mathbb{R}^n$ be
an open set, $X $ a {\rm BBF} space
having an absolutely continuous norm,
and $X(\Omega)$ its
restricted space.
\begin{enumerate}[{\rm (i)}]
\item From \cite[Proposition 2.10]{ZYY24jga},
we deduce that $X(\Omega)$ also has
an absolutely continuous norm.

\item
By \cite[Propositions 3.12 and 3.13]{LN24im}
and \cite[Remark 3.2(vi)]{ZLYYZ24},
we conclude that $X(\Omega)$ is separable.
Moreover, from \cite[Remark 2.13]{ZYY24jga}, it follows that
$[X(\Omega)]'$ coincides with $[X(\Omega)]^*$;
see also \cite[p.\,23, Corollary 4.3]{BS88}. Here, and thereafter,
we denote the \emph{dual space} of $X(\Omega)$ by $[X(\Omega)]^*$.
\end{enumerate}
\end{remark}

We next present the following concept of
ball Banach Sobolev extension domains
(see, for instance, \cite[Definition 2.17]{ZYY24jga}).

\begin{definition}\label{df-EXD}
Let $X $ be a {\rm BBF} space and $k\in \mathbb{N}$. An open set
$\Omega\subset \mathbb{R}^n$ is called a $W^{k,X}$-extension domain if
there exists a linear extension
operator $\Lambda: W^{k,X}(\Omega)\to W^{k,X} $
such that, for any
$f\in W^{k,X}(\Omega)$,
$(\Lambda f)|_{\Omega}=f$
and
$
\|\Lambda f\|_{W^{k,X} }
\le C\|f\|_{W^{k,X}(\Omega)}
$ with the positive constant $C$ independent
of $f$.
\end{definition}

\begin{remark}
Let $k\in \mathbb{N}$ and $X $ be a {\rm BBF} space satisfying
the same assumptions as in Lemma \ref{lem-po}.
As pointed out in \cite[Theorem 5.10]{ZYY24jga},
an $(\epsilon,\delta)$-domain
(see \cite[p.\,73]{Jon81} for its definition),
with $\epsilon\in (0,1]$ and $\delta\in (0,\infty]$,
is a $W^{k,X}$-extension domain.
In particular, the Lipschitz domain
(see, for instance, \cite[Definition 4.4]{EG15}),
the classical snowflake domain
(see, for instance, \cite[pp.\,104--105]{LV73}),
and the half-space are all $W^{k,X}$-extension domains.
\end{remark}

Based on these, we state the following BBM formula
on ball Banach function spaces.

\begin{theorem}\label{thm-main}
Let $k\in \mathbb{N}$, $X $ be a {\rm BBF} space having an absolutely continuous norm,
and $\Omega\subset \mathbb{R}^n$ a $W^{k,X}$-extension domain.
Assume that $X$ satisfies the same assumptions as in Lemma \ref{lem-po}
with some $p\in [1,\infty)$.
If $q\in [1,\infty)$ satisfies that $q=p=1$ or
$n(\frac{1}{p}-\frac{1}{q})<k$ when $p>1$,
then, for any $f\in W^{k,X}(\Omega)$,
\begin{align}\label{eq-main1}
&\lim_{s\to 1^{-}}
(1-s)^{\frac{1}{q}}
\left\|\left[\int_{\Omega(\cdot,k)}
\frac{|\Delta^k_h f(\cdot)|^q}{|h|^{n+skq}}\,dh
\right]^\frac{1}{q}\right\|_{X(\Omega)}\notag\\
&\quad=\frac{1}{(kq)^\frac{1}{q}}\left\|\left[ \int_{\mathbb{S}^{n-1}}
\left|\sum_{\alpha\in\mathbb{Z}_+^n,\,|\alpha|=k}
\partial^{\alpha}f(\cdot)\xi^{\alpha}\right|^q\,d\mathcal{H}^{n-1}(\xi)
\right]^{\frac{1}{q}}\right\|_{X(\Omega)}.
\end{align}
\end{theorem}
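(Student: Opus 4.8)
\textbf{Proof proposal for Theorem \ref{thm-main}.}
The plan is to reduce the BBF-space statement to the pointwise limit identity
\begin{align*}
\lim_{s\to 1^{-}}(1-s)^{\frac1q}
\left[\int_{\Omega(x,k)}\frac{|\Delta^k_h f(x)|^q}{|h|^{n+skq}}\,dh\right]^{\frac1q}
=\frac{1}{(kq)^{\frac1q}}
\left[\int_{\mathbb{S}^{n-1}}\Bigl|\sum_{|\alpha|=k}\partial^{\alpha}f(x)\xi^{\alpha}\Bigr|^q
\,d\mathcal{H}^{n-1}(\xi)\right]^{\frac1q}
\end{align*}
valid for a.e.\ $x\in\Omega$, and then to promote this a.e.\ convergence to convergence in the $X(\Omega)$-norm using the absolute continuity of $X(\Omega)$ together with a uniform (in $s$) domination provided by Theorem \ref{thm-QX}. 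First I would pass to the extension: since $\Omega$ is a $W^{k,X}$-extension domain, replace $f$ by $\Lambda f\in W^{k,X}$ and note that on cubes $Q\supset\Omega$ the inner region $\Omega(x,k)$ is contained in a ball of radius $\sim\ell(Q)$, which lets me invoke Theorem \ref{thm-QX} to get the uniform bound
$(1-s)^{\gamma_{p,q}}\|[\int_{\Omega(\cdot,k)}|\Delta^k_h f(\cdot)|^q|h|^{-n-skq}\,dh]^{1/q}\|_{X(\Omega)}\le C\|\nabla^k\Lambda f\|_{X}$, so the family of functions appearing inside the $X(\Omega)$-norm is dominated by a single $X(\Omega)$-function as $s\to1^-$ (after splitting off the piece with $|h|\ge\delta$, which tends to $0$).

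The pointwise identity is the analytic heart. For $f\in C^\infty$ with compact support I would first replace $\Delta^k_h f(x)$ by its leading term: by Taylor's theorem (via \cite[Proposition 1.4.5]{Gra14}, as already used in the proof of Lemma \ref{Lem2.5}) one has $\Delta^k_h f(x)=\sum_{|\alpha|=k}\partial^{\alpha}f(x)h^{\alpha}\cdot c_k+o(|h|^k)$ uniformly on compacta, where $c_k$ is the combinatorial constant $\sum_{j=0}^k(-1)^{k-j}\binom{k}{j}j^k=k!$; writing $h=t\xi$ with $t=|h|$, $\xi\in\mathbb{S}^{n-1}$, the quantity $(1-s)\int_{\Omega(x,k)}|\Delta^k_hf(x)|^q|h|^{-n-skq}\,dh$ becomes, after the polar change of variables, $(1-s)\int_{\mathbb{S}^{n-1}}\int_0^{\tau(x,\xi)}|c_k P_\xi(x)t^k+o(t^k)|^q\,t^{-1-skq+kq}\,dt\,d\mathcal{H}^{n-1}(\xi)$ with $P_\xi(x):=\sum_{|\alpha|=k}\partial^{\alpha}f(x)\xi^{\alpha}$; the radial integral $\int_0^{\tau}t^{kq(1-s)-1}\,dt=\tau^{kq(1-s)}/(kq(1-s))$ supplies the factor $1/(kq)$ after multiplying by $(1-s)$ and sending $s\to1^-$, while the $o(t^k)$ error contributes a radial integral that still has a positive power of $t$ near $0$ and hence vanishes after multiplication by $(1-s)$. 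This yields the claimed limit for smooth $f$; I would then record uniform-in-$x$ smallness of the tail $|h|\ge\delta$ and treat $C^\infty_{\rm c}$ as the test class.

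To pass from $C^\infty_{\rm c}$ to general $f\in W^{k,X}(\Omega)$ I would argue by density and stability: approximating $\Lambda f$ in $W^{k,X}$ by mollifications $f_j\in C^\infty$ (density being available under the standing maximal-boundedness hypothesis, cf.\ the argument in the proof of Theorem \ref{thm-upi}), both sides of \eqref{eq-main1} are controlled by $\|\nabla^k(f-f_j)\|_{X}$ — the left side via the linear (in $f$) structure of $\Delta^k_h$ and Theorem \ref{thm-QX}, the right side via the obvious pointwise estimate $[\int_{\mathbb{S}^{n-1}}|\sum\partial^{\alpha}(\cdot)\xi^{\alpha}|^q]^{1/q}\lesssim|\nabla^k(\cdot)|$ — so the limit relation is preserved under $W^{k,X}$-limits. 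The main obstacle I anticipate is the upgrade from a.e.\ convergence to $X(\Omega)$-norm convergence: absolute continuity of the norm gives, via Egorov's theorem, that the ``bad set'' where convergence is slow has arbitrarily small $X(\Omega)$-contribution, but one must combine this with the uniform domination from Theorem \ref{thm-QX} to control the contribution of the bad set \emph{uniformly in $s$} — a dominated-convergence-type argument in $X(\Omega)$ that requires care because $X$ need not be a Lebesgue space; here I would lean on the associate-space duality (Remark \ref{rem-x-a}) and the Rubio de Francia machinery (Lemma \ref{lem-rxp}) exactly as in the proof of Lemma \ref{lem-po}, testing against $g\in Y=(X^{1/p})'$ and using that $R_Yg\in A_1$ reduces everything to the weighted Lebesgue case where classical dominated convergence applies.
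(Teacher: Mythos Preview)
Your overall strategy---establish the limit for smooth compactly supported functions via Taylor expansion and then pass to general $f\in W^{k,X}(\Omega)$ by density, using a uniform-in-$s$ bound to control the error---matches the paper's skeleton. But two steps in your implementation do not go through as written.

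First, you propose to obtain the uniform bound from Theorem~\ref{thm-QX} on a cube $Q\supset\Omega$. This fails whenever $\Omega$ is unbounded (the prototypical case $\Omega=\mathbb{R}^n$ is itself a $W^{k,X}$-extension domain, and no such cube exists). The paper instead extends to $\Lambda f\in W^{k,X}(\mathbb{R}^n)$ and applies Theorem~\ref{thm-upi}(i) with a \emph{fixed} radius $r=1$ (followed by Lemma~\ref{lem-po}), which bounds only the localized piece over $|h|<1$ by $\|\nabla^k\Lambda f\|_X$; this suffices to run the density argument on the local part, with the smooth-function limit there handled directly in norm by Lemma~\ref{lem-good} rather than via your pointwise-plus-dominated-convergence route. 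Second, and more seriously, you dismiss the tail $|h|\ge\delta$ as something that ``tends to $0$''. For $f\in C_{\rm c}^\infty$ this is trivial, but for general $f\in W^{k,X}(\Omega)$ it is not: neither Theorem~\ref{thm-upi} nor Theorem~\ref{thm-QX} controls the tail, and in the supercritical range $p<q$ (still with $n(1/p-1/q)<k$) the ball-average bound of Lemma~\ref{eq-s1} is unavailable. The paper devotes Lemma~\ref{lem-outBall} to exactly this point, and its proof in the supercritical case rests on the substantial dyadic-grid interpolation estimate of Proposition~\ref{lem-upw}. Without that ingredient your density argument does not close, because the stability estimate you invoke---$(1-s)^{1/q}\bigl\|[\int_{\Omega(\cdot,k)}|\Delta^k_h(f-f_j)|^q|h|^{-n-skq}\,dh]^{1/q}\bigr\|_{X(\Omega)}\lesssim\|f-f_j\|_{W^{k,X}(\Omega)}$---has an uncontrolled tail contribution.
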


\begin{remark}\label{rem-main}
\begin{enumerate}[{\rm (i)}]
\item  It is worth pointing out that, for any $f\in W^{k,X}(\Omega)$,
\begin{align}\label{eq-sim}
\left\|\left[ \int_{\mathbb{S}^{n-1}}
\left|\sum_{\alpha\in\mathbb{Z}_+^n,\,|\alpha|=k}
\partial^{\alpha}f(\cdot)\xi^{\alpha}\right|^q\,d\mathcal{H}^{n-1}(\xi)
\right]^{\frac{1}{q}}\right\|_{X(\Omega)}
\sim \left\|\nabla^k f\right\|_{X(\Omega)}
\end{align}
with the positive equivalence constants depending only on $n$, $k$, and $q$;
in particular, when $k=1$, the left-hand side of \eqref{eq-sim}
equals
$[\int_{\mathbb{S}^{n-1}}|\xi \cdot e|^q\,
d\mathcal{H}^{n-1}(\xi)]^{\frac{1}{q}}\left\|\nabla f\right\|_{L^p (\Omega)}
$
with any fixed $e\in \mathbb{S}^{n-1}$
(see, for instance, \cite[Lemma 4.12]{HLYYZ-bsvy}
or \cite{FKR15}).

\item
Theorem \ref{thm-main} improves
\cite[Theorem 3.36]{DGPYYZ24}.
Precisely, in the case where $k=1$, $\Omega=\mathbb{R}^n$,
and $X:=L^{p} $ with $p\in [1,\infty)$,
Theorem \ref{thm-main}
extends the range $q\in [1,p]$
in \cite[Theorem 3.36]{DGPYYZ24}
into the range $q\in [1,\infty)$ satisfying $n(\frac{1}{p}-\frac{1}{q})<1$.
Moreover,
when $k\in\mathbb{N}\cap[2,\infty)$,
Theorem \ref{thm-main}
is new in the setting of ball Banach function spaces.

\item The assumption
$n(\frac{1}{p}-\frac{1}{q})<k$ in
Theorem \ref{thm-main} is \emph{sharp}; see Proposition \ref{pro-sp3}.
\end{enumerate}
\end{remark}

To prove Theorem \ref{thm-main},
we first give some auxiliary lemmas.
Indeed, repeating the proof of \cite[Theorem 3.5]{ZYY24jga} with
the first-order difference, $p$, and $\beta$ therein replaced, respectively, by
the higher-order difference, $q$, and $\frac{n}{p}$ here,
we can obtain the following
asymptotic formula for good functions,
which is important in the proof of Theorem \ref{thm-main};
we omit the details.

\begin{lemma}\label{lem-good}
Let $X $ be a {\rm BBF} space and
$\Omega\subset \mathbb{R}^n$ an open set.
Assume $\epsilon_0\in(0,\infty)$
and a family $\{\rho_{\epsilon}\}_{\epsilon\in (0,\epsilon_0)}$
of locally integrable functions on $(0,\infty)$ satisfies that,
for any $\epsilon\in (0,\epsilon_0)$, $\rho_{\epsilon}$
is nonnegative and decreasing,
\begin{align}\label{eq-ati1}
\int_{0}^{\infty}\rho_{\epsilon}(r)r^{n-1}\,dr=1,
\end{align}
and, for any $\delta\in (0,\infty)$,
\begin{align}\label{eq-ati2}
\lim_{\epsilon\to 0^+}\int_{\delta}^{\infty}
\rho_{\epsilon}(r)r^{n-1}\,dr=0.
\end{align}
Assume that  $p\in [1,\infty)$ such that,
for any $\lambda\in [1,\infty)$,
\begin{align*}
\left\|{\bf 1}_{B({\bf 0},\lambda)}\right\|_{X }
\lesssim
\lambda^{\frac{n}{p}}\left\|{\bf 1}_{B({\bf 0},1)}\right\|_{X }
\end{align*}
with the implicit positive constant independent of
$\lambda$.
If $k\in \mathbb{N}$
and $q\in [1,\infty)$ satisfies
$n(\frac{1}{p}-\frac{1}{q})<k$,
then, for any $f\in C_{\rm c}^{\infty} $,
\begin{align*}
\lim_{\epsilon\to 0^+}
\left\|\left[\int_{\Omega(\cdot,k)}
\frac{|\Delta^k_h f(\cdot)|^q}{|h|^{kq}}\rho_{\epsilon}(|h|)\,dh
\right]^\frac{1}{q}\right\|_{X(\Omega)}
=
\left\|\left[ \int_{\mathbb{S}^{n-1}}
\left|\sum_{\alpha\in\mathbb{Z}_+^n,\,|\alpha|=k}
\partial^{\alpha}f(\cdot)\xi^{\alpha}\right|^q\,d\mathcal{H}^{n-1}(\xi)
\right]^{\frac{1}{q}}\right\|_{X(\Omega)}.
\end{align*}
\end{lemma}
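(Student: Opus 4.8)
The plan is to establish the two one-sided estimates
\[
\liminf_{\epsilon\to 0^+}\left\|G_{\epsilon}\right\|_{X(\Omega)}\ge\left\|G\right\|_{X(\Omega)}
\qquad\text{and}\qquad
\limsup_{\epsilon\to 0^+}\left\|G_{\epsilon}\right\|_{X(\Omega)}\le\left\|G\right\|_{X(\Omega)},
\]
where, for any $x\in\Omega$,
\[
G_{\epsilon}(x):=\left[\int_{\Omega(x,k)}\frac{|\Delta^k_h f(x)|^q}{|h|^{kq}}\rho_{\epsilon}(|h|)\,dh\right]^{\frac1q},\qquad
G(x):=\left[\int_{\mathbb{S}^{n-1}}\left|\sum_{\alpha\in\mathbb{Z}_+^n,\,|\alpha|=k}\partial^{\alpha}f(x)\xi^{\alpha}\right|^q\,d\mathcal{H}^{n-1}(\xi)\right]^{\frac1q}.
\]
The common analytic input is the integral representation of the higher-order difference already used in the proof of Lemma \ref{Lem2.5}; combined with the uniform continuity of $\nabla^k f$ (here the hypothesis $f\in C_{\rm c}^{\infty}$ is essential), it yields, for all $x,h\in\mathbb{R}^n$,
\[
\left|\frac{|\Delta^k_h f(x)|}{|h|^{k}}-\left|\sum_{\alpha\in\mathbb{Z}_+^n,\,|\alpha|=k}\partial^{\alpha}f(x)\left(\frac{h}{|h|}\right)^{\alpha}\right|\right|\le C_{(n,k)}\,\varpi(k|h|),
\]
where $\varpi(t):=\sup\{|\partial^{\alpha}f(y)-\partial^{\alpha}f(z)|:\,|y-z|\le t,\ \alpha\in\mathbb{Z}_+^n,\ |\alpha|=k\}$ is bounded and satisfies $\varpi(t)\to 0$ as $t\to 0^+$.

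For the lower bound, I would fix $\delta\in(0,\infty)$, restrict the $h$-integral defining $G_{\epsilon}$ to $\{h:|h|<\delta\}$, and use that $B({\bf 0},\delta)\subset\Omega(x,k)$ whenever $x\in\Omega_{\delta}:=\{x\in\Omega:\operatorname{dist}(x,\partial\Omega)>k\delta\}$. Passing to polar coordinates and invoking the pointwise comparison above together with \eqref{eq-ati1} and \eqref{eq-ati2}, I would obtain, for every $x\in\Omega_{\delta}$,
\[
\liminf_{\epsilon\to 0^+}[G_{\epsilon}(x)]^q\ge\int_{\mathbb{S}^{n-1}}\left[\,\left|\sum_{\alpha\in\mathbb{Z}_+^n,\,|\alpha|=k}\partial^{\alpha}f(x)\xi^{\alpha}\right|-C_{(n,k)}\varpi(k\delta)\,\right]_{+}^{q}\,d\mathcal{H}^{n-1}(\xi)=:[G^{(\delta)}(x)]^q.
\]
The Fatou property of the {\rm BBF} space $X(\Omega)$ (a consequence of Definition \ref{def-X}(ii) and (iii)), applied along a sequence $\epsilon\to 0^+$ that realizes the lower limit of $\|G_{\epsilon}\|_{X(\Omega)}$, then gives $\|{\bf 1}_{\Omega_{\delta}}G^{(\delta)}\|_{X(\Omega)}\le\liminf_{\epsilon\to 0^+}\|G_{\epsilon}\|_{X(\Omega)}$; since ${\bf 1}_{\Omega_{\delta}}G^{(\delta)}$ increases to $G$ on $\Omega$ as $\delta\to 0^+$ (because $\Omega$ is open and $\varpi(k\delta)\to 0$), Definition \ref{def-X}(iii) upgrades this to $\|G\|_{X(\Omega)}\le\liminf_{\epsilon\to 0^+}\|G_{\epsilon}\|_{X(\Omega)}$.

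For the upper bound, I would fix $\delta\in(0,\epsilon_0)$ and split $[G_{\epsilon}(x)]^q=[G^{(1)}_{\epsilon}(x)]^q+[G^{(2)}_{\epsilon}(x)]^q$ according to whether $|h|\le\delta$ or $|h|>\delta$, so that $\|G_{\epsilon}\|_{X(\Omega)}\le\|G^{(1)}_{\epsilon}\|_{X(\Omega)}+\|G^{(2)}_{\epsilon}\|_{X(\Omega)}$. For the near part, after enlarging the region to $\{|h|\le\delta\}$ and combining the pointwise comparison, Minkowski's inequality in $L^q(\rho_{\epsilon}(|h|)\,dh)$, polar coordinates, \eqref{eq-ati1}, and the observation that $G^{(1)}_{\epsilon}$ vanishes outside the bounded set $\{x:\operatorname{dist}(x,\mathrm{supp\,}(f))\le k\delta\}$, I expect the pointwise bound $G^{(1)}_{\epsilon}\le G+C_{(n,k,q)}\varpi(k\delta){\bf 1}_{\{\operatorname{dist}(\cdot,\mathrm{supp\,}(f))\le k\delta\}}$; since the characteristic function of a bounded set lies in $X$, this gives $\|G^{(1)}_{\epsilon}\|_{X(\Omega)}\le\|G\|_{X(\Omega)}+C\varpi(k\delta)$ uniformly in $\epsilon$. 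It then suffices to prove that, for each fixed $\delta$, $\|G^{(2)}_{\epsilon}\|_{X(\Omega)}\to 0$ as $\epsilon\to 0^+$; combining this with the bound on $G^{(1)}_{\epsilon}$ and afterwards letting $\delta\to 0^+$ yields $\limsup_{\epsilon\to 0^+}\|G_{\epsilon}\|_{X(\Omega)}\le\|G\|_{X(\Omega)}$, which together with the lower bound finishes the proof.

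To prove $\|G^{(2)}_{\epsilon}\|_{X(\Omega)}\to 0$, I would expand $\Delta^k_h f(x)$ via \eqref{eq-sho-hd} to bound $[G^{(2)}_{\epsilon}(x)]^q\le C_{(k,q)}\sum_{j=0}^{k}\int_{|h|>\delta}|h|^{-kq}|f(x+jh)|^q\rho_{\epsilon}(|h|)\,dh=:C_{(k,q)}\sum_{j=0}^{k}J_{j}(x)$. The term $J_0$ is supported in $\mathrm{supp\,}(f)$ and bounded there by $\delta^{-kq}\mathcal{H}^{n-1}(\mathbb{S}^{n-1})\|f\|_{L^{\infty}}^q\int_{\delta}^{\infty}\rho_{\epsilon}(r)r^{n-1}\,dr$, so $\|J_0^{1/q}\|_{X(\Omega)}\to 0$ by \eqref{eq-ati2}; for $j\in\{1,\dots,k\}$, fixing $R_0\in(0,\infty)$ with $\mathrm{supp\,}(f)\subset B({\bf 0},R_0)$ and changing variables $z=x+jh$, one writes $J_j(x)=j^{kq-n}\int_{\mathrm{supp\,}(f),\,|z-x|>j\delta}|z-x|^{-kq}\rho_{\epsilon}(|z-x|/j)|f(z)|^q\,dz$, and splitting the $z$-integral according to $|z-x|\le 2R_0$ or $|z-x|>2R_0$ produces, since $\rho_{\epsilon}$ is decreasing, a first part dominated by $C_{(\delta)}\rho_{\epsilon}(\delta){\bf 1}_{B({\bf 0},3R_0)}$ and, using the elementary bound $\rho_{\epsilon}(r)\le C_{(n)}r^{-n}\int_{r/2}^{\infty}\rho_{\epsilon}(\tau)\tau^{n-1}\,d\tau$ together with $|z-x|\sim|x|$ on the second region, a second part dominated by $C\,\sigma_{\epsilon}\,|x|^{-kq-n}{\bf 1}_{B({\bf 0},R_0)^{\complement}}$, where $\sigma_{\epsilon}:=\int_{R_0/(2k)}^{\infty}\rho_{\epsilon}(\tau)\tau^{n-1}\,d\tau\to 0$ as $\epsilon\to 0^+$ by \eqref{eq-ati2}. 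Since $\rho_{\epsilon}(\delta)\to 0$ as $\epsilon\to 0^+$ too, collecting these bounds gives $\|(J_j)^{1/q}\|_{X(\Omega)}\lesssim\rho_{\epsilon}(\delta)^{1/q}\|{\bf 1}_{B({\bf 0},3R_0)}\|_{X}+\sigma_{\epsilon}^{1/q}\,\|\,|x|^{-(k+n/q)}{\bf 1}_{B({\bf 0},R_0)^{\complement}}\,\|_{X}$. The step I expect to be the main obstacle is controlling this last $X$-norm: because $X$ is not assumed to have an absolutely continuous norm, dominated convergence is unavailable, so I would instead estimate it directly via the dyadic decomposition $\|\,|x|^{-(k+n/q)}{\bf 1}_{B({\bf 0},R_0)^{\complement}}\,\|_{X}\lesssim\sum_{m:\,2^m\gtrsim R_0}2^{-m(k+n/q)}\|{\bf 1}_{B({\bf 0},2^m)}\|_{X}\lesssim\sum_{m:\,2^m\gtrsim R_0}2^{m(\frac np-k-\frac nq)}\|{\bf 1}_{B({\bf 0},1)}\|_{X}$, which converges precisely because the hypotheses $\|{\bf 1}_{B({\bf 0},\lambda)}\|_{X}\lesssim\lambda^{n/p}\|{\bf 1}_{B({\bf 0},1)}\|_{X}$ and $n(\frac1p-\frac1q)<k$ together make the exponent $\frac np-k-\frac nq$ negative. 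Hence $\|(J_j)^{1/q}\|_{X(\Omega)}\lesssim\rho_{\epsilon}(\delta)^{1/q}+\sigma_{\epsilon}^{1/q}\to 0$ as $\epsilon\to 0^+$, so $\|G^{(2)}_{\epsilon}\|_{X(\Omega)}\to 0$, as needed.
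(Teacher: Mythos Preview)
Your proposal is correct and follows essentially the same approach that the paper indicates (it defers to \cite[Theorem 3.5]{ZYY24jga} and omits details): pointwise comparison of $\Delta^k_h f$ with its Taylor polynomial via the integral representation, Fatou's lemma in $X(\Omega)$ for the lower bound, and a near/far split for the upper bound. Your dyadic summation of $\|\,|x|^{-(k+n/q)}{\bf 1}_{B({\bf 0},R_0)^\complement}\|_X$ is precisely the way the two hypotheses $\|{\bf 1}_{B({\bf 0},\lambda)}\|_X\lesssim\lambda^{n/p}$ and $n(\frac1p-\frac1q)<k$ are meant to combine to kill the tail.
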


Moreover, we need the other asymptotic formula as follows.

\begin{lemma}\label{lem-outBall}
Let $X $ be a {\rm BBF} space
satisfying the same assumptions as in Lemma \ref{lem-po}
with some $p\in [1,\infty)$.
Assume that $k\in \mathbb{N}$ and $q\in [1,\infty)$ satisfies
$n(\frac{1}{p}-\frac{1}{q})<k$.
If $\Omega$ is a $W^{k,X}$-extension domain,
then, for any $f\in W^{k,X}(\Omega)$,
\begin{align*}
\lim_{s\to 1^{-}}
(1-s)^{\frac{1}{q}}
\left\|\left[\int_{\{h\in \Omega(\cdot,k):|h|\ge 1\}}
\frac{|\Delta^k_h f(\cdot)|^q}{|h|^{n+skq}}\,dh
\right]^\frac{1}{q}\right\|_{X(\Omega)}=0.
\end{align*}
\end{lemma}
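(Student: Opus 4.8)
The strategy is to reduce to $\mathbb{R}^n$, show that the quantity inside the $X(\Omega)$-norm is controlled in $X$ \emph{uniformly} in $s$ for $s$ close to $1$, and then let the prefactor $(1-s)^{1/q}$ annihilate it. First I would fix a linear extension operator $\Lambda\colon W^{k,X}(\Omega)\to W^{k,X}$ as in Definition \ref{df-EXD}. Since $\Lambda f=f$ on $\Omega$ and, for $x\in\Omega$ and $h\in\Omega(x,k)$, all of $x,x+h,\dots,x+kh$ lie in $\Omega$, one has $\Delta^k_h f(x)=\Delta^k_h(\Lambda f)(x)$ at such points; enlarging the inner integration region and invoking Remark \ref{rem-0} together with Definition \ref{def-X}(ii) yields, for $F_s(x):=[\int_{|h|\ge1}|\Delta^k_h(\Lambda f)(x)|^q|h|^{-n-skq}\,dh]^{1/q}$,
\[
\left\|\left[\int_{\{h\in\Omega(\cdot,k):\,|h|\ge1\}}\frac{|\Delta^k_h f(\cdot)|^q}{|h|^{n+skq}}\,dh\right]^{\frac1q}\right\|_{X(\Omega)}\le\|F_s\|_{X}.
\]
Hence it suffices to prove $\sup_{s\in(1-c_0,1)}\|F_s\|_{X}<\infty$ for some $c_0\in(0,1)$: multiplying by $(1-s)^{1/q}\to0$ then gives the lemma.

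To bound $\|F_s\|_{X}$ I would apply the extrapolation Lemma \ref{lem-po} on $\Omega=\mathbb{R}^n$ with the pair $(F_s,\,G)$, where $G:=|\Lambda f|+|\nabla^k\Lambda f|\in X$ and $Y:=(X^{1/p})'$, so it remains to verify a weighted estimate $\|F_s\|_{L^p_\omega}\le\phi([\omega]_{A_1})\|G\|_{L^p_\omega}$ for all $\omega\in A_1$, with an increasing $\phi$ independent of $s$ near $1$ (when $\|G\|_{L^p_\omega}=\infty$ there is nothing to prove, so one may assume $\Lambda f\in W^{k,p}_\omega$). I would split according to $q$. If $q\le p$ (which forces $q=1$ when $p=1$), Lemma \ref{eq-s1} with $r=1$ gives $s^{1/q}\|F_s\|_{L^p_\omega}\le C[\omega]_{A_1}^{2/p}\|\Lambda f\|_{L^p_\omega}\le C[\omega]_{A_1}^{2/p}\|G\|_{L^p_\omega}$, so $\phi(t):=2^{1/q}Ct^{2/p}$ works for $s\in(1/2,1)$. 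If $q>p$ (so $p>1$ and $n(\tfrac1p-\tfrac1q)<k$ by hypothesis), fix $\theta\in(0,\,1-\tfrac nk(\tfrac1p-\tfrac1q))$, put $\rho_s(r):=r^{kq(1-s)-n}$ and $c_0:=1/C_{(n)}$ with $C_{(n)}$ as in Proposition \ref{lem-upw}. Then for $s\in(1-\tfrac n{kq},1)$ the function $\rho_s$ is nonnegative and decreasing, $|h|^{-n-skq}=\rho_s(|h|)|h|^{-kq}$, $B({\bf 0},C_{(n)}c_0)^{\complement}=\{|h|\ge1\}$, and
\[
C_{(\rho_s)}=\int_{c_0}^{\infty}r^{kp(1-s-\theta)}\,\frac{dr}{r}=\frac{C_{(n)}^{\,kp(s+\theta-1)}}{kp(s+\theta-1)},
\]
which stays bounded as $s\to1^-$ provided $s\ge1-\tfrac\theta2$. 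Proposition \ref{lem-upw} then yields $\|F_s\|_{L^p_\omega}^p\le C_{(n,k,p,q)}C_{(\rho_s)}[\omega]_{A_1}([\omega]_{A_1}+1)\|\Lambda f\|_{L^p_\omega}^{p\theta}\|\nabla^k\Lambda f\|_{L^p_\omega}^{p(1-\theta)}\le C'[\omega]_{A_1}([\omega]_{A_1}+1)\|G\|_{L^p_\omega}^p$, so $\phi(t):=C''(t(t+1))^{1/p}$ works for $s\in(\max\{1-\tfrac\theta2,\,1-\tfrac n{kq}\},1)$. In either case Lemma \ref{lem-po} gives $\|F_s\|_{X}\le2^{1/p}\phi(2\|\mathcal{M}\|_{Y\to Y})\|G\|_{X}\lesssim\|f\|_{W^{k,X}(\Omega)}$, a bound independent of $s$ in a left neighborhood of $1$.

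Combining the two displays, the quantity whose limit is in question is at most $(1-s)^{1/q}\|F_s\|_{X}\lesssim(1-s)^{1/q}\|f\|_{W^{k,X}(\Omega)}\to0$ as $s\to1^-$, which is the assertion. The main obstacle is the supercritical range $q>p$: one must produce a weighted tail estimate whose constant is simultaneously a power of an increasing function of $[\omega]_{A_1}$---so that the extrapolation Lemma \ref{lem-po} applies---and uniformly controlled as $s\to1^-$; this is exactly what the choice of $\rho_s$ together with a fixed small $\theta$ and the resulting boundedness of $C_{(\rho_s)}$ achieve. The case $q\le p$ is immediate from Lemma \ref{eq-s1}.
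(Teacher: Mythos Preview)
Your proof is correct and follows essentially the same route as the paper's. Both arguments extend to $\mathbb{R}^n$, establish a weighted $L^p_\omega$-bound on the tail integral that is uniform in $s$ near $1$ with constants that are increasing functions of $[\omega]_{A_1}$, extrapolate to $X$ via Lemma~\ref{lem-po} (the paper phrases this as ``an argument similar to that used in the proof of Lemma~\ref{lem-po}''), and then let the prefactor $(1-s)^{1/q}$ kill the resulting uniform bound. Your case split $q\le p$ versus $q>p$, the choice $c_0=1/C_{(n)}$ so that $B({\bf 0},C_{(n)}c_0)^\complement=\{|h|\ge1\}$, and the direct appeal to Proposition~\ref{lem-upw} are exactly what the paper does in compressed form by citing Lemma~\ref{eq-s1} together with the derived inequality \eqref{eq-rr}; your version simply makes the mechanics explicit.
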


\begin{proof}
Let $\Lambda$ be an extension operator
from $W^{k,X}(\Omega)$ to $W^{k,X}$,
$f\in W^{k,X}(\Omega)$,
and $\theta \in
[0, \min\{1,1-\frac{n}{k}(\frac{1}{p}-\frac{1}{q})\})$.
From this, Lemma \ref{eq-s1}, and \eqref{eq-rr},
we infer that,
for any $s\in (\max\{1-\frac{\theta}{2},1-\frac{n}{kq}\},1)$
and $\Lambda f\in W^{k,p}_{\omega}$,
\begin{align*}
&\left\{\int_{\mathbb{R}^n}\left[\int_{B({\bf 0},1)^{\complement}}
\frac{|\Delta^k_h \Lambda f(x)|^q}{|h|^{n+skq}}\,dh
\right]^\frac{p}{q}\omega(x)\,dx\right\}^{\frac{1}{p}}\\
&\quad\lesssim
[\omega]_{A_1}^{\frac{2}{p}}
\max\left\{\left\| \Lambda f\right\|^{\theta}_{L^p_\omega}
\left\|\nabla^k \Lambda f\right\|^{1-\theta}_{L^p_\omega},
\left\| \Lambda f\right\|_{L^p_\omega}\right\}
\le[\omega]_{A_1}^{\frac{2}{p}}
\left\|\Lambda f\right\|_{W^{k,p}_\omega}.\notag
\end{align*}
By this, $\Lambda f|_{\Omega}=f$,
and an argument
similar to that used in the proof of Lemma \ref{lem-po},
we conclude that
\begin{align*}
(1-s)^{\frac{1}{q}}\left\|\left[\int_{\{h\in \Omega(\cdot,k):|h|\ge 1\}}
\frac{|\Delta^k_h f(\cdot)|^q}{|h|^{n+skq}}\,dh
\right]^\frac{1}{q}\right\|_{X(\Omega)}
&\le
(1-s)^{\frac{1}{q}}
\left\|\left[\int_{B({\bf 0},1)^{\complement}}
\frac{|\Delta^k_h \Lambda f(\cdot)|^q}{|h|^{n+skq}}\,dh
\right]^\frac{1}{q}\right\|_{X }\\
&\lesssim (1-s)^{\frac{1}{q}}
\left\|\Lambda f\right\|_{W^{k,X}}
\lesssim(1-s)^{\frac{1}{q}}
\left\|f\right\|_{W^{k,X}(\Omega)}
\to 0
\end{align*}
as $s\to 1^{-}$. This finishes the proof of
Lemma \ref{lem-outBall}.
\end{proof}

Finally, to show Theorem \ref{thm-main},
we also need a density result
on ball Banach Sobolev spaces on extension domains.
Recall that, for any open set $\Omega$,
the restriction to $\Omega$ of the space
$C_{\rm c}^{\infty} $ is defined by
setting
$
C_{\rm c}^{\infty} |_{\Omega}
: =\left\{f|_{\Omega}:f\in C_{\rm c}^{\infty} \right\}.
$
Moreover, we
use the \emph{symbol}
$W^{k,X}_{\rm c} $
to denote the set of all functions in
$W^{k,X} $ with compact support. Then we have the following conclusion.

\begin{lemma}\label{lem-dens}
Let $k\in \mathbb{N}$ and $X $
be a {\rm BBF} space having an absolutely
continuous norm.
If $X $
satisfies the same assumptions as in Lemma \ref{lem-po}
with some $p\in [1,\infty)$,
then the following assertions hold.
\begin{enumerate}[{\rm (i)}]
\item  $W^{k,X}_{\rm c} $ and $C_{\rm c}^{\infty} $
are both dense in $W^{k,X} $.

\item If $\Omega\subset \mathbb{R}^n$
is a $W^{k,X}$-extension domain,
then $C_{\rm c}^{\infty} |_{\Omega}$
is dense in $W^{k,X}(\Omega)$.
\end{enumerate}
\end{lemma}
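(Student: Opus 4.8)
The plan is to prove (i) by the standard two-step density scheme — first truncation, to reduce to compactly supported functions, then mollification, to reach $C_{\rm c}^{\infty}$ — and then to deduce (ii) from (i) by composing with the extension operator and restricting to $\Omega$.

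\emph{Truncation for (i).} Fix $\chi\in C_{\rm c}^{\infty}$ with ${\bf 1}_{B({\bf 0},1)}\le\chi\le{\bf 1}_{B({\bf 0},2)}$ and set $\chi_R:=\chi(\cdot/R)$ for $R\ge1$. Given $f\in W^{k,X}$, Leibniz's rule yields, for $|\alpha|\le k$, $\partial^{\alpha}(\chi_R f)=\chi_R\,\partial^{\alpha}f+\sum_{0<\beta\le\alpha}\binom{\alpha}{\beta}(\partial^{\beta}\chi_R)\,\partial^{\alpha-\beta}f$, where each $\partial^{\beta}\chi_R$ with $\beta\neq{\bf 0}$ is supported in $B({\bf 0},2R)\setminus B({\bf 0},R)$ and bounded by $\|\partial^{\beta}\chi\|_{L^{\infty}}$. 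Since ${\bf 1}_{B({\bf 0},R)^{\complement}}$ and ${\bf 1}_{B({\bf 0},2R)\setminus B({\bf 0},R)}$ tend to $0$ almost everywhere as $R\to\infty$ while $\partial^{\gamma}f\in X$ for all $|\gamma|\le k$, the absolute continuity of the norm of $X$ together with Definition \ref{def-X}(ii) forces $\|(1-\chi_R)\partial^{\alpha}f\|_{X}\to0$ and $\|(\partial^{\beta}\chi_R)\partial^{\alpha-\beta}f\|_{X}\to0$ as $R\to\infty$; hence $\chi_R f\to f$ in $W^{k,X}$, and $W^{k,X}_{\rm c}$ is dense in $W^{k,X}$.

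\emph{Mollification for (i).} Fix $\eta\in C_{\rm c}^{\infty}$ with $\mathrm{supp}\,\eta\subset B({\bf 0},1)$ and $\int_{\mathbb{R}^n}\eta=1$, set $\eta_{\varepsilon}:=\varepsilon^{-n}\eta(\cdot/\varepsilon)$, and, for $f\in W^{k,X}_{\rm c}$, note $\eta_{\varepsilon}\ast f\in C_{\rm c}^{\infty}$ with $\partial^{\alpha}(\eta_{\varepsilon}\ast f)=\eta_{\varepsilon}\ast\partial^{\alpha}f$ for $|\alpha|\le k$; thus it suffices to show $\eta_{\varepsilon}\ast g\to g$ in $X$ as $\varepsilon\to0^{+}$ for every compactly supported $g\in X$. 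For this I would combine two facts. First, the mollifiers are bounded on $X$ uniformly in $\varepsilon\in(0,1)$: when $p>1$ this follows from $|\eta_{\varepsilon}\ast h|\le C_{(n)}\mathcal{M}(h)$ and the boundedness of $\mathcal{M}$ on $X$, which in turn follows from the Fefferman--Stein inequality $\int(\mathcal{M}h)^{p}w\lesssim\int|h|^{p}\mathcal{M}w$, the identity $\|g\|_{X}^{p}=\sup_{\|\phi\|_{Y}\le1}\int|g|^{p}|\phi|$ (a consequence of Definition \ref{def-convex} and Remark \ref{rem-x-a}(iii)), and the hypothesis that $\mathcal{M}$ is bounded on $Y=(X^{\frac1p})'$; and when $p=1$, since then $Y=X'$, it follows from the dual estimate $\int|\eta_{\varepsilon}\ast h|\,|\phi|\le\int(\eta_{\varepsilon}\ast|h|)|\phi|=\int|h|\,(\widetilde{\eta_{\varepsilon}}\ast|\phi|)\le C_{(n)}\int|h|\,\mathcal{M}(\phi)$ combined with Hölder's inequality for $X$ and $X'$. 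Second, $\eta_{\varepsilon}\ast s\to s$ in $X$ for every simple function $s$ supported in a fixed ball: indeed $\eta_{\varepsilon}\ast s\to s$ almost everywhere and $|\eta_{\varepsilon}\ast s|\le\|s\|_{L^{\infty}}{\bf 1}_{B^{\ast}}\in X$ for a fixed ball $B^{\ast}$, so this is the $X$-analogue of the dominated convergence theorem, which holds because $X$ has absolutely continuous norm; moreover such simple functions are dense among the compactly supported elements of $X$, again by absolute continuity. A routine approximation argument using these two facts gives $\eta_{\varepsilon}\ast g\to g$ in $X$, and combined with the truncation step this proves that $C_{\rm c}^{\infty}$ is dense in $W^{k,X}$.

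\emph{Proof of (ii) and the main obstacle.} Let $\Omega$ be a $W^{k,X}$-extension domain with extension operator $\Lambda$ and $f\in W^{k,X}(\Omega)$. By (i) choose $\phi_{j}\in C_{\rm c}^{\infty}$ with $\phi_{j}\to\Lambda f$ in $W^{k,X}$; since $(\Lambda f)|_{\Omega}=f$ and restriction is a contraction from $W^{k,X}$ to $W^{k,X}(\Omega)$, we obtain $\|\phi_{j}|_{\Omega}-f\|_{W^{k,X}(\Omega)}\le\|\phi_{j}-\Lambda f\|_{W^{k,X}}\to0$, with $\phi_{j}|_{\Omega}\in C_{\rm c}^{\infty}|_{\Omega}$; this proves (ii). The main obstacle is the mollification step — specifically, upgrading the classical almost-everywhere (and $L^{1}_{\rm loc}$) convergence of $\eta_{\varepsilon}\ast g$ to convergence in the $X$-norm — and it is precisely here that the two structural hypotheses on $X$, absolute continuity of the norm and boundedness of $\mathcal{M}$ on $(X^{\frac1p})'$, are used in an essential way; the truncation step and part (ii) are then routine.
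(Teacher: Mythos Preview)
Your proof is correct and follows essentially the same two-step scheme as the paper: truncation by a smooth cutoff to reach $W^{k,X}_{\rm c}$, then mollification to reach $C_{\rm c}^{\infty}$, and finally extension-plus-restriction for (ii). The paper's own proof is shorter only because it outsources the truncation and mollification steps to \cite[Propositions~3.6 and~3.8]{DGPYYZ24}; you have filled in those details explicitly, and your treatment of the uniform $X$-boundedness of the mollifiers (splitting into $p>1$ via Fefferman--Stein and $p=1$ via the dual estimate) is a clean way to handle both cases under the hypothesis on $(X^{1/p})'$.
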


\begin{proof}
We first prove (i). To this end,
let $f\in W^{k,X} $ and
$\phi\in C_{\rm c}^{\infty} $
be such that ${\bf 1}_{B({\bf0},1)}\le\phi\le{\bf 1}_{B({\bf0},2)}$.
For any $l\in \mathbb{N}$, let
$
f_l (\cdot):= f(\cdot)\phi({\cdot}/{l}).
$
Then, applying an argument similar to that used in
the proof of \cite[Proposition 3.6]{DGPYYZ24}
with the first-order derivatives therein replaced
by the higher-order derivatives,
we find that, for any $l\in\mathbb{N}$,
$f_{l}\in W^{k,X}_{\rm c} $ and
$
\lim_{l\to \infty}
\|f_l -f\|_{W^{k,X} }=0.
$
Thus, $W^{k,X}_{\rm c} $
is dense in $W^{k,X} $.
Now, assume $g\in W^{k,X}_{\rm c} $
and let
$\eta\in C_{\rm c}^{\infty} $ be
such that ${\rm supp}\,(\eta)\subset B({\bf 0},1)$
and $\int_{\mathbb{R}^n}\eta(x)\,dx =1$
and, for any $j\in \mathbb{N}$, let
$\eta_j(\cdot):= j^n\eta(j\cdot)$.
Then
$g\ast \eta_j \in C_{\rm c}^{\infty} $.
Repeating the proof of
\cite[Proposition 3.8]{DGPYYZ24}, we conclude that
$
\lim_{j\to \infty}
\|g\ast \eta_j-g\|_{W^{k,X} }=0
$
and hence
$C_{\rm c}^{\infty} $ is dense
in $W^{k,X}_{\rm c} $. From this
and the above just showed fact that $W^{k,X}_{\rm c} $
is dense in $W^{k,X} $,
we deduce that (i) holds.

We next prove (ii). To do this, let
$f\in W^{k,X} (\Omega)$. Since $\Omega$ is
a $W^{k,X}$-extension domain,
it follows that there
exists $g\in W^{k,X} $ such that $g|_{\Omega}=f$.
By (i), we find that there
exists a sequence $\{g_j\}_{j\in \mathbb{N}}$ in $C_{\rm c}^{\infty} $
satisfying $\|g_j -g\|_{W^{k,X} }\to 0$ as $j\to \infty$.
For any $j\in \mathbb{N}$,
let $f_j:= g_j |_{\Omega} \in C_{\rm c}^{\infty} |_{\Omega}$.
From this and the definition of
$W^{k,X}(\Omega)$, we infer that
$
\|f_j -f\|_{W^{k,X}(\Omega)}
\le
\|g_j -g\|_{W^{k,X} }\to 0
$
as $j\to \infty$.
This finishes the proof of (ii)
and hence Lemma \ref{lem-dens}.
\end{proof}

Now, we turn to show Theorem \ref{thm-main}.

\begin{proof}[Proof of Theorem \ref{thm-main}]
Let
$\Omega$ be a $W^{k,X}$-extension domain with $\Lambda$ being the
corresponding extension operator
and let
$f\in W^{k,X}(\Omega)$.
By this, Theorem \ref{thm-upi}(i) with $r:=1$, and Lemma \ref{lem-po},
we find that, for any
$s\in (0,1)$,
\begin{align*}
(1-s)^{\frac{1}{q}}\left\|
\left[\int_{B({\bf 0},1)}
\frac{|\Delta^k_h \Lambda f(\cdot)|^q}{|h|^{n+skq}}\,dh
\right]^\frac{1}{q}\right\|_{X }
\lesssim
\left\|\nabla^k \Lambda f\right\|_{X},
\end{align*}
which, together with the definition of $W^{k,X}$-extension
domains, further implies that
\begin{align*}
(1-s)^{\frac{1}{q}}	\left\|\left[\int_{\Omega(\cdot,k)\cap
B({\bf 0},1)}
\frac{|\Delta^k_h f(\cdot)|^q}{|h|^{n+skq}}\,dh
\right]^\frac{1}{q}\right\|_{X(\Omega)}
\lesssim\|f\|_{ W^{k,X}(\Omega)}
\end{align*}
with the implicit positive constants independent of
$s$ and $f$.
On the other hand, from Lemma \ref{lem-dens}, we deduce that,
for any $\delta\in (0,\infty)$,
there exists $\eta\in C_{\rm c}^{\infty} $
such that
$
\|f-\eta |_{\Omega}\|_{W^{k,X}(\Omega)}<\delta.
$
Using Lemma \ref{lem-rxp},
the assumptions that $X^{\frac{1}{p}} $
is a {\rm BBF} space and
that $\mathcal{M}$ is
bounded on $Y:= (X^{\frac{1}{p}})'$, and
Lemma \ref{lem-apwight}(i),
we conclude that, for any $\lambda\in [1,\infty)$,
\begin{align*}
\left\|{\bf 1}_{B({\bf 0},\lambda)}\right\|_{X }
&\sim
\sup_{\|g\|_{Y}=1}\left[R_{Y}g
\left(B({\bf 0},\lambda)\right)\right]^{\frac{1}{p}}\lesssim
\lambda^{\frac{n}{p}}	\sup_{\|g\|_{Y}=1}
\left[R_{Y}g\left(B({\bf 0},1)\right)\right]^{\frac{1}{p}}
\sim
\lambda^{\frac{n}{p}}\left\|{\bf 1}_{B({\bf 0},1)}\right\|_{X }.
\end{align*}
From this and Lemma \ref{lem-good} with
\begin{align}\label{eq-sRHO}
\rho_{\epsilon}(r):=\begin{cases}
kq\epsilon r^{\epsilon k q-n} \quad &\text{if }r\in (0,1),\\
0 &\text{if }r\in [1,\infty)
\end{cases}
\end{align}
for any $r\in (0,\infty)$ and any
given $\epsilon\in (0,\frac{n}{kq})$,
it follows that
\begin{align}\label{thm-maine1}
&\lim_{s\to 1^{-}}
(1-s)^{\frac{1}{q}}
\left\|\left[\int_{\Omega(\cdot,k)\cap B({\bf 0},1)}
\frac{|\Delta^k_h \eta(\cdot)|^q}{|h|^{n+skq}}\,dh
\right]^\frac{1}{q}\right\|_{X(\Omega)}\notag\\
&\quad=
\frac{1}{(kq)^\frac{1}{q}}\left\|\left[ \int_{\mathbb{S}^{n-1}}
\left|\sum_{\alpha\in\mathbb{Z}_+^n,\,|\alpha|=k}
\partial^{\alpha}\eta(\cdot)\xi^{\alpha}\right|^q\,d\mathcal{H}^{n-1}(\xi)
\right]^{\frac{1}{q}}\right\|_{X(\Omega)}.
\end{align}
This, combined with a standard density argument (see, for instance,
\cite[p.\,1732]{DGPYYZ24}), Remark \ref{rem-main}(i),
the triangle inequality of the norm $\|\cdot\|_{X(\Omega)}$,
and the arbitrariness of $\delta$,
further implies that \eqref{thm-maine1} also holds
with $\eta$ replaced by $f$.
By this and Lemma \ref{lem-outBall},
we obtain
\eqref{eq-main1}.
This then finishes the proof of Theorem \ref{thm-main}.
\end{proof}

\subsection{A New Characterization of Ball Banach Sobolev Spaces}\label{s52}

In this subsection, we use the BBM formula
(namely Theorem \ref{thm-main}) to establish the following
new characterization of ball Banach Sobolev spaces.

\begin{theorem}\label{thm-cwkx}
Let $k\in \mathbb{N}$, $X $ be a {\rm BBF} space,
and $\Omega\subset \mathbb{R}^n$ a $W^{k,X}$-extension domain.
Assume that both $X$ and $X' $ have absolutely continuous norms
and
$X$ satisfies the same assumptions as in Lemma \ref{lem-po}
with some $p\in (1,\infty)$.
Let $q\in [1,\infty)$ satisfy
$n(\frac{1}{p}-\frac{1}{q})<k$.
Then $f\in W^{k,X}(\Omega)$ if and only if $f\in W^{k-1,X}(\Omega)$
[where $W^{0,X}(\Omega):=X(\Omega)$] and
\begin{align}\label{eq-cha01}
I(f):=\liminf_{s\to 1^{-}}
(1-s)^{\frac{1}{q}}
\left\|\left[\int_{\Omega(\cdot,k)}
\frac{|\Delta^k_h f(\cdot)|^q}{|h|^{n+skq}}\,dh
\right]^\frac{1}{q}\right\|_{X(\Omega)}<\infty;
\end{align}
moreover, for any $f\in W^{k,X}(\Omega)$,
$\|f\|_{W^{k,X}(\Omega)}\sim \|f\|_{W^{k-1,X}(\Omega)}+I(f)$
with the positive equivalence constants independent of $f$.
\end{theorem}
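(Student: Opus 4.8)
The plan is to prove the two implications of the equivalence and then the norm equivalence, using the BBM formula (Theorem \ref{thm-main}) as the main engine. The ``only if'' direction is essentially immediate: if $f\in W^{k,X}(\Omega)$, then trivially $f\in W^{k-1,X}(\Omega)$, and Theorem \ref{thm-main} gives that the $\liminf$ defining $I(f)$ is in fact a genuine limit equal to
$(kq)^{-1/q}\|[\int_{\mathbb{S}^{n-1}}|\sum_{|\alpha|=k}\partial^{\alpha}f(\cdot)\xi^{\alpha}|^q\,d\mathcal{H}^{n-1}(\xi)]^{1/q}\|_{X(\Omega)}$,
which by Remark \ref{rem-main}(i) is comparable to $\|\nabla^k f\|_{X(\Omega)}<\infty$; hence $I(f)<\infty$. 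This simultaneously yields one half of the norm equivalence, namely $\|f\|_{W^{k-1,X}(\Omega)}+I(f)\lesssim\|f\|_{W^{k,X}(\Omega)}$.

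For the ``if'' direction I would assume $f\in W^{k-1,X}(\Omega)$ and $I(f)<\infty$, and aim to show that all weak derivatives $\partial^{\alpha}f$ with $|\alpha|=k$ exist and lie in $X(\Omega)$. Since $\Omega$ is a $W^{k,X}$-extension domain and $p\in(1,\infty)$ with $X$ satisfying the standing assumptions, the natural route is a duality/weak-compactness argument: fix a multi-index $\alpha$ with $|\alpha|=k$ and a test function $\psi\in C_{\rm c}^{\infty}(\Omega)$, and I would try to realize $\int_{\Omega}\partial^{\alpha}f\,\psi$ (which a priori does not exist) as a limit along $s\to 1^-$ of quantities controlled by the localized seminorm. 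Concretely, for each fixed small $h$ one integrates by parts against the $k$-th order difference quotient of $\psi$ (using $f\in W^{k-1,X}(\Omega)\hookrightarrow L^1_{\rm loc}$), then integrates in $h$ against the kernel $|h|^{-n-skq}$ restricted to $B(\mathbf{0},1)$ and the appropriate normalizing factor; bounding the resulting pairing via H\"older's inequality for the dual pair $(X(\Omega),X'(\Omega))$ (Remark \ref{rem-x-a}(ii)) and the assumption $I(f)<\infty$ shows that the linear functional $\psi\mapsto$ (this limit) is bounded on a dense subspace, with norm $\lesssim I(f)$. Because $X'(\Omega)$ has an absolutely continuous norm, $[X(\Omega)]^{*}=X'(\Omega)$ (Remark \ref{r-dual}(ii)) — or rather one uses the absolute continuity of $X$ to identify the functional with an element of $X'(\Omega)^{*}=X''(\Omega)=X(\Omega)$ — giving a function $g_{\alpha}\in X(\Omega)$ with $\int_{\Omega}g_{\alpha}\psi=(-1)^{k}\int_{\Omega}f\,\partial^{\alpha}\psi$ for all $\psi$, i.e. $g_{\alpha}=\partial^{\alpha}f$ weakly and $\|\partial^{\alpha}f\|_{X(\Omega)}\lesssim I(f)$. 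Summing over $\alpha$ gives $f\in W^{k,X}(\Omega)$ and $\|\nabla^k f\|_{X(\Omega)}\lesssim I(f)$, completing the reverse norm estimate.

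The main obstacle I anticipate is making the weak-compactness extraction rigorous in the ball Banach function space setting: one must choose the correct subsequence $s_j\to 1^-$ realizing the $\liminf$, control the difference-quotient-against-$\psi$ pairing uniformly (this requires a careful but routine estimate showing that $(1-s)^{1/q}$ times the $h$-integral of $\Delta^k_h f(x)$ against a smooth compactly supported weight converges, as $s\to1^-$, to a multiple of $\int\partial^\alpha f\cdot(\text{something})$ — precisely the kind of computation underlying Lemma \ref{lem-good}), and then invoke the dual characterization $X=X''$ together with the absolute continuity hypotheses to pass from a bounded functional to an $X(\Omega)$-function. A clean alternative, which I would actually prefer, is to bypass the abstract functional analysis by working with a mollification $f\ast\eta_j$ as in the proof of Theorem \ref{thm-upi}: show that $\nabla^k(f\ast\eta_j)$ is bounded in $X$ (uniformly in $j$) by using $I(f)<\infty$ and Fatou's lemma for the seminorm, extract (using the reflexivity-type consequence of absolute continuity of both $X$ and $X'$, via Remark \ref{r-dual}) a weakly convergent subsequence whose limit must be $\nabla^k f$ since $f\ast\eta_j\to f$ in $L^1_{\rm loc}$; then $f\in W^{k,X}(\Omega)$ with the desired norm bound. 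Either way the BBM formula does the heavy lifting for the forward direction, and the reverse direction is a compactness-plus-duality argument whose only delicate point is the identification $[X(\Omega)]^{**}=X(\Omega)$ under the stated hypotheses.
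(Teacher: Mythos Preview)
Your proposal is essentially correct and matches the paper's approach: the necessity is exactly as you describe (Theorem \ref{thm-main} plus Remark \ref{rem-main}(i)), and for sufficiency the paper also uses your preferred Route 2 (mollification plus weak compactness), packaged as Lemma \ref{lem-cha}.

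Two points you gloss over that the paper makes precise and that are worth knowing. First, since $f$ is only defined on $\Omega$, the mollification $\varphi_t\ast f$ is only defined on the shrunken domain $\Omega_t=\{x:\operatorname{dist}(x,\Omega^{\complement})>t\}$, and the paper works on bounded $U\Subset\Omega_t$, extracting the weak limit there and then exhausting $\Omega$. Second, and more substantively, to get the uniform bound $\|\nabla^k(\varphi_t\ast f)\|_{X(U)}\lesssim I(f)$ you need the intermediate inequality that the localized Gagliardo seminorm of $\varphi_t\ast f$ on $\Omega_t$ is controlled by that of $f$ on $\Omega$ (the paper's claim \eqref{eq-con}); this follows from Minkowski's integral inequality together with $\|\varphi_t\ast F\|_X\le\|\mathcal{M}(F)\|_X\lesssim\|F\|_X$, and the boundedness of $\mathcal{M}$ on $X$ itself (not just on $(X^{1/p})'$) is exactly where the hypothesis $p>1$ is used. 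Once that is in hand, Lemma \ref{lem-point} gives the pointwise limit for the smooth function $\varphi_t\ast f$, Fatou in $X$ yields the uniform bound, and then Banach--Alaoglu (using $X=X''$ and reflexivity from absolute continuity of both $X$ and $X'$) produces $g_\alpha\in X(\Omega)$ with $g_\alpha=\partial^\alpha f$, just as you outline.
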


\begin{remark}
\begin{enumerate}[{\rm (i)}]
\item Theorem \ref{thm-cwkx} improves
\cite[Corollary 4.10]{ZYY24jga}.
Indeed, in the case where $k=1$
and $X:=L^{p} $ with $p\in [1,\infty)$,
Theorem \ref{thm-cwkx}
extends the range $q\in [1,p]\cup (1,\frac{n}{n-1}]$
in \cite[Theorem 3.36]{DGPYYZ24}
into the range $q\in [1,\infty)$ satisfying $n(\frac{1}{p}-\frac{1}{q})<1$.
Furthermore,
when $k\in\mathbb{N}\cap[2,\infty)$,
Theorem \ref{thm-cwkx}
is new in the setting of ball Banach function spaces.
\item By Proposition \ref{pro-sp3} below, we find that
the assumption
$n(\frac{1}{p}-\frac{1}{q})<k$ in
Theorem \ref{thm-cwkx} is \emph{sharp}.
\end{enumerate}
\end{remark}

To prove Theorem \ref{thm-cwkx},
we need the following well-known inequality
(see, for instance \cite[p.\,699]{Bre02}).

\begin{lemma}\label{lem-thet}
Let $q\in [1,\infty)$. If $\theta\in(0,1)$, then there exists a
positive constant $C_{(\theta)}$,
depending only on $\theta$, such that, for any $a,b \in (0,\infty)$,
$
(a+b)^q \leq (1+\theta)a^q+C_{(\theta)}b^q.
$
\end{lemma}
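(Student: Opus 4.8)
The plan is to reduce the claimed two-variable inequality to a one-variable one by homogeneity and then argue by convexity. Dividing both sides of $(a+b)^q\le(1+\theta)a^q+C_{(\theta)}b^q$ by $a^q$ and writing $t:=b/a\in(0,\infty)$, we see that the assertion is equivalent to producing $C_{(\theta)}\in(0,\infty)$ such that $(1+t)^q\le(1+\theta)+C_{(\theta)}t^q$ for all $t\in(0,\infty)$. First I would dispose of the trivial case $q=1$: here $a+b\le(1+\theta)a+b$ holds for all $a,b\in(0,\infty)$ since $\theta a>0$, so one may take $C_{(\theta)}:=1$. Hence, from now on, assume $q\in(1,\infty)$.

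For $q\in(1,\infty)$, the function $t\mapsto t^q$ is convex on $[0,\infty)$, so, for any $\lambda\in(0,1)$ and any $a,b\in(0,\infty)$,
\begin{align*}
(a+b)^q=\left[\lambda\cdot\frac{a}{\lambda}+(1-\lambda)\cdot\frac{b}{1-\lambda}\right]^q
\le\lambda\left(\frac{a}{\lambda}\right)^q+(1-\lambda)\left(\frac{b}{1-\lambda}\right)^q
=\lambda^{1-q}a^q+(1-\lambda)^{1-q}b^q.
\end{align*}
I would then choose $\lambda:=(1+\theta)^{-\frac{1}{q-1}}$, which lies strictly inside $(0,1)$ precisely because $1+\theta>1$ and $q-1>0$; with this choice $\lambda^{1-q}=1+\theta$. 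Setting
\begin{align*}
C_{(\theta)}:=(1-\lambda)^{1-q}=\left[1-(1+\theta)^{-\frac{1}{q-1}}\right]^{-(q-1)}\in(0,\infty),
\end{align*}
which indeed depends only on $\theta$ (with $q$ fixed throughout the statement), we obtain exactly $(a+b)^q\le(1+\theta)a^q+C_{(\theta)}b^q$, as desired.

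I do not expect any genuine obstacle here. The only points needing a line of care are that the chosen $\lambda$ is strictly interior to $(0,1)$ and that $C_{(\theta)}$ is finite and positive, both of which follow immediately from $\theta\in(0,1)$ and $q\in(1,\infty)$. If one prefers to avoid invoking convexity, an equally short alternative is available: put $\delta:=(1+\theta)^{1/q}-1>0$; for $t\in(0,\delta]$ monotonicity of $s\mapsto(1+s)^q$ gives $(1+t)^q\le(1+\delta)^q=1+\theta$, whereas for $t\in(\delta,\infty)$ one has $1+t\le(1+\delta^{-1})t$ and hence $(1+t)^q\le(1+\delta^{-1})^qt^q$; combining the two ranges via $(1+t)^q\le\max\{1+\theta,(1+\delta^{-1})^qt^q\}\le(1+\theta)+(1+\delta^{-1})^qt^q$ then yields the claim with $C_{(\theta)}:=(1+\delta^{-1})^q$.
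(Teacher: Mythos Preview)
Your proof is correct; both the convexity argument and the alternative case-splitting argument go through without any gaps. The paper itself does not prove this lemma at all: it simply records it as a well-known inequality and cites \cite[p.\,699]{Bre02}, so there is no ``paper's own proof'' to compare against.
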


Furthermore, the following
technical lemma is also needed.

\begin{lemma}\label{lem-point}
Let $k\in \mathbb{N}$, $q\in [1,\infty)$,
$\Omega \subset \mathbb{R}^n$ be an open set,
$\epsilon_0\in(0,\infty)$,
and $\{\rho_{\epsilon}\}_{\epsilon\in (0,\epsilon_0)}$
be as in Lemma \ref{lem-good}.
If $f\in C^{\infty}(\Omega)$
and $U \subset\Omega $ is a bounded open set
satisfying $\overline{U}\subset \Omega$, then,
for any $x\in U$,
\begin{align}\label{eq-pointwise}
\lim_{\epsilon\to 0^+}
\int_{B({\bf 0},R_x )}
\frac{|\Delta^k_h f(x)|^q}{|h|^{kq}}\rho_{\epsilon}(|h|)\,dh
=\int_{\mathbb{S}^{n-1}}
\left|\sum_{\alpha\in\mathbb{Z}_+^n,\,|\alpha|=k}
\partial^{\alpha}f(x)\xi^{\alpha}\right|^q\,d\mathcal{H}^{n-1}(\xi),
\end{align}
where $R_x:= k^{-1}{\rm dist}(x, U^{\complement})$.
\end{lemma}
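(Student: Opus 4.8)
The plan is to reduce the pointwise limit \eqref{eq-pointwise} to the approximate‑identity properties \eqref{eq-ati1} and \eqref{eq-ati2} of $\{\rho_{\epsilon}\}_{\epsilon}$, by Taylor‑expanding $f$ to order $k$ at $x$ and then integrating in polar coordinates. First, I would fix $x\in U$ and set $R:=R_x$; since $kR={\rm dist}(x,U^{\complement})$, for every $h$ with $0<|h|<R$ and every $i\in\{1,\dots,k\}$ one has $x+ih\in B(x,k|h|)\subset B(x,kR)\subset U\subset\Omega$, so $\Delta^k_h f(x)$ is well defined on $B({\bf 0},R)$ and every partial derivative of $f$ is bounded and uniformly continuous on the compact set $\overline{B(x,kR)}\subset\overline{U}\subset\Omega$. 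Using the Taylor integral representation of $\Delta^k_h f$ (see \cite[Proposition 1.4.5]{Gra14}, as recalled in the proof of Lemma \ref{Lem2.5}), I would write $\Delta^k_h f(x)=P(h)+R(x,h)$, where $P(h):=\sum_{\alpha\in\mathbb{Z}_+^n,\,|\alpha|=k}\partial^{\alpha}f(x)h^{\alpha}$ is homogeneous of degree $k$ in $h$ and, by the uniform continuity of the $\partial^{\alpha}f$ with $|\alpha|=k$ on $\overline{B(x,kR)}$, the remainder obeys $|R(x,h)|\le\eta_x(|h|)\,|h|^k$ for some bounded $\eta_x:(0,R)\to[0,\infty)$ with $\eta_x(t)\to0$ as $t\to0^+$.

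Next I would pass to polar coordinates $h=r\xi$ with $r=|h|\in(0,R)$ and $\xi\in\mathbb{S}^{n-1}$. By homogeneity, $|h|^{-k}P(h)=P(\xi)=\sum_{\alpha\in\mathbb{Z}_+^n,\,|\alpha|=k}\partial^{\alpha}f(x)\xi^{\alpha}$, so $|h|^{-kq}|\Delta^k_h f(x)|^q=|P(\xi)+r^{-k}R(x,h)|^q$ with $|r^{-k}R(x,h)|\le\eta_x(r)$. Applying the elementary inequality $\big||a|^q-|b|^q\big|\le q\,(\max\{|a|,|b|\})^{q-1}|a-b|$ (valid for $q\ge1$) together with the uniform bound $|P(\xi)+r^{-k}R(x,h)|,\,|P(\xi)|\le\|P\|_{L^{\infty}(\mathbb{S}^{n-1})}+\sup_{(0,R)}\eta_x<\infty$, this yields
\begin{align*}
\left|\,\frac{|\Delta^k_h f(x)|^q}{|h|^{kq}}-|P(\xi)|^q\,\right|\le C\,\eta_x(|h|),
\end{align*}
valid for all $h$ with $0<|h|<R$, where $C$ depends only on $n$, $k$, $q$, and $x$. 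In particular the integrand in \eqref{eq-pointwise} is dominated by a constant multiple of $\rho_{\epsilon}(|h|)$, which by \eqref{eq-ati1} is integrable over $B({\bf 0},R)$, so the left‑hand side of \eqref{eq-pointwise} makes sense; moreover, integrating the displayed estimate against $\rho_{\epsilon}(|h|)$ over $B({\bf 0},R)$ in polar coordinates and using that $|P(\xi)|^q$ does not depend on $r$, I find that this left‑hand side differs from $\big(\int_0^R\rho_{\epsilon}(r)r^{n-1}\,dr\big)\int_{\mathbb{S}^{n-1}}|P(\xi)|^q\,d\mathcal{H}^{n-1}(\xi)$ by at most a constant multiple of $\int_0^R\eta_x(r)\rho_{\epsilon}(r)r^{n-1}\,dr$.

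To conclude I would let $\epsilon\to0^+$. From \eqref{eq-ati1} and \eqref{eq-ati2}, $\int_0^R\rho_{\epsilon}(r)r^{n-1}\,dr=1-\int_R^{\infty}\rho_{\epsilon}(r)r^{n-1}\,dr\to1$. For the error, fixing $\delta\in(0,R)$ and splitting $\int_0^R=\int_0^{\delta}+\int_{\delta}^R$, the first piece is at most $(\sup_{(0,\delta)}\eta_x)\int_0^{\infty}\rho_{\epsilon}(r)r^{n-1}\,dr=\sup_{(0,\delta)}\eta_x$ by \eqref{eq-ati1}, while the second is at most $(\sup_{(0,R)}\eta_x)\int_{\delta}^{\infty}\rho_{\epsilon}(r)r^{n-1}\,dr$, which tends to $0$ by \eqref{eq-ati2}; hence $\limsup_{\epsilon\to0^+}\int_0^R\eta_x(r)\rho_{\epsilon}(r)r^{n-1}\,dr\le\sup_{(0,\delta)}\eta_x$, and letting $\delta\to0^+$ (so $\sup_{(0,\delta)}\eta_x\to0$) kills the error. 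Combining this with the previous step gives $\lim_{\epsilon\to0^+}\int_{B({\bf 0},R)}|h|^{-kq}|\Delta^k_h f(x)|^q\rho_{\epsilon}(|h|)\,dh=\int_{\mathbb{S}^{n-1}}|P(\xi)|^q\,d\mathcal{H}^{n-1}(\xi)$, which is exactly \eqref{eq-pointwise} since $P(\xi)=\sum_{\alpha\in\mathbb{Z}_+^n,\,|\alpha|=k}\partial^{\alpha}f(x)\xi^{\alpha}$. The argument is essentially routine; the only point demanding a little care is that the Taylor remainder be controlled uniformly enough in the radial variable $r\in(0,R)$ so that the mass‑one, concentrating profiles $\rho_{\epsilon}(\cdot)|\cdot|^{n-1}$ can absorb it — there is no deeper obstacle, and all $x$‑dependent constants are harmless because the assertion is pointwise in $x$.
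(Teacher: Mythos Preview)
Your proof is correct and follows essentially the same approach as the paper: Taylor-expand $\Delta^k_h f(x)$ at $h=0$, separate the homogeneous principal part $P(h)$ from a remainder that is $o(|h|^k)$, pass to polar coordinates, and use the normalization \eqref{eq-ati1} together with the concentration \eqref{eq-ati2} to kill the error. The only cosmetic differences are that the paper bounds the Taylor remainder by $C|h|^{k+1}$ (one order higher, which suffices since $f\in C^\infty$) and compares $|a|^q$ with $|b|^q$ via the inequality $(a+b)^q\le(1+\theta)a^q+C_{(\theta)}b^q$ followed by a limsup/liminf argument and $\theta\to0^+$, whereas you use the mean-value bound $\big||a|^q-|b|^q\big|\le q(\max\{|a|,|b|\})^{q-1}|a-b|$ to estimate the difference directly; both routes are standard and equivalent here.
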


\begin{proof}
Let $\epsilon\in (0,\epsilon_0)$, $f\in C^{\infty}(\Omega)$, and $x\in U$.
Since $U$ is bounded, we infer that,
for any $x\in U$,
\begin{align}\label{eq-Rx}
kR_x \le
{\rm diam}\, (U):=
\sup \{|y-z|:y,z\in U\}
<\infty.
\end{align}
From this, the assumption
$f\in C^{\infty}(\Omega)$, and
Taylor's formula, we deduce that
there exists a positive constant $C_{(k,f,U)}$,
depending only on $k$, $f$,
and $U$, such that,
for any $h\in B({\bf 0},R_x )$,
\begin{align*}
\left|\Delta^k_h
f(x)-\sum_{\alpha\in\mathbb{Z}_+^n,\,|\alpha|=k}
\partial^{\alpha}f(x)h^{\alpha}\right|
\le C_{(k,f,U)}|h|^{k+1}.
\end{align*}
This, together with Lemma \ref{lem-thet},
implies that, for any $\theta \in (0,1)$,
there exists a positive
constant $C_{(\theta)}$,
depending only on $\theta$, such that,
for any $h\in B({\bf 0},R_x )$,
\begin{align*}
\left|\Delta^k_h f(x)\right|^q
\le (1+\theta)
\left|\sum_{\alpha\in\mathbb{Z}_+^n,\,|\alpha|=k}
\partial^{\alpha}f(x)h^{\alpha}\right|^q
+C_{(\theta)}[C_{(k,f,U)}]^q|h|^{(k+1)q}.
\end{align*}
By this, the polar coordinate,
and \eqref{eq-Rx}, we obtain,
for any $\theta \in (0,1)$,
\begin{align}\label{eq-I1}
&\int_{B({\bf 0},R_x )}
\frac{|\Delta^k_h f(x)|^q}{|h|^{kq}}\rho_{\epsilon}(|h|)\,dh\notag\\
&\quad\le
(1+\theta)\int_{ B({\bf 0},R_x )}
\left|\sum_{\alpha\in\mathbb{Z}_+^n,\,|\alpha|=k}
\partial^{\alpha}f(x)h^{\alpha}\right|^q{|h|^{-kq}}
\rho_{\epsilon}(|h|)\,dh\notag\\
&\qquad +
C_{(\theta)}[C_{(k,f,U)}]^q
\int_{B({\bf 0},R_x )}
|h|^{q}\rho_{\epsilon}(|h|)\,dh\notag\\
&\quad\le
(1+\theta)\int_{\mathbb{S}^{n-1}}
\left|\sum_{\alpha\in\mathbb{Z}_+^n,\,|\alpha|=k}
\partial^{\alpha}f(x)\xi^{\alpha}\right|^q\,d\mathcal{H}^{n-1}(\xi)
\int_{0}^{{\rm diam}\,(U)}\rho_{\epsilon}(r)r^{n-1}\,dr\notag\\
&\qquad+
C_{(\theta)}[C_{(k,f,U)}]^q\mathcal{H}^{n-1}
\left(\mathbb{S}^{n-1}\right)\int_{0}^{{\rm diam}\,(U)}
\rho_{\epsilon}(r)r^{q+n-1}\,dr.
\end{align}
From \eqref{eq-ati1} and \eqref{eq-ati2},
it follows that
\begin{align}\label{eq-pn1}
\lim_{\epsilon\to 0^+}\int_{0}^{{\rm diam}\,(U)}
\rho_{\epsilon}(r)r^{n-1}\,dr=1
\end{align}
and, for any $\delta \in (0, {\rm diam}\,(U))$,
\begin{align*}
\int_{0}^{{\rm diam}\,(U)}
\rho_{\epsilon}(r)r^{q+n-1}\,dr
&=
\int_{0}^{\delta}
\rho_{\epsilon}(r)r^{q+n-1}\,dr
+\int_{\delta}^{{\rm diam}\,(U)}
\rho_{\epsilon}(r)r^{q+n-1}\,dr\\
&\le \delta^q \int_{0}^{\infty}
\rho_{\epsilon}(r)r^{n-1}\,dr
+\left[{\rm diam}\,(U)\right]^q\int_{\delta}^{\infty}
\rho_{\epsilon}(r)r^{n-1}\,dr\to \delta^q
\end{align*}
as $\epsilon\to 0^+$, which, together with
the arbitrariness of $\delta$, further implies that
\begin{align*}
\lim_{\epsilon\to 0^+}\int_{0}^{{\rm diam}\,(U)}
\rho_{\epsilon}(r)r^{q+n-1}\,dr=0.
\end{align*}	
Applying this, \eqref{eq-I1},
and \eqref{eq-pn1},  letting $\epsilon\to 0^+$,
and then letting $\theta\to 0^+$, we obtain
\begin{align*}
\limsup_{\epsilon\to 0^+}
\int_{B({\bf 0},R_x )}
\frac{|\Delta^k_h f(x)|^q}{|h|^{kq}}\rho_{\epsilon}(|h|)\,dh
\le\int_{\mathbb{S}^{n-1}}
\left|\sum_{\alpha\in\mathbb{Z}_+^n,\,|\alpha|=k}
\partial^{\alpha}f(x)\xi^{\alpha}\right|^q\,d\mathcal{H}^{n-1}(\xi).
\end{align*}
Similarly, we have
\begin{align*}
\liminf_{\epsilon\to 0^+}
\int_{B({\bf 0},R_x )}
\frac{|\Delta^k_h f(x)|^q}{|h|^{kq}}\rho_{\epsilon}(|h|)\,dh
\ge\int_{\mathbb{S}^{n-1}}
\left|\sum_{\alpha\in\mathbb{Z}_+^n,\,|\alpha|=k}
\partial^{\alpha}f(x)\xi^{\alpha}\right|^q\,d\mathcal{H}^{n-1}(\xi).
\end{align*}
Therefore, we obtain \eqref{eq-pointwise},
which then completes the proof of Lemma \ref{lem-point}.
\end{proof}

Based on this lemma,
we next obtain the following conclusion, which
also plays a key role in the proof of
Theorem \ref{thm-cwkx}.

\begin{lemma}\label{lem-cha}
Let $X $ be a {\rm BBF} space
satisfying the same assumptions as in Lemma \ref{lem-po}
with some $p\in (1,\infty)$.
Assume that both $X $ and
$X' $ have absolutely continuous norms.
Let $k\in \mathbb{N}$ and $q\in [1,\infty)$ satisfy
$n(\frac{1}{p}-\frac{1}{q})<k$, $\Omega$ be an open set,
$\epsilon_0\in(0,\infty)$,
and
$\{\rho_{\epsilon}\}_{\epsilon\in (0,\epsilon_0)}$ be
as in Lemma \ref{lem-good}.
If
$f\in W^{k-1,X}(\Omega)$ [where $W^{0,X}(\Omega):=X(\Omega)$] and
\begin{align}\label{eq-cha1}
{\rm I}:=\liminf_{\epsilon\to 0^+}
\left\|\left[\int_{\Omega(\cdot,k)}
\frac{|\Delta^k_h f(\cdot)|^q}{|h|^{kq}}\rho_{\epsilon}(|h|)\,dh
\right]^\frac{1}{q}\right\|_{X(\Omega)}<\infty,
\end{align}
then $f\in W^{k,X}(\Omega)$ and $\|\nabla^k f\|_{X(\Omega)}\lesssim {\rm I}$.
\end{lemma}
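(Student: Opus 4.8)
The plan is to extract the weak $k$-th order derivatives of $f$ as weak limits (or, via lower semicontinuity, as control on the seminorm) obtained by letting $\epsilon\to 0^+$ along a suitable sequence in the quantity $\mathrm{I}$ in \eqref{eq-cha1}, using duality on the BBF space $X(\Omega)$ and the pointwise asymptotic formula of Lemma \ref{lem-point} tested against good functions. First I would reduce to a bounded open set $U$ with $\overline{U}\subset\Omega$ and work with $f$ mollified on $U$, so that Lemma \ref{lem-point} applies to the smoothed function; the hypothesis $f\in W^{k-1,X}(\Omega)$ guarantees that $f$ and its derivatives up to order $k-1$ are locally integrable, so mollification is well behaved and the lower-order derivatives pass to the limit. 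The reason for invoking the absolute continuity of both $X(\Omega)$ and $X'(\Omega)$ is exactly to run the duality argument: by Remark \ref{r-dual}(ii), $[X(\Omega)]^*=[X(\Omega)]'$, so a function $g\in\mathscr{M}(\Omega)$ lies in $X(\Omega)$ with $\|g\|_{X(\Omega)}\le M$ as soon as $\int_\Omega g\varphi\le M\|\varphi\|_{[X(\Omega)]'}$ for all $\varphi$ in a dense subset of the associate space; and absolute continuity of $X'(\Omega)$ makes simple (or compactly supported bounded) functions dense there.

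The key steps, in order, are: (1) choose a sequence $\epsilon_j\to 0^+$ realizing the $\liminf$ in \eqref{eq-cha1}, so that the norms $\|[\int_{\Omega(\cdot,k)}|\Delta^k_h f(\cdot)|^q|h|^{-kq}\rho_{\epsilon_j}(|h|)\,dh]^{1/q}\|_{X(\Omega)}$ are bounded by, say, $\mathrm{I}+1$; (2) for each fixed test function $\psi\in C_{\rm c}^\infty(\Omega)$ and each multi-index $\alpha$ with $|\alpha|=k$, integrate by parts $k$ times in the expression $\int_\Omega \Delta^k_h f(x)\,\psi(x)\,dx$ to move the difference operator onto $\psi$ (a discrete-to-continuous argument identifies the limit as $\int_\Omega f(x)\,(\text{combination of }\partial^\alpha\psi)(x)\,dx$ up to harmless error as $h\to 0$), and combine this with Lemma \ref{lem-point}, applied with $f$ replaced by a mollification and then letting the mollification parameter go to zero, to produce a candidate distributional derivative; (3) control the resulting linear functional on test functions by $(\mathrm{I}+1)$ times the $[X(\Omega)]'$-norm of the test function, using H\"older's inequality on BBF spaces (Definition \ref{def-X'}) together with the normalization \eqref{eq-ati1} of $\rho_\epsilon$, which ensures the inner integral against $\rho_\epsilon$ is an averaging operator of total mass one; (4) invoke the density of nice functions in $X'(\Omega)$ (from absolute continuity, via Remark \ref{r-dual}) and the identification $[X(\Omega)]^*=[X(\Omega)]'$ to conclude that the candidate derivative is represented by an $L^1_{\rm loc}$ function $\partial^\alpha f$ lying in $X(\Omega)$ with $\|\partial^\alpha f\|_{X(\Omega)}\lesssim \mathrm{I}$; summing over $|\alpha|=k$ and using the equivalence \eqref{eq-sim} from Remark \ref{rem-main}(i) yields $\|\nabla^k f\|_{X(\Omega)}\lesssim \mathrm{I}$, hence $f\in W^{k,X}(\Omega)$.

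The main obstacle I expect is the interchange of limits in step (2)--(3): one must pass from the difference-quotient form $\int |\Delta^k_h f|^q|h|^{-kq}\rho_\epsilon(|h|)\,dh$ (where $f$ is only in $W^{k-1,X}$ a priori, not smooth) to the pointwise Taylor expansion of Lemma \ref{lem-point} (which requires $C^\infty$ regularity), and this has to be done while keeping uniform control in $\epsilon$ so that Fatou-type lower semicontinuity of the BBF norm (Definition \ref{def-X}(iii)) can be applied to the mollified approximants $f\ast\eta_\delta$. The standard device is to mollify first, apply Lemma \ref{lem-point} and the duality bound to $f\ast\eta_\delta$ on a slightly shrunk domain $U$, observe that $\Delta^k_h(f\ast\eta_\delta)=(\Delta^k_h f)\ast\eta_\delta$ so that the $\epsilon$-uniform bound transfers to the mollified functions by Jensen and Fubini, and then let $\delta\to 0^+$; the weak-$\ast$ compactness supplied by $[X(\Omega)]^*=[X(\Omega)]'$ identifies the weak limit of $\nabla^k(f\ast\eta_\delta)$ as $\nabla^k f$, and lower semicontinuity of the norm gives the bound. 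A secondary technical point is exhausting $\Omega$ by such sets $U$ and patching the local derivatives into a global one, which is routine once uniqueness of weak derivatives is invoked.
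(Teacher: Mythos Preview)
Your final paragraph captures the correct strategy and matches the paper's proof: mollify $f$ by $\varphi_t$, apply Lemma~\ref{lem-point} to the smooth function $\varphi_t\ast f$ on a shrunk domain, use Fatou on $X$ to obtain $\|\nabla^k(\varphi_t\ast f)\|_{X(U)}\lesssim\mathrm I$ uniformly in $t$, extract a weak limit via Banach--Alaoglu (using that $[X(\Omega)]^*=[X(\Omega)]'$ and $X(\Omega)$ is reflexive, from absolute continuity of both $X$ and $X'$), identify the limit as $\partial^\alpha f$ by pairing against $C_{\rm c}^\infty$, and exhaust $\Omega$ by such $U$. The numbered steps (1)--(4) are muddled by comparison; in particular step~(2) as written (``move the difference operator onto $\psi$ by integration by parts'') does not isolate an individual $\partial^\alpha$ and is not part of the actual argument.

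There is one genuine gap in the mechanism you propose for transferring the bound from $f$ to $\varphi_t\ast f$. You write that ``the $\epsilon$-uniform bound transfers to the mollified functions by Jensen and Fubini,'' but in a general BBF space this is not enough. After Minkowski's inequality one obtains
\[
\left[\int\frac{|\Delta^k_h(\varphi_t\ast f)(x)|^q}{|h|^{kq}}\rho_\epsilon(|h|)\,dh\right]^{1/q}
\le(\varphi_t\ast F)(x),
\]
where $F$ is the corresponding expression for $f$; to conclude one still needs $\|\varphi_t\ast F\|_X\lesssim\|F\|_X$, which does \emph{not} follow from Fubini alone since $X$ need not be translation-invariant. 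The paper closes this by the pointwise bound $\varphi_t\ast F\lesssim\mathcal M(F)$ together with the boundedness of $\mathcal M$ on $X$; the latter is deduced from the standing assumption that $X^{1/p}$ is a BBF space with $\mathcal M$ bounded on $(X^{1/p})'$ and, crucially, $p\in(1,\infty)$. This is exactly where the restriction $p>1$ enters, so you should make it explicit. A secondary point you should also track is the domain bookkeeping: one works on $\Omega_t:=\{x:\operatorname{dist}(x,\Omega^\complement)>t\}$ and uses that, for $y\in B(\mathbf 0,t)$, $\mathbf 1_{\Omega_t}(x)\le\mathbf 1_\Omega(x-y)$ and the inner integration radius satisfies $r_{x,t}\le r_{x-y}$, so that the mollified difference integral is genuinely controlled by the original one restricted to $\Omega$.
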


\begin{proof}
Let
$f\in W^{k-1,X}(\Omega)$ satisfy \eqref{eq-cha1} and
$\varphi\in C_{\rm c}^{\infty} $ be a nonnegative
function satisfying both
$\int_{\mathbb{R}^n}\varphi(x)\,dx=1$
and ${\rm supp}\,(\varphi)\subset B({\bf 0},1)$.
For any $t\in (0,\infty)$, let
$\varphi_t := \frac{1}{t^{n}}\varphi(\frac{\cdot}{t})$
and
$$\Omega_t:=
\left\{x\in \Omega:{\rm dist}\,
\left(x,\Omega^{\complement}\right)>t\right\}.$$
For any $x\in \mathbb{R}^n$ and $t\in (0,\infty)$,
let $r_{x,t}:=k^{-1}{\rm dist}\,(x, \Omega_t^{\complement})$
and
$r_{x}:=k^{-1}{\rm dist}\,(x,\Omega^{\complement})$.
We claim that,
for any $t\in (0,\infty)$ and $\epsilon\in(0,\epsilon_0)$,
\begin{align}\label{eq-con}
&\left\|\left[\int_{B({\bf 0},r_{\cdot,t})}
\frac{|\Delta^k_h (\varphi_t \ast
f)(\cdot)|^q}{|h|^{kq}}\rho_{\epsilon}(|h|)\,dh
\right]^\frac{1}{q}{\bf 1}_{\Omega_t}\right\|_{X }\lesssim
\left\|\left[\int_{B({\bf 0},r_{\cdot})}
\frac{|\Delta^k_h f(\cdot)|^q}{|h|^{kq}}\rho_{\epsilon}(|h|)\,dh
\right]^\frac{1}{q}{\bf 1}_{\Omega}\right\|_{X }.
\end{align}
Indeed, observe that, for any $t\in (0,\infty)$,
$x\in \Omega_t $, and $y\in B({\bf 0},t)$,
\begin{align*}
{\rm dist}\,\left(x-y,\Omega^{\complement}\right)
\ge{\rm dist}\,\left(x,\Omega^{\complement}\right)
-|y|>0,
\end{align*}
which further implies that $x-y\in \Omega$
and hence
\begin{align}\label{eq-xy}
{\bf 1}_{\Omega_t}(\cdot)
\le {\bf 1}_{\Omega}(\cdot-y).
\end{align}
On the other hand,
for any $t\in (0,\infty)$,
$x\in \Omega_t $, $y\in B({\bf 0},t)$, and $z\in \Omega^{\complement}$,
we have $y+z\in \Omega_t^{\complement}$ and
\begin{align*}
|x-y-z|=|x-(y+z)|\ge {\rm dist}\, \left(x,\Omega_t^{\complement}\right)
\end{align*}
and hence
$
{\rm dist}\,(x-y,\Omega^{\complement})
\ge {\rm dist}\,(x,\Omega_t^{\complement}),
$
which further implies $r_{x,t}\le r_{x-y}$.
This, together with
the fact that ${\rm supp}\,(\varphi_t)\subset B({\bf 0},t)$, \eqref{eq-xy},
Minkowski's inequality,
\cite[Proposition 2.7]{Duo01},
and the assumption $\int_{\mathbb{R}^n}\varphi(x)\,dx=1$,
further implies that
\begin{align}\label{eq-con2}
&\left\|\left[\int_{B({\bf 0},r_{\cdot,t})}
\frac{|\Delta^k_h (\varphi_t
\ast f)(\cdot)|^q}{|h|^{kq}}\rho_{\epsilon}(|h|)\,dh
\right]^\frac{1}{q}{\bf 1}_{\Omega_t}(\cdot)\right\|_{X } \notag\\
&\quad\le
\left\|\left[\int_{B({\bf 0},r_{\cdot,t})}
\left\{
\int_{B({\bf 0},t)}\varphi_t (y)
\frac{|\Delta^k_h (f)(\cdot-y)|}{|h|^{k}}
[\rho_{\epsilon}(|h|)]^{\frac{1}{q}}
\,dy\right\}^q\,dh
\right]^\frac{1}{q}{\bf 1}_{\Omega_t}(\cdot)
\right\|_{X } \notag\\
&\quad\le
\left\|\int_{B({\bf 0},t)}
\varphi_t (y)
\left\{
\int_{B({\bf 0},r_{\cdot,t})}
\frac{|\Delta^k_h (f)(\cdot-y)|^q}{|h|^{kq}}
\rho_{\epsilon}(|h|){\bf 1}_{\Omega_t}(\cdot)\,dh
\right\}^{\frac{1}{q}}\,dy
\right\|_{X }
\notag\\
&\quad\le
\left\|\int_{B({\bf 0},t)}
\varphi_t (y)
\left\{
\int_{\mathbb{R}^n}
\frac{|\Delta^k_h (f)(\cdot-y)|^q}{|h|^{kq}}
\rho_{\epsilon}(|h|)
{\bf 1}_{B({\bf 0},r_{\cdot-y})}(h)
\,dh
\right\}^{\frac{1}{q}}	{\bf 1}_{\Omega}(\cdot-y)\,dy
\right\|_{X }
\notag\\
&\quad
= \left\|\varphi_t \ast F\right\|_{X }
\le\left\|\mathcal{M}\left(F\right)
\right\|_{X},
\end{align}
where, for any $x\in \mathbb{R}^n$,
\begin{align}\label{eq-FFF}
F(x):=\left[\int_{B({\bf 0},r_{x})}
\frac{|\Delta^k_h f(x)|^q}{|h|^{kq}}\rho_{\epsilon}(|h|)\,dh
\right]^\frac{1}{q}{\bf 1}_{\Omega}(x).
\end{align}
From \cite[Lemma 3.7]{ZYY23ccm} and
the assumptions that
$p\in (1,\infty)$,
$X^{\frac{1}{p}} $ is a {\rm BBF} space,
and $\mathcal{M}$ is bounded on $ (X^{\frac{1}{p}})'$,
we infer that $\mathcal{M}$ is bounded on $X $.
By this, \eqref{eq-con2}, and \eqref{eq-FFF},
we find that the above claim \eqref{eq-con}
holds.

Notice that, for any $x\in \Omega$,
$B({\bf 0},r_x)\subset \Omega(x,k)$.
Thus, using this, \eqref{eq-con},
and \eqref{eq-cha1},
we obtain, for any $t\in (0,\infty)$,
\begin{align}\label{eq-cha3}
\liminf_{\epsilon\to 0^+}
\left\|\left[\int_{B({\bf 0},r_{\cdot,t})}
\frac{|\Delta^k_h (\varphi_t
\ast f)(\cdot)|^q}{|h|^{kq}}\rho_{\epsilon}(|h|)\,dh
\right]^\frac{1}{q}\right\|_{X(\Omega_t)} \lesssim {\rm I}.
\end{align}
For any $t\in (0,\infty)$,
let $U_t \subset \Omega_t$
be a bounded open set satisfying
$\overline{U_t}\subset \Omega_t$ and,
for any $x\in \Omega$, let
$r'_{x,t}:=k^{-1}{\rm dist}(x,U_t^\complement)$.
From this and Lemma \ref{lem-point},
we deduce that, for any $t\in (0,\infty)$ and
$x\in U_t$,
\begin{align*}
\lim_{\epsilon\to 0^+}
\int_{B({\bf 0},r'_{x,t})}
\frac{|\Delta^k_h f(x)|^q}{|h|^{kq}}\rho_{\epsilon}(|h|)\,dh
=\int_{\mathbb{S}^{n-1}}
\left|\sum_{\alpha\in\mathbb{Z}_+^n,\,|\alpha|=k}
\partial^{\alpha}f(x)\xi^{\alpha}\right|^q\,d\mathcal{H}^{n-1}(\xi).
\end{align*}
By this and Fatou's lemma on $X(\Omega)$
(see, for instance, \cite[Lemma 3.7]{ZYY24jga}),
we obtain, for any $t\in (0,\infty)$,
\begin{align*}
&\left\|\left[\int_{\mathbb{S}^{n-1}}
\left|\sum_{\alpha\in\mathbb{Z}_+^n,\,|\alpha|=k}
\partial^{\alpha}\left(\varphi_t \ast f\right)(\cdot)\xi^{\alpha}\right|^q
\,d\mathcal{H}^{n-1}(\xi)
\right]^\frac{1}{q}{\bf 1}_{U_t}\right\|_{X(\Omega_t)}\\
&\quad\le
\liminf_{\epsilon\to 0^+}
\left\|\left[\int_{B({\bf 0}, r'_{\cdot,t})}
\frac{|\Delta^k_h (\varphi_t \ast f)(\cdot)|^q}{|h|^{kq}}
\rho_{\epsilon}(|h|)\,dh
\right]^\frac{1}{q}{\bf 1}_{U_t}\right\|_{X(\Omega_t)},
\end{align*}
which, combined with the arbitrariness of
$U_t$ and \cite[Proposition 2.8]{ZYY24jga},
further implies that
\begin{align}\label{eq-cha11}
&\left\|\left[\int_{\mathbb{S}^{n-1}}
\left|\sum_{\alpha\in\mathbb{Z}_+^n,\,|\alpha|=k}
\partial^{\alpha}\left(\varphi_t \ast f\right)(\cdot)\xi^{\alpha}\right|^q
\,d\mathcal{H}^{n-1}(\xi)
\right]^\frac{1}{q}\right\|_{X(\Omega_t)}\\
&\quad\le
\liminf_{\epsilon\to 0^+}
\left\|\left[\int_{B({\bf 0},r_{\cdot,t})}
\frac{|\Delta^k_h (\varphi_t \ast f)(\cdot)|^q}{|h|^{kq}}
\rho_{\epsilon}(|h|)\,dh
\right]^\frac{1}{q}\right\|_{X(\Omega_t)}.
\notag
\end{align}
Let $U\subset \Omega$ be an open set satisfying $\overline{U} \subset \Omega$
and choose $t_0 \in (0,\infty)$ sufficiently small such that
$U\subset \Omega_{t_0} $.
From this, the monotonicity of
$\Omega_t$ with respect to $t$,
Remark \ref{rem-main}(i), \eqref{eq-cha11},
and \eqref{eq-cha3}, it follows that,
for any $t\in (0,t_0)$,
\begin{align}\label{e-vtf}
\left\|\nabla^k(\varphi_t \ast f)\right\|_{X(U)}
\lesssim
{\rm I}.
\end{align}
By the assumptions that $X $
and $X' $ have absolutely continuous
norms and Remarks \ref{rem-x-a} and \ref{r-dual}, we find that
both $X(\Omega)$ and $[X(\Omega)]'$ have absolutely continuous norms,
$X(\Omega)$ is reflexive and separable,
$X(\Omega)=[X(\Omega)]^{**}$, and $[X(\Omega)]^{*}=[X(\Omega)]'$.
These, together with \eqref{e-vtf} and the Banach--Alaoglu theorem
(see, for instance, \cite[Theorem 3.17]{Rud91}), further imply that,
for any given $\alpha\in \mathbb{Z}_+^n$ with $|\alpha|=k$, there
exist a sequence
$\{t_j\}_{j\in \mathbb{N}}
$ in $ (0,t_0)$ and $g_{\alpha}\in X(U)$
such that $t_j\to 0^{+}$ as $j\to \infty$ and, for any $\Phi\in [X(U)]'$,
\begin{align}\label{e-p-integral}
\int_{U}\partial^{\alpha}(\varphi_{t_j} \ast
f)(x)\Phi(x)\,dx
\to
\int_{U}g_{\alpha}(x)\Phi(x)\,dx
\end{align}
as $j\to \infty$.
By this, Definition \ref{def-X'}, and Remark \ref{rem-x-a}(iii),
we conclude that
\begin{align}\label{e-piX}
\left\|g_{\alpha}\right\|_{X(U)}
&=
\sup_{\|\Phi\|_{[X(U)]'}=1}
\left|\int_{U}g_{\alpha}(x)\Phi(x)\,dx \right|=
\sup_{\|\Phi\|_{[X(U)]'}=1}
\lim_{j\to \infty}
\left|\int_{U}\partial^{\alpha}(\varphi_{t_j} \ast
f)(x)\Phi(x)\,dx \right|\notag\\
&\le
\sup_{\|\Phi\|_{[X(U)]'}=1}
\sup_{j\in \mathbb{N}}
\left|\int_{U}\partial^{\alpha}(\varphi_{t_j} \ast
f)(x)\Phi(x)\,dx \right|\le
\sup_{j\in \mathbb{N}}	\left\|\nabla^k(\varphi_{t_j} \ast f)\right\|_{X(U)}
\lesssim
{\rm I}.
\end{align}
Moreover, from Remark \ref{rem-x-a}(ii) and
\cite[Proposition 2.8]{ZYY24jga} again, we infer that $C_{\rm
c}^{\infty}(U)\subset X'(U)=[X(U)]'$.
By \cite[p.\,27, Theorem 1.2.19]{Gra14},
we find that $\varphi_t \ast f\to f$ in $L^{1}_{\rm
loc}(U)$ as $t\to 0^{+}$.
Applying this and
\eqref{e-p-integral}, we conclude that, for any $\phi\in C_{\rm
c}^{\infty} (U)$,
\begin{align*}
\int_{U}f(x)\partial^{\alpha}\phi(x)\,dx
&=\lim_{j\to \infty}
\int_{U}(\varphi_{t_j} \ast
f)(x)\partial^{\alpha}\phi(x)\,dx \\
&=(-1)^k
\lim_{j\to \infty}
\int_{U}\partial^{\alpha}(\varphi_{t_j} \ast
f)(x)\phi(x)\,dx
=(-1)^k
\int_{U}g_{\alpha}(x)\phi(x)\,dx.
\end{align*}
This, combined with the definition and the uniqueness of weak
derivatives (see, for instance, \cite[pp.\,143--144]{EG15}), further
implies that $\partial^{\alpha}f$ exists on $U$ and,
for almost every $x\in U$, $\partial^{\alpha}f(x)=g_{\alpha}(x)$.
From this
and \eqref{e-piX}, we deduce that
$\|\nabla^k f\|_{X(U)}
\sim\sum_{\alpha\in
\mathbb{Z}_+^n,\,|\alpha|=k}\|g_{\alpha}\|_{X(U)}
\lesssim{\rm I}.$
By this, the arbitrariness of $U$, and Fatou's lemma on $X(\Omega)$
again
(see \cite[Lemma 3.7]{ZYY24jga}),
we find that, for any $\alpha \in \mathbb{Z}_+^n$
with $|\alpha|=k$, $\partial^{\alpha}f$ exists on $\Omega$
and
$
\|\nabla^k f\|_{X(\Omega)}
\lesssim
{\rm I},
$
which further implies that $f\in {W}^{k,X}(\Omega)$
and hence completes the proof of Lemma \ref{lem-cha}.
\end{proof}

Applying Lemma \ref{lem-cha} and Theorem \ref{thm-main},
we now show Theorem \ref{thm-cwkx}.

\begin{proof}[Proof of Theorem \ref{thm-cwkx}]
We first prove the necessity.
To do this, let $f\in W^{k,X}(\Omega)$.
Applying Theorem \ref{thm-main}, the definition
of $W^{k,X}(\Omega)$, and Remark \ref{rem-main}(i),
we immediately obtain $$I(f)\lesssim \left\|\nabla^k f\right\|_{X(\Omega)}<\infty$$ and
$f\in W^{k-1,X}(\Omega)$.
This finishes the proof of
the necessity.

Next, we show the sufficiency. For this purpose, let
$f\in W^{k-1,X}(\Omega)$ satisfy \eqref{eq-cha01}
and $\{\rho_{\epsilon}\}_{\epsilon\in (0,\frac{n}{kq})}$
be the same as in \eqref{eq-sRHO}.
By this,
we conclude that
\begin{align*}
& \liminf_{\epsilon\to 0^+}
\left\|\left[\int_{\Omega(\cdot,k)}
\frac{|\Delta^k_h f(\cdot)|^q}{|h|^{kq}}\rho_{\epsilon}(|h|)\,dh
\right]^\frac{1}{q}\right\|_{X(\Omega)}\\
&\quad \lesssim\liminf_{s\to 1^{-}}
(1-s)^{\frac{1}{q}}
\left\|\left[\int_{\{h\in \Omega(\cdot,k):|h|<1\}}
\frac{|\Delta^k_h f(\cdot)|^q}{|h|^{n+skq}}\,dh
\right]^\frac{1}{q}\right\|_{X(\Omega)}\le I(f)<\infty.
\end{align*}
From this and
Lemma \ref{lem-cha}, it follows that
$f\in W^{k,X}(\Omega)$ and $\|f\|_{W^{k,X}(\Omega)}\lesssim
\|f\|_{W^{k-1,X}(\Omega)}+I(f) $.
Thus, we obtain the sufficiency. This
finishes the proof of Theorem \ref{thm-cwkx}.
\end{proof}

Finally, applying Theorems \ref{thm-main} and \ref{thm-cwkx}
to weighted
Lebesgue spaces, we give the proof of Theorem \ref{thm-wls}.

\begin{proof}[Proof of Theorem \ref{thm-wls}]
We first prove (i).
From \cite[Theorem 1.34]{Rud87},
we infer that $L^p_{\omega}$ has an absolutely continuous norm.
By this and an argument similar to that used in the proof of Theorem \ref{thm-Q},
we find that $L^p_{\omega}$ satisfies all the assumptions of
Theorem \ref{thm-main}, which further implies (i).

Now, we show (ii). Let $p\in (1,\infty)$.
From this, the fact that $[L^p_{\omega} ]'=L^{p'}_{\omega^{1-p'}}$,
and \cite[Theorem 1.34]{Rud87} again, we deduce that
$[L^p_{\omega} ]'$
has an absolutely continuous norm.
This, together with the above proved fact that $L^p_{\omega}$
satisfies all the assumptions of
Theorem \ref{thm-main}, further implies that
$L^p_{\omega}$ satisfies all the assumptions of
Theorem \ref{thm-cwkx}.
By this, we obtain (ii), which completes
the proof of Theorem \ref{thm-wls}.
\end{proof}

\section{The Sharpness of The Triebel--Lizorkin Exponent $q$}\label{sec-q}

In this section, we prove the sharpness of the Triebel--Lizorkin
exponent $q$
in the main theorems of this article.
First, the following proposition shows that
the Triebel--Lizorkin exponent $q$ in Theorem \ref{thm-upi} is nearly sharp.

\begin{proposition}\label{pro-q}
Let $k\in \mathbb{N}$, $s\in (0,1)$, and $p,q\in [1,\infty)$.
\begin{enumerate}[{\rm (i)}]
\item
If there exists a positive constant $C$ such that,
for any $r\in (0,\infty)$
and $f\in C_{\rm c}^\infty$,
\begin{align}\label{eq-sp1}
\left\{
\int_{\mathbb{R}^n}
\left[\int_{B({\bf 0},r)}
\frac{|\Delta^k_h f(x)|^q}{|h|^{n+skq}}\,dh
\right]^\frac{p}{q}\,dx\right\}^{\frac{1}{p}}
\le C r^{(1-s)k}
\left\|\nabla^k f\right\|_{L^p},
\end{align}
then $n(\frac{1}{p}-\frac{1}{q})\le k$.

\item Assume $p\in (1,\infty)$. Then there exists a weight
$\omega\in A_p$ such that, if inequality
\eqref{eq-sss}
holds for any $r\in (0,\infty)$
and $f\in C_{\rm c}^\infty$,
then $n(\frac{p_\omega}{p}-\frac{1}{q})\le k$.
\end{enumerate}
\end{proposition}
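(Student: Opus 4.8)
The plan is to prove sharpness by constructing explicit test functions via the standard scaling/bump argument, separately for the two parts.

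For part (i), I would argue by contradiction: suppose \eqref{eq-sp1} holds for all $r\in(0,\infty)$ and $f\in C_{\rm c}^\infty$ but $n(\frac1p-\frac1q)>k$. Fix a single nonzero bump $\varphi\in C_{\rm c}^\infty$, say with $\mathrm{supp}\,(\varphi)\subset B({\bf 0},1)$ and $\nabla^k\varphi\not\equiv0$, and for $\lambda\in(0,1)$ set $f_\lambda(\cdot):=\varphi(\cdot/\lambda)$. Then $\|\nabla^k f_\lambda\|_{L^p}=\lambda^{\frac np-k}\|\nabla^k\varphi\|_{L^p}$ by a change of variables. On the left-hand side, I would estimate from below: restricting the outer integral to the $O(\lambda^n)$-measure set where $f_\lambda$ lives and the inner integral to $|h|\lesssim\lambda$, where $\Delta^k_h f_\lambda(x)=\lambda^{-?}\cdots$ — more precisely, $\Delta^k_h f_\lambda(x)=\Delta^k_{h/\lambda}\varphi(x/\lambda)$, so after substituting $x=\lambda u$, $h=\lambda v$ the double integral scales as $\lambda^{n(1+\frac pq)-skp}$ times the corresponding fixed quantity for $\varphi$ with radius $r/\lambda$; choosing $r=\lambda$ (or any fixed multiple) makes the $\varphi$-side a positive constant independent of $\lambda$. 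Comparing the two sides, \eqref{eq-sp1} with $r=\lambda$ would force $\lambda^{\frac nq-sk}\lesssim\lambda^{(1-s)k}\lambda^{\frac np-k}=\lambda^{\frac np-sk}$, i.e.\ $\lambda^{n(\frac1q-\frac1p)}\lesssim 1$ as $\lambda\to0^+$; since $n(\frac1q-\frac1p)<-k<0$ this blows up, a contradiction. Hence $n(\frac1p-\frac1q)\le k$. (I should double-check that the inner integral over $B({\bf 0},r)$ with $r=\lambda$ does not lose the scaling — it does not, since the natural inner cutoff for $f_\lambda$ is exactly at scale $\lambda$.)

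For part (ii), the point is that the presence of a weight $\omega\in A_p$ with $p_\omega>1$ improves the admissible range, so I must produce a weight witnessing the threshold $n(\frac{p_\omega}{p}-\frac1q)\le k$. The natural candidate is a power weight $\omega(x):=|x|^{a}$ for a suitable exponent $a$; recall $|x|^a\in A_p$ iff $-n<a<n(p-1)$, and one computes $p_\omega=\max\{1,1+\frac an\}$ when $a\ge0$ (and $p_\omega=\max\{1,\frac{n+a}{?}\}$ in general — I would pin down the exact formula $p_{|x|^a}=1+\frac{a_+}{n}$ for $a\in(0,n(p-1))$ and, if $a<0$, note $p_\omega=1$ which is the unweighted case already handled). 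I would then choose $a$ close to $n(p-1)$ so that $p_\omega$ is close to $p$, making the constraint $n(\frac{p_\omega}{p}-\frac1q)<k$ as restrictive as allowed, and run the same dilation test $f_\lambda=\varphi(\cdot/\lambda)$. The weighted norms now pick up extra powers of $\lambda$: $\|\nabla^k f_\lambda\|_{L^p_\omega}^p=\int|\nabla^k\varphi(x/\lambda)|^p|x|^a\,dx=\lambda^{n+a}\lambda^{-kp}\|\nabla^k\varphi\|_{L^p_{|\cdot|^a}}^p$ — provided $\mathrm{supp}\,(\varphi)$ avoids the origin, so the weight behaves like a constant and no delicate singular-integral issues arise; actually even keeping $0\in\mathrm{supp}\,\varphi$ is fine since $|x|^a$ is locally integrable. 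Similarly the left-hand double integral scales with an extra factor $\lambda^{n+a}$ (one copy of the weight, integrated over the $\lambda$-scale support). Matching exponents in \eqref{eq-sss} with $r=\lambda$ yields, after simplification, $\lambda^{\frac nq}\lesssim\lambda^{\frac{n+a}{p}}$ as $\lambda\to0^+$, forcing $\frac nq\ge\frac{n+a}{p}$, i.e.\ $\frac1q\ge\frac{p_\omega}{p}$ when $a=n(p_\omega-1)$, which rearranges to $n(\frac{p_\omega}{p}-\frac1q)\le0\le k$ — hmm, this is too strong, so I must instead not send $\lambda\to0$ but rather exploit a function concentrating at a point \emph{away} from the origin combined with a weight singular \emph{at that point}, or use a two-parameter family (bump of size $\lambda$ centered at distance $\sim1$, weight $|x-x_0|^a$). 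The cleaner route: take $\omega(x)=|x|^a$ and a bump $\varphi$ supported near, say, the unit sphere, then translate-and-dilate so the bump shrinks toward the origin where the weight is singular — this reproduces the genuine threshold $n(\frac{p_\omega}{p}-\frac1q)\le k$ rather than a weaker bound.

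The main obstacle I anticipate is calibrating part (ii) so that the test-function construction sees exactly the exponent $k$ on the right-hand side (not something smaller). The unweighted scaling in part (i) is clean because both sides are genuinely homogeneous of definite degrees; in the weighted case the weight $|x|^a$ is also homogeneous, but one must be careful that the bump is positioned so that the inner Gagliardo integral genuinely ``feels'' the $k$-th order cancellation at the correct scale and that the weight contributes a single clean power. I expect the resolution is to use the family $f_\lambda(x):=\lambda^{k}\varphi(x/\lambda)$ — normalizing so that $\|\nabla^k f_\lambda\|_{L^\infty}$ stays bounded — or equivalently to track all four exponents $(n,a,k,s)$ and observe that the inequality, being forced to hold for \emph{all} $r$, can be rescaled to kill $r$ and reduce to a pure power comparison in $\lambda$; the exponent condition extracted in the limit $\lambda\to0^+$ (using that the BBM-type inequality degenerates precisely when $n(\frac{p_\omega}{p}-\frac1q)$ exceeds $k$) then gives $n(\frac{p_\omega}{p}-\frac1q)\le k$. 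I would also remark that this construction is the ``power weight'' half of the ``flexible combination of bump functions and power weights'' advertised in the introduction, so it should dovetail with the techniques already developed for Proposition \ref{pro-sharp}.
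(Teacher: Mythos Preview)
Your proposal has a genuine gap: the dilation test with $r=\lambda$ is vacuous because inequality \eqref{eq-sp1} is \emph{dilation-invariant}. If $f_\lambda(x)=\varphi(x/\lambda)$ and $r=\lambda$, then substituting $x=\lambda u$, $h=\lambda v$ on the left gives exactly $\lambda^{n/p-sk}$ times the fixed $\varphi$-integral over $B({\bf 0},1)$, while the right-hand side is $\lambda^{(1-s)k}\cdot\lambda^{n/p-k}\|\nabla^k\varphi\|_{L^p}=\lambda^{n/p-sk}\|\nabla^k\varphi\|_{L^p}$. Both sides carry the \emph{same} power of $\lambda$, so the comparison yields no constraint on $p,q$. (Your claimed left-hand exponent $\lambda^{n/q-sk}$ is a miscalculation: the inner $h$-integral over $B({\bf 0},\lambda)$ contributes $\lambda^{-skq}$, not $\lambda^{n-skq}$, after the change of variables.) The same invariance wrecks your part (ii) computation, which is why you end up with the too-strong conclusion $n(\frac{p_\omega}{p}-\frac1q)\le 0$ and then have to improvise.

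The fix, and what the paper does, is to \emph{decouple} the two scales: fix a single bump $f$ with $\mathbf{1}_{B({\bf 0},1)}\le f\le\mathbf{1}_{B({\bf 0},2)}$ and send $r\to\infty$. For $|x|\sim r$ far from $\operatorname{supp}(f)$ and $h=(y-x)/k$ with $y\in B({\bf 0},1)$, one checks $x+ih\notin\operatorname{supp}(f)$ for $i<k$ while $x+kh=y$, so $\Delta^k_h f(x)=f(y)=1$; this gives a lower bound $\gtrsim r^{n(\frac1p-\frac1q)-sk}$ on the left side, and comparing with $r^{(1-s)k}$ as $r\to\infty$ forces $n(\frac1p-\frac1q)\le k$. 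For (ii) the paper takes the power weight $\omega(x)=|x|^{\delta-n}$ with $\delta\in(n,np)$ (so $p_\omega=\delta/n$); since the outer integral lives on $|x|\sim r$ where $\omega\sim r^{\delta-n}$, the same computation produces the threshold $n(\frac{p_\omega}{p}-\frac1q)\le k$ directly, with no need to shrink or translate the bump.
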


\begin{proof}
We first prove (i). To this end,
choose $f\in C_{\rm c}^\infty$ satisfying ${\bf 1}_{B({\bf 0},1)}
\le f\le {\bf 1}_{B({\bf 0},2)}$
and fix $r\in (12k,\infty)$. Then, for any $i\in\{0,\ldots,k-1\}$,
$y\in B({\bf 0},1)$, and
$x\in \mathbb{R}^n$
with $\frac{r}{4}\le |x|<\frac{r}{2}$,
$|x-y|<r$ and $|\frac{(k-i)x+iy}{k}|\ge \frac{|x|-(k-1)|y|}{k} >2$.
Similar to \eqref{eq-s2}, by these and a change of variables,
we find that
\begin{align}\label{eq-sharp02}
&\left\{
\int_{\mathbb{R}^n}
\left[\int_{B({\bf 0},r)}
\frac{|\Delta^k_h f(x)|^q}{|h|^{n+skq}}\,dh
\right]^\frac{p}{q}\,dx\right\}^{\frac{1}{p}}\notag\\
&\quad\gtrsim	\left\{
\int_{\frac{r}{4}\le |x|<\frac{r}{2}}
\left[\int_{B({\bf 0},1)}
\frac{|\Delta^k_{\frac{y-x}{k}} f(x)|^q}{|y-x|^{n+skq}}\,dy
\right]^\frac{p}{q}\,dx\right\}^{\frac{1}{p}}\notag\\
&\quad\gtrsim
r^{-\frac{n}{q}-sk}
\left\{
\int_{\frac{r}{4}\le |x|<\frac{r}{2}}\left[\int_{B({\bf0},1)}
|f(y)|^q\,dy\right]^{\frac pq}
\,dx\right\}^{\frac{1}{p}}\sim r^{n(\frac{1}{p}-\frac{1}{q})-sk}.
\end{align}
From this and \eqref{eq-sp1}, it follows that
$r^{n(\frac{1}{p}-\frac{1}{q})-sk}\lesssim r^{(1-s)k}$.
Combining this and the arbitrariness of $r\in(12k,\infty)$,
we further obtain $n(\frac{1}{p}-\frac{1}{q})\le k$.
This finishes the proof of (i).

Next, we show (ii). To do this,
consider the weight $\omega(x):=|x|^{\delta-n}$
for any $x\in\mathbb{R}^n$ and any given $\delta \in (n,np)$.
Then, using \cite[Example 7.1.7]{Gra14}, we find that
$\omega\in A_p$ and $p_\omega =\frac{\delta}{n}$
(see also \cite[p.\,24]{Cla25}).
Let $f$ be as in (i).
Similar to \eqref{eq-sharp02}, we obtain, for any
$r\in (12k,\infty)$,
\begin{align*}
\left\{
\int_{\mathbb{R}^n}
\left[\int_{B({\bf 0},r)}
\frac{|\Delta^k_h f(x)|^q}{|h|^{n+skq}}\,dh
\right]^\frac{p}{q}|x|^{\delta-n}\,dx\right\}^{\frac{1}{p}}
&\gtrsim	\left\{
\int_{\frac{r}{4}\le |x|<\frac{r}{2}}
\left[\int_{B({\bf 0},1)}
\frac{|\Delta^k_{\frac{y-x}{k}} f(x)|^q}{|y-x|^{n+skq}}\,dy
\right]^\frac{p}{q}|x|^{\delta-n}\,dx\right\}^{\frac{1}{p}}\\
&\sim
r^{-\frac{n}{q}-sk}
\left\{
\int_{\frac{r}{4}\le |x|<\frac{r}{2}}
|x|^{\delta-n}\,dx\right\}^{\frac{1}{p}}\sim r^{\frac{\delta}{p}-\frac{n}{q}-sk}.
\end{align*}
From this and \eqref{eq-sss},
we infer that $\frac{\delta}{p}-\frac{n}{q}\le k$;
that is, $n(\frac{p_\omega}{p}-\frac{1}{q})\le k$.
Thus, (ii) holds, which then completes the proof of
Proposition \ref{pro-q}.
\end{proof}

Similarly, the following result gives the near sharpness
of the range of $q$ in Theorems \ref{thm-Q} and \ref{thm-QX}.

\begin{proposition}\label{pro-q2}
Let $k\in \mathbb{N}$, $s\in (0,1)$, and $p,q\in [1,\infty)$.
\begin{enumerate}[{\rm (i)}]
\item If \eqref{eq-sss3} with $\omega\equiv1$ holds
for any cube $Q\subset \mathbb{R}^n$
and any $f\in C_{\rm c}^\infty(Q)$,
then $n(\frac{1}{p}-\frac{1}{q})\le k$.

\item Assume $p\in (1,\infty)$. Then there exists
a weight
$\omega\in A_p$ such that, if inequality
\eqref{eq-sss3}
holds for any cube $Q\subset \mathbb{R}^n$
and any $f\in C_{\rm c}^\infty(Q)$,
then $n(\frac{p_\omega}{p}-\frac{1}{q})\le k$.
\end{enumerate}
\end{proposition}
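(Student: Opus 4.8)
The plan is to adapt the scaling argument from the proof of Proposition~\ref{pro-q}, replacing the dilating ball $B({\bf 0},r)$ there by a dilating cube $Q$ of edge length $L\to\infty$ centered at the origin. First I would fix a function $f\in C_{\rm c}^\infty$ with ${\bf 1}_{B({\bf 0},1)}\le f\le{\bf 1}_{B({\bf 0},2)}$ and, for each $L\in(24k,\infty)$, let $Q_L$ be the cube centered at the origin with edge length $L$. Since $\mathrm{supp}\,(f)\subset B({\bf 0},2)\subset Q_L$, we have $f\in C_{\rm c}^\infty(Q_L)$. The key geometric observation, exactly as in \eqref{eq-s2}, is that for any $x\in Q_L$ with $\frac{L}{8}\le|x|<\frac{L}{4}$ and any $y\in B({\bf 0},1)$, setting $h:=\frac{y-x}{k}$ one has $x+kh=y\in B({\bf 0},1)\subset Q_L$, while for each $i\in\{0,1,\dots,k-1\}$ the convex combination $x+ih=\frac{(k-i)x+iy}{k}$ satisfies both $|x+ih|\le|x|<\frac{L}{2}$ (so $x+ih\in Q_L$, hence $h\in Q_L(x,k)$) and $|x+ih|\ge\frac{|x|}{k}-\frac{k-1}{k}>2$ (so $f(x+ih)=0$); consequently $\Delta^k_h f(x)=f(y)=1$. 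Moreover $|h|=\frac{|y-x|}{k}\sim L$ on this set of $h$'s, which has Lebesgue measure comparable to a constant depending only on $n$ and $k$.

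Next, restricting the outer integral to $\{x:\frac{L}{8}\le|x|<\frac{L}{4}\}\subset Q_L$ and the inner integral to the $h$'s just described, I would obtain, as in \eqref{eq-sharp02}, the lower bound
\[
\left\{\int_{Q_L}\left[\int_{Q_L(x,k)}\frac{|\Delta^k_h f(x)|^q}{|h|^{n+skq}}\,dh\right]^{\frac{p}{q}}\omega(x)\,dx\right\}^{\frac{1}{p}}\gtrsim L^{-\frac{n}{q}-sk}\left\{\int_{\{x:\frac{L}{8}\le|x|<\frac{L}{4}\}}\omega(x)\,dx\right\}^{\frac{1}{p}}.
\]
For part~(i), with $\omega\equiv1$ the last factor is $\sim L^{n/p}$, whereas the right-hand side of \eqref{eq-sss3} is $\sim[\ell(Q_L)]^{(1-s)k}=L^{(1-s)k}$, since $\|\nabla^k f\|_{L^p}$ is a finite constant independent of $L$ and the factor $(1-s)^{\gamma_{p,q}}$ is harmless for fixed $s$. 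Hence \eqref{eq-sss3} forces $L^{n(\frac{1}{p}-\frac{1}{q})-sk}\lesssim L^{(1-s)k}$ for all large $L$, and letting $L\to\infty$ yields $n(\frac{1}{p}-\frac{1}{q})\le k$.

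For part~(ii), I would take $\omega(x):=|x|^{\delta-n}$ with $\delta\in(n,np)$, so that $\omega\in A_p$ and $p_\omega=\frac{\delta}{n}$ by \cite[Example 7.1.7]{Gra14}. Then $\int_{\{x:\frac{L}{8}\le|x|<\frac{L}{4}\}}\omega(x)\,dx\sim L^{\delta}$, while, because $\delta>0$ makes $\omega$ locally bounded near the origin and $\mathrm{supp}\,(\nabla^k f)\subset B({\bf 0},2)$, the weighted integral $\int_{Q_L}|\nabla^k f(x)|^p\omega(x)\,dx$ remains a finite constant independent of $L$. Thus \eqref{eq-sss3} forces $L^{\frac{\delta}{p}-\frac{n}{q}-sk}\lesssim L^{(1-s)k}$, i.e.\ $\frac{\delta}{p}-\frac{n}{q}\le k$, which is precisely $n(\frac{p_\omega}{p}-\frac{1}{q})\le k$. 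The argument is a routine transcription of Proposition~\ref{pro-q}, and the only point needing care is the geometric bookkeeping in the first paragraph: one must verify simultaneously that $h=\frac{y-x}{k}$ lies in $Q_L(x,k)$ (so all intermediate points $x+ih$ stay inside the cube) and that these same points lie outside $\mathrm{supp}\,(f)$; choosing the annulus $\frac{L}{8}\le|x|<\frac{L}{4}$ and requiring $L>24k$ makes both hold at once.
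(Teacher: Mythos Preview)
Your proof is correct and takes essentially the same approach as the paper: the paper's proof simply says to repeat the argument of Proposition~\ref{pro-q} with $r$ replaced by $\frac{\ell(Q)}{2}$ and \eqref{eq-sp1} replaced by \eqref{eq-sss3}, which is exactly what you have carried out (your annulus $\frac{L}{8}\le|x|<\frac{L}{4}$ is the paper's $\frac{r}{4}\le|x|<\frac{r}{2}$ with $r=\frac{L}{2}$). Your extra check that $h\in Q_L(x,k)$ via $|x+ih|\le|x|<\frac{L}{2}$ is the one additional point needed when passing from balls to cubes, and it is handled correctly.
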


\begin{proof}
Let $Q$ be a cube centered at ${\bf 0}$ with $\ell (Q)\in (24k,\infty)$.
Choose $f\in C_{\rm c}^\infty(Q)$ satisfying
${\bf 1}_{B({\bf 0},1)}\le f\le {\bf 1}_{B({\bf 0},2)}$.
Repeating the proof of Proposition \ref{pro-q} with $r$ and \eqref{eq-sp1} therein replaced,
respectively, by $\frac{\ell (Q)}{2}$
and \eqref{eq-sss3} here,
we obtain the desired results.
\end{proof}

Finally, the sharpness of the range of $q$
in the
Gagliardo--Nirenberg interpolation
inequality
(Theorem \ref{thm-in}),
the
BBM formula (Theorem \ref{thm-main}),
and the related new characterization of Sobolev type spaces
(Theorem \ref{thm-cwkx}) follows from the following proposition.

\begin{proposition}\label{pro-sp3}
Let $k\in \mathbb{N}$, $s\in (0,1)$, and $p,q\in [1,\infty)$.
\begin{enumerate}[{\rm (i)}]
\item If $n(\frac{1}{p}-\frac{1}{q})\ge k$,
then there exists $f\in C_{\rm c}^\infty$
such that
\begin{align*}
\left\{
\int_{\mathbb{R}^n}
\left[\int_{\mathbb{R}^n}
\frac{|\Delta^k_h f(x)|^q}{|h|^{n+skq}}\,dh
\right]^\frac{p}{q}\,dx\right\}^{\frac{1}{p}}
=\infty.
\end{align*}

\item Assume $p\in (1,\infty)$.
Then there exist
$\omega\in A_p$
and $f\in C_{\rm c}^\infty$ such that, if
$n(\frac{p_{\omega}}{p}-\frac{1}{q})\ge k$, then
\begin{align*}
\left\{
\int_{\mathbb{R}^n}
\left[\int_{\mathbb{R}^n}
\frac{|\Delta^k_h f(x)|^q}{|h|^{n+skq}}\,dh
\right]^\frac{p}{q}\omega(x)\,dx\right\}^{\frac{1}{p}}
=\infty.
\end{align*}
\end{enumerate}
\end{proposition}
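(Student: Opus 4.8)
The plan is to handle both parts with a single fixed bump function $f\in C_{\rm c}^{\infty}$ satisfying ${\bf 1}_{B({\bf 0},1)}\le f\le{\bf 1}_{B({\bf 0},2)}$, by isolating the genuinely nonlocal contribution to the inner integral. The point is that, although the full Gagliardo-type seminorm over $\mathbb{R}^n$ is harmless at each fixed $x$ (both the small-$|h|$ and the large-$|h|$ tails of the inner integral converge there), the inner integral decays too slowly in $x$ once $n(\frac1p-\frac1q)$ is large, and this makes the outer integration diverge. First I would show that, for $|x|$ sufficiently large, only the $j=1$ difference term survives: if $x+h\in B({\bf 0},1)$, then $f(x)=0$ (since $|x|>2$) and, for each $j\in\{2,\dots,k\}$, writing $x+jh=(x+h)+(j-1)h$ and using $|h|\ge|x|-|x+h|>|x|-1$ gives $|x+jh|\ge(j-1)|h|-|x+h|>(j-1)(|x|-1)-1>2$, so $f(x+jh)=0$. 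Hence $\Delta^k_h f(x)=(-1)^{k-1}\binom{k}{1}f(x+h)=(-1)^{k-1}k$ for every $h$ in the translated ball $S_x:=\{h\in\mathbb{R}^n:x+h\in B({\bf 0},1)\}=B(-x,1)$, a set of measure $|B({\bf 0},1)|$.

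Since $|h|\le|x|+1\le 2|x|$ for $h\in S_x$, this yields, for all sufficiently large $|x|$,
\begin{align*}
\int_{\mathbb{R}^n}\frac{|\Delta^k_h f(x)|^q}{|h|^{n+skq}}\,dh
\ge\int_{S_x}\frac{k^q}{|h|^{n+skq}}\,dh
\ge\frac{k^q|B({\bf 0},1)|}{(2|x|)^{n+skq}}
\gtrsim|x|^{-n-skq},
\end{align*}
and therefore $[\int_{\mathbb{R}^n}|\Delta^k_h f(x)|^q|h|^{-n-skq}\,dh]^{p/q}\gtrsim|x|^{-np/q-skp}$.

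For part (i), integrating this lower bound over $\{x\in\mathbb{R}^n:|x|\ge M\}$ in polar coordinates leaves a constant multiple of $\int_M^\infty r^{n-1-np/q-skp}\,dr$, which is infinite precisely when $n\ge\frac{np}{q}+skp$, equivalently $n(\frac1p-\frac1q)\ge sk$; since $s\in(0,1)$ forces $sk<k$, the hypothesis $n(\frac1p-\frac1q)\ge k$ certainly gives divergence, proving (i). For part (ii), I would choose $\omega(x):=|x|^{\delta-n}$ with $\delta\in(n,np)$, so that $\omega\in A_p$ and $p_\omega=\frac{\delta}{n}$ by \cite[Example 7.1.7]{Gra14}; then the same lower bound, integrated against $\omega(x)\,dx$, reduces to $\int_M^\infty r^{\delta-1-np/q-skp}\,dr$, which is infinite exactly when $\delta\ge\frac{np}{q}+skp$, equivalently $n(\frac{p_\omega}{p}-\frac1q)\ge sk$; again $sk<k$, so the hypothesis $n(\frac{p_\omega}{p}-\frac1q)\ge k$ suffices and (ii) follows.

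The one delicate point is the geometric bookkeeping in the first step — checking that, when $|x|$ is large and $x+h$ sits near the origin, exactly one of the $k+1$ shifted points $x,x+h,\dots,x+kh$ lies in ${\rm supp}\,(f)$, which pins down the value of $\Delta^k_h f(x)$ and hence the clean power-law lower bound; everything afterwards is the elementary convergence test for $\int^\infty r^a\,dr$. (It is also worth recording that the bump $f$ used lies in $W^{k,p}_\omega$ for these weights, so it is a legitimate test function for the inequalities and formulas whose sharpness is at stake.)
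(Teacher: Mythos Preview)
Your proof is correct and follows essentially the same strategy as the paper's: choose a radial bump function, show that for $|x|$ large the $k$-th difference $\Delta^k_h f(x)$ reduces to a single nonzero term when $h$ is chosen so that one shifted point lands in the support, obtain the pointwise lower bound $|x|^{-np/q-skp}$ for the inner integral (raised to $p/q$), and then read off divergence of the outer integral under the stated exponent condition. The only cosmetic difference is that the paper arranges for the $j=k$ term to survive (via the substitution $y=x+kh$) rather than your $j=1$ term, and uses slightly different radii; the resulting power-law bounds and the divergence test are identical.
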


\begin{proof}
We first prove (i).
For this purpose,
choose $f\in C^{\infty}_{\rm c} $ satisfying
${\bf 1}_{B({\bf 0},\frac{1}{2})}\le f\le {\bf 1}_{B({\bf
0},\frac{3}{2})}$.
Notice that, for any $i\in\{0,\ldots,k-1\}$,
$x\in B({\bf 0},2k)^{\complement}$,
and $y\in B({\bf 0},\frac{1}{2})$,
$|\frac{(k-i)x+iy}{k}|\ge \frac{|x|-(k-1)|y|}{k}>\frac{3}{2}$,
which, together with an argument similar to that used in
the proof of \eqref{eq-s2},
further implies that $|\Delta_{\frac{y-x}{k}}f(x)|=|f(y)|=1$.
From this and the change of variable $y:=x+kh$, we deduce that
\begin{align*}
\int_{\mathbb{R}^n}\left[\int_{\mathbb{R}^n}
\frac{|\Delta^k_h f(x)|^q}{|h|^{n+skq}}\,dh
\right]^\frac{p}{q}\,dx
&\ge
\int_{B({\bf 0},2k)^{\complement}}\left[\int_{B({\bf 0},\frac{1}{2})}
|y-x|^{-n-skq}\,dy
\right]^\frac{p}{q}\,dx\\
&\gtrsim
\int_{B({\bf 0},2k)^{\complement}}
|x|^{-\frac{pn}{q}-skp}
\,dx\sim
\int_{2k}^{\infty}
r^{p[\frac{n}{p}-\frac{n}{q}-k]+(1-s)kp-1}
\,dr\\
&\ge \int_{2k}^{\infty}
r^{(1-s)kp-1}
\,dr=\infty.
\end{align*}
Thus, (i) holds.

Now, we show (ii). To this end,
fix $\delta \in (n,np)$ and, for any $x\in\mathbb{R}^n$,
let $\omega(x):=|x|^{\delta-n}$.
Then, from \cite[Example 7.1.7]{Gra14}, it follows that
$\omega\in A_p$ and $p_\omega =\frac{\delta}{n}$
(see also \cite[p.\,24]{Cla25}).
Applying this and an argument similar to that used in the proof of (i),
we conclude that, if $\frac{\delta}{p}-\frac nq=n(\frac{p_\omega}{p}-\frac1q)\ge k$,
then, for any $f\in C^{\infty}_{\rm c} $ satisfying
${\bf 1}_{B({\bf 0},\frac{1}{2})}\le f\le {\bf 1}_{B({\bf
0},\frac{3}{2})}$,
\begin{align*}
\int_{\mathbb{R}^n}\left[\int_{\mathbb{R}^n}
\frac{|\Delta^k_h f(x)|^q}{|h|^{n+skq}}\,dh
\right]^\frac{p}{q}|x|^{\delta-n}\,dx
&\gtrsim
\int_{B({\bf 0},2k)^{\complement}}
|x|^{-\frac{pn}{q}-skp+\delta-n}\,dx
\sim\int_{2k}^{\infty}
r^{p[\frac{\delta}{p}-\frac{n}{q}-k]+(1-s)kp-1}\,dr\\
&\ge \int_{2k}^{\infty}
r^{(1-s)kp-1}
\,dr=\infty.
\end{align*}
This finishes the proof (ii)
and hence Proposition \ref{pro-sp3}.
\end{proof}

\section{Applications to Specific Function Spaces}\label{sec-app}
In this section, we apply Theorems \ref{thm-QX},
\ref{thm-in}, \ref{thm-main},
and \ref{thm-cwkx}
to various examples of ball Banach function spaces,
including
Bourgain--Morrey-type spaces (see Subsection \ref{ss-BM}),
mixed-norm Lebesgue spaces (see Subsection \ref{ss-mix}),
variable Lebesgue spaces (see Subsection \ref{ss-va}),
Lorentz spaces (see Subsection \ref{ss-lor}),
Orlicz(-slice) spaces (see Subsection \ref{ss-os}), and
Herz spaces (see Subsection \ref{ss-Herz}).
In Subsection \ref{ss-m}, we present two more
specific function spaces to which the aforementioned
main theorems of this article can be applied,
without giving the details.
More applications to newfound function spaces are highly anticipated.

\subsection{Bourgain--Morrey-Type Spaces}\label{ss-BM}

Recall that Morrey spaces,
introduced by Morrey \cite{Mor38},
have important
applications in the theory of partial differential equations
and harmonic analysis (see, for instance, the monographs
\cite{a15} of Adams,
\cite{SDGD20I,SDGD20II} of Sawano et al.,
and \cite{ysy10} of Yuan et al.).
In 1991, Bourgain \cite{Bou91} introduced a new function space
(a special case of Bourgain--Morrey spaces)
to study the Bochner--Riesz multiplier in $\mathbb{R}^3$.
Later, Masaki \cite{Mas-arXiv} extended it to the full range of
exponents (that is, what is now known as the Bourgain--Morrey space)
for nonlinear Schr\"{o}dinger equations,
generalizing Morrey spaces.
These spaces have since become crucial in the research
of partial differential equations
(see, for example, \cite{BV07,Bou98,KPV00,ms18,ms18-2,MVV99}),
while their fundamental real-variable properties were
recently established by Hatano et al. \cite{HNSH22}.
Very recently,
via mixing Besov--Triebel--Lizorkin and Bourgain--Morrey structures,
Zhao et al. \cite{ZSTYY23} and Hu et al. \cite{HLY23} respectively
introduced the Besov--Bourgain--Morrey space and
the Triebel--Lizorkin--Bourgain--Morrey space.

We next recall the definitions of
the aforementioned Bourgain--Morrey-type spaces.
Let $p\in(0,\infty]$.
We use
$L^p_{\rm loc}(\Omega)$ to denote the set of all locally
$p$-integrable
functions on an open set $\Omega$.
For any $j\in{\mathbb Z}$ and
$m:=(m_1,\ldots,m_n) \in {\mathbb Z}^n$,
recall that the \emph{dyadic cube} $Q_{j, m}$ in
$\mathbb{R}^n$ is defined by
setting
$
Q_{j, m}:=2^{-j}\left(m+[0,1)^n\right).
$
The following definitions of Bourgain--Morrey-type spaces
can be found in, for example,
\cite{HNSH22,HLY23,ZSTYY23}.

\begin{definition}
Let $0<p\le u\le r\le\infty$ and $\tau\in(0,\infty]$.
\begin{enumerate}[{\rm(i)}]
\item The \emph{Morrey space}
$M^u_p$
is defined to be the
set of all $f\in L^p_{\rm loc}$ such that
\begin{equation*}
\|f\|_{M^u_p}:=
\sup_{j\in\mathbb{Z},\,m\in\mathbb{Z}^n}
\left|Q_{j,m}\right|^{\frac 1u-\frac 1p}
\left\|f{\mathbf{1}}_{Q_{j,m}}
\right\|_{L^{p}}<\infty.
\end{equation*}
\item The \emph{Bourgain--Morrey space}
$M^u_{p,r}$ is defined to be the
set of all $f\in L^p_{\rm loc}$ such that
\begin{equation*}
\|f\|_{M^u_{p,r}}
:=\left\{\sum_{j\in{\mathbb Z},\,m\in{\mathbb Z}^n}
\left[\left|Q_{j,m}\right|^{\frac{1}{u}-\frac{1}{p}}
\left\|f{\mathbf{1}}_{Q_{j,m}}
\right\|_{L^{p}}\right]^r\right\}^{\frac{1}{r}},
\end{equation*}
with the usual modification made when $r=\infty$, is finite.
\item[{\rm(iii)}] The \emph{Besov--Bourgain--Morrey space
$M\dot{B}_{p,r}^{u,\tau}$} is defined to be the set
of all $f\in L^p_{\rm loc}$ such that
\begin{align*}
\|f\|_{M\dot{B}_{p,r}^{u,\tau}}:=
\left\{\sum_{j\in{\mathbb Z}}
\left[\sum_{m\in{\mathbb Z}^n}
\left\{\left|Q_{j, m}\right|^{\frac{1}{u}-\frac{1}{p}}
\left\|f{\mathbf{1}}_{Q_{j,m}}\right\|_{L^{p}}
\right\}^r \right]^\frac{\tau}{r}\right\}^\frac{1}{\tau},
\end{align*}
with the usual modifications made when $r=\infty$ or $\tau=\infty$,
is finite.
\item[{\rm(iv)}] The \emph{Triebel--Lizorkin--Bourgain--Morrey space
$M\dot{F}_{p,r}^{u,\tau}$} is defined to be the set
of all $f\in L^p_{\rm loc}$ such that
\begin{align*}
\|f\|_{M\dot{F}_{p,r}^{u,\tau}}
:=\left(\int_{\mathbb{R}^n}
\left\{\int_{0}^{\infty}\left[t^{n(\frac1u
-\frac1p-\frac1r)}\left\|f{\bf 1}_{B(y,t)}
\right\|_{L^p}\right]^\tau\,
\frac{dt}{t}\right\}^{\frac r\tau}\,dy\right)^{\frac1r},
\end{align*}
with the usual modifications made when $r=\infty$ or $\tau=\infty$,
is finite.
\end{enumerate}
\end{definition}

\begin{remark}
It is obvious that
$M^{u}_{p,\infty}=M^u_p$
and $M\dot{B}^{u,r}_{p,r}
=M^u_{p,r}$.
Moreover, from \cite[Proposition 3.6(iii)]{HLY23},
we infer that $M\dot{F}^{u,r}_{p,r}
=M^u_{p,r}$.
\end{remark}

\begin{theorem}\label{thm-bm}
Let $k\in \mathbb{N}$, $1\le  p<u< r\le \infty$,
$\tau\in (0,\infty]$, $A\in \{B,F\}$, and $q\in[1,\infty)$
satisfy $n(\frac{1}{p}-\frac{1}{q})<k$.
Then Theorem \ref{thm-QX} holds with $X:=M\dot{B}_{p,r}^{u,\tau}$
and Theorem \ref{thm-in} holds with $X:=M\dot{A}_{p,r}^{u,\tau}$.
Moreover, if $p\in (1,\infty)$, then
Theorem \ref{thm-cwkx} holds with $X:=M\dot{A}_{p,r}^{u,\tau}$
and $\Omega:=\mathbb{R}^n$;
if $p,r\in (1,\infty)$, then
Theorem \ref{thm-main} holds with $X:=M\dot{B}_{p,r}^{u,\tau}$.
\end{theorem}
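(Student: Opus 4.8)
The plan is to deduce Theorem \ref{thm-bm} directly from the abstract results Theorems \ref{thm-QX}, \ref{thm-in}, \ref{thm-main}, and \ref{thm-cwkx}, so that the whole proof reduces to checking that the Bourgain--Morrey-type spaces in question fit the framework of Lemma \ref{lem-po}. Concretely, for $X\in\{M\dot{B}^{u,\tau}_{p,r},\,M\dot{F}^{u,\tau}_{p,r}\}$ I would verify: (a) $X$ is a {\rm BBF} space; (b) there is a $p_0\in[1,p]$ such that $X^{1/p_0}$ is a {\rm BBF} space and $\mathcal{M}$ is bounded on $(X^{1/p_0})'$; and, for the statements using Theorem \ref{thm-main} (resp.\ Theorem \ref{thm-cwkx}), that $X$ (resp.\ both $X$ and $X'$) has an absolutely continuous norm. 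Here one may take $p_0=p$, or, when $p>1$, any $p_0\in(1,p)$ close to $p$: since $n(\frac1p-\frac1q)<k$ is strict, $n(\frac1{p_0}-\frac1q)<k$ still holds, which is precisely what the abstract theorems require once invoked with exponent $p_0$, and $\gamma_{p_0,q}=\gamma_{p,q}$ in all relevant cases, so the asymptotic factor on $1-s$ comes out as claimed (for the split at $q\in[1,p]$ in Theorem \ref{thm-in} one uses $p_0=p$).

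For (a) I would quote the real-variable theory of these spaces from \cite{HNSH22,ZSTYY23,HLY23}, together with the $p_0$-convexification identities $(M\dot{B}^{u,\tau}_{p,r})^{1/p_0}=M\dot{B}^{u/p_0,\,p_0\tau}_{p/p_0,\,p_0r}$ and $(M\dot{F}^{u,\tau}_{p,r})^{1/p_0}=M\dot{F}^{u/p_0,\,p_0\tau}_{p/p_0,\,p_0r}$; since the first index $p/p_0\ge1$, $X^{1/p_0}$ is again a space of the same family and hence a {\rm BBF} space. The one substantive point is the boundedness of $\mathcal{M}$ on the K\"{o}the dual $(X^{1/p_0})'$: here I would identify $(X^{1/p_0})'$ with the corresponding Bourgain--Morrey-type ``block''/pre-dual space via the K\"{o}the-duality description in the cited references and invoke the boundedness of $\mathcal{M}$ on that pre-dual; when $p/p_0>1$ an alternative route starts from the classical boundedness of $\mathcal{M}$ on $X^{1/p_0}$ itself and transfers it to the associate space by duality. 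This is where the conditions $p<u<r$ are genuinely used.

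For the parts using Theorem \ref{thm-main} and Theorem \ref{thm-cwkx}, the extra ingredient is absolute continuity of the norm. Unlike the classical Morrey space $M^u_p$, the Bourgain--Morrey-type spaces $M\dot{A}^{u,\tau}_{p,r}$ with finite $p$ and genuine (finite-exponent) summability/integrability in the defining norm have absolutely continuous norms, because the outer and inner $\ell^r$-/$\ell^\tau$- (or $L^r$-/$L^\tau$-) structures force the relevant tails to vanish; this gives the hypothesis of Theorem \ref{thm-main} in the range $p,r\in(1,\infty)$. Moreover, when $p>1$ the associate space $(M\dot{A}^{u,\tau}_{p,r})'$ is again of the same type, hence also has an absolutely continuous norm, supplying the remaining hypothesis of Theorem \ref{thm-cwkx}. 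All these structural facts are recorded in \cite{HNSH22,ZSTYY23,HLY23}.

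I expect the main obstacle to be step (b): establishing the boundedness of $\mathcal{M}$ on the K\"{o}the dual of the convexified space (equivalently, on the relevant Bourgain--Morrey-type pre-dual), and, relatedly, pinning down the precise parameter ranges in which $X$ and $X'$ are {\rm BBF} spaces with absolutely continuous norms. These facts cannot be obtained by soft arguments and must be imported from the (somewhat delicate) real-variable theory of Bourgain--Morrey-type spaces; everything else is a routine application of the abstract theorems.
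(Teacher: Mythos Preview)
Your plan coincides with the paper's for the $M\dot{B}$ parts of Theorems \ref{thm-QX} and \ref{thm-main}: the paper quotes \cite[Theorem 4.12]{ZYY23ccm} to verify that $M\dot{B}^{u,\tau}_{p,r}$ satisfies the hypotheses of Lemma \ref{lem-po} (and, when $p,r\in(1,\infty)$, has an absolutely continuous norm), exactly as you propose.

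For Theorem \ref{thm-in} with $X=M\dot{A}^{u,\tau}_{p,r}$ and for Theorem \ref{thm-cwkx}, however, the paper takes a genuinely different route. Rather than verifying the BBF hypotheses --- in particular, the absolute continuity of $X'$, and for the $F$-type spaces the boundedness of $\mathcal{M}$ on $(X^{1/p_0})'$ --- it bypasses the abstract framework entirely: it invokes the weighted-Lebesgue versions (Theorems \ref{thm-Q}, \ref{thm-ex}, \ref{thm-wls}(ii)) with $\omega\in A_1$ and then transfers them to the Bourgain--Morrey-type norms via \cite[Proposition 7.7]{LYYZZ-arXiv} and the argument of \cite[Theorem 7.5]{LYYZZ-arXiv}. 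That detour is also what forces the restriction $\Omega=\mathbb{R}^n$ in the $M\dot{A}$ case of Theorem \ref{thm-cwkx}.

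Your assertion that $(M\dot{A}^{u,\tau}_{p,r})'$ is ``again of the same type'' with absolutely continuous norm is precisely the point the paper avoids; if this were routinely available for both the $B$- and $F$-type spaces in the stated range, Theorem \ref{thm-cwkx} could have been applied on arbitrary extension domains rather than only on $\mathbb{R}^n$. So while the overall architecture of your proposal is sound, step (b) and the absolute-continuity-of-the-dual claim may not be documented in \cite{HNSH22,ZSTYY23,HLY23} in the generality you need --- that gap is exactly where the paper's weighted-extrapolation detour earns its keep.
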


\begin{proof}
By \cite[Proposition 7.7]{LYYZZ-arXiv}, an argument similar
to that used
in the proof of \cite[Theorem 7.5]{LYYZZ-arXiv}, and
Theorems \ref{thm-Q}, \ref{thm-ex}, and
\ref{thm-wls}(ii) with $\omega\in A_1$,
we can easily conclude that
Theorems \ref{thm-QX} and
\ref{thm-in} hold with $X:=M\dot{A}_{p,r}^{u,\tau}$
and Theorem \ref{thm-cwkx}
holds with $X:=M\dot{A}_{p,r}^{u,\tau}$ and $p\in (1,\infty)$.
Furthermore,
from the proof of \cite[Theorem 4.12]{ZYY23ccm},
we deduce that $M\dot{B}_{p,r}^{u,\tau}$
under consideration satisfies
all the assumptions of Theorem \ref{thm-QX}
and $M\dot{B}_{p,r}^{u,\tau}$ with $p,r\in (1,\infty)$
satisfies all the assumptions of Theorem \ref{thm-main}. Applying
these, we obtain the desired conclusions.
\end{proof}

\begin{remark}
Let all the symbols be the same as in Theorem \ref{thm-bm}.
Then Theorem \ref{thm-QX} with $X:=M\dot{B}_{p,r}^{u,\tau}$
and Theorem \ref{thm-in} with $X:=M\dot{A}_{p,r}^{u,\tau}$ are new.
When $p\in (1,\infty)$,
Theorem \ref{thm-cwkx} with $X:=M\dot{A}_{p,\infty}^{u,\infty}$, $k:=1$, $q\in (1,p)$, and
$\Omega:=\mathbb{R}^n$
coincides with \cite[Proposition 5.3]{DGPYYZ24}, and with $X:=M\dot{A}_{p,r}^{u,\tau}$, $k:=1$, and
$q\in [1,\frac{n}{n-1}]\cup [1,\min\{p,\tau\})$
reduces to \cite[Theorem 6.16]{ZYY24jga};
other cases of Theorem \ref{thm-cwkx} with $X:=M\dot{A}_{p,r}^{u,\tau}$ are new.
When $p,r\in (1,\infty)$,
Theorem \ref{thm-main} with $X:=M\dot{B}_{p,r}^{u,\tau}$, $k:=1$,
and $q\in [1,\frac{n}{n-1}]\cup [1,\min\{p,\tau\})$
reduces to \cite[Theorem 6.16]{ZYY24jga}
and other cases are new.
\end{remark}

\subsection{Mixed-Norm Lebesgue Spaces}\label{ss-mix}

For a given vector $\vec{r}:=(r_1,\ldots,r_n)
\in[1,\infty]^n$, the \emph{mixed-norm Lebesgue
space $L^{\vec{r}} $} is defined to be the
set of all $f\in\mathscr{M} $ with
the following finite \emph{norm}
\begin{equation*}
\|f\|_{L^{\vec{r}} }:=\left\{\int_{\mathbb{R}}
\cdots\left[\int_{\mathbb{R}}\left|f(x_1,\ldots,
x_n)\right|^{r_1}\,dx_1\right]^{\frac{r_2}{r_1}}
\cdots\,dx_n\right\}^{\frac{1}{r_n}},
\end{equation*}
where the usual modifications are made when $r_i=
\infty$ for some $i\in\{1,\ldots,n\}$.
The study of mixed-norm Lebesgue spaces
can be traced back to H\"ormander \cite{Hor60}
and Benedek and Panzone \cite{BP61}.
For more studies on mixed-norm Lebesgue spaces,
we refer to Cleanthous et al. \cite{CG20,CGN17,CGN19}
and the survey article \cite{HY21}.

\begin{theorem}\label{thm-mix}
Let $k\in{\mathbb{N}}$, $\vec{r}:=(r_1,\ldots,r_n)\in(1,\infty)^n$,
$r_-:=\min\{r_1, \ldots, r_n\}$,
and
$q\in [1,\infty)$ satisfy
$n(\frac{1}{r_{-}}-\frac{1}{q})<k$.
Then Theorems \ref{thm-QX},
\ref{thm-in}, \ref{thm-main},
and \ref{thm-cwkx} hold with $X:=L^{\vec{r}} $.
\end{theorem}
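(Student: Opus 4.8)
The plan is to verify that the mixed-norm Lebesgue space $L^{\vec{r}}$ satisfies all the hypotheses required by Theorems \ref{thm-QX}, \ref{thm-in}, \ref{thm-main}, and \ref{thm-cwkx}, and then invoke those theorems directly. The common structural requirement, coming from Lemma \ref{lem-po}, is that there exists some $p\in[1,\infty)$ such that $X^{1/p}$ is again a {\rm BBF} space and the Hardy--Littlewood maximal operator $\mathcal{M}$ is bounded on $Y:=(X^{1/p})'$. The natural choice here is $p:=r_-=\min\{r_1,\dots,r_n\}$, so that $(L^{\vec{r}})^{1/r_-}=L^{\vec{r}/r_-}$ with exponent vector $\vec{r}/r_-:=(r_1/r_-,\dots,r_n/r_-)\in[1,\infty)^n$ having all entries $\ge 1$; since $L^{\vec{s}}$ is a {\rm BBF} space whenever $\vec{s}\in[1,\infty)^n$ (see, e.g., \cite[Section 7]{ZYY23ccm} or the references in \cite{HY21}), $X^{1/r_-}$ is indeed a {\rm BBF} space. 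For the maximal operator, the associate space $(L^{\vec{r}/r_-})'$ is the mixed-norm Lebesgue space $L^{\vec{r}\,'}$ with the dual exponent vector, all of whose entries lie in $(1,\infty]$ (in fact, for the indices equal to $r_-$ the dual entry is $\infty$, and for the others it is finite $>1$); the boundedness of $\mathcal{M}$ on mixed-norm Lebesgue spaces with all exponents in $(1,\infty]$ is classical (see \cite{BP61} and the systematic treatments in \cite{CGN17,CGN19}). Hence the hypotheses of Lemma \ref{lem-po} hold with this $p$.

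Next I would record the range condition. Theorem \ref{thm-QX} requires $n(\frac1p-\frac1q)<k$, which is exactly our assumption $n(\frac1{r_-}-\frac1q)<k$; likewise the hypotheses on $q$ in Theorems \ref{thm-in}, \ref{thm-main}, and \ref{thm-cwkx} are all phrased in terms of the same quantity $n(\frac1p-\frac1q)<k$, and when $p=r_-=1$ (which cannot occur here since $\vec{r}\in(1,\infty)^n$, but is worth noting for completeness) the degenerate case $q=p=1$ is also covered. So with $X:=L^{\vec{r}}$ and $p:=r_-$, the conclusions of Theorems \ref{thm-QX} and \ref{thm-in} follow at once.

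For Theorem \ref{thm-main} (the BBM formula) there is the extra requirement that $X$ have an absolutely continuous norm. Since every $r_i\in(1,\infty)$, the mixed-norm Lebesgue space $L^{\vec{r}}$ is separable and order-continuous, hence has absolutely continuous norm (this follows, e.g., from a componentwise application of the one-dimensional fact \cite[Theorem 1.34]{Rud87}, or see \cite{CGN17}). Thus Theorem \ref{thm-main} applies. For Theorem \ref{thm-cwkx} one additionally needs $X'$ to have absolutely continuous norm and $p\in(1,\infty)$; the associate space $X'=L^{\vec{r}\,'}$ has all dual exponents in $(1,\infty)$ (here every entry is genuinely finite and $>1$ because every $r_i\in(1,\infty)$), so it too has absolutely continuous norm, and $p=r_->1$, so Theorem \ref{thm-cwkx} applies as well.

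The only genuine point requiring care—and thus the main obstacle—is pinning down the boundedness of $\mathcal{M}$ on the correct associate space $Y=(L^{\vec{r}/r_-})'$: one must identify this associate space explicitly as a mixed-norm Lebesgue space with exponent vector whose entries all lie in $(1,\infty]$, and then quote the mixed-norm Fefferman--Stein / maximal inequality. Once the identification $(L^{\vec{s}})'=L^{\vec{s}\,'}$ (with the convention that an entry $s_i=1$ dualizes to $\infty$) is in hand, the boundedness of $\mathcal{M}$ on $L^{\vec{s}\,'}$ follows by iterating the one-variable Hardy--Littlewood maximal theorem in each coordinate, since every dual exponent exceeds $1$ (the coordinates with value $\infty$ are harmless, as $\mathcal{M}$ is trivially bounded on $L^\infty$ in that variable). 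With these verifications complete, all four theorems transfer verbatim to $X=L^{\vec{r}}$, which finishes the proof.
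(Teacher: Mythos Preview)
Your approach is essentially the paper's: verify that $L^{\vec r}$ satisfies the standing hypotheses (those of Lemma~\ref{lem-po} together with absolute continuity of the norm on $X$ and $X'$), and then invoke Theorems~\ref{thm-QX}, \ref{thm-in}, \ref{thm-main}, \ref{thm-cwkx} directly. The paper simply cites \cite[Theorem~4.19]{ZYY23ccm} for this verification; you spell it out, which is fine.

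One point deserves more care. You take $p=r_-$ exactly, so $X^{1/p}=L^{\vec r/r_-}$ has at least one exponent equal to $1$, and its associate space has at least one exponent equal to $\infty$. Your justification that $\mathcal M$ is bounded there by ``iterating the one-variable Hardy--Littlewood maximal theorem in each coordinate'' is too quick: the $n$-dimensional $\mathcal M$ is dominated by the composition $M_n\circ\cdots\circ M_1$ of one-dimensional maximal operators, but bounding each $M_i$ on a mixed-norm space with an \emph{inner} exponent equal to $\infty$ is a Fefferman--Stein type statement, not a triviality, and your one-line remark does not cover it. The clean fix---and the standard one in this literature---is to choose instead $p\in(1,r_-)$ close enough to $r_-$ that $n(\tfrac1p-\tfrac1q)<k$ still holds (possible because the hypothesis is a strict inequality). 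Then every entry of $(\vec r/p)'$ lies in $(1,\infty)$, and the boundedness of $\mathcal M$ on $L^{(\vec r/p)'}$ is the classical mixed-norm maximal inequality with no endpoint issues. With this adjustment your argument goes through verbatim.
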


\begin{proof}
From the proof of \cite[Theorem 4.19]{ZYY23ccm},
it follows that $L^{\vec{r}}$
under consideration satisfies
all the assumptions of Theorem \ref{thm-cwkx}. Applying
this, we obtain the desired conclusions.
\end{proof}

\begin{remark}
Let all the symbols be the same as in Theorem \ref{thm-mix}.
Then Theorems \ref{thm-QX} and \ref{thm-in} with $X:=L^{\vec{r}} $
are new. Theorem \ref{thm-main} with $X:=L^{\vec{r}}$, $k:=1$, $q\in [1,r_{-})$, and
$\Omega:=\mathbb{R}^n$
coincides with \cite[Theorem 5.6]{DGPYYZ24}
and other cases are new.
Theorem \ref{thm-cwkx}  with $X:=L^{\vec{r}}$, $k:=1$, and $q\in [1,\frac{n}{n-1}]\cup [1,r_{-})$
reduces to \cite[Theorem 6.23]{ZYY24jga} and other cases
are new.
\end{remark}

\subsection{Variable Lebesgue Spaces}\label{ss-va}
Recall that the \emph{variable Lebesgue space
$L^{r(\cdot)} $} associated with the function
$r:\mathbb{R}^n\to(0,\infty)$ is defined to be the set
of all $f\in\mathscr{M} $ with the following finite
\emph{quasi-norm}
\begin{equation*}
\|f\|_{L^{r(\cdot)} }:=\inf\left\{\lambda
\in(0,\infty):\int_{\mathbb{R}^n}\left[\frac{|f(x)|}
{\lambda}\right]^{r(x)}\,dx\le1\right\}.
\end{equation*}
For more related results on variable Lebesgue spaces,
we refer to
\cite{CF13,DHR09,KR91,NS12}.

Let $r:\mathbb{R}^n\to(0,\infty)$ be a
measurable function,
$$
\widetilde{r}_-:=\underset{x\in\mathbb{R}^n}{
\mathop\mathrm{\,ess\,inf\,}}\,r(x),\
\text{and}\
\widetilde{r}_+:=\underset{x\in\mathbb{R}^n}{
\mathop\mathrm{\,ess\,sup\,}}\,r(x).$$
A function $r:\mathbb{R}^n\to(0,\infty)$ is said to be
\emph{globally
{\rm log}-H\"older continuous} if there exist
$r_{\infty}\in\mathbb{R}$ and a positive constant $C$ such that, for
any $x,y\in\mathbb{R}^n$,
\begin{equation*}
|r(x)-r(y)|\le \frac{C}{\log(e+\frac{1}{|x-y|})}
\ \ \text{and}\ \
|r(x)-r_\infty|\le \frac{C}{\log(e+|x|)}.
\end{equation*}

\begin{theorem}\label{thm-v}
Let $k\in{\mathbb{N}}$, $r:\mathbb{R}^n\to(0,\infty)$ be globally
log-H\"older continuous, $1< \widetilde{r}_-
\leq\widetilde{r}_+<\infty$,
and
$q\in [1,\infty)$ satisfy $n(\frac{1}{\widetilde{r}_-}-\frac{1}{q})<k$.
Then Theorems \ref{thm-QX},
\ref{thm-in}, \ref{thm-main},
and \ref{thm-cwkx} hold with $X:=L^{r(\cdot)}$.
\end{theorem}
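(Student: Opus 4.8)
The plan is to deduce all four assertions of Theorem \ref{thm-v} by verifying that, under the stated hypotheses, the space $X:=L^{r(\cdot)}$ meets every structural requirement imposed on $X$ in Theorems \ref{thm-QX}, \ref{thm-in}, \ref{thm-main}, and \ref{thm-cwkx}; once this verification is complete, those four theorems apply verbatim. The facts about variable Lebesgue spaces that I would invoke are classical (see, for instance, \cite{CF13,DHHR11}): when $1<\widetilde r_-\le\widetilde r_+<\infty$, the space $L^{r(\cdot)}$ is a {\rm BBF} space; for any $p\in(0,\infty)$ one has $(L^{r(\cdot)})^{1/p}=L^{r(\cdot)/p}$ with equivalent quasi-norms and $(L^{r(\cdot)/p})'=L^{(r(\cdot)/p)'}$; if $t:\mathbb{R}^n\to(0,\infty)$ is globally {\rm log}-H\"older continuous with $1<\widetilde t_-\le\widetilde t_+<\infty$, then $\mathcal{M}$ is bounded on $L^{t(\cdot)}$; and $L^{t(\cdot)}$ has an absolutely continuous norm whenever $\widetilde t_+<\infty$.

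First I would fix the exponent $p$ entering Lemma \ref{lem-po}. Since the hypothesis $n(\frac{1}{\widetilde r_-}-\frac{1}{q})<k$ is equivalent to $\widetilde r_->\frac{nq}{n+kq}$, the interval $(\max\{1,\frac{nq}{n+kq}\},\widetilde r_-)$ is nonempty, and I would choose $p$ therein. Then $p\in(1,\infty)$, $n(\frac{1}{p}-\frac{1}{q})<k$, and $p<\widetilde r_-$. Consequently $X^{1/p}=L^{r(\cdot)/p}$ is a {\rm BBF} space, because $r(\cdot)/p\ge\widetilde r_-/p>1$, and $Y:=(X^{1/p})'=L^{(r(\cdot)/p)'}$. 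The exponent $(r(\cdot)/p)'$ is again globally {\rm log}-H\"older continuous, since global {\rm log}-H\"older continuity is preserved both under multiplication by a positive constant and under passage to the conjugate exponent, and its infimum and supremum lie strictly between $1$ and $\infty$ because $1<\widetilde r_-/p$ and $\widetilde r_+/p<\infty$; hence $\mathcal{M}$ is bounded on $Y$. This verifies the hypotheses of Lemma \ref{lem-po} for $X=L^{r(\cdot)}$ with this $p$, and invoking Theorems \ref{thm-QX} and \ref{thm-in} directly then yields the corresponding assertions for $X=L^{r(\cdot)}$.

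For Theorem \ref{thm-main} I would additionally use that $\widetilde r_+<\infty$ forces $L^{r(\cdot)}$ to have an absolutely continuous norm; combined with the verification above, this supplies all the hypotheses of Theorem \ref{thm-main}. For Theorem \ref{thm-cwkx} I would moreover observe that $X'=L^{r'(\cdot)}$ and that $\widetilde{(r')}_+=(\widetilde r_-)'<\infty$ because $\widetilde r_->1$, so that $X'$ also has an absolutely continuous norm, and that the $p$ chosen above lies in $(1,\infty)$, as required; this gives every assumption of Theorem \ref{thm-cwkx} and completes the proof. Since the argument is purely a matter of checking the abstract hypotheses of the four cited theorems, I do not expect any genuine obstacle; the only point deserving a little attention is the simultaneous nonemptiness of the admissible range of $p$ together with the stability of global {\rm log}-H\"older continuity under the $p$-convexification and under conjugation, both of which are routine (and could alternatively be quoted from the existing treatment of variable Lebesgue spaces in the ball Banach function space literature, e.g., \cite{ZYY23ccm}).
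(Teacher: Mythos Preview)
Your proposal is correct and follows essentially the same approach as the paper: verify that $L^{r(\cdot)}$ satisfies all the abstract hypotheses of Theorems \ref{thm-QX}, \ref{thm-in}, \ref{thm-main}, and \ref{thm-cwkx}, then invoke those theorems directly. The paper simply cites \cite[Theorem 5.10]{ZYY23} for this verification, whereas you spell out the choice of $p\in(\max\{1,\frac{nq}{n+kq}\},\widetilde r_-)$ and the needed properties of $L^{r(\cdot)/p}$, $(L^{r(\cdot)/p})'$, and $L^{r'(\cdot)}$ explicitly; the substance is the same.
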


\begin{proof}
Applying the proof of \cite[Theorem 5.10]{ZYY23}, we conclude that all
the assumptions of Theorem \ref{thm-cwkx}
hold for the variable Lebesgue space
$L^{{r}(\cdot)} $ under
consideration. Thus,
we obtain the desired results.
\end{proof}

\begin{remark}
Let all the symbols be the same as in Theorem \ref{thm-v}.
Then Theorems \ref{thm-QX} and
\ref{thm-in} with $X:=L^{r(\cdot)}$ are new.
Theorem
\ref{thm-main} with $X:=L^{r(\cdot)}$, $k:=1$, $q\in [1,\widetilde{r}_-)$,
and $\Omega:=\mathbb{R}^n$
coincides with \cite[Theorem 5.13]{DGPYYZ24}
and other cases are new.
Theorem \ref{thm-cwkx} with $X:=L^{r(\cdot)}$, $k:=1$, and $q\in [1,\frac{n}{n-1}]\cup[1,\widetilde{r}_-)$
reduces to \cite[Theorem 6.23]{ZYY24jga} and
other cases are new.
\end{remark}

\subsection{Lorentz Spaces}\label{ss-lor}

Recall that, for any $r,\tau\in(0,\infty)$,
the \emph{Lorentz space $L^{r,\tau} $}, originally studied by Lorentz \cite{Lor50,Lor51},
is defined to be the set of all
$f\in\mathscr{M} $ such that
\begin{equation*}
\|f\|_{L^{r,\tau} }
:=\left\{\int_0^{\infty}
\left[t^{\frac{1}{r}}f^*(t)\right]^\tau
\frac{\,dt}{t}\right\}^{\frac{1}{\tau}}
<\infty,
\end{equation*}
where $f^*$ denotes the \emph{decreasing rearrangement of $f$},
that is, for any $t\in[0,\infty)$,
\begin{equation*}
f^*(t):=\inf\left\{s\in(0,\infty):\left|
\left\{x\in\mathbb{R}^n:|f(x)|>s\right\}\right|\leq t\right\}
\end{equation*}
with the convention $\inf \emptyset =\infty$.
\begin{theorem}\label{thm-lorentzs}
Let $k\in {\mathbb{N}}$,
$r,\tau\in(1,\infty)$,
and $q\in [1,\infty)$ satisfy
$n(\frac{1}{\min\{r,\tau\}}-\frac{1}{q})<k$.
Then Theorems \ref{thm-QX},
\ref{thm-in}, \ref{thm-main},
and \ref{thm-cwkx} hold with $X:=L^{r,\tau}$.
\end{theorem}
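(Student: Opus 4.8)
The plan is to reduce Theorem \ref{thm-lorentzs} to the abstract results by verifying that $X:=L^{r,\tau}$ with $r,\tau\in(1,\infty)$ fulfills all the hypotheses appearing there: namely, the structural assumption of Lemma \ref{lem-po} (the existence of some $p\in[1,\infty)$ for which $X^{1/p}$ is again a {\rm BBF} space and the Hardy--Littlewood maximal operator $\mathcal{M}$ is bounded on $(X^{1/p})'$), together with the absolute-continuity conditions on $X$ and $X'$ required by Theorems \ref{thm-main} and \ref{thm-cwkx}. Once these are in place, Theorems \ref{thm-QX}, \ref{thm-in}, \ref{thm-main}, and \ref{thm-cwkx} apply to $L^{r,\tau}$ verbatim, exactly as in the proofs of Theorems \ref{thm-mix} and \ref{thm-v}.

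First I would recall that, for $r,\tau\in(1,\infty)$, the Lorentz space $L^{r,\tau}$ — equipped with an equivalent norm built from the maximal decreasing rearrangement $f^{**}$ rather than $f^{*}$ — is a {\rm BBF} space; this is classical (see, for instance, \cite[Chapter 4]{BS88}). Next, using the hypothesis $n(\frac{1}{\min\{r,\tau\}}-\frac{1}{q})<k$, I would choose $p\in(1,\min\{r,\tau\})$ close enough to $\min\{r,\tau\}$ so that still $n(\frac1p-\frac1q)<k$. Then the $\frac1p$-convexification satisfies $(L^{r,\tau})^{1/p}=L^{r/p,\tau/p}$ with $r/p,\tau/p\in(1,\infty)$, hence it is again a {\rm BBF} space, and by the duality of Lorentz spaces its associate space is (equivalent to) $L^{(r/p)',(\tau/p)'}$ with both exponents in $(1,\infty)$, on which $\mathcal{M}$ is bounded. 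Thus the assumptions of Lemma \ref{lem-po} hold with this $p$, which immediately yields Theorems \ref{thm-QX} and \ref{thm-in} with $X:=L^{r,\tau}$.

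For the BBM formula and the characterization, I would additionally observe that $L^{r,\tau}$ with $\tau\in(1,\infty)$ has an absolutely continuous norm and that its associate space $L^{r',\tau'}$, with $r',\tau'\in(1,\infty)$, also does; both facts follow from the dominated convergence theorem together with the order continuity of these spaces (see, for instance, \cite[Chapter 1]{BS88} and the discussion in \cite{ZYY23ccm,ZYY24jga}). With these verifications, the hypotheses of Theorems \ref{thm-main} and \ref{thm-cwkx} are met (the latter using $r,\tau>1$, which is in force), completing the argument. In practice one would simply cite the verification already carried out for Lorentz spaces in the ball-Banach-function-space literature.

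The main obstacle, such as it is, is purely a normalization matter: the functional $\|\cdot\|_{L^{r,\tau}}$ is genuinely a norm only when $1\le\tau\le r$, whereas for $r<\tau$ it is merely a quasi-norm, so one must first pass to the equivalent $f^{**}$-based norm before Definition \ref{def-X} can be applied literally; after this replacement, all the remaining checks (convexification, identification of the associate space, absolute continuity of the norm and its associate norm) are routine consequences of the classical real-variable theory of Lorentz spaces.
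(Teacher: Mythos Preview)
Your proposal is correct and follows essentially the same approach as the paper: both reduce the statement to verifying that $L^{r,\tau}$ satisfies the abstract hypotheses of Theorems \ref{thm-QX}, \ref{thm-in}, \ref{thm-main}, and \ref{thm-cwkx} (the assumptions of Lemma \ref{lem-po} and the absolute-continuity conditions). The paper simply cites \cite[Theorem~5.18]{ZYY23} for these verifications, whereas you spell them out explicitly --- the choice of $p\in(1,\min\{r,\tau\})$, the convexification identity $(L^{r,\tau})^{1/p}=L^{r/p,\tau/p}$, the associate-space duality, the boundedness of $\mathcal{M}$, and the absolute continuity of both norms --- and correctly flag the quasi-norm issue when $r<\tau$.
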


\begin{proof}
Using the proof of \cite[Theorem 5.18]{ZYY23}, we easily find that all
the assumptions with $X :=L^{r,\tau} $ of Theorem \ref{thm-cwkx}
and hence Theorems \ref{thm-QX},
\ref{thm-in}, and \ref{thm-main}
are satisfied, which further implies the desired conclusions and
hence completes the proof of Theorem \ref{thm-lorentzs}.
\end{proof}

\begin{remark}
Let all the symbols be the same as in Theorem \ref{thm-lorentzs}.
Then Theorems \ref{thm-QX} and
\ref{thm-in} with $X:=L^{r,\tau}$ are new.
Theorem \ref{thm-main} with $X:=L^{r,\tau}$, $k:=1$, $q\in [1,\min\{r,\tau\})$,
and $\Omega:=\mathbb{R}^n$ coincides with \cite[Theorem 5.24]{DGPYYZ24}
and other cases are new.
Theorem \ref{thm-cwkx} with $X:=L^{r,\tau}$, $k:=1$, and
$q\in [1,\frac{n}{n-1}]\cup[1,\min\{r,\tau\})$ reduces to
\cite[Theorem 6.27]{ZYY24jga} and other cases
are new.
\end{remark}

\subsection{Orlicz and Orlicz-Slice Spaces}\label{ss-os}
Recall that a non-decreasing function $\Phi:[0,\infty)
\ \to\ [0,\infty)$ is called an \emph{Orlicz function}
if $\Phi$ satisfies that
$\Phi(0)= 0$,
$\Phi(t)\in(0,\infty)$ for any $t\in(0,\infty)$,
and
$\lim_{t\to\infty}\Phi(t)=\infty$.
An Orlicz function $\Phi$ is said to be of \emph{lower}
(resp. \emph{upper}) \emph{type} $r$ for some
$r\in\mathbb{R}$ if there exists a positive constant
$C_{(r)}$ such that,
for any $t\in[0,\infty)$ and
$s\in(0,1)$ [resp. $s\in[1,\infty)$],
$
\Phi(st)\le C_{(r)} s^r\Phi(t).
$
In this subsection, we always assume that
$\Phi$
is an Orlicz function with both positive lower
type $r_{\Phi}^-$ and positive upper type $r_{\Phi}^+$.
The \emph{Orlicz space $L^\Phi $}
is defined to be the set of all $f\in\mathscr{M} $
with the following finite \emph{quasi-norm}
\begin{equation*}
\|f\|_{L^\Phi }:=\inf\left\{\lambda\in
(0,\infty):\int_{\mathbb{R}^n}\Phi\left(\frac{|f(x)|}
{\lambda}\right)\,dx\le1\right\}.
\end{equation*}
For more related results on Orlicz spaces,
we refer to \cite{DFMN21,NS14,RR02}.
For any given $t,r\in(0,\infty)$,
the \emph{Orlicz-slice space}
$(E_\Phi^r)_t $ is defined to be
the set of all $f\in\mathscr{M} $ with the
following finite \emph{quasi-norm}
\begin{equation*}
\|f\|_{(E_\Phi^r)_t }:=\left\{\int_{\mathbb{R}^n}
\left[\frac{\|f\mathbf{1}_{B(x,t)}\|_{L^\Phi }}
{\|\mathbf{1}_{B(x,t)}\|_{L^\Phi }}\right]
^r\,dx\right\}^{\frac{1}{r}},
\end{equation*}
where $\Phi$
is an Orlicz function with both positive
lower type $r_{\Phi}^-$ and positive upper
type $r_{\Phi}^+$.
The Orlicz-slice space was introduced in
\cite{ZYYW19} as a generalization of both
the slice space
(see, for example, \cite{AM19,AP17}) and the Wiener amalgam space
(see, for example, \cite{Ho19,Hol75,KNTYY07}). For more recent
studies on Orlicz-slice spaces, we refer to Ho \cite{Ho21,Ho22,Ho23}.

\begin{theorem}\label{thm-orlicz}
Let $k\in{\mathbb{N}}$ and
$\Phi$ be an Orlicz function with both positive lower
type $r_{\Phi}^-$ and positive upper type $r_{\Phi}^+$
satisfying $1< r^-_{\Phi}\leq
r^+_{\Phi}<\infty$.
\begin{enumerate}[{\rm (i)}]
\item If $q\in [1,\infty)$
satisfies $n(\frac{1}{r^-_{\Phi}}-\frac{1}{q})<k$,
then Theorems \ref{thm-QX},
\ref{thm-in}, \ref{thm-main},
and \ref{thm-cwkx} hold with $X:=L^{\Phi}$.

\item If $t\in(0,\infty)$,
$r\in(1,\infty)$, and $q\in [1,\infty)$
satisfy $n(\frac{1}{\min\{r^-_{\Phi},r\}}-\frac{1}{q})<k$,
then Theorems \ref{thm-QX},
\ref{thm-in}, \ref{thm-main},
and \ref{thm-cwkx} hold with $X:=(E_\Phi^r)_t$.
\end{enumerate}
\end{theorem}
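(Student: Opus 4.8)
The plan is to reduce everything, exactly as in the previous subsections, to verifying that the underlying space $X$ satisfies the full list of structural hypotheses of Theorem~\ref{thm-cwkx}, since these are the strongest among Theorems~\ref{thm-QX}, \ref{thm-in}, \ref{thm-main}, and~\ref{thm-cwkx}; once this is done, all four conclusions follow at once (Theorems~\ref{thm-QX} and~\ref{thm-in} using only the portion of the hypotheses appearing in Lemma~\ref{lem-po}, and Theorem~\ref{thm-main} additionally using the absolute continuity of the norm of $X$, while the extension-domain requirement for Theorems~\ref{thm-main} and~\ref{thm-cwkx} is inherited from their statements). Thus, for (i) it suffices to exhibit some $p\in(1,\infty)$ with $n(\frac{1}{p}-\frac{1}{q})<k$ such that $L^{\Phi}$ is a {\rm BBF} space, $(L^{\Phi})^{1/p}$ is a {\rm BBF} space, the Hardy--Littlewood maximal operator $\mathcal{M}$ is bounded on $Y:=[(L^{\Phi})^{1/p}]'$, and both $L^{\Phi}$ and $(L^{\Phi})'$ have absolutely continuous norms; for (ii) the same is needed with $L^{\Phi}$ replaced by $(E_\Phi^r)_t$ and with the role of $r^-_{\Phi}$ played by $\min\{r^-_{\Phi},r\}$.

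First I would settle (i). Since $\Phi$ is an Orlicz function with $1<r^-_{\Phi}\le r^+_{\Phi}<\infty$, it is equivalent to a convex Young function satisfying both the $\Delta_2$ condition (from the finite upper type) and the $\nabla_2$ condition (from the lower type exceeding $1$), so $L^{\Phi}$ is a rearrangement-invariant Banach function space, hence a {\rm BBF} space, and, since $\Phi$ and its complementary function both satisfy $\Delta_2$, both $L^{\Phi}$ and $(L^{\Phi})'$ have absolutely continuous norms; see, for instance, \cite{RR02,NS14,DFMN21}. A direct computation with the Luxemburg norm shows that, for any $p\in(1,r^-_{\Phi})$, the convexification $(L^{\Phi})^{1/p}$ coincides, up to an equivalent norm, with the Orlicz space associated with $t\mapsto\Phi(t^{1/p})$, which has positive lower type $r^-_{\Phi}/p>1$ and finite upper type $r^+_{\Phi}/p$; consequently $(L^{\Phi})^{1/p}$ is a {\rm BBF} space, and, by the standard boundedness of $\mathcal{M}$ on the associate space of an Orlicz space whose defining function has lower type strictly above $1$, $\mathcal{M}$ is bounded on $Y$. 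Finally, the constraint $n(\frac{1}{p}-\frac{1}{q})<k$ is arranged by choosing $p\in(1,r^-_{\Phi})$ close enough to $r^-_{\Phi}$, which is possible because $n(\frac{1}{r^-_{\Phi}}-\frac{1}{q})<k$ by assumption. Feeding these verifications into Theorem~\ref{thm-cwkx} (and hence also Theorems~\ref{thm-QX}, \ref{thm-in}, and~\ref{thm-main}) yields (i); in practice I would phrase this by citing the proof of the corresponding real-variable result for $L^{\Phi}$ in \cite{ZYY23ccm} (see also \cite{ZYY24jga,DGPYYZ24}).

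For (ii), the analogous properties of the Orlicz-slice space $(E_\Phi^r)_t$ with $1<r^-_{\Phi}\le r^+_{\Phi}<\infty$ and $r\in(1,\infty)$ are already available in the literature: $(E_\Phi^r)_t$ is a {\rm BBF} space whose norm and whose associate norm are both absolutely continuous, and, for a suitable $p\in(1,\min\{r^-_{\Phi},r\})$, the convexification $[(E_\Phi^r)_t]^{1/p}$ is a {\rm BBF} space on whose associate space $\mathcal{M}$ is bounded; these facts go back to \cite{ZYYW19} and are developed further in \cite{Ho21,Ho22,Ho23} and in the treatments of $(E_\Phi^r)_t$ in \cite{ZYY24jga,DGPYYZ24}. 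As before, $n(\frac{1}{p}-\frac{1}{q})<k$ is secured by taking $p$ close to $\min\{r^-_{\Phi},r\}$, using the hypothesis $n(\frac{1}{\min\{r^-_{\Phi},r\}}-\frac{1}{q})<k$, and then Theorem~\ref{thm-cwkx} gives (ii). The only genuinely delicate step in the whole argument is verifying the boundedness of $\mathcal{M}$ on the associate space of the $1/p$-convexification of the slice space — which is precisely why one must route the argument through Lemma~\ref{lem-po} and the convexification rather than work with $(E_\Phi^r)_t$ directly — but this is exactly the input already isolated in \cite{ZYYW19,Ho21,Ho22,Ho23}, so no new difficulty arises, and the proof is complete.
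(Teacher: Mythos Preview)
Your proposal is correct and mirrors the paper's own proof, which simply cites \cite[Theorems 4.25 and 4.27]{ZYY23ccm} to verify that $L^{\Phi}$ and $(E_\Phi^r)_t$ satisfy all the hypotheses of Theorem~\ref{thm-cwkx} (and hence those of Theorems~\ref{thm-QX}, \ref{thm-in}, and~\ref{thm-main}). Your version is more explicit about which structural properties are being checked and how to choose $p$ close to $r^-_{\Phi}$ (resp.\ $\min\{r^-_{\Phi},r\}$) to meet the constraint on $q$, but the strategy is identical.
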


\begin{proof}
From the proof of
\cite[Theorems 4.25 and 4.27]{ZYY23ccm}, we infer that the
Orlicz space $L^{\Phi} $ and
the Orlicz-slice space
$(E_\Phi^r)_t $ under consideration
satisfy all the assumptions of
Theorem \ref{thm-cwkx} and hence Theorems \ref{thm-QX},
\ref{thm-in}, and \ref{thm-main}.
By this, we obtain the
desired conclusions.
\end{proof}

\begin{remark}
Let all the symbols be the same as in Theorem \ref{thm-orlicz}.
\begin{enumerate}[{\rm (i)}]
\item Theorems \ref{thm-QX} and
\ref{thm-in} with $X:=L^{\Phi}$ are new.
Theorems \ref{thm-main} and \ref{thm-cwkx} with $X:=L^{\Phi}$, $k:=1$,
and $q\in [1,\frac{n}{n-1}]\cup[1, r^-_{\Phi})$
reduces to \cite[Theorem 6.29]{ZYY24jga} and other cases are new.

\item Theorems \ref{thm-QX} and
\ref{thm-in} with $X:=(E_\Phi^r)_t$ are new.
Theorems \ref{thm-main} and \ref{thm-cwkx} with $X:=(E_\Phi^r)_t$, $k:=1$,
and $q\in [1,\frac{n}{n-1}]\cup[1, \min\{r^-_{\Phi},r\})$
reduces to \cite[Theorem 6.31]{ZYY24jga} and other cases are new.
\end{enumerate}
\end{remark}

\subsection{Local and Global Generalized Herz Spaces}\label{ss-Herz}

Recall that the classical Herz space was originally
introduced by Herz \cite{herz} to
study the Bernstein theorem on absolutely
convergent Fourier transforms.
Recently, Rafeiro and Samko \cite{RS20}
introduced the local and the global generalized Herz spaces
(see Definition \ref{def-herz})
which respectively generalize the classical
Herz spaces and generalized Morrey type spaces.
For more studies on Herz spaces,
we refer to \cite{GLY98,HY99,HS25,HWYY23,LY96,
LYH22,RS20,ZYZ22}.

To state the definitions of the aforementioned
generalized Herz spaces,
we first present the Matuszewska--Orlicz indices
as follows, which were introduced in
\cite{MO,Mo65} and characterize the growth
properties of functions at
origin and infinity (see also \cite{LYH22}).

\begin{definition}
Let $\omega$ be a positive function on $(0,\infty)$. Then
the \emph{Matuszewska--Orlicz indices}
$m_0(\omega)$, $M_0(\omega)$,
$m_\infty(\omega)$, and $M_\infty(\omega)$ of
$\omega$ are defined, respectively, by setting, for
any $h\in(0,\infty)$,
$$m_0(\omega):=\sup_{t\in(0,1)}
\frac{\log\left[\limsup\limits_{h\to0^+}\frac{\omega(ht)}
{\omega(h)}\right]}{\log t},\ M_0(\omega):=\inf_{t\in(0,1)}\frac{\log
\left[
\liminf\limits_{h\to0^+}
\frac{\omega(ht)}{\omega(h)}\right]}{\log t},$$
$$
m_{\infty}(\omega):=\sup_{t\in(1,\infty)}
\frac{\log\left[\liminf\limits_{h\to\infty}
\frac{\omega(ht)}{\omega(h)}\right]}{\log t},\ \mathrm{and}\ M_\infty(\omega)
:=\inf_{t\in(1,\infty)}\frac{\log\left[\limsup\limits_
{h\to\infty}\frac{\omega(ht)}{\omega(h)}\right]}{\log t}.
$$
\end{definition}

Let $\mathbb{R}_+:=(0,\infty)$
and $\omega$ be a nonnegative function on $\mathbb{R}_+$.
Then the function $\omega$ is said to be
\emph{almost increasing}
(resp.\ \emph{almost decreasing})
on $\mathbb{R}_+$ if there exists
a constant $C\in[1,\infty)$ such that,
for any $t,\tau\in\mathbb{R}_+$ satisfying
$t\leq\tau$ (resp.\ $t\geq\tau$),
$\omega(t)\leq C\omega(\tau).$
The \emph{function class} $M(\mathbb{R}_+)$
is defined to be the set of all positive functions
$\omega$ on $\mathbb{R}_+$ such that, for any $0<\delta<N<\infty$,
$0<\inf_{t\in(\delta,N)}\omega(t)\leq\sup_
{t\in(\delta,N)}\omega(t)<\infty$ and
there exist four constants $\alpha_{0},
\beta_{0},\alpha_{\infty},\beta_{\infty}\in\mathbb{R}$ such that
\begin{enumerate}[{\rm(i)}]
\item for any $t\in(0,1]$,
$\omega(t)t^{-\alpha_{0}}$ is almost increasing and
$\omega(t)t^{-\beta_{0}}$ is almost decreasing;
\item for any $t\in[1,\infty)$,
$\omega(t)t^{-\alpha_{\infty}}$ is
almost increasing and $\omega(t)t^{-\beta_{\infty}}$ is
almost decreasing.
\end{enumerate}

The following concept of generalized Herz spaces
was originally introduced by
Rafeiro and Samko in \cite[Definition 2.2]{RS20}
(see also \cite{LYH22}).

\begin{definition}\label{def-herz}
Let $p,r\in(0,\infty]$ and $\omega\in M(\mathbb{R}_+)$.
\begin{enumerate}[{\rm(i)}]
\item Let $\xi\in\mathbb{R}^n$.
The \emph{local generalized Herz space}
$\dot{\mathcal{K}}^{p,r}_{\omega,\xi} $
is defined to be the set of all
$f\in L^p_{\rm loc}(\mathbb{R}^n\setminus\{\xi\})$ such that
$$
\|f\|_{\dot{\mathcal{K}}^{p,r}_{\omega,\xi}
}:=\left\{\sum_{k\in\mathbb{Z}}
\left[\omega\left(2^{k}\right)\right]^{r}
\left\|f{\bf 1}_{B({\bf 0},2^{k})\setminus B({\bf 0},2^{k-1})}
\right\|_{L^{p} }^{r}\right\}^{\frac{1}{r}}<\infty.
$$
\item The
\emph{global generalized
Herz space} $\dot{\mathcal{K}}^{p,r}_{\omega} $
is defined to be the set of all
$f\in L^p_{\rm loc} $ such that
$\|f\|_{\dot{\mathcal{K}}^{p,r}_{\omega} }
:=\sup_{\xi\in\mathbb{R}^n}
\|f\|_{\dot{\mathcal{K}}^{p,r}_{\omega,\xi} }<\infty$.
\end{enumerate}
\end{definition}

\begin{theorem}\label{thm-herz}
Let $k\in \mathbb{N}$, $p,r\in[1,\infty)$,
$\mu \in [1,\min\{p,r\}]$,
$q\in [1,\infty)$
satisfy
$n(\frac{1}{\mu}-\frac1q)<k$,
and $\omega\in M(\mathbb{R}_+)$
satisfy
\begin{align*}
-\frac np<m_0(\omega)\le M_0(\omega)< n\left(\frac{1}{\mu}-\frac{1}{p}\right)
\text{ and }
-\frac np<m_\infty(\omega)\le M_\infty(\omega)<n\left(\frac{1}{\mu}-\frac{1}{p}\right).
\end{align*}
\begin{enumerate}[{\rm(i)}]
\item
Let $\xi\in \mathbb{R}^n$.
Then Theorems \ref{thm-QX} and
\ref{thm-in}
hold with $X:=\dot{\mathcal{K}}^{p,r}_{\omega,\xi}$.
If $\mu\in (1,\infty)$ or $\mu=q=1$, then Theorem \ref{thm-main} holds with $X:=\dot{\mathcal{K}}^{p,r}_{\omega,\xi}$.
If $\mu\in (1,\infty)$, then Theorem \ref{thm-cwkx}
holds with $X:=\dot{\mathcal{K}}^{p,r}_{\omega,\xi}$.
\item Theorems \ref{thm-QX} and
\ref{thm-in}
hold with $X:=\dot{\mathcal{K}}^{p,r}_{\omega}$.
If $\mu\in (1,\infty)$, then Theorem \ref{thm-cwkx}
holds with $X:=\dot{\mathcal{K}}^{p,r}_{\omega}$.
\end{enumerate}
\end{theorem}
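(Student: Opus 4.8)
The plan is to reduce Theorem \ref{thm-herz} to the abstract results Theorems \ref{thm-QX}, \ref{thm-in}, \ref{thm-main}, and \ref{thm-cwkx} by verifying, in each of the two cases (local and global), that the generalized Herz space under consideration satisfies all the hypotheses required there: namely, that $X$ is a ball Banach function space, that there exists some $p_0\in[1,\infty)$ such that $X^{\frac{1}{p_0}}$ is also a ball Banach function space and $\mathcal{M}$ is bounded on $Y:=(X^{\frac{1}{p_0}})'$, that (for Theorem \ref{thm-main}) $X$ has an absolutely continuous norm, and that (for Theorem \ref{thm-cwkx}) both $X$ and $X'$ have absolutely continuous norms. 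First I would recall from the monograph \cite{LYH22} (and the companion papers on generalized Herz spaces) that, under the stated Matuszewska--Orlicz index conditions $-\frac np<m_0(\omega)\le M_0(\omega)<n(\frac1\mu-\frac1p)$ and the analogous bounds at infinity, both $\dot{\mathcal{K}}^{p,r}_{\omega,\xi}$ and $\dot{\mathcal{K}}^{p,r}_{\omega}$ are ball Banach function spaces; this is exactly where the lower-index bounds $m_0(\omega),m_\infty(\omega)>-\frac np$ and the upper bounds are used.

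Next I would pin down the convexification exponent: take $p_0:=\mu$, so that $X^{\frac1\mu}$ is the generalized Herz space with first exponent $p/\mu\ge1$, second exponent $r/\mu\ge1$, and weight $\omega^\mu$, whose indices then satisfy $m_0(\omega^\mu)=\mu m_0(\omega)$, etc.; the hypothesis $M_0(\omega)<n(\frac1\mu-\frac1p)$ becomes $M_0(\omega^\mu)<n(1-\frac{\mu}{p})=n(1-\frac{1}{p/\mu})$, which together with the corresponding lower bound is precisely the condition ensuring that $X^{\frac1\mu}$ is a ball Banach function space and that $\mathcal{M}$ is bounded on $(X^{\frac1\mu})'$. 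The boundedness of $\mathcal{M}$ on the associate space of a generalized Herz space is established in \cite{LYH22} (and was the main real-variable input used in earlier applications such as \cite{ZYY24jga}), so I would simply cite it. This verifies the hypotheses of Lemma \ref{lem-po} with $p=\mu$; combining with $n(\frac1\mu-\frac1q)<k$ gives Theorems \ref{thm-QX} and \ref{thm-in} in both cases (i) and (ii) directly.

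For the BBM part, Theorem \ref{thm-main} additionally needs $X$ to have an absolutely continuous norm, and Theorem \ref{thm-cwkx} needs both $X$ and $X'$ to have absolutely continuous norms. For the local space $\dot{\mathcal{K}}^{p,r}_{\omega,\xi}$ with $\mu>1$ (so in particular $p,r>1$) the absolute continuity of the norm, and that of its associate space, follow from the known identification of the associate space as another local generalized Herz space together with the finiteness of the exponents; this is recorded in \cite{LYH22} and was used in \cite{ZYY24jga}. This yields Theorem \ref{thm-main} when $\mu\in(1,\infty)$, and also Theorem \ref{thm-cwkx} when $\mu\in(1,\infty)$. The residual endpoint case $\mu=q=1$ of Theorem \ref{thm-main} for the local space must be handled separately: here $p_0=1$, and the conclusion of Theorem \ref{thm-main} only asks for $X$ (not $X'$) to have an absolutely continuous norm, which still holds for $\dot{\mathcal{K}}^{p,r}_{\omega,\xi}$ with $p,r\in[1,\infty)$ finite; I would note that the hypotheses of Lemma \ref{lem-po} are met with $p=1$ by the index computation above (the condition $M_0(\omega)<n(1-\frac1p)$ with $\mu=1$), so Theorem \ref{thm-main} applies.

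The one genuinely delicate point, and the step I expect to be the main obstacle, is the global space $\dot{\mathcal{K}}^{p,r}_{\omega}$: it is well known (see \cite{LYH22}) that the global generalized Herz space does \emph{not} have an absolutely continuous norm in general (it behaves like a Morrey-type space, which contains $L^\infty$-like obstructions), so Theorems \ref{thm-main} and \ref{thm-cwkx} cannot both be claimed for it through the density-based argument. This is exactly why the statement of Theorem \ref{thm-herz}(ii) only asserts Theorems \ref{thm-QX} and \ref{thm-in}, plus Theorem \ref{thm-cwkx} when $\mu\in(1,\infty)$ --- and for Theorem \ref{thm-cwkx} one should check that the proof of that theorem for the global Herz space goes through using only the reflexivity/separability consequences available here, i.e. that the associate space (rather than $X$ itself) carries enough structure; in fact Theorem \ref{thm-cwkx} requires both $X$ and $X'$ to have absolutely continuous norms, so I would instead appeal to the fact, proved in \cite{ZYY24jga}, that the relevant characterization for the global Herz space can be obtained by a limiting argument over the local spaces $\dot{\mathcal{K}}^{p,r}_{\omega,\xi}$ uniformly in $\xi$, thereby circumventing the lack of absolute continuity of the global norm. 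Thus the write-up would, for part (ii), derive Theorems \ref{thm-QX} and \ref{thm-in} from Lemma \ref{lem-po} and Theorems \ref{thm-QX}--\ref{thm-in} directly, and obtain Theorem \ref{thm-cwkx} (for $\mu>1$) by passing from the already-established local case to the global case via $\|f\|_{\dot{\mathcal{K}}^{p,r}_{\omega}}=\sup_{\xi}\|f\|_{\dot{\mathcal{K}}^{p,r}_{\omega,\xi}}$ and the uniform-in-$\xi$ bounds, citing \cite{ZYY24jga,LYH22} for the requisite properties. We omit the routine verifications.
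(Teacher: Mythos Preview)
Your proposal is correct and follows essentially the same route as the paper. The paper's own proof is extremely terse: for (i) it cites an argument analogous to \cite[Theorem 4.15]{ZYY23ccm} (which amounts precisely to verifying the ball Banach function space property, the convexification with exponent $\mu$, the boundedness of $\mathcal{M}$ on $(X^{1/\mu})'$, and the absolute continuity of the norm for the local space, all drawn from \cite{LYH22}); for (ii) it derives everything from (i) together with the identity $\|f\|_{\dot{\mathcal{K}}^{p,r}_{\omega}}=\sup_{\xi}\|f\|_{\dot{\mathcal{K}}^{p,r}_{\omega,\xi}}$. The only cosmetic difference is that for part (ii) you propose to verify the hypotheses of Lemma \ref{lem-po} \emph{directly} for the global space when handling Theorems \ref{thm-QX} and \ref{thm-in}, whereas the paper uniformly passes through the local case and takes the supremum over $\xi$; the latter is simpler because it sidesteps the question of identifying the associate of the global Herz space and checking $\mathcal{M}$-boundedness on it, and because the local-case constants are automatically uniform in $\xi$ by translation invariance. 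Your handling of Theorem \ref{thm-cwkx} for the global space via the uniform-in-$\xi$ local estimates is exactly what the paper does.
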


\begin{proof}
From an argument similar to that used in the proof of \cite[Theorem 4.15]{ZYY23ccm},
we deduce (i). On the other hand,
using (i) and the definition of $\dot{\mathcal{K}}^{p,r}_{\omega}$,
we can easily obtain (ii).
This finishes the proof of Theorem \ref{thm-herz}.
\end{proof}

\begin{remark}
Let all the symbols be the same as in Theorem \ref{thm-herz}.
\begin{enumerate}[{\rm (i)}]
\item Theorems \ref{thm-QX} and
\ref{thm-in} with $X:=\dot{\mathcal{K}}^{p,r}_{\omega,\xi}$ are new.
Theorem \ref{thm-main} with $X:=\dot{\mathcal{K}}^{p,r}_{\omega,\xi}$,
$k:=1$, $p,r\in (1,\infty)$, and
$q\in [1,\frac{n}{n-1}]\cup [1,\mu]$ reduces to \cite[Theorem 6.19]{ZYY24jga}
and other cases
are new.
When $p,r\in (1,\infty)$,
Theorem \ref{thm-cwkx} with $X:=\dot{\mathcal{K}}^{p,r}_{\omega,\xi}$, $k:=1$, and
$q\in [1,\frac{n}{n-1}]\cup [1,\mu]$ reduces to \cite[Theorem 6.19]{ZYY24jga}
and other cases
are new.

\item Theorems \ref{thm-QX} and
\ref{thm-in} with $X:=\dot{\mathcal{K}}^{p,r}_{\omega}$ are new.
When $p,r\in (1,\infty)$, Theorem \ref{thm-cwkx}
with $X:=\dot{\mathcal{K}}^{p,r}_{\omega}$, $k:=1$, and $q\in [1,\frac{n}{n-1}]\cup [1,\mu)$ reduces
to \cite[Theorem 6.21]{ZYY24jga} and other cases are new.
\end{enumerate}
\end{remark}

\subsection{More Function Spaces}\label{ss-m}

In this subsection, we indicate that the main results of this article can be applied to more
specific function spaces.
To limit the length of this article, we omit the details, but provide
some related references.

{\bf Weighted Orlicz spaces.} Let $A_{\infty}:=\bigcup_{p\in [1,\infty)} A_p$ and $\omega$ be a $A_{\infty}$-weight.
Recall that the \emph{weighted Orlicz space $L^\Phi_{\omega} $}
is defined to be the set of all $f\in\mathscr{M} $
having the following finite \emph{quasi-norm}
\begin{equation*}
\|f\|_{L^\Phi_{\omega} }:=\inf\left\{\lambda\in
(0,\infty):\int_{\mathbb{R}^n}\Phi\left(\frac{|f(x)|}
{\lambda}\right)\omega(x)\,dx\le1\right\}.
\end{equation*}
From \cite[Remark 5.22 (ii)]{WYYZ21}, we infer that the
weighted Orlicz space with positive lower
type $r_{\Phi}^-$ and positive upper type $r_{\Phi}^+$
satisfying $1< r^-_{\Phi}\le
r^+_{\Phi}<\infty$ is a
{\rm BBF} space.
By \cite[p.\,20, Theorem 13]{RR02}, \cite[Theorem 2.1.1]{KK91}, and the definition of absolutely continuous norms,
we find that the weighted Orlicz space $L^\Phi_{\omega} $ satisfies all the assumptions of Theorems \ref{thm-QX},
\ref{thm-in}, \ref{thm-main}, and \ref{thm-cwkx}.
Thus, Theorems \ref{thm-QX},
\ref{thm-in}, \ref{thm-main}, and \ref{thm-cwkx}
can to applied to $X:=L^\Phi_{\omega}$,
and the obtained results are new; we omit the details.

{\bf Mixed-norm Herz spaces.}
Let $\vec{p}:=(p_1,\ldots,p_n),\vec{q}:=(q_1,\ldots,q_n)\in [1,\infty)^n$
and $\vec{\alpha}:=(\alpha_1,\ldots, \alpha_n )\in \mathbb{R}^n$.
The \emph{mixed-norm Herz space} $\dot{E}^{\vec{\alpha},\vec{p}}_{\vec{q}}$
is defined to be the set
of all $f\in \mathscr{M}$ having the following finite \emph{quasi-norm}
\begin{align*}
\|f\|_{\dot{E}^{\vec{\alpha},\vec{p}}_{\vec{q}}}
:	&=\left\{\sum_{k_n \in \mathbb{Z}}2^{k_n p_n \alpha_n }
\left[\int_{R_{k_n}}\cdots\left\{\sum_{k_1 \in \mathbb{Z}}2^{k_1 p_1 \alpha_1 }\right.
\right. \right.\\
&\quad\left.\left.\left.\times
\left[\int_{R_{k_1}}|f(x_1,\ldots,x_n)|^{q_1}\,dx_1\right]^{\frac{p_1}{q_1}}\right\}^{\frac{q_2}{p_1}}\cdots
\,dx_n\right]^{\frac{p_n}{q_n}}\right\}^{\frac{1}{p_n}},
\end{align*}
where, for any $i\in \{1,\ldots,n\}$ and $k_i\in \mathbb{Z}$,
$R_{k_i}:= (-2^{k_i}, 2^{k_i})\setminus (-2^{k_-1}, 2^{k_i -1})$.
Applying \cite[Proposition 2.22]{ZYZ22}, we find that the mixed-norm Herz space
$\dot{E}^{\vec{\alpha},\vec{p}}_{\vec{q}}$ with $\alpha_i \in (-\frac{1}{q_i}, 1-\frac{1}{q_i})$
is a {\rm BBF} space.
From \cite[Corollaries 2.19 and 4.7]{ZYZ22} and the definition of absolutely continuous norms,
we deduce that the mixed-norm Herz space $\dot{E}^{\vec{\alpha},\vec{p}}_{\vec{q}}$
satisfies all the assumptions of Theorems \ref{thm-QX},
\ref{thm-in}, \ref{thm-main}, and \ref{thm-cwkx}.
Therefore, Theorems \ref{thm-QX},
\ref{thm-in}, \ref{thm-main}, and \ref{thm-cwkx}
can be applied to $X:=\dot{E}^{\vec{\alpha},\vec{p}}_{\vec{q}}$,
and the obtained results are new; we omit the details.

\begin{remark}
It is quite interesting
to see whether all these results on specific function spaces
in this section can be applied to the study of partial differential equations.
\end{remark}

\section*{Acknowledgment}

The authors would like to sincerely thank all
the anonymous referees for their
carefully reading and several comments which improve the
presentation of the article.
The authors also wish
to express their deep gratitude to Emiel Lorist
for pointing out the aforementioned error 
(see Remark \ref{e}) appeared in Theorem \ref{thm-upi}
in the case where $p=1$ of the original published version.

\bigskip

\noindent Pingxu Hu, Yinqin Li, Dachun Yang (Corresponding author) and
Wen Yuan.

\medskip

\noindent Laboratory of Mathematics and Complex Systems
(Ministry of Education of China),
School of Mathematical Sciences, Beijing Normal University,
Beijing 100875, The People's Republic of China

\smallskip

\noindent {\it E-mails}: \texttt{pingxuhu@mail.bnu.edu.cn} (P. Hu)

\noindent\phantom{\it E-mails }
\texttt{yinqli@mail.bnu.edu.cn} (Y. Li)

\noindent\phantom{\it E-mails }
\texttt{dcyang@bnu.edu.cn} (D. Yang)

\noindent\phantom {\it E-mails }
\texttt{wenyuan@bnu.edu.cn} (W. Yuan)

\end{document}